\theoremstyle{definition}
\newtheorem{thm}{Theorem}[section]
\newtheorem{cor}[thm]{Corollary}
\newtheorem{prop}[thm]{Proposition}
\newtheorem{lem}[thm]{Lemma}
\newtheorem{defn}[thm]{Definition}
\newtheorem{ex}[thm]{Example}
\newtheorem{Q}[thm]{Question}
\newtheorem*{claim*}{Claim}
\numberwithin{equation}{section}
\newcommand{\ddownarrow}{{\hspace{-.1mm}\downarrow}}
\begin{document}










\title[Randomness, Disintegrations, and Convergence to the Truth]
{Algorithmic Randomness, Effective Disintegrations, and Rates of Convergence to the Truth}
\author{Simon M. Huttegger}

\address{Department of Logic and Philosophy of Science \\ 5100 Social Science Plaza \\ University of California, Irvine \\ Irvine, CA 92697-5100, U.S.A.}

\email{shuttegg@uci.edu}

\urladdr{http://faculty.sites.uci.edu/shuttegg/}

\author{Sean Walsh}

\address{Department of Philosophy \\ University of California, Los Angeles \\ 390 Portola Plaza, Dodd Hall 321 \\ Los Angeles, CA 90095-1451}

\email{walsh@ucla.edu}

\urladdr{http://philosophy.ucla.edu/person/sean-walsh/}

\author{Francesca Zaffora Blando}

\address{Department of Philosophy\\ Carnegie Mellon University \\Baker Hall 161\\
5000 Forbes Avenue\\ Pittsburgh, PA 15213}

\email{fzaffora@andrew.cmu.edu}

\thanks{Many thanks to Jeremy Avigad, Peter Cholak, Johanna Franklin, Alexander Kastner, Josiah Lopez-Wild, Christopher Porter, Michael Rescorla, and Jason Rute for discussion and feedback.}

\subjclass[2010]{Primary 03D32 Secondary: 03A10, 03D78, 03F60, 60A10, 60B05, 60G48}

\date{}

\begin{abstract}
\makeatletter\phantomsection\def\@currentlabel{(abstract)}\makeatother
L\'evy's Upward Theorem says that the conditional expectation of an integrable random variable converges with probability one to its true value with increasing information. In this paper, we use methods from effective probability theory to characterise the probability one set along which convergence to the truth occurs, and the rate at which the convergence occurs. We work within the setting of computable probability measures defined on computable Polish spaces and introduce a new general theory of effective disintegrations. We use this machinery to prove our main results, which (1)~identify the points along which certain classes of effective random variables converge to the truth in terms of certain classes of algorithmically random points, and which further~(2) identify when computable rates of convergence exist. Our convergence results significantly generalize earlier results within a unifying novel abstract framework, and there are no precursors of our results on computable rates of convergence. Finally, we make a case for the importance of our work for the foundations of Bayesian probability theory.
\end{abstract}

\maketitle

\tableofcontents

\section{Introduction}\label{sec:intro}

Measure-theoretic probability was developed in the early 20th Century in response to pressing problems in statistical physics, astronomy, and pure mathematics, and today it is used throughout the mathematical sciences.\footnote{For a historical survey, see \cite{Plato1994}.} What proved to be an especially significant conceptual progress was the ability to say that certain properties are true with probability one. Early examples include Borel's strong law of large numbers, irrational rotations of the unit interval, Birkhoff's ergodic theorem, and Poincar\'e's recurrence theorem. It is often unclear, however, what these sets are. That is to say, measure-theoretic results only assert the existence of certain sets of probability one but fail to characterise the points that are elements of those sets. This was pointed out as early as 1916 by Weyl, who insisted that a deeper understanding of the sets involved in zero-one laws was necessary in order to interpret the results of measure-theoretic probability.\footnote{\cite{Weyl1916}.}

The theory of algorithmic randomness involves a fine-grained classification of different measure one sets, with the primary exemplars being the Martin-L\"of random points, the Schnorr random points, and the Kurtz random points.\footnote{The original papers of Martin-L\"of, Schnorr, and Kurtz are: \cite{Martin-Lof1966aa}, \cite{Martin-Lof1970ab}, \cite{Schnorr1971aa}, \cite{Schnorr1977ab}, \cite{Kurtz1981-eb}. There are now several comprehensive references on algorithmic randomness, including \cite{Li1997aa}, \cite{Nies2008aa}, \cite{Downey2010aa}, \cite{Shen2017-ze}.} Originally this was done for the uniform ``fair coin'' measure on Cantor space (the space of infinite sequences of 0's and 1's) and the famous results pertained to algorithmic incompressibility and the Turing degrees.\footnote{For instance, the Levin-Schnorr characterisation of Martin-L\"of randomness in terms of initial segment complexity, and the Ku\v{c}era-G\'acs proof that every Turing degree is below the degree of a Martin-L\"of random. See, e.g., \cite[Theorem 6.3.10 p. 239, Theorem 8.3.2 p. 326]{Downey2010aa} for statement and references.} However, the theory has been recently developed for a more general class of computable probability measures on computable spaces, by authors such as G\'acs, Hoyrup and Rojas, Reimann, Rute, and Miyabe.\footnote{\cite{Gacs2005-xu}, \cite{Hoyrup2008-oj}, \cite{Hoyrup2009-pl}, \cite{Reimann2010-dt}, \cite{Rute2012aa}, \cite{Miyabe2014-ih}, \cite{Hoyrup2021-qj} (listed in rough chronological order).} A related recent trend has been showing that effectivized versions of classical theorems on almost sure convergence prove convergence exactly on various classes of algorithmically random points.\footnote{\cite{Brattka2015-ka}, \cite{Miyabe2016-jo}. The latter is, in part, a survey and contains many further references.} This arguably contributes to the deeper understanding along the lines suggested by Weyl. Further, this recent trend suggests reconceiving of the various notions of algorithmic randomness less as on a par with rival conceptual analyses of a pre-theoretic phenomenon, \emph{\`a la} the Church-Turing thesis, and more as delineations of extensionally and conceptually distinct kinds of probability one events.\footnote{This point is due to \cite{Porter2016-au}.} Or, if one puts the point in terms of the corresponding null sets, the various notions demarcate different types of impossibility that occur throughout measure-theoretic mathematics and its many applications.

Our main theorems (Theorems~\ref{thm:newnewlevy}, \ref{thm:newnewlevyrates}, \ref{thm:drlevy}, \ref{thm:drlevy:drates}, \ref{thm:doobmax}) contribute to this recent literature by characterising, in terms of algorithmic randomness, the points at which L\'evy's Upward Theorem holds for various classes of effective random variables, as well as providing information about the rates of convergence to the truth. 

Let us recall the classical statement of L\'evy's Theorem.\footnote{\cite[p. 134]{Williams1991aa}, \cite[\S{41} pp. 128 ff]{Levy1938-ya}.} Suppose $(X,\mathscr{F}, \nu)$ is a probability triple. Let $\mathscr{F}_1, \mathscr{F}_2, \hdots$ be an increasing sequence of sub-$\sigma$-algebras of $\mathscr{F}$ whose union generates $\mathscr{F}$. Then \emph{L\'evy's Upward Theorem} states that one has $\mathbb{E}_{\nu}[f \mid \mathscr{F}_n]\rightarrow f$ both $\nu$-a.s. and in $L_1(\nu)$, for any $\mathscr{F}$-measurable function $f$ in $L_1(\nu)$. In this, $\mathbb{E}_{\nu}[f \mid \mathscr{G}]$ denotes the conditional expectation of $f$ relative to~$\mathscr{G}$, which, recall, is defined as the $\nu$-a.s. unique $\mathscr{G}$-measurable function $g$ such that $\int_A g \; d\nu = \int_A f\; d\nu$ for all events $A$ in the sub-$\sigma$-algebra $\mathscr{G}$ of $\mathscr{F}$.

The convergence in L\'{e}vy's Upward Theorem is one of the cornerstones of Bayesian epistemology.\footnote{\cite[pp. 144 ff]{Earman1992aa}, \cite[pp. 28-29]{Howson2006aa}.} The random variable $f$ can be thought of as a quantity that a Bayesian agent, whose degrees of belief are captured by the underlying probability measure~$\nu$, is trying to estimate by repeatedly performing an experiment. The quantity $\mathbb{E}_{\nu}[f\mid\mathscr{F}_n]$ can be seen as encoding the agent's opinions regarding the value of $f$ after having observed the outcomes of the first $n$ experiments. L\'{e}vy's Upward Theorem then implies that, with probability one, the Bayesian agent's opinions regarding the value of $f$ will converge to $f$'s true value in the limit.

To be able to characterise the $\nu$-measure one set on which $\mathbb{E}_{\nu}[f \mid \mathscr{F}_n]\rightarrow f$, one needs to choose versions of $\mathbb{E}_{\nu}[f \mid \mathscr{F}_n]$ and $f$. It seems natural to focus attention on classes of effective random variables $f$ defined relative to spaces~$X$ and probability measures~$\nu$ which are themselves computable. For, computability seems like a natural constraint to place on our Bayesian agent, and many of the examples of probability measures and random variables that occur in practice and applications are computable. However, the Bayesian perspective recommends few other general constraints on what is eligible to be a credence or a~prior. Hence it is important to develop the theory for a maximally broad class of computable spaces and probability measures.\footnote{The distinctive status of the computability hypothesis which we are suggesting raises a host of interesting and complex conceptual questions, ranging from the nature of cognition to the character of inductive inference. We put these issues aside here.}

\subsection{Effective probability and algorithmic randomness}\label{subsec:core}

The computable Polish spaces with computable probability measures are such an appropriately general class of spaces and measures. In this brief section we collect together the few definitions we need about their theory. The reader already familiar with these concepts can easily skip to the next section (\S\ref{sec:disintegration}).

A \emph{Polish space} is a topological space which is separable and completely metrizable. All the paradigmatic spaces such as the reals and their products and their closed and open subspaces are Polish, and similarly for Cantor space. Descriptive set theory takes as its subject matter the Borel and projective subsets of Polish spaces.\footnote{\cite{Kechris1995}, \cite{Moschovakis2009-vf}.} When topological considerations are salient or when one needs i.i.d. sequences with prescribed distributions, it is often assumed in contemporary probability theory that the sample space is a Polish space or a Borel subset thereof.\footnote{A Borel subset of a Polish space together with its Borel subsets is known as a \emph{standard Borel space} in descriptive set theory (cf. \S\cite[Definition 12.5, Corollary 13.4]{Kechris1995}). For representative examples of standard Borel spaces within probability, see e.g. \cite[p. 51]{Durrett2010-lj}, \cite[p. 7, pp. 561 ff]{Kallenberg2002-gw}. For a classic probability text that foregrounds standard Borel spaces, see \cite[Chapter 1]{Parthasarathy1967-ad}.}

A \emph{computable Polish space $X$} is a Polish space with a distinguished countable dense set $x_0, x_1, \ldots$ and a distinguished complete compatible metric $d$ such that the distance $d(x_i,x_j)$ between any two elements of the countable dense set is a computable real, uniformly in $i,j\geq 0$.\footnote{A standard reference for computable Polish spaces is \cite[Chapter 3]{Moschovakis2009-vf}. A comparison to the Weihrauch approach to computable analysis is given in \cite{Gregoriades2017-xr}. One can view the treatment of metric spaces in \cite{Simpson2009aa} as an axiomatization of Polish spaces and reals which are computable in an oracle. Finally, the study of computable Polish spaces in and of themselves is distinct from effective descriptive set theory, which usually refers to techniques for proving results about all Borel sets by first proving it for lightface Borel sets in Baire space (the most famous example of this is the Glimm-Effros dichotomy (cf. \cite[Chapter 6]{Gao2008-ap})).} (The enumeration of the distinguished countable dense set can contain repetitions, and will need to do so in finite spaces.)

In a computable Polish space, an open set $U$ is \emph{c.e. open} if there is a computable function $n(\cdot)$ which enumerates a subsequence $x_{n(i)}$ of the countable dense set and a computable sequence $r_i$ of rational radii such that $U=\bigcup_i B_d(x_{n(i)},r_i)$. In this, $B_d(x,r)$ denotes the open ball with centre $x$ and radius $r$ relative to metric $d$ (when the metric $d$ is clear from context, we just write $B(x,r)$). The name ``c.e. open'' is chosen since the natural numbers are a computable Polish space with the discrete metric, and the c.e. opens in this space are precisely the computably enumerable sets of natural numbers, one of the canonical objects of the contemporary theory of computation.\footnote{See \cite{soare2016turing}, a standard reference.} Further, many of the elementary methods of studying c.e. sets (e.g., universal enumerations) extend to c.e. open sets. The complements of c.e. open sets are called \emph{effectively closed} sets (cf. \S\ref{sec:closedcont}).

In a computable Polish space, we say that a sequence $x_n\rightarrow x$ at \emph{geometric rate~$b$} if $d(x,x_n)\leq b^{-n}$ for all $n\geq 0$. We say that a sequence $x_n\rightarrow x$ \emph{fast} if $x_n\rightarrow x$ at geometric rate $b=2$. We then say that a point $x$ is \emph{computable} if there is a computable function $n(\cdot)$ which enumerates a subsequence $x_{n(i)}$ of the countable dense set such that $x_{n(i)}\rightarrow x$ fast. This subsequence is called a \emph{witness} to the computability of $x$. In Cantor space with its usual metric, the computable points are precisely the computable subsets of natural numbers. 

In the real numbers, the countable dense set is the rationals, and the above definition of computable points is precisely how Turing defined computable real numbers at the outset of the theory of computation nearly a century ago.\footnote{\cite{Turing1937-cm}.} An equivalent formalisation of computable reals is by Dedekind cuts. A real $x$ is \emph{left-c.e.} (resp. \emph{right-c.e.}) if its left Dedekind cut  $\{q\in \mathbb{Q}: q<x\}$ in the rationals is a c.e. set (resp. if its right Dedekind cut $\{q\in \mathbb{Q}: x<q\}$ in the rationals is a c.e. set). One can show that a real is computable iff it is both left-c.e. and right-c.e., and uniformly so. (An example of a left-c.e. real that is not computable is $\sum_n 2^{-f(n)}$, where $f:\mathbb{N}\rightarrow \mathbb{N}$ is an injective computable function with non-computable range.)\footnote{These and other effective aspects of real numbers are treated extensively in e.g. \cite{Rettinger2021-rm}, \cite[Chapter 5]{Downey2010aa}.}

These preliminaries in place, one can then quickly define the required core notions from algorithmic randomness and effective probability. These are all needed in order to formally state our main theorems, but one might restrict oneself to (\ref{defn:core:1})-(\ref{defn:core:10}) on a first pass and come back to the others as needed. 
\begin{defn}\label{defn:core} (Core notions)
\begin{enumerate}[leftmargin=*]
\item \label{defn:core:1} A function $f:X\rightarrow (-\infty,\infty]$ is  \emph{lower semi-computable} (abbreviated \emph{lsc}) if for all rational $q$, the set $f^{-1}(q,\infty]$ is uniformly c.e. open.

\item \label{defn:core:1a} A function $f:X\rightarrow [-\infty,\infty)$ is  \emph{upper semi-computable} (abbreviated \emph{usc}) if for all rational $q$, the set $f^{-1}[-\infty,q)$ is uniformly c.e. open.

\item \label{defn:core:2} A probability measure $\nu$ is \emph{computable} if $\nu(U)$ is uniformly left-c.e. as $U$ ranges over c.e. opens.
   
\item  \label{defn:core:6}   Given a computable probability measure $\nu$ and a computable real $p\geq 1$, a function $f:X\rightarrow [0,\infty]$ is \emph{an $L_p(\nu)$ Schnorr test} if it is lsc and if $\|f\|_p$ is a computable real, where this denotes the $p$-norm $\|f\|_p=(\int \left|f\right|^p \; d\nu)^{\frac{1}{p}}$.
    
 \item \label{defn:core:7}   Given a computable probability measure $\nu$ and a computable real $p\geq 1$, a function $f:X\rightarrow [0,\infty]$ is \emph{an $L_p(\nu)$ Martin-L\"of test} if it is lsc and $\|f\|_p<\infty$.
    
\item  \label{defn:core:8}    A point $x$ in $X$ is \emph{Kurtz random relative to $\nu$} (abbreviated $\mathsf{KR}^{\nu}(X)$) if $x$ is in every c.e. open $U$ with $\nu(U)=1$.
    
 \item \label{defn:core:9}    A point $x$ in $X$ is \emph{Schnorr random relative to $\nu$} (abbreviated $\mathsf{SR}^{\nu}(X)$) if $f(x)<\infty$ for any $L_1(\nu)$ Schnorr test $f$ (equivalently, for any $L_p(\nu)$ Schnorr test, for $p\geq 1$ computable).

 \item  \label{defn:core:10}   A point $x$ in $X$ is \emph{Martin-L\"of random relative to $\nu$} (abbreviated $\mathsf{MLR}^{\nu}(X)$) if $f(x)<\infty$ for any $L_1(\nu)$ Martin-L\"of test $f$ (equivalently, for any $L_p(\nu)$ Martin-L\"of test, for $p\geq 1$ computable).

\item \label{defn:core:3.1}  A \emph{computable basis} $\mathscr{B}$ for $X$ is a computable sequence of c.e. opens such that every c.e. open can be effectively written as a union of elements in $\mathscr{B}$.

\item \label{defn:core:3}  If $\nu$ is a computable probability measure, then a \emph{$\nu$-computable basis} $\mathscr{B}$ for $X$ is a computable basis such that (i) finite unions from $\mathscr{B}$ uniformly have $\nu$-computable measure, and (ii) each c.e. open in $\mathscr{B}$ is uniformly paired with an effectively closed superset of the same $\nu$-measure. If $\nu$ is clear from context, we simply say \emph{measure computable basis} instead of $\nu$-computable basis.

 \item \label{defn:core:4}  A sub-$\sigma$-algebra $\mathscr{F}$ of the Borel sets on $X$ is \emph{$\nu$-effective} if it is generated by a computable sequence of events  $\{A_{m}: m\geq 0\}$ from the algebra $\mathscr{A}$ generated by a $\nu$-computable basis $\mathscr{B}$.\footnote{When working with $\mathscr{A}$, we assume that we are working with the codes for Boolean combinations of elements of $\mathscr{B}$, and only by extension with the sets that they define. This is because there are some spaces where Boolean algebra structure on the quotient is not computable.} We say that $\mathscr{F}$ is \emph{generated} by $\{A_{m}: m\geq 0\}$.
  
\item  \label{defn:core:5}  A \emph{full $\nu$-effective filtration} $\mathscr{F}_n$  (resp. \emph{almost-full $\nu$-effective filtration}) is an increasing sequence $\mathscr{F}_n$ of uniformly $\nu$-effective sub-$\sigma$-algebras generated by a uniformly computable sequence $\{A_{n,m}: m\geq 0\}$ from the algebra $\mathscr{A}$ generated by a $\nu$-computable basis $\mathscr{B}$, which is further equipped with a uniform procedure for going from a c.e. open $U$ to a computable sequence $A_{n_i, m_i}$ such that $U=\bigcup_i A_{n_i, m_i}$ (resp.  $U=\bigcup_i A_{n_i, m_i}$ on $\mathsf{KR}^{\nu}(X)$).

\item \label{defn:core:15} If $x$ is a point of the computable Polish space $X$ and $Y$ is a subset of Baire space (the space of all functions from natural numbers to natural numbers), then \emph{$x$ weakly computes an element of $Y$} if, for every sequence $x_{n(i)}$ from the countable dense set of $X$ such that $x_{n(i)}\rightarrow x$ fast, there is $y$ in $Y$ which is Turing reducible to the function $i\mapsto n(i)$. If $Y=\{y\}$, then we just say that \emph{$x$ weakly computes $y$}.\footnote{One can extend Turing reducibility from a relation between sets of natural numbers to a relation between closed subsets of Baire space. In this setting, the notion of weak computation is called Muchnik reducibility, and is contrasted to a strong uniform notion called Medvedev reducibility. See \cite{Simpson2005-pa}, \cite{Hinman2012-gz} for introduction and references, although this theory is usually focused on effectively closed sets. Given a point $x$, the set of functions $i\mapsto n(i)$ such that $x_{n(i)}\rightarrow x$ at a fixed rate, in the sense of (\ref{defn:core:16}), is a closed subset of Baire space.}$^{,\;}$
\footnote{If $X$ is Cantor space or the reals, then for each point $x$ of the space, there is a sequence $i\mapsto n(i)$ of least Turing degree such that $x_{n(i)}\rightarrow x$ fast. In these settings, computational properties of the point of the space usually refer to those of this sequence. However, there are spaces for which there are points with no sequence of least Turing degree. See Miller \cite{Miller2004-cf}.}

\item \label{defn:core:15.5} If $x$ is a point of the computable Polish space $X$, and $\mathcal{C}$ is any collection of Turing degrees (equivalence classes of elements of Baire space under Turing reducibility), then we say that \emph{$x$ is in $\mathcal{C}$} if there is some $i\mapsto n(i)$ whose Turing degree is in $\mathcal{C}$ such that $x_{n(i)}\rightarrow x$ fast, where $x_j$ again enumerates the countable dense set. In the case where $\mathcal{C}$ just consists of the computable degree, note that $x$ is in $\mathcal{C}$ iff $x$ is computable as a point of $X$.

\item \label{defn:core:16} Suppose $y_n, y$ are elements in a metric space $Y$ such that $y_n\rightarrow y$. Then a \emph{rate of convergence} for $y_n\rightarrow y$ is a function $m:\mathbb{Q}^{>0}\rightarrow \mathbb{N}$ such that for all rational $\epsilon>0$ and all $n\geq m(\epsilon)$ one has $d(y_n,y)<\epsilon$.\footnote{If $y_n\rightarrow y$ at geometric rate $b>1$ in a computable Polish space, then one defines a rate in the sense of (\ref{defn:core:16}) by setting $m(\epsilon)=n$ for the least $n$ such that $b^{-n}<\epsilon$.} Often in practice we use the case where $Y$ is the reals and $y_n=f_n(x)$ and $y=f(x)$, where $f_n,f$ are real-valued functions. A synonym for \emph{rate} is \emph{modulus}, and so we often use the $m$ variable for rates.
\end{enumerate}
\end{defn}

For the algorithmic randomness notions in (\ref{defn:core:8})-(\ref{defn:core:10}), we just write $\mathsf{KR}^{\nu}$ instead of $\mathsf{KR}^{\nu}(X)$ when $X$ is clear from context; and similarly for $\mathsf{SR}^{\nu}$ and $\mathsf{MLR}^{\nu}$. For $\sigma$-algebras $\mathscr{F}$, it is always understood that they are sub-$\sigma$-algebras of the Borel $\sigma$-algebra, and when $\nu$ is clear from context we just say \emph{effective} instead of \emph{$\nu$-effective}. 

Algorithmic randomness is often formulated in terms of effective null sets, called  \emph{sequential tests}. But the definitions given above in terms of \emph{integral tests} are easier to work with in our setting and are known to be equivalent to the sequential definitions, by theorems of Levin and Miyabe.\footnote{\cite{Levin1976}, \cite[Theorem 3.5]{Miyabe2013}.} 

Before turning to disintegrations, it is helpful to introduce notational conventions regarding versions of integrable functions vs. equivalence classes thereof. In the following definition, the $\sigma$-algebra on $[-\infty,\infty]$ is simply $\{B\cup C: B\subseteq \mathbb{R} \mbox{ Borel}, C\subseteq \{-\infty,\infty\}\}$, and similarly for $[0,\infty]$.

\begin{defn}\label{defn:versions} (Conventions on functions defined pointwise vs. functions defined up to $\nu$-a.s. equivalence)

Suppose $X$ is a Polish space and suppose that $\nu$ is a finite non-negative measure on the Borel events of $X$. Then we define:

$\mathbb{L}_p(\nu)$ is the set of pointwise defined Borel measurable functions $f:X\rightarrow [-\infty,\infty]$ such that $\|f\|_p<\infty$.

 $\mathbb{L}^+_p(\nu)$ is the set of pointwise defined Borel measurable functions $f:X\rightarrow [0,\infty]$ such that $\|f\|_p<\infty$.

 $L_p(\nu)$ is the set of equivalence classes of elements of $\mathbb{L}_p(\nu)$ under $\nu$-a.s. equivalence. That is, $L_p(\nu)$ is the classical Banach space with norm $\|\cdot\|_p$.

 $L^+_p(\nu)$ is the set of equivalence classes of elements of $\mathbb{L}^+_p(\nu)$ under $\nu$-a.s. equivalence. That is, $L^+_p(\nu)$ is a positive cone in the Banach space $L_p(\nu)$.

\end{defn}

Then $\mathbb{L}_p(\nu)$ projects onto $L_p(\nu)$ by sending a function to its equivalence class, and likewise $\mathbb{L}^+_p(\nu)$ projects onto $L^+_p(\nu)$. Note that $L_p(\nu)$ Schnorr tests and Martin-L\"of tests from Definition~\ref{defn:core}(\ref{defn:core:6})-(\ref{defn:core:7}) are elements of $\mathbb{L}_p^+(\nu)$, and they are elements of $L_p^+(\nu)$ only after passing to the equivalence class.

\subsection{Classical and effective disintegrations}\label{sec:disintegration}

We use disintegrations for the versions $\mathbb{E}_{\nu}[f\mid \mathscr{F}_n]$ of conditional expectation. While, classically, conditional expectation is defined only $\nu$-a.s., in order to characterise algorithmic randomness notions in terms of L\'evy's Upward Theorem, we need to select specific versions of conditional expectation. The concept of a disintegration provides a very general way of making such selections. It is due to Rohlin\footnote{\cite{Rohlin1949-ly}, \cite{Rohlin1962-rn}, \cite{Rokhlin1967-op}.} and is routinely used today in ergodic theory and optimal transport,\footnote{It is often used in the proof of the Ergodic Decomposition Theorem and the Gluing Lemma. See \cite[154]{Einsiedler2010-yb}, \cite[182]{Santambrogio2015-li}.} and it is closely related to conditional probability distributions.\footnote{\cite{Chang1997-ip}, \cite[\S{5.3}]{Pollard2002-ih}.} 

Suppose that $X$ is a Polish space, $\nu$ is a probability measure on the Borel sets of $X$ and $\mathscr{F}$ is a countably generated sub-$\sigma$-algebra of the Borel $\sigma$-algebra. Define the equivalence relation $\sim_{\mathscr{F}}$ on $X$ by $x\sim_{\mathscr{F}} x^{\prime}$ iff, for all $A$ in $\mathscr{F}$, one has $x$ in $A$ iff $x^{\prime}$ in $A$, and let $[x]_{\mathscr{F}}$ be the corresponding equivalence class.\footnote{Since we are focused on L\'evy's Upward Theorem, we are focusing on countably generated sub-$\sigma$-algebras $\mathscr{F}$ of the Borel $\sigma$-algebra. Note that this has the consequence that the relation $\sim_{\mathscr{F}}$ is a smooth Borel equivalence relation (cf. \cite[\S{5.4}]{Gao2008-ap}). More complicated Borel equivalence relations occur naturally in nearby topics. For instance, Rute examines L\'evy's Downward Theorem (cf. \cite[Theorem 11.2]{Rute2012aa}, \cite[Theorem 14.4]{Williams1991aa}), which in Cantor space results naturally in the sub-$\sigma$-algebra of $E_0$-invariant events, where $E_0$ is the Borel equivalence relation featuring in the Glimm-Effros dichotomy (cf. \cite[Definition 6.1.1]{Gao2008-ap}).}

Let $\mathcal{M}^+(X)$ be the Polish space of non-negative Borel measures on $X$ (cf. \S\ref{sec:efdst:measure}). For Borel measurable $\rho: X\rightarrow \mathcal{M}^+(X)$, whose action is written as $x\mapsto \rho_x$, we define the partial map  
\begin{equation}\label{eqn:disintegrate}
\mathbb{E}_{\nu}[\cdot \mid \mathscr{F}](\cdot): \mathbb{L}_1(\nu)\times X\dashrightarrow [-\infty,\infty] \hspace{3mm} \mbox{ by } \hspace{3mm} \mathbb{E}_{\nu}[f \mid \mathscr{F}](x) = \int f(v) \; d\rho_{x}(v)
\end{equation}
This map is a version, that is, it is partially defined on all pairs $(f,x)$. Note that it is totally defined on $\mathbb{L}_1^+(\nu)\times X$, with range $[0,\infty]$,\footnote{Indeed, it is totally defined on all pairs ($f,x)$ where $f$ is non-negative Borel measurable. But for our purpose of defining a version of $\mathbb{E}_{\nu}[\cdot \mid \mathscr{F}]$ we only need to pay attention to when $f$ is in $\mathbb{L}_1(\nu)$.}  and it is totally defined and finite on all simple functions. It is further helpful to keep in mind that whether $ \mathbb{E}_{\nu}[f \mid \mathscr{F}](x)$ is finite depends on whether the element $f$ of $\mathbb{L}_1(\nu)$ is additionally in $\mathbb{L}_1(\rho_x)$: that is, it is integrability with respect to $\rho_x$ rather than $\nu$ which is at issue.

For a Polish space $X$, one says that a map $\rho:X\rightarrow \mathcal{M}^+(X)$ is \emph{the disintegration of $\mathscr{F}$ with respect to $\nu$} if both the following happen:\footnote{We are following the treatment of Einsiedler-Ward \cite[135]{Einsiedler2010-yb}. Since the main examples of disintegrations involve products (cf. Appendicies~\ref{sec:app:classical:disintegrations}-\ref{sec:app:effective:disintegrations}), often alternative definitions of disintegrations involve maps that axiomatize the role that the projection operators play in the paradigmatic examples. For an example of definitions along these lines, see \cite{Chang1997-ip}, \cite[\S{5.3}]{Pollard2002-ih}.}
\begin{itemize}[leftmargin=*]
\item for all $f$ in $\mathbb{L}_1(\nu)$, one has that $\mathbb{E}_{\nu}[f \mid \mathscr{F}]$ is a version of the conditional expectation of $f$ with respect to $\mathscr{F}$ and $\nu$.\footnote{Hence it is in $\mathbb{L}_1(\nu)$, and thus it is defined and finite for $\nu$-a.s. many $x$ from $X$.}
\item For $\nu$-a.s. many $x$ from $X$, one has $\rho_{x}(X)=1$ and $\rho_{x}([x]_{\mathscr{F}})=1$.
\end{itemize}
A disintegration of $\mathscr{F}$ with respect to $\nu$ exists for any countably generated sub-$\sigma$-algebra $\mathscr{F}$ of the Borel $\sigma$-algebra on $X$.\footnote{\cite[135]{Einsiedler2010-yb}. Indeed, a little more is true: one can replace $X$ by one of its Borel subsets. Further, one can relax the assumption that  $\mathscr{F}$ is countably generated, provided that one does not insist on $\rho_{x}([x]_{\mathscr{F}})=1$.} Further, it is possible to be more agnostic about the codomain of $\rho$ outside the $\nu$-measure~one set on which it outputs probability measures. See Appendix~\ref{sec:app:classical:disintegrations} for two classical examples of disintegrations.

Here is our key definition of effective disintegration:
\begin{defn}\label{defn:eff:disin} (Effective disintegrations).

Let $X$ be a computable Polish space. Let $\nu$ be a computable probability measure on $X$. Let $\mathscr{F}$ be a $\nu$-effective $\sigma$-algebra. Let $\mathsf{XR}^{\nu}$ be a $\nu$-measure one subset of $\mathsf{KR}^{\nu}(X)$. Then the map $\rho: X\rightarrow \mathcal{M}^+(X)$ is \emph{an $\mathsf{XR}^{\nu}$ disintegration of $\mathscr{F}$ with respect to $\nu$} if each of the following happen:
\begin{enumerate}[leftmargin=*]
\item \label{defn:eff:disin:1} For all $f$ in $\mathbb{L}_1(\nu)$, one has that $\mathbb{E}_{\nu}[f \mid \mathscr{F}]$ is a version of the conditional expectation of $f$ with respect to $\mathscr{F}$ and $\nu$.
\item \label{defn:eff:disin:2} For all $x$ in $\mathsf{XR}^{\nu}$ , one has that $\rho_{x}(X)=1$ and $\rho_{x}([x]_{\mathscr{F}}\cap \mathsf{XR}^{\nu})=1$.
\item \label{defn:eff:disin:3} For c.e. open $U$, the map $x\mapsto \rho_x(U)$ is uniformly lsc from $X$ to $[0,\infty)$.
\end{enumerate} 

We further define:

A \emph{Kurtz disintegration} is simply a $\mathsf{KR}^{\nu}$ disintegration.

A \emph{Schnorr disintegration} is a map which is both a $\mathsf{KR}^{\nu}$ disintegration and a $\mathsf{SR}^{\nu}$ disintegration.

 A \emph{Martin-L\"of disintegration} is a map which is a $\mathsf{KR}^{\nu}$ disintegration and a $\mathsf{SR}^{\nu}$ disintegration and a $\mathsf{MLR}^{\nu}$ disintegration.

\end{defn}

Technically, it appears possible to, e.g., be a $\mathsf{SR}^{\nu}$ disintegration but not a $\mathsf{KR}^{\nu}$ disintegration. This is due to the universal quantifier over $\mathsf{XR}^{\nu}$ at the outset of~(\ref{defn:eff:disin:2}). But this possibility does not appear to occur naturally among examples.

Due to space constraints, we have opted to focus on theory in the body of the text, and have put a brief discussion of the many interesting examples of effective disintegrations in Appendix~\ref{sec:app:effective:disintegrations}.
 
Finally, we can define: 
\begin{defn}\label{defn:dr}
Let $\mathscr{F}_n$ be an almost-full effective filtration, equipped uniformly with Kurtz disintegrations $\rho^{(n)}$. A point $x$ in $X$ is said to be \emph{density random with respect to $\rho$}, abbreviated $\mathsf{DR}^{\nu}_{\rho}(X)$, if $x$ is in $\mathsf{MLR}^{\nu}(X)$ and $\lim_n \rho_x^{(n)}(U)=\delta_x(U)$ for every c.e. open $U$.
\end{defn}
In this, $\delta_x$ is the Dirac measure centred at $x$. With the limit written as such, by the Portmanteau Theorem one sees that it is a strengthening of the weak convergence of the measures $\rho_x^{(n)}\rightarrow \delta_x$. Since we use the disintegration to define the conditional expectation, as in equation (\ref{eqn:disintegrate}) above, the limit in Definition~\ref{defn:dr} can be written equivalently as $\lim_n \mathbb{E}_{\nu}[I_U \mid \mathscr{F}_n](x)=I_U(x)$. With respect to the canonical filtration of length $n$-strings on Cantor space and its natural disintegration (cf. Example~\ref{ex:refinedpartitionconcrete}), density randomness has been a focal topic in recent literature on algorithmic randomness.\footnote{\cite{Bienvenu2014-cs}, \cite{Miyabe2016-jo}, \cite{Khan2016-qv}. In this setting with $\nu$ being the uniform measure, it is known that $\mathsf{DR}^{\nu}_{\rho}$ is a proper subset of $\mathsf{MLR}^{\nu}$. One example which shows this properness is an element $\omega$ of $\mathsf{MLR}^{\nu}$ such that $\{\omega^{\prime}: \omega^{\prime}<_{lex} \omega\}$ is c.e. open, where $<_{lex}$ is the lexicographic order.} Definition~\ref{defn:dr} is our suggestion for how to generalise this to the setting of arbitrary effective disintegrations.

\subsection{Statement of main results}\label{subsec:statementmain}

Our first main theorem is the following:

\begin{restatable}{thm}{thmmain}\label{thm:newnewlevy} (Effective Upward L\'evy Theorem for Schnorr Randomness).
Suppose that $X$ is a computable Polish space and $\nu$ is a computable probability measure. Suppose that $\mathscr{F}_n$ is an almost-full effective filtration, equipped uniformly with Kurtz disintegrations.

If $p\geq 1$ is computable, then the following four items are equivalent for $x$ in $X$: 
\begin{enumerate}[leftmargin=*]
  \item \label{levy:thm:1} $x$ is in $\mathsf{SR}^{\nu}(X)$.
  \item \label{levy:thm:2} $x$ is in $\mathsf{KR}^{\nu}(X)$ and $\lim_n \mathbb{E}_{\nu}[f\mid \mathscr{F}_n](x)=f(x)$ for all $L_p(\nu)$ Schnorr tests~$f$.
  \item \label{levy:thm:2.5} $x$ is in $\mathsf{KR}^{\nu}(X)$ and $\lim_n \mathbb{E}_{\nu}[f\mid \mathscr{F}_n](x)$ exists for all $L_p(\nu)$ Schnorr tests~$f$ and $\lim_n \mathbb{E}_{\nu}[I_U\mid \mathscr{F}_n](x)=I_U(x)$ for all c.e. opens $U$ with $\nu(U)$ computable.
  \item \label{levy:thm:3} $x$ is in $\mathsf{KR}^{\nu}(X)$ and $\lim_n \mathbb{E}_{\nu}[f\mid \mathscr{F}_n](x)$ exists for all $L_p(\nu)$ Schnorr tests~$f$.
\end{enumerate}

\end{restatable}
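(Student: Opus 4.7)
The plan is to close the loop $(1)\Rightarrow(2)\Rightarrow(2.5)\Rightarrow(3)\Rightarrow(1)$. The two middle implications are immediate: whenever $U$ is a c.e.\ open with $\nu(U)$ computable, the indicator $I_U$ is lsc with computable $L_p$-norm $\nu(U)^{1/p}$, hence is itself an $L_p(\nu)$ Schnorr test. Applying~(2) to $I_U$ therefore yields the additional clause of~(2.5), and (2.5) trivially implies~(3).

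For $(1)\Rightarrow(2)$, I would work from the disintegration identity $\mathbb{E}_\nu[f\mid\mathscr{F}_n](x)=\int f\,d\rho_x^{(n)}$, valid on $\mathsf{KR}^\nu\supseteq\mathsf{SR}^\nu$ by Definition~\ref{defn:eff:disin}. First, approximate the Schnorr test $f$ from below by an increasing sequence $f_k\uparrow f$ of simple functions built from the algebra $\mathscr{A}$ of a $\nu$-computable basis, with $\|f-f_k\|_p$ tending to $0$ at a computable geometric rate. Second, for each such $f_k$ deduce $\mathbb{E}_\nu[f_k\mid\mathscr{F}_n](x)\to f_k(x)$ at every Kurtz random~$x$, exploiting the almost-full property of the filtration, which guarantees that every c.e.\ open is, on $\mathsf{KR}^\nu$, an effective union of algebra sets from successive filtration levels; this forces the atoms $[x]_{\mathscr{F}_n}$ eventually to separate $x$ from the complement of any basic open containing it. Third, control the tail via an effective $L_p$ Doob-type estimate: the majorants $M_k:=\sup_n\mathbb{E}_\nu[f-f_k\mid\mathscr{F}_n]$ are lsc (by Definition~\ref{defn:eff:disin}(\ref{defn:eff:disin:3})) with $L_p$-norms bounded by $c_p\|f-f_k\|_p$, so a suitable geometric sum $\sum_k 2^k M_k$ gives an $L_p(\nu)$ Schnorr test dominating all approximation errors; finiteness of this test at Schnorr random~$x$ forces $M_k(x)\to 0$ and hence $\mathbb{E}_\nu[f\mid\mathscr{F}_n](x)\to f(x)$.

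For $(3)\Rightarrow(1)$ I would argue contrapositively. Suppose $x\in\mathsf{KR}^\nu\setminus\mathsf{SR}^\nu$ and fix an $L_1(\nu)$ Schnorr test $h$ with $h(x)=\infty$; then $f:=h^{1/p}$ is an $L_p(\nu)$ Schnorr test with $f(x)=\infty$. Using an effective upcrossings inequality, the upcrossing-count function of the martingale $n\mapsto\int(f\wedge m)\,d\rho_y^{(n)}$ across any rational interval $[a,b]$ is, as a function of $y$, lsc with computable $L_1$-norm; I would then aggregate such tests over rationals $[a,b]$ and truncation levels~$m$ into a single $L_p(\nu)$ Schnorr test whose conditional expectations at~$x$ must oscillate given $f(x)=\infty$, contradicting~(3).

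The main obstacle is the effective Doob step common to both directions: converting the classical Doob maximal and upcrossings inequalities into Schnorr tests requires checking lower semicontinuity of the relevant suprema, which relies on clause~(\ref{defn:eff:disin:3}) of the disintegration, together with computability of the resulting $L_p$- or $L_1$-norms, which draws on the $\nu$-computable basis and the uniform presentation of the filtration. Once those two effective inequalities are in hand, the convergence in $(1)\Rightarrow(2)$ and the witness construction in $(3)\Rightarrow(1)$ follow routinely from the reductions above.
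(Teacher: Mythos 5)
Your outer structure is sound, and your forward direction for $p>1$ tracks the paper's own argument (approximate from below by filtration-measurable simple functions, control the error by the maximal function of the tail, conclude by a three-term estimate at a Schnorr random point). But there are two genuine gaps. First, your Doob step fails at $p=1$: the strong maximal inequality $\|g^{\ast}\|_p\leq c_p\|g\|_p$ holds only for $p>1$, and for $p=1$ the maximal function of an $L_1(\nu)$ function need not lie in $L_1(\nu)$ at all (Lemma~\ref{lem:doob} only gives that it lands in $L_0(\nu)$). So your majorants $M_k=\sup_n\mathbb{E}_{\nu}[f-f_k\mid\mathscr{F}_n]$ need not have finite, let alone computable, $L_1$-norms, and the aggregated test $\sum_k 2^kM_k$ is unavailable. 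Since the theorem is asserted for all computable $p\geq 1$, this is a real omission; the paper's route for $p=1$ is to pass to $L_0(\nu)$ with the Ky Fan metric, use the weak-type estimate to get a modulus $m(\epsilon)=\epsilon^2$ for $\cdot^{\ast}:L_1(\nu)\to L_0(\nu)$ (Proposition~\ref{prop:abstract:1}), and introduce $L_0(\nu)$ Schnorr tests (Definition~\ref{defn:l0st}, Propositions~\ref{prop:l0schnorrtest} and~\ref{prop:schnorrtestforlsc}) to still conclude $M_k(x)\to 0$ on $\mathsf{SR}^{\nu}$. Even for $p>1$, note that $\sum_k 2^kM_k$ diverges in norm if you only arrange $\|f-f_k\|_p\leq 2^{-k}$, so you need a faster subsequence, and you must still verify that the infinite sum has a \emph{computable} $p$-norm rather than merely a finite one; that verification is precisely the Schnorr-specific content that separates this theorem from its Martin-L\"of analogue.

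Second, and more seriously, your $(3)\Rightarrow(1)$ strategy aims at the wrong failure mode. If $f$ is an $L_p(\nu)$ Schnorr test with $f(x)=\infty$ and $x\in\mathsf{KR}^{\nu}$, then for every rational $b$ the c.e.\ open $f^{-1}(b,\infty]$ contains $x$, and almost-fullness produces an event $A$ of some $\mathscr{F}_{n_1}$ with $x\in A$ and $A\cap\mathsf{KR}^{\nu}\subseteq f^{-1}(b,\infty]$; since $\rho^{(n)}_x$ concentrates on $[x]_{\mathscr{F}_n}\cap\mathsf{KR}^{\nu}\subseteq f^{-1}(b,\infty]$ for all $n\geq n_1$, one gets $\mathbb{E}_{\nu}[f\mid\mathscr{F}_n](x)\geq b$ for all $n\geq n_1$. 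Hence the conditional expectations diverge to $+\infty$ rather than oscillate, the upcrossing counts of your truncated martingales across any fixed rational interval are eventually constant at $x$, and no contradiction can be extracted from an upcrossing test. (There is also a technical obstruction you do not address: the event that an lsc process drops below $a$ is not an open condition, so your upcrossing-count functions are not obviously lsc in $y$.) The divergence-to-infinity computation just sketched is itself essentially the paper's entire proof of $(3)\Rightarrow(1)$, so the correct argument is both different from and much shorter than the one you propose.
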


In condition (\ref{levy:thm:2}), $\lim_n \mathbb{E}_{\nu}[f\mid \mathscr{F}_n](x)=f(x)$ means that the limit of $\mathbb{E}_{\nu}[f\mid \mathscr{F}_n](x)$ exists and is finite and equal to $f(x)$. Likewise in (\ref{levy:thm:2.5})-(\ref{levy:thm:3}), the existence of the limit means that it is finite. Note that (\ref{levy:thm:2.5}) implies that $\mathsf{SR}^{\nu}$ already proves the analogue of density randomness where we restrict to c.e. open $U$ with $0<\nu(U)<1$ computable. 

For rates of convergence, we have:
\begin{restatable}{thm}{thmmainrates}\label{thm:newnewlevyrates} (Rates for Upward L\'evy Theorem for Schnorr Randomness). For all $X,\nu,\mathscr{F}_n$ as in Theorem~\ref{thm:newnewlevy}, one has:
\begin{enumerate}[leftmargin=*]
\item \label{levy:extra:5}  For all $x$ in $\mathsf{SR}^{\nu}(X)$ and all computable $p\geq 1$ and all $L_p(\nu)$ Schnorr tests $f$ one has that $x$ weakly computes a rate of convergence for $\mathbb{E}_{\nu}[f\mid \mathscr{F}_n](x)\rightarrow f(x)$.
\item \label{levy:extra:6}  For all $x$ in $\mathsf{SR}^{\nu}(X)$ of computably dominated degree and all computable $p\geq 1$ and all $L_p(\nu)$ Schnorr tests $f$ one has that there is a computable rate for the convergence $\mathbb{E}_{\nu}[f\mid \mathscr{F}_n](x)\rightarrow f(x)$.
\end{enumerate}
\end{restatable}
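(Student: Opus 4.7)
The plan is to leverage Doob's $L^p$-maximal inequality and Markov's inequality to convert the abstract $L^p$-convergence $M_N \to f$ (where $M_n := \mathbb{E}_\nu[f\mid\mathscr{F}_n]$) into a uniform Schnorr test witnessing the failure of the pointwise convergence rate, and then to extract the rate from $x \in \mathsf{SR}^\nu$ via the weak computability properties inherent to Schnorr randomness.

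\smallskip

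\emph{Setup and Doob.} Fix an $L_p(\nu)$ Schnorr test $f$ and set $M_n(x) = \int f\,d\rho_x^{(n)}$, which is lsc in $x$ uniformly in $n$ by the lsc-ness of $f$ combined with Definition~\ref{defn:eff:disin}(\ref{defn:eff:disin:3}). Theorem~\ref{thm:newnewlevy} already gives $M_n(x) \to f(x)$ on $\mathsf{SR}^\nu$, and setting $g_N := \sup_{n \geq N} |M_n - f|$ the task reduces to effectivizing $g_N(x) \to 0$. Doob's $L^p$-maximal inequality, applied to the martingale increments $M_{N+j} - M_N$ converging in $L^p$ to $f - M_N$, together with the triangle inequality, yields a computable constant $C_p$ satisfying $\|g_N\|_p \leq C_p \|f - M_N\|_p$ when $p > 1$; the $p = 1$ case uses Doob's weak-$L^1$ bound combined with the same Markov device below. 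Classical $L^p$ martingale convergence ensures $\|f - M_N\|_p \to 0$.

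\smallskip

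\emph{Effective bounds on $\nu(g_N > \epsilon)$.} Computability of $\|f\|_p$, together with effective integration against $\nu$ via the disintegration, renders $\|f - M_N\|_p$ a uniformly computable (monotone) sequence in $N$. Although its rate of convergence to $0$ need not itself be computable, semi-decidability of strict inequality between computable reals lets one effectively find, for each rational $\epsilon > 0$ and each $k \in \mathbb{N}$, an index $N(\epsilon,k)$ with $C_p^p \|f - M_{N(\epsilon,k)}\|_p^p < 2^{-k}\epsilon^p$; Markov's inequality then bounds $\nu(\{g_{N(\epsilon,k)} > \epsilon\}) \leq 2^{-k}$. The next step is to replace the (possibly non-lsc) event $\{g_{N(\epsilon,k)} > \epsilon\}$ by a c.e.\ open over-approximation $V_{\epsilon,k}$ of computable $\nu$-measure $\leq 2^{-k+1}$; this is engineered by the bounded truncations $f_K = \min(f,K)$, whose residuals $\|f - f_K\|_p \to 0$ are effectively controlled by $\|f\|_p$ and hence absorbed into the error budget, together with the lsc-ness of $M_n$ and of the truncations $f_K$.

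\smallskip

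\emph{Extracting the rate.} The resulting uniformly c.e.\ open sequence $V_{\epsilon,k}$ has uniformly computable measure tending to $0$ as $k \to \infty$; the standard Schnorr-test layering property---that from a fast Cauchy approximation to $x$ one can weakly compute an index $k$ for which $x$ escapes the corresponding c.e.\ open layer---supplies, for each rational $\epsilon$, an integer $k(x,\epsilon)$ weakly computable from $x$ with $x \notin V_{\epsilon,k(x,\epsilon)}$, and then $m(\epsilon) := N(\epsilon, k(x,\epsilon))$ is a rate of convergence for $M_n(x) \to f(x)$, proving (\ref{levy:extra:5}). For (\ref{levy:extra:6}), if $x$ has computably dominated Turing degree, the function $\epsilon \mapsto k(x,\epsilon)$ is dominated by a computable $\widetilde{k}$; substituting $\widetilde{k}(\epsilon)$ yields a computable rate. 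The main technical obstacle is the middle paragraph: effectively over-approximating $\{g_N > \epsilon\}$---a set defined by the modulus of a difference of lsc functions---by a c.e.\ open of controllably small measure, which is precisely what the truncation-and-error-absorption device is designed to achieve.
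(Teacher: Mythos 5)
Your overall architecture runs parallel to the paper's: both proofs ultimately rest on Doob's maximal inequality, effective control of the $\nu$-measure of exceptional sets, the self-location/layering property of Schnorr randoms to extract a weakly computable modulus, and domination to upgrade to a computable rate in the computably dominated case. However, there is a genuine gap at exactly the step you flag as ``the main technical obstacle,'' and the device you propose there does not close it. The set $\{g_N>\epsilon\}=\{\sup_{n\geq N}|M_n-f|>\epsilon\}$ involves the modulus of a difference of two lsc functions; $\{M_n-f>\epsilon\}$ and especially $\{f-M_n>\epsilon\}$ are not c.e.\ open, and the obstruction is not the unboundedness of $f$ but the fact that $f$ is only lower semi-computable. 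Truncating from above, $f_K=\min(f,K)$, leaves $f_K$ lsc and not usc, so $\{f_K-M_n>\epsilon\}$ is still not c.e.\ open and the problem persists unchanged. What is actually needed is to approximate $f$ from below by simple functions $f_s$ from the countable dense set, which by Proposition~\ref{prop:shiftbasistolpst} are equal on $\mathsf{KR}^{\nu}$ to both an lsc and a usc function; only then is $f\ominus f_s$ a genuine non-negative lsc Schnorr test (Lemma~\ref{ex:approximationbyschnorr2}), its maximal function $(f\ominus f_s)^{\flat}$ is lsc with superlevel sets of computable measure (via Lemma~\ref{lem:usingst}), and the exceptional sets become c.e.\ open with computable measure.

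Moreover, once you split $|M_n-f|\leq (f\ominus f_s)+|\mathbb{E}_{\nu}[f_s\mid\mathscr{F}_n]-f_s|+\mathbb{E}_{\nu}[f\ominus f_s\mid\mathscr{F}_n]$, the middle term cannot be absorbed by the maximal inequality at all: it is controlled in the paper by Proposition~\ref{prop:dis:4}, which uses almost-fullness to arrange that $\mathbb{E}_{\nu}[f_s\mid\mathscr{F}_n]=f_s$ on $\mathsf{KR}^{\nu}$ for all $n\geq n(s)$ with $n(\cdot)$ computable, so that this term has a \emph{computable} modulus. Your proposal has no analogue of this exact eventual $\mathscr{F}_n$-measurability, and without it the decomposition does not terminate. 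Two smaller points: (i) your extraction of a computable rate in part~(\ref{levy:extra:6}) by dominating $\epsilon\mapsto k(x,\epsilon)$ implicitly needs the layers $V_{\epsilon,k}$ to be nested in $k$ (otherwise $x\notin V_{\epsilon,k}$ gives no information about $V_{\epsilon,\widetilde{k}(\epsilon)}$); this is repairable by intersecting, but it is cleaner to dominate the rate function itself, as the paper does, since any function majorizing a rate of convergence is again a rate; (ii) the domination only holds past a finite threshold, which must be absorbed by passing to smaller~$\epsilon$.
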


The notion in Theorem~\ref{thm:newnewlevyrates}(\ref{levy:extra:6}) is a classical notion from the theory of computation: a Turing degree is \emph{computably dominated} if any function from natural numbers to natural numbers that is computable from the degree is dominated by a computable function, in the sense that the computable function is eventually above it.\footnote{\cite[124]{soare2016turing}, \cite[27]{Nies2008aa}. A more traditional name for this concept is ``of hyperimmune-free degree.'' This more traditional name comes from an equivalent definition that emerged in the context of Post's Problem (cf. \cite[133 ff]{soare2016turing}).} For many but not all computable Polish spaces $X$ and $\nu$ in $\mathcal{P}(X)$ computable,  there are non-atoms in $\mathsf{MLR}^{\nu}$ (and hence in $\mathsf{SR}^{\nu}$) of computably dominated degree. This is a consequence of the existence of universal tests for $\mathsf{MLR}^{\nu}$ and the Computably Dominated Basis Theorem (cf. discussion at Proposition~\ref{prop:existencecompdom}, Example~\ref{ex:existencecompdom}, Question~\
\ref{q:existencecompdom}).

It is unknown to us whether Theorem~\ref{thm:newnewlevyrates}(\ref{levy:extra:6}) can be improved, in the sense of an affirmative answer to the following question:
\begin{restatable}{Q}{qrates}\label{q:rates}
For all~$x$ in $\mathsf{SR}^{\nu}$ that are not of computably dominated degree and all computable $p\geq 1$ and for all $L_p(\nu)$ Schnorr tests $f$  is there a computable rate for the convergence $\mathbb{E}_{\nu}[f\mid \mathscr{F}_n](x)\rightarrow f(x)$?

The following is the simplest concrete version of the question (cf. Example~\ref{ex:refinedpartitionconcrete}):

If $\nu$ is uniform measure on Cantor space, and $\omega$ in $\mathsf{SR}^{\nu}$ is not of computably dominated degree, and if $U$ is c.e. open with  $0<\nu(U)<1$ computable and $\omega$ not in $U$, then does the convergence $\nu(U\mid [\omega\upharpoonright n])\rightarrow 0$ have a computable rate?\footnote{For such $U$, the set $\{\omega: \nu(U\mid [\omega\upharpoonright n])\rightarrow I_U(\omega)\}$ can be rather complex. In particular, Carotenuto-Nies \cite{Carotenuto2016-rt} show that it is $\Pi^0_3$-complete when $U$ is dense. It is not clear to us whether this complexity is located among the $\mathsf{SR}^{\nu}$'s or the non-computably dominated  $\mathsf{SR}^{\nu}$'s, or whether it is reflected in their rates of convergence.}
\end{restatable}
\noindent Under uniform measure on Cantor space, the points which are not of computably dominated degree have measure one, a result due to Martin.\footnote{Martin's paper \cite{martin1967measure} is unpublished, but his proof has subsequently appeared in other sources, such as \cite[Theorem 8.21.1 p. 381]{Downey2010aa}, \cite[Theorem 1.2]{Dobrinen2004-oj}.} One way to negatively resolve the question would be to show that the non-computable-domination in Martin's proof (or a variation on it) could be witnessed by a rate of convergence associated to an $L_p(\nu)$ Schnorr test, or perhaps even to an indicator function of a c.e. open $U$ with $0<\nu(U)<1$ computable.

We prove Theorems~\ref{thm:newnewlevy}-\ref{thm:newnewlevyrates} in \S\ref{sec:proofmain}. Theorem~\ref{thm:newnewlevy} extends and unifies prior work by Pathak, Rojas, and Simpson, and of Rute (see discussion in \S\ref{sec:relation:work:previous} below), while Theorem-\ref{thm:newnewlevyrates} is entirely new.

Our next theorem pertains to convergence along Martin-L\"of tests: 
\begin{restatable}{thm}{thmmaindr}\label{thm:drlevy} (Effective~Upward L\'evy Theorem for Density Randomness~$p>1$).

Suppose that $X$ is a computable Polish space and $\nu$ in $\mathcal{P}(X)$ is computable. Suppose that $\mathscr{F}_n$ is an almost-full effective filtration, equipped uniformly with Kurtz disintegrations~$\rho^{(n)}$.

If $p> 1$ is computable, then the following three items are equivalent for $x$ in $X$: 
\begin{enumerate}[leftmargin=*]
  \item \label{dr:thm:1} $x$ is in $\mathsf{DR}_{\rho}^{\nu}(X)$.
  \item \label{dr:thm:2} $x$ is in $\mathsf{KR}^{\nu}(X)$ and $\lim_n \mathbb{E}_{\nu}[f\mid \mathscr{F}_n](x)=f(x)$ for all $L_p(\nu)$ Martin-L\"of tests~$f$.
  \item \label{dr:thm:3}  $x$ is in $\mathsf{KR}^{\nu}(X)$  and $\lim_n \mathbb{E}_{\nu}[f\mid \mathscr{F}_n](x)$ exists for all $L_p(\nu)$ Martin-L\"of tests~$f$ and $\lim_n \mathbb{E}_{\nu}[I_U\mid \mathscr{F}_n](x)=I_U(x)$ for every c.e. open $U$.
\end{enumerate}

\end{restatable}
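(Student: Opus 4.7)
The plan is to combine the Schnorr case of L\'evy's upward theorem (Theorem~\ref{thm:newnewlevy}) with the effective Doob $L_p$-maximal inequality (Theorem~\ref{thm:doobmax}), via an approximation of $L_p(\nu)$ Martin-L\"of tests from below by $L_p(\nu)$ Schnorr tests; the hypothesis $p>1$ enters decisively through Doob's inequality, which produces an \emph{lsc} dominating function with a bounded $L_p$-norm.

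The implication $(\ref{dr:thm:2})\Rightarrow(\ref{dr:thm:3})$ is immediate, since for any c.e.\ open $U$ the indicator $I_U$ is lsc with $\|I_U\|_p\le 1$, hence is itself an $L_p(\nu)$ Martin-L\"of test. For $(\ref{dr:thm:3})\Rightarrow(\ref{dr:thm:1})$, the density randomness clause is literally the c.e.\ open part of~(\ref{dr:thm:3}); to obtain $x\in\mathsf{MLR}^{\nu}(X)$, fix any $L_p(\nu)$ Martin-L\"of test $g$ and write $g=\sup_k g_k$, where each $g_k$ is a finite positive-rational combination of indicators of basic opens from the $\nu$-computable basis. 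Inclusion-exclusion together with the c.e.\ open clause gives $\mathbb{E}_{\nu}[g_k\mid\mathscr{F}_n](x)\to g_k(x)$ for every $k$, and hence $\liminf_n \mathbb{E}_{\nu}[g\mid\mathscr{F}_n](x)\ge \sup_k g_k(x)=g(x)$. Since the limit in~(\ref{dr:thm:3}) exists and is finite, $g(x)<\infty$.

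The main implication is $(\ref{dr:thm:1})\Rightarrow(\ref{dr:thm:2})$. Let $x\in \mathsf{DR}^{\nu}_{\rho}(X)$ and let $f$ be an $L_p(\nu)$ Martin-L\"of test; the bound $\liminf_n \mathbb{E}_{\nu}[f\mid\mathscr{F}_n](x)\ge f(x)$ follows exactly as above. For the matching $\limsup$ bound, approximate $f=\sup_k f_k$ by a computable increasing sequence $f_k\le f$ of Schnorr tests (finite positive-rational combinations of indicators of basic opens, which have computable $L_p$-norms), and set $h_k:=f-f_k\ge 0$, which is lsc with $\|h_k\|_p\downarrow 0$ by monotone convergence. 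Extract a (non-effective) subsequence $(k_j)$ with $\|h_{k_j}\|_p\le 2^{-j}$. By Theorem~\ref{thm:doobmax}, each $Mh_{k_j}:=\sup_n \mathbb{E}_{\nu}[h_{k_j}\mid\mathscr{F}_n]$ is lsc with $\|Mh_{k_j}\|_p\le C_p\, 2^{-j}$, so $G:=\sum_j Mh_{k_j}$ is lsc with $\|G\|_p<\infty$; i.e., $G$ is itself an $L_p(\nu)$ Martin-L\"of test. Since $x\in \mathsf{MLR}^{\nu}(X)$ we get $G(x)<\infty$, which forces $Mh_{k_j}(x)\to 0$. Applying Theorem~\ref{thm:newnewlevy} to each Schnorr test $f_{k_j}$,
\[
\limsup_n \mathbb{E}_{\nu}[f\mid\mathscr{F}_n](x)\le \lim_n \mathbb{E}_{\nu}[f_{k_j}\mid\mathscr{F}_n](x)+Mh_{k_j}(x)=f_{k_j}(x)+Mh_{k_j}(x)\le f(x)+Mh_{k_j}(x),
\]
and sending $j\to\infty$ yields $\limsup\le f(x)$.

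The conceptual crux is the last paragraph: passing from Schnorr to Martin-L\"of tests without access to a computable modulus. Doob's $L_p$-maximal inequality (with $p>1$) is what allows the residuals $h_k$ to be converted into an lsc, $L_p$-controlled dominating function that can be absorbed into a single $L_p(\nu)$ Martin-L\"of test $G$. The non-effectivity of $(k_j)$ is harmless, since $\mathsf{MLR}^\nu$ is defined by quantification over all $L_p(\nu)$ Martin-L\"of tests, not just those in a uniformly computable family; a separate bookkeeping point is that one must verify that $\mathbb{E}_{\nu}[h\mid\mathscr{F}_n](x)=\int h\,d\rho_x^{(n)}$ is lsc in $x$ whenever $h$ is lsc, which follows from clause~(\ref{defn:eff:disin:3}) of Definition~\ref{defn:eff:disin} and the standard lsc approximation of $h$.
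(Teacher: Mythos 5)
Your directions $(\ref{dr:thm:2})\Rightarrow(\ref{dr:thm:3})$ and $(\ref{dr:thm:3})\Rightarrow(\ref{dr:thm:1})$ are fine (the latter is even a reasonable alternative to the paper's route, which goes through almost-fullness rather than through the c.e.-open clause of~(\ref{dr:thm:3})), and the $\liminf$ half of $(\ref{dr:thm:1})\Rightarrow(\ref{dr:thm:2})$ matches what the paper does via Proposition~\ref{prop:limitsinductive}. But the $\limsup$ half has a fatal gap, located exactly where you declare the non-effectivity of $(k_j)$ to be ``harmless.'' An $L_p(\nu)$ Martin-L\"of test must be \emph{lower semi-computable} (Definition~\ref{defn:core}(\ref{defn:core:1}), (\ref{defn:core:7})): its superlevel sets at rational thresholds must be uniformly c.e.\ open. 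Infinite sums of non-negative lsc functions are lsc only when the summands form a uniformly computable sequence (Proposition~\ref{prop:lscclosure}); your $G=\sum_j Mh_{k_j}$ is indexed by a subsequence $(k_j)$ that you cannot compute, precisely because $\|f\|_p$ is not a computable real for a Martin-L\"of test and so there is no effective modulus for $\|h_k\|_p\downarrow 0$. Hence $G$ is merely classically lower semicontinuous with finite $p$-norm; it is not an $L_p(\nu)$ Martin-L\"of test, and $x\in\mathsf{MLR}^{\nu}$ gives no control over $G(x)$. The error is not cosmetic: your $\limsup$ argument never invokes the density clause of $\mathsf{DR}^{\nu}_{\rho}$, only $x\in\mathsf{MLR}^{\nu}$ (for $G(x)<\infty$) and $x\in\mathsf{SR}^{\nu}$ (for the tests $f_{k_j}$), so if it worked it would show $\limsup_n\mathbb{E}_{\nu}[f\mid \mathscr{F}_n](x)\le f(x)$ for \emph{every} $x\in\mathsf{MLR}^{\nu}$ and every Martin-L\"of test $f$. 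That is false: on Cantor space with the uniform measure there is $\omega\in\mathsf{MLR}^{\nu}\setminus\mathsf{DR}^{\nu}_{\rho}$ for which $U=\{\omega^{\prime}:\omega^{\prime}<_{lex}\omega\}$ is c.e.\ open, and for $f=I_U$ one has $f(\omega)=0$ while $\limsup_n\nu(U\mid[\omega\upharpoonright n])>0$.

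The paper's proof of the $\limsup$ bound avoids building any test out of a non-effective family. It fixes the single lsc function $f^p$, notes that $g=\sum_k I_{U_k}$ with $U_k=\{y:\sup_n\mathbb{E}_{\nu}[f^p\mid\mathscr{F}_n](y)>2^k\}$ is an honest $L_1(\nu)$ Martin-L\"of test by Doob's submartingale inequality (superlevel sets of one maximal function at computable thresholds are uniformly c.e.\ open, and $\nu(U_k)\le 2^{-k}\|f\|_p^p$ needs no computability of the norm), extracts a constant $K$ with $\sup_n\int f^p\,d\rho^{(n)}_x<K$, and then applies H\"older with respect to $\rho^{(n)}_x$ together with the density hypothesis $\rho^{(n)}_x(f^{-1}(a,\infty])\to 0$ to kill $\int_{f>a}f\,d\rho^{(n)}_x$. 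That is where $p>1$ and density randomness genuinely enter. Two smaller points: Doob's maximal inequality is Lemma~\ref{lem:doob}, not Theorem~\ref{thm:doobmax} (which is the Maximal Doob Randomness theorem, proved after this one); and $f-f_k$ is not lsc in general --- one needs the usc pairing and the $\ominus$ operator of Lemma~\ref{ex:approximationbyschnorr2} --- though that part of your argument is repairable.
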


In contrast to Theorem~\ref{thm:newnewlevyrates}(\ref{levy:extra:6}), one has the following, whose proof is a traditional diagonalization argument deploying the halting set:
\begin{restatable}{thm}{thmmaindrrates}\label{thm:drlevy:drates} 
(Rates for Upward L\'evy Theorem for Density Randomness).

There are $X,\nu, \mathscr{F}_n, \rho^{(n)}$ as in Theorem~\ref{thm:drlevy} which have the property that for every computable $p>1$ and every~$x$ in $\mathsf{DR}_{\rho}^{\nu}(X)$ there is $L_p(\nu)$ Martin-L\"of test $f$ such that the convergence $\mathbb{E}_{\nu}[f\mid \mathscr{F}_n](x)\rightarrow f(x)$ has no computable rate.
\end{restatable}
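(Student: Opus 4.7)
\emph{Plan.} We take $X = 2^\omega$ with uniform measure $\nu$, the canonical filtration $\mathscr{F}_n$ generated by length-$n$ cylinders, and the canonical Kurtz disintegration $\rho^{(n)}_x$ equal to the uniform measure on $[x \upharpoonright n]$; these satisfy the hypotheses of Theorem~\ref{thm:drlevy}. Given $x \in \mathsf{DR}^{\nu}_{\rho}(X)$ and computable $p > 1$, the idea is to build an $L_p(\nu)$ Martin-L\"of test $f$ whose convergence rate at $x$ encodes the non-computability of Chaitin's constant $\Omega = \sum_{\sigma \in \mathrm{dom}(U)} 2^{-|\sigma|}$ for a universal prefix-free machine $U$. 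Let $\Omega_{<s}$ denote the standard left-computable approximations (the sum truncated to programs enumerated by stage $s$), and let $w_s = \Omega_{<s+1} - \Omega_{<s}$ be the weight added at stage $s$, so $\sum_s w_s = \Omega$.

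I propose to set
\[
f \;=\; \sum_s w_s \cdot I_{A_s}, \qquad A_s \;=\; \{\omega \in 2^\omega : \omega(s) \neq x(s)\},
\]
so that each $A_s$ is a half-space of measure $1/2$ disjoint from $x$; then $f(x) = 0$ and $\|f\|_p \leq \Omega \cdot 2^{-1/p} < \infty$. Since $A_s$ involves only the $s$-th coordinate, one computes $\mathbb{E}_{\nu}[I_{A_s} \mid \mathscr{F}_n](x) = 1/2$ for $s \geq n$ and $\mathbb{E}_{\nu}[I_{A_s} \mid \mathscr{F}_n](x) = I_{A_s}(x) = 0$ for $s < n$. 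Summing,
\[
\mathbb{E}_{\nu}[f \mid \mathscr{F}_n](x) - f(x) \;=\; \tfrac{1}{2}\sum_{s \geq n} w_s \;=\; \tfrac{1}{2}\bigl(\Omega - \Omega_{<n}\bigr),
\]
which tends to $0$, consistent with Theorem~\ref{thm:drlevy}. The diagonalisation now runs as follows: if a computable rate $m: \mathbb{Q}^{>0} \to \mathbb{N}$ existed, we would have $\Omega_{<m(\epsilon)} \leq \Omega < \Omega_{<m(\epsilon)} + 2\epsilon$ for every rational $\epsilon > 0$, yielding an effective approximation of $\Omega$ to arbitrary precision and contradicting the well-known non-computability of $\Omega$.

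The principal obstacle, and the main technical task of the proof, is to realise this construction as an \emph{absolutely} lsc test rather than one that is only lsc relative to $x$: as written, the half-space $A_s$ depends on the bit $x(s)$, so the required c.e.-open presentation of $\{f > q\}$ as per Definition~\ref{defn:core}(\ref{defn:core:1}) seems to demand knowledge of $x$. Since Theorem~\ref{thm:drlevy:drates} only requires existence of $f$ per $x$ (not uniformly in $x$), the resolution will be to select, for each $x \in \mathsf{DR}^{\nu}_{\rho}$, an appropriate computable lsc representative from the countable family of candidate tests parameterised by fixed computable bit sequences $\vec b = (b_s)$ (replacing $1 - x(s)$ by $b_s$). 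Arguing that for every such $x$ at least one such $\vec b$ still yields a discrepancy with $\Omega$-governed non-computable rate---possibly together with a judicious choice of the universal prefix-free machine $U$ that tailors the stages carrying weight to the target $x$---is the core analytic step, and it is here that the density-randomness of $x$ must enter crucially, ruling out the computable cancellations that would otherwise spoil the rate.
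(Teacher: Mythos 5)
Your setup (Cantor space, uniform measure, the canonical filtration and disintegration of Example~\ref{ex:refinedpartitionconcrete}) matches the paper's, and the diagonalisation template---encode a left-c.e.\ non-computable real into the tail $\mathbb{E}_{\nu}[f\mid\mathscr{F}_n](x)-f(x)$---is the right general idea. But there is a genuine gap, one you flag yourself and then do not close: the function $f=\sum_s w_s\cdot I_{A_s}$ with $A_s=\{\omega:\omega(s)\neq x(s)\}$ is not an $L_p(\nu)$ Martin-L\"of test in the sense of Definition~\ref{defn:core}, because exhibiting $f^{-1}(q,\infty]$ as a c.e.\ open requires computing the bits of $x$, and every $x\in\mathsf{DR}^{\nu}_{\rho}\subseteq\mathsf{MLR}^{\nu}$ is non-computable under the uniform measure. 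Your proposed repair---replacing $1-x(s)$ by a fixed computable sequence $b_s$---destroys exactly the two features the diagonalisation needs: one no longer has $f(x)=0$ (for Martin-L\"of random $x$ the set $\{s: x(s)=b_s\}$ has density $\tfrac12$), and the discrepancy becomes the signed series $\sum_{s\ge n} w_s(\tfrac12-\epsilon_s)$ with $\epsilon_s\in\{0,1\}$ recording whether $x(s)=b_s$, whose cancellations can make the tail converge far faster than $\Omega-\Omega_{<n}$; a computable rate for it then yields no approximation of $\Omega$. The sentence promising to rule out these cancellations, ``possibly together with a judicious choice of the universal prefix-free machine,'' is the entire content of the theorem and is not supplied; note also that the machine defining $\Omega$ must be fixed computably in advance and cannot be tailored to a non-computable target $x$.

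For comparison, the paper resolves precisely this obstacle by a different device. It builds a single computable family of c.e.\ opens $U_k=\bigcup_n U_{k,n}$, where $U_{k,n}$ demands a block of $0$'s of length $e_n+k+1$ starting at a position $c_{n+1}$ dominating $\varphi_{e_n}(e_n)$, with $e_0,e_1,\dots$ an injective enumeration of the halting set. Two properties then do the work your construction lacks: (i) $\nu(U_k)<2^{-k}$, so $\sum_k I_{U_k}$ is an $L_1(\nu)$ Martin-L\"of test and every $x\in\mathsf{DR}^{\nu}_{\rho}$ lies outside some $U_k$---this replaces your $x$-dependent choice of the sets $A_s$ by a measure-theoretic selection from an $x$-\emph{independent} countable family of genuine tests; and (ii) each $U_{k,n}$ constrains only bits $\ge c_{n+1}$ and hence is independent of $\mathscr{F}_{c_{n+1}}$, so $\nu(U_k\mid[\omega\upharpoonright c_{n+1}])\ge 2^{-(e_n+k+1)}$ for \emph{every} $\omega$, a lower bound on the discrepancy that no cancellation can defeat. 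A putative computable rate $m$ is then beaten by taking $e_n$ to be an index for the total computable function $i\mapsto m(2^{-(i+k+1)})$. Any salvage of your $\Omega$-flavoured bookkeeping would need analogues of (i) and (ii); as written, the proof does not go through.
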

\noindent Hence, once we shift the tests from Schnorr tests to Martin-L\"of tests, we never have points which possess computable rates for all tests. In one sense, Question~\ref{q:rates} is asking whether there is some way to emulate a halting-set-like construction among the non-computably dominated $\mathsf{SR}^{\nu}$'s.

We prove Theorems~\ref{thm:drlevy}-\ref{thm:drlevy:drates} in \S\ref{sec:thm:dnesity:proof}. Theorem~\ref{thm:drlevy} extends work from a paper of Miyabe, Nies, and Zhang, which we discuss in the next section, while Theorem~\ref{thm:drlevy:drates} is entirely new.

Given Theorem~\ref{thm:newnewlevy} and Theorem~\ref{thm:drlevy}, it is natural to try to understand whether there is a convergence to the truth characterisation of $\mathsf{MLR}^{\nu}$, or at least some nearby superset of it (by contrast, $\mathsf{DR}^{\nu}_{\rho}$ is a subset of $\mathsf{MLR}^{\nu}$). We thus isolate a class of Martin-L\"of tests $f$ which have approximations $f_s$ such that $f_s\rightarrow f$ in $L_p(\nu)$ at an exponential rate, but not a rate that can necessarily be computed. Hence we define the following, where clauses (\ref{defn:maximaldoob:1})-(\ref{defn:maximaldoob:3}) mimic the canonical approximations of $L_p(\nu)$ Schnorr tests (cf. Proposition~\ref{prop:exptoflsc}, Lemma~\ref{ex:approximationbyschnorr2}), and where clause (\ref{defn:maximaldoob:4}) pertains to exponential rates:
\begin{defn}\label{defn:maximaldoob}
Suppose that $p\geq 1$ is computable. 

A \emph{$L_p(\nu)$ maximal Doob test} $f:X\rightarrow [0,\infty]$ is an lsc function in $L_p(\nu)$ such that there is a uniformly computable sequence $f_s$ of $L_p(\nu)$ Schnorr tests satisfying 
\begin{enumerate}[leftmargin=*]
\item \label{defn:maximaldoob:1} $0\leq f_s\leq f_{s+1}$ on $\mathsf{KR}^{\nu}$ and $f=\sup_s f_s$ on $\mathsf{KR}^{\nu}$.
\item \label{defn:maximaldoob:2} $f-f_s$ is equal on $\mathsf{KR}^{\nu}$ to a non-negative lsc function.
\item \label{defn:maximaldoob:3} $f_t-f_s$ for $t>s$ is equal on $\mathsf{KR}^{\nu}$ to an $L_p(\nu)$ Schnorr test, uniformly in $t>s$.
\item \label{defn:maximaldoob:4} For all $k\geq 0$, $\sum_s \|f-f_s\|_p\cdot (s+1)^k <\infty$.\footnote{By taking $k=0$, we have $f_s\rightarrow f$ in $L_p(\nu)$, and so $f$ is an $L_p(\nu)$ Martin-L\"of test, and hence in conjunction with (\ref{defn:maximaldoob:2}) we have that $f-f_s$ is equal on $\mathsf{KR}^{\nu}$ to an $L_p(\nu)$ Martin-L\"of test.}
\end{enumerate}

A point $x$ is \emph{$p$-maximal Doob random relative to $\nu$}, abbreviated $\mathsf{MDR}^{\nu,p}(X)$, if $f(x)<\infty$ for all $L_p(\nu)$ maximal Doob tests.
\end{defn}

Our theorem on this is the following:
\begin{restatable}{thm}{thmdoobmax}\label{thm:doobmax} (Effective~Upward L\'evy Theorem for Maximal Doob Randomness,~$p>1$).

Suppose that $X$ is a computable Polish space and $\nu$ in $\mathcal{P}(X)$ is computable. Suppose that $\mathscr{F}_n$ is an almost-full effective filtration, equipped uniformly with Kurtz disintegrations.

If $p> 1$ is computable, then the following three items are equivalent for $x$ in $X$: 
\begin{enumerate}[leftmargin=*]
    \item \label{thm:doobmax:1} $x$ in $\mathsf{MDR}^{\nu, p}$.
    \item \label{thm:doobmax:2} $x$ is in $\mathsf{KR}^{\nu}$ and $f(x)=\lim_n \mathbb{E}_{\nu}[f\mid \mathscr{F}_n](x)$ for all $L_p(\nu)$ maximal Doob tests $f$.
\item \label{thm:doobmax:3} $x$ is in $\mathsf{KR}^{\nu}$ and $\lim_n \mathbb{E}_{\nu}[f\mid \mathscr{F}_n](x)$ exists for all $L_p(\nu)$ maximal Doob tests $f$.
\end{enumerate}
\end{restatable}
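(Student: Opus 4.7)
The plan is to prove $(1)\Rightarrow(2)\Rightarrow(3)\Rightarrow(1)$. The implication $(2)\Rightarrow(3)$ is immediate. For $(3)\Rightarrow(1)$, I first observe that every $L_p(\nu)$ Schnorr test $g$ is trivially an $L_p(\nu)$ maximal Doob test via the constant approximation $g_s\equiv g$: all four clauses of Definition~\ref{defn:maximaldoob} hold since $g-g_s\equiv 0$. So (3) yields existence of $\lim_n\mathbb{E}_{\nu}[g\mid\mathscr{F}_n](x)$ for every $L_p(\nu)$ Schnorr test $g$, whence $x\in\mathsf{SR}^{\nu}$ by Theorem~\ref{thm:newnewlevy}(\ref{levy:thm:3})$\Rightarrow$(\ref{levy:thm:1}). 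For a general maximal Doob test $f$ with Schnorr approximations $f_s$, monotonicity of conditional expectation through the Kurtz disintegration (using $f\geq f_s$ on $\mathsf{KR}^{\nu}$ and $\rho_x^{(n)}(\mathsf{KR}^{\nu})=1$) gives $\mathbb{E}_{\nu}[f\mid\mathscr{F}_n](x)\geq\mathbb{E}_{\nu}[f_s\mid\mathscr{F}_n](x)$; letting $n\to\infty$ via Theorem~\ref{thm:newnewlevy} and then $s\to\infty$ using clause~(\ref{defn:maximaldoob:1}) yields $\lim_n\mathbb{E}_{\nu}[f\mid\mathscr{F}_n](x)\geq f(x)$. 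Since the limit is finite by~(3), we get $f(x)<\infty$, so $x\in\mathsf{MDR}^{\nu,p}$.

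The substantive direction is $(1)\Rightarrow(2)$. Assume $x\in\mathsf{MDR}^{\nu,p}$; the Schnorr-is-maximal-Doob observation already gives $x\in\mathsf{SR}^{\nu}\subseteq\mathsf{KR}^{\nu}$. Fix an $L_p(\nu)$ maximal Doob test $f$ with approximations $f_s$. The bound $\liminf_n\mathbb{E}_{\nu}[f\mid\mathscr{F}_n](x)\geq f(x)$ follows exactly as in the preceding paragraph. For the matching $\limsup$ bound, decompose $\mathbb{E}_{\nu}[f\mid\mathscr{F}_n]=\mathbb{E}_{\nu}[f_s\mid\mathscr{F}_n]+\mathbb{E}_{\nu}[f-f_s\mid\mathscr{F}_n]$; the first summand tends to $f_s(x)$ by Theorem~\ref{thm:newnewlevy} applied to the Schnorr test $f_s$, so the task reduces to showing $D_s(x)\to 0$ as $s\to\infty$, where $D_s:=\sup_n\mathbb{E}_{\nu}[f-f_s\mid\mathscr{F}_n]$ is the Doob maximal function of the residual. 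From $\limsup_n\mathbb{E}_{\nu}[f\mid\mathscr{F}_n](x)\leq f_s(x)+D_s(x)$ together with $f_s(x)\to f(x)$, the conclusion will follow.

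The engine is to realise $\Psi:=\sum_s D_s$ as an $L_p(\nu)$ maximal Doob test; then $\Psi(x)<\infty$ because $x\in\mathsf{MDR}^{\nu,p}$, forcing $D_s(x)\to 0$. The key ingredient is Doob's $L_p$ maximal inequality, which is available only because $p>1$: it gives $\|D_s\|_p\leq q\|f-f_s\|_p$ with $q=p/(p-1)$, so clause~(\ref{defn:maximaldoob:4}) yields $\sum_s\|D_s\|_p(s+1)^k<\infty$ for every $k\geq 0$. For the Schnorr approximations I take
\[
\Psi_t(x):=\sum_{s<t}\max_{n\leq t}\mathbb{E}_{\nu}[f_t-f_s\mid\mathscr{F}_n](x),
\]
exploiting that $f_t-f_s$ is an $L_p(\nu)$ Schnorr test for $t>s$ by clause~(\ref{defn:maximaldoob:3}), that conditional expectations against an effective Kurtz disintegration preserve the Schnorr-test property (lsc via the effectivity of $x\mapsto\rho_x^{(n)}$, with computable $L_p$-norm inherited from $f_t-f_s$ and $\nu$), and that finite maxima and sums of Schnorr tests are Schnorr tests. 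Monotone convergence gives $\Psi_t\nearrow\Psi$ on $\mathsf{KR}^{\nu}$, and clauses~(\ref{defn:maximaldoob:1})--(\ref{defn:maximaldoob:3}) reduce to these observations.

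The main obstacle is verifying clause~(\ref{defn:maximaldoob:4}) for $\Psi_t$: one needs $\|\Psi-\Psi_t\|_p$ to decay faster than every negative power of $t$. I split $\Psi-\Psi_t$ into (a) the outer-tail $\sum_{s\geq t}D_s$, controlled directly by Doob's inequality and the super-polynomial decay of $\|f-f_s\|_p$, and (b) the truncation error from replacing $\sup_n$ by $\max_{n\leq t}$ and $f$ by $f_t$, which reduces via a second application of Doob's inequality to the decay of $\|f-f_t\|_p$. The strength of clause~(\ref{defn:maximaldoob:4}) beyond mere summability $\sum_s\|f-f_s\|_p<\infty$ is precisely what survives this double truncation, and the hypothesis $p>1$ is essential because the $L_p$ maximal inequality fails at $p=1$---mirroring the role of $p>1$ in Theorem~\ref{thm:drlevy}.
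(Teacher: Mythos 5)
Your overall architecture coincides with the paper's: the heart of the matter is exactly the paper's Proposition~\ref{thm:theharddoobmax}, namely that $\Psi=\sum_s D_s$ with $D_s=\sup_n\mathbb{E}_{\nu}[f-f_s\mid\mathscr{F}_n]$ is (on $\mathsf{KR}^{\nu}$) an $L_p(\nu)$ maximal Doob test, proved via Doob's $L_p$ maximal inequality together with the double-sum interchange turning $\sum_t\sum_{s\geq t}\|D_s\|_p(t+1)^k$ into $\sum_s\|D_s\|_p(s+1)^{k+1}$. Your closing three-epsilon argument for $(1)\Rightarrow(2)$ matches the paper's, and your $(3)\Rightarrow(1)$ (first get $x\in\mathsf{SR}^{\nu}$, then use monotonicity and a $\liminf$ bound) is a mild and correct variant of the paper's, which instead reruns the reductio from Theorem~\ref{thm:newnewlevy} directly on maximal Doob tests. (A small repair: your ``constant approximation $g_s\equiv g$'' runs into $\infty-\infty$ in clauses~(\ref{defn:maximaldoob:2})--(\ref{defn:maximaldoob:3}) at Kurtz randoms where $g=\infty$; use the approximants of Proposition~\ref{prop:exptoflsc} and pass to a subsequence with $\|g-g_{s(n)}\|_p<e^{-n}$, as in Proposition~\ref{prop:doob:basecase}.)

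The genuine gap is in your witness $\Psi_t=\sum_{s<t}\max_{n\leq t}\mathbb{E}_{\nu}[f_t-f_s\mid\mathscr{F}_n]$. You assert that the error from replacing $\sup_n$ by $\max_{n\leq t}$ ``reduces via a second application of Doob's inequality to the decay of $\|f-f_t\|_p$.'' It does not. For fixed $s<t$ write
\begin{align*}
&\sup_n\mathbb{E}_{\nu}[f-f_s\mid\mathscr{F}_n]-\max_{n\leq t}\mathbb{E}_{\nu}[f_t-f_s\mid\mathscr{F}_n]\\
&=\Bigl(\sup_n\mathbb{E}_{\nu}[f-f_s\mid\mathscr{F}_n]-\max_{n\leq t}\mathbb{E}_{\nu}[f-f_s\mid\mathscr{F}_n]\Bigr)
+\Bigl(\max_{n\leq t}\mathbb{E}_{\nu}[f-f_s\mid\mathscr{F}_n]-\max_{n\leq t}\mathbb{E}_{\nu}[f_t-f_s\mid\mathscr{F}_n]\Bigr).
\end{align*}
The second bracket is at most $\sup_n\mathbb{E}_{\nu}[f-f_t\mid\mathscr{F}_n]$ and is indeed controlled by Doob and $\|f-f_t\|_p$. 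But the first bracket is the tail of the maximal function of $f\ominus f_s$ beyond time $t$: Doob's inequality bounds the size of the supremum, not the rate at which the running maximum attains it. For fixed $s$ that bracket tends to $0$ in $L_p$ only by dominated convergence, with no computable or summable rate (this is the same phenomenon exploited in Theorem~\ref{thm:drlevy:drates}); already the $s=0$ column of $\sum_t(t+1)^k\sum_{s<t}\|\cdot\|_p$ can diverge, so clause~(\ref{defn:maximaldoob:4}) fails for your $\Psi_t$. The repair is not to truncate in $n$ at all: take $\Psi_t=\sum_{s<t}\sup_n\mathbb{E}_{\nu}[f_t-f_s\mid\mathscr{F}_n]$, as the paper does. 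For $p>1$ the full maximal function of an $L_p(\nu)$ Schnorr test is again an $L_p(\nu)$ Schnorr test (Proposition~\ref{prop:dis:3} plus closure of lsc functions under suprema), so this is still an admissible witness, and the only remaining error is the replacement of $f$ by $f_t$, which your Doob estimate does handle and which sums to $\sum_t \frac{p}{p-1}\|f-f_t\|_p(t+1)^{k+1}<\infty$.
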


We prove Theorem~\ref{thm:doobmax} in \S\ref{sec:mdr}. The name ``Maximal Doob'' in Theorem~\ref{thm:doobmax} and Definition~\ref{defn:maximaldoob} comes from the role played by Doob's Maximal Inequality (cf. Lemma~\ref{lem:doob}(\ref{lem:doob:1})) in the proofs in \S\ref{sec:mdr}. In Proposition~\ref{prop:doob:basecase}, we note that $\mathsf{MLR}^{\nu}\subseteq \mathsf{MDR}^{\nu,p}\subseteq \mathsf{SR}^{\nu}$. But we do not know the answer to the following question:
\begin{Q}\label{q:doob}
Are the inclusions $\mathsf{MLR}^{\nu}\subseteq \mathsf{MDR}^{\nu,p}\subseteq \mathsf{SR}^{\nu}$ proper? 
\end{Q}
\noindent We suspect that $\mathsf{MDR}^{\nu,p}$ is a proper subset of $\mathsf{SR}^{\nu}$, and that one could show this by establishing the analogue of Theorem~\ref{thm:drlevy:drates}.

We add that we do not know the answer to the following:
\begin{Q}
Do Theorem~\ref{thm:drlevy} and Theorem~\ref{thm:doobmax} hold for $p=1$? 
\end{Q}
\noindent The proof of the former uses H\"older at one place (cf. equation~(\ref{eqn:iamholder})), and the latter uses Doob's Maximal Inequality (cf. Lemma~\ref{lem:doob}(\ref{lem:doob:1})).

\subsection{Relation to previous work}\label{sec:relation:work:previous}

Theorem~\ref{thm:newnewlevy} generalises the result of Pathak, Rojas, and Simpson, who show it for the specific case of $p=1$ and $X=[0,1]^k$, $\nu$ being the $k$-fold product of Lebesgue measure on $[0,1]$ with itself, and with $\mathscr{F}_n$ being given by dyadic partitions.\footnote{\cite{Pathak2014aa}. They state their result not in terms of $L_p(\nu)$ Schnorr tests, but in terms of computable points of $L_p(\nu)$. See \S\ref{sec:app1:lpcomputable}.} Under this guise, L\'evy's Upward Theorem just is the Lebesgue Differentiation Theorem. Their proof goes through Tarski's decidability results on the first-order theory of the reals, and so seems in certain key steps specific to the reals with Lebesgue measure.\footnote{Such as at \cite[Lemma 3.3 p. 339]{Pathak2014aa}.} However, see Proposition~\ref{prop:schnorrtest} below, which generalises rather directly from their setting to the general setting.

In conjunction with the properties of effective disintegrations (cf. \S\ref{sec:ed-prop}), one can derive the equivalence of (\ref{levy:thm:1})-(\ref{levy:thm:2}) in Theorem~\ref{thm:newnewlevy} from results of Rute.\footnote{In particular, for the (\ref{levy:thm:1}) to (\ref{levy:thm:2}) direction of Theorem~\ref{thm:newnewlevy}, see Rute's ``Effective Levy 0/1 law'' \cite[Theorem 6.3 p. 31]{Rute2012aa}. Our Proposition~\ref{prop:compcont} and Proposition~\ref{prop:contcomp} implies that if $f$ is an $L_1(\nu)$ Schnorr test, then $\mathbb{E}_{\nu}[f\mid \mathscr{F}_n]$ is a computable point of $L_1(\nu)$, and so Rute's Theorem 6.3 applies, once one internalises how to translate back and forth between $L_p(\nu)$ Schnorr tests and computable points of $L_p(\nu)$ (cf. \S\ref{sec:app1:lpcomputable}). For the (\ref{levy:thm:2}) to (\ref{levy:thm:1}) direction of Theorem~\ref{thm:newnewlevy}, see Rute's \cite[Example 12.1 p. 31]{Rute2012aa}.} As for Schnorr randomness, our work then expands on Rute's primarily by finding a large class of versions of conditional expectations to which his results apply, and by identifying the information on rates of convergence in Theorem~\ref{thm:newnewlevyrates}. More generally, the theory we develop is organised around the elementary concept of an integral Schnorr test, and so we hope might be of value to others by virtue of being accessible.\footnote{In particular, we can avoid appeal to Rute's theory of a.e. convergence, which is an alternative way to organise effective convergence in $L_0(\nu)$ (cf. \S\ref{subsection:convergence:measure}). See \cite[Proposition 3.15 p. 15]{Rute2012aa} and his Convergence Lemma \cite[Lemma 3.19 p. 17]{Rute2012aa}.}

Further, we are able to strengthen what is, in our view, one of the more foundationally significant parts of Rute's work. He notes that traditionally ``algorithmic randomness is more concerned with success than convergence'' and that ``only computable randomness has a well-known characterisation in terms of martingale convergence instead of martingale success.''\footnote{\cite[p. 7]{Rute2012aa}. He is referring to what is called a ``folklore'' characterisation of computable randomness on Cantor space with the uniform measure in \cite[Theorem 7.1.3 p. 270]{Downey2010aa}.} In Cantor space with the uniform measure, Rute has a characterisation of Schnorr randomness in terms of convergence of $L_2(\nu)$ martingales.\footnote{See items (1), (4) in his Example 1.5, immediately below the preceding quotation.} We have been able to generalise this to all computable measures on computable Polish spaces: see Theorem~\ref{thm:margingaleconvergencel2}. This proof follows Rute's $L_2(\nu)$ Hilbert space proof in broad outline. It seems to us that keeping track of the maximal function, which we can then use in DCT arguments, has been helpful here. 

In the setting of Cantor space with the uniform measure and the natural filtration of length $n$-strings and the natural disintegration (cf. Example~\ref{ex:refinedpartitionconcrete}), our Theorem~\ref{thm:drlevy} was already known for $p=1$ and hence all computable $p\geq 1$. This result is in a paper of Miyabe, Nies, and Zhang, where it is attributed to the Madison group of Andrews, Cai, Diamondstone, Lempp and Miller.\footnote{\cite[Theorem 3.3 p. 312]{Miyabe2016-jo}. Their proof is a little more general, in that it just concerns martingale convergence rather than martingales associated to random variables. Their proof, in the Cantor space setting, also can be modified to give not only convergence but convergence to the truth for random variables.} Their argument goes through an auxiliary test notion of Madison test. While we only have it for computable $p>1$, our proof of Theorem~\ref{thm:drlevy} goes through first principles about density randomness and effective disintegrations. It is not presently clear to us whether the Cantor space proof using Madison tests can be generalised to arbitrary computable probability measures on computable Polish spaces equipped with effective disintegrations.\footnote{As a final remark about the previous literature, we should mention that L\'evy's Upward Theorem has also been studied in the context of Shafer and Vovk's game-theoretic probability (\cite{Shafer2009-ac}, \cite[Chapter 8]{Shafer2019-qp}). Their approach conceives of martingales primarily as game-theoretic strategies, and does not treat computational matters explicitly. By contrast, here we are focusing on the martingales $\mathbb{E}_{\nu}[f\mid \mathscr{F}_n]$ and on effective properties of them conceived of as sequences of random variables. Discerning the relation between our approach and their approach would involve, as a first step, carefully going through their approach and ascertaining the exact levels of effectivity needed to secure their results, and secondly translating back and forth between the strategy and random variable paradigms.}

We hope our efforts brings this prior important work on algorithmic randomness to the attention of a broader audience. Bayesianism is an important and increasingly dominant framework in a variety of disciplines, and this prior work and our work hopefully makes vivid the way in which computability theory and algorithmic randomness bears directly on the question of when and how fast Bayesian inductive methods converge to the truth. As its proof makes clear, the non-computable rate of convergence to the truth in Theorem~\ref{thm:drlevy:drates} is another presentation of the halting set, and so this proof gives the theory of computation a central role in a limitative theorem of inductive inference, similar to its central role in the great limitative theorems of deductive inference like the Incompleteness Theorems. Further, the existence of Schnorr random worlds that are computably dominated, as in Theorem~\ref{thm:newnewlevyrates}(\ref{levy:extra:6}), shows that a central kind of randomness is entirely compatible with there being effective ways of determining how close we are to the truth. This optimistic inductive possibility is not one that would be visible in absence of recent work in algorithmic randomness. 

Internal to the discussion about Bayesianism within philosophy, authors such as Belot have voiced the concern that the classical theory only tells us that worlds at which we fail to converge to the truth have probability zero, but otherwise tells us little about when and where the failure happens.\footnote{\cite{Belot2013aa}.} From the perspective of Theorems~\ref{thm:newnewlevy}, \ref{thm:drlevy}, \ref{thm:doobmax}, the probability zero event of non-convergence is not arbitrary, so long as one is insisting on convergence along a broad enough class of effective random variables. Namely, the sequences along which convergence to the truth fails for some element of this class are exactly those that are not random with respect to the underlying computable prior probability measure. In other words, those sequences can be determined by effective means to be atypical from the agent's point of view.

Finally, we should emphasise that ours is not the only perspective on conditional expectations and its effectivity that one could adopt. In focusing on disintegrations, we are presupposing a framework where pointwise there is a single ``formula'' for the conditional expectation, namely the one displayed in equation~(\ref{eqn:disintegrate}) (and again see Appendicies~\ref{sec:app:classical:disintegrations}-\ref{sec:app:effective:disintegrations} for examples). Likewise, the effectivity constraints in Definition~\ref{defn:eff:disin} have the consequence that the conditional expectation operator is a continuous computable function (cf. Proposition~\ref{prop:compcont}), and so sends computable points to computable points (cf. Proposition~\ref{prop:contcomp}). Both of these presuppositions constrain the applicability of our framework. For instance, Rao points out that conditional expectations are used throughout econometrics, but there one often uses the Dynkin-Doob Lemma as definitional of the conditional expectation,\footnote{\cite[376]{Rao2014-le}. For an example, see the presentation of conditional expectation in \cite[Chapter 7]{Florens2007-pn}.} and there is no more hope of having a single formula come out of it than there is of having all variables expressible in linear terms of one another. Likewise, conditional expectations and martingales can be used to prove theorems like the Radon-Nikodym Theorem,\footnote{\cite[p. 145-146]{Williams1991aa}.} which is ``computably false'' in that there are computable absolutely continuous probability measures with no computable Radon-Nikodym derivative.\footnote{\cite[p. 396]{Simpson2009aa}, \cite{Yu1990}, \cite{Hoyrup2011-rm}.} That, of course, is not to say that these are not of interest or that determining how non-effective they are is not of interest, but just to say they will not be available in a framework like ours where we restrict to computable continuous conditional expectation operators.\footnote{The paper Ackerman et. al. \cite{Ackerman2017-xf} is an important recent paper studying how non-effective, in general, it is to have disintegrations. Our Definition~\ref{defn:eff:disin}, by contrast, restricts attention to those disintegrations that are highly effective. This will not be all of them, and we do not claim that it would be all of the interesting ones.}

\subsection{Outline of paper}

The paper is organised as follows. In \S\ref{sec:large:cps}, we begin with a brief discussion of some aspects of effectively closed sets and computable continuous functions and lsc functions which we need for our proof, and then we go over relevant aspects of the three computable Polish spaces which are central for effective probability theory:
\begin{itemize}[leftmargin=*]
\item The computable Polish space  $\mathcal{M}^+(X)$  of non-negative finite Borel measures on $X$ and its computable Polish subspace $\mathcal{P}(X)$ of probability measures.
\item For each computable $\nu$ in $\mathcal{M}^+(X)$ and each computable $p\geq 1$, the computable Polish space $L_p(\nu)$.
\item For each computable $\nu$ in $\mathcal{M}^+(X)$ the computable Polish space $L_0(\nu)$ of Borel measurable functions which are finite $\nu$-a.s. and whose topology is given by convergence in measure. 
\end{itemize}
The space $L_0(\nu)$ is needed since when $f$ is in $L_p(\nu)$, the maximal function $f^{\ast}=\sup_n \mathbb{E}_{\nu}[f\mid \mathscr{F}_n]$ is in $L_0(\nu)$, and is guaranteed to be in $L_p(\nu)$ iff $p>1$. In addition to being needed in the proofs of the main theorems, the material in \S\ref{sec:large:cps} also can serve to contextualise many components of Definition~\ref{defn:core}. For instance, we mention in \S\ref{sec:efdst:measure} a result of Hoyrup-Rojas that an element of $\mathcal{P}(X)$ is computable in the sense of Definition~\ref{defn:core}(\ref{defn:core:2}) iff it is computable as an element of the Polish space $\mathcal{P}(X)$. Likewise, in \S\ref{sec:Lp} we mention a result saying that an $L_p(\nu)$ Schnorr test is simply a non-negative lsc function whose equivalence class is a computable element of $L_p(\nu)$ (cf. Proposition~\ref{prop:exptoflsc}). Finally, towards the close of \S\ref{subsection:convergence:measure}, we define an $L_0(\nu)$ Schnorr test and prove a new characterisation of $\mathsf{SR}^{\nu}$ in terms of these tests (cf. Definition~\ref{defn:l0st}, Proposition~\ref{prop:l0schnorrtest}).

In \S\ref{sec:twolemsonsr} we present two lemmas on Schnorr randomness. The second of these, called the Self-location Lemma (\ref{lem:sefllocate}) is a distinctive feature of Schnorr randomness (\emph{vis-\`a-vis} the other algorithmic randomness notions), and is central to our proof of Theorem~\ref{thm:newnewlevyrates}. In \S\ref{sec:recover} we present some results on recovering the pointwise values of effective random variables on $\mathsf{SR}^{\nu}$. In \S\ref{sec:maximal:classical}, we review various classical features of the maximal function which we shall need later. In \S\ref{sec:abstract} we present an abstract treatment of Theorem~\ref{thm:newnewlevy} in terms of various effective constraints that a version of the conditional expectation may satisfy. 
In \S\ref{sec:ed-prop} we develop the fundamental properties of effective disintegrations. In \S\ref{sec:proofmain}, we prove Theorems~\ref{thm:newnewlevy}-\ref{thm:newnewlevyrates}, and in \S\ref{sec:thm:dnesity:proof} we prove Theorems~\ref{thm:drlevy}-\ref{thm:drlevy:drates} and in \S\ref{sec:mdr} we prove Theorem~\ref{thm:doobmax}. In \S\ref{sec:app1:lpcomputable}, we show how Miyabe's translation method allows us to recast Theorem~\ref{thm:newnewlevy} in terms of computable points of $L_p(\nu)$. In \S\ref{sec:ed-martin}, we develop the theory of martingales in $L_2(\nu)$ and prove the aforementioned generalisation of Rute's result characterising Schnorr randomness in terms of martingale convergence. In Appendix~\ref{sec:app:classical:disintegrations} we briefly exposit two classical examples of disintegrations, and in Appendix~\ref{sec:app:effective:disintegrations} we present several examples of effective disintegrations.

In a sequel to this paper, we present a similar analysis of the Blackwell-Dubins Theorem,\footnote{\cite{Blackwell1962-ux}} which is also a ``convergence to the truth'' result, but wherein the pair ``agent and world'' is replaced with a pair of agents whose credences are variously absolutely continuous with respect to one another.

\section{Computable Polish spaces for effective probability theory}\label{sec:large:cps}

\subsection{Effectively closed, computable continuous, and lsc}\label{sec:closedcont}

In this section, we briefly describe two further concepts from the theory of computable Polish spaces: namely effectively closed subsets and computable continuous functions, and we close by mentioning a few brief aspects of lsc functions.

Before we do that, we mention one elementary proposition on computable Polish spaces which is worth having in hand (for e.g. the Self-location Lemma~\ref{lem:sefllocate}):
\begin{prop}\label{prop:closure:reals:general:0.4}
Let $X$ be a computable Polish space with metric $d$ and countable dense set~$x_0, x_1, \ldots$. Suppose the map $i\mapsto n(i)$ is such that $x_{n(i)}\rightarrow x$ fast. Then
\begin{enumerate}[leftmargin=*]
    \item \label{prop:closure:reals:general:0.4:shadow} The set $\{(j,q)\in \mathbb{N}\times \mathbb{Q}^{>0}: x\in B(x_j,q)\}$ is c.e. in graph of~$i\mapsto n(i)$.
    \item \label{prop:closure:reals:general:0.4:shadow:2}  The point $x$ is computable iff the set $\{(j,q)\in \mathbb{N}\times \mathbb{Q}^{>0}: x\in B(x_j,q)\}$ is c.e.
\end{enumerate}
\end{prop}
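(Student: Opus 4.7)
My plan is to identify a uniformly computable (in the graph of $n$) semidecision procedure for membership in the set in question. The natural candidate, exploiting the fast convergence $d(x, x_{n(i)}) \leq 2^{-i}$, is the equivalence
\[
x \in B(x_j, q) \iff \exists\, i \;\; d(x_{n(i)}, x_j) + 2^{-i} < q.
\]
The $(\Leftarrow)$ direction is immediate from the triangle inequality $d(x, x_j) \leq d(x, x_{n(i)}) + d(x_{n(i)}, x_j)$ combined with fast convergence. For $(\Rightarrow)$, given $\delta := q - d(x, x_j) > 0$, I would pick $i$ large enough that $2 \cdot 2^{-i} < \delta$ and use the triangle inequality in the form $d(x_{n(i)}, x_j) \leq 2^{-i} + d(x, x_j)$ to bound $d(x_{n(i)}, x_j) + 2^{-i} \leq 2\cdot 2^{-i} + d(x, x_j) < q$.

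Once the characterization is in hand, (1) follows directly: with oracle access to the graph of $n$, the value $n(i)$ is available, and $d(x_{n(i)}, x_j)$ is then a computable real uniformly in $(i,j)$ by the definition of computable Polish space. The condition $d(x_{n(i)}, x_j) + 2^{-i} < q$ on rationals is therefore semidecidable uniformly in $(i,j,q)$, and existential quantification over $i$ preserves c.e.

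For (2), the forward direction is an immediate corollary of (1) applied to a computable witness $n$ of the computability of $x$, since the oracle for a computable function is trivial. For the reverse direction, I would build a computable witness $m$ to the computability of $x$ by enumerating $S := \{(j,q) : x \in B(x_j, q)\}$ and defining $m(i)$ to be the first component $j$ of a pair $(j,q) \in S$ discovered with $q \leq 2^{-i}$. Density of $\{x_k\}$ ensures such a pair exists (take any $x_j$ within $2^{-(i+1)}$ of $x$), so the search terminates; the construction delivers $d(x_{m(i)}, x) < 2^{-i}$, witnessing fast convergence of $x_{m(i)}$ to $x$.

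I expect the only genuinely conceptual step to be the $(\Rightarrow)$ direction of the characterization — specifically, recognizing that the $2^{-i}$ slack built into the inequality is precisely what converts the a priori non-effective comparison ``$d(x, x_j) < q$'' into one among rationals and uniformly computable reals. The rest is routine uniformization over oracle computations.
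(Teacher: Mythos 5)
Your proposal is correct and follows essentially the same route as the paper: the equivalence $x\in B(x_j,q)\iff\exists i\;d(x_{n(i)},x_j)+2^{-i}<q$ is exactly the paper's characterization $d(x_j,x)<q$ iff $\exists i\; d(x_j,x_{n(i)})<q-2^{-i}$, with semidecidability coming from the distances between dense-set points being uniformly computable (hence right-c.e.), and your construction of the witness for the converse of (2) by searching for a pair $(j,q)$ with $q\leq 2^{-i}$ is the paper's argument verbatim. No gaps.
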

\begin{proof}
For (\ref{prop:closure:reals:general:0.4:shadow}), since the distances between the points of the countable dense set is uniformly computable, they are also uniformly right-c.e. Hence, it suffices to note that $d(x_j,x)<q$ iff there is $i\geq 0$ with $d(x_j,x_{n(i)})<q-2^{-i}$.

For (\ref{prop:closure:reals:general:0.4:shadow:2}), if $x$ is computable, then we can choose $i\mapsto n(i)$ computable, and then are done by (\ref{prop:closure:reals:general:0.4:shadow}). Conversely, if the set is c.e., given $i\geq 0$, enumerate it until one finds a pair $(j,q)$ with $q\leq 2^{-i}$, and set $n(i)=j$.
\end{proof}

As mentioned in \S\ref{sec:intro}, the complement of a c.e. open set is called an \emph{effectively closed set}. In Cantor space and Baire space, the effectively closed sets can be represented as paths through computable trees.\footnote{\cite[p. 41]{Cenzer1999-hy}.} The following provides a simple example on the real line of a classically closed set which is not effectively closed:
\begin{ex}\label{ex:theuniversalcounter}
Suppose that $c<d$ and $c$ is right-c.e. and $d$ is left-c.e but neither $c,d$ are computable. Then $[c,d]$ is a computable Polish space. Further, $[c,d]$ is a classically closed subset of the reals which is not an effectively closed subset of the reals.
\end{ex}
\noindent It is a computable Polish space since its countable dense set $(c,d)\cap \mathbb{Q}$ is c.e. since it is the intersection of the the right Dedekind cut of $c$ and the left Dedekind cut of $d$. If $[c,d]$ were effectively closed in the reals then $U=(-\infty,c)\cup (d,\infty)$ would be c.e. open in the reals. Then by choosing a rational $r$ in $(c,d)$, one has that $\{q\in \mathbb{Q}: q<c\}=\{q\in U: q<r\}$ is c.e., contrary to hypothesis.

Effectively closed subsets of a computable Polish space need not themselves have the structure of a computable Polish space, since one in addition needs to produce an enumeration of a countable dense set where the distance between the points is uniformly computable.\footnote{By contrast, classically, the Polish subspaces of a Polish space are precisely the $G_{\delta}$ subsets. See \cite[p. 17]{Kechris1995}.} Hence, we define: a \emph{computable Polish subspace} $Y$ of $X$ is given by a an effectively closed subset $Y$ of $X$ and a countable sequence of points $y_0, y_1, \ldots$ which are uniformly computable points of $X$ and which are dense in $Y$. One can check that the c.e. opens relative to $Y$ are just the c.e. opens of the space $X$ intersected with $Y$, and further any effectively closed subset of $Y$ is also an effectively closed subset of $X$.  Similarly, one can check that a computable point of $Y$ is just a computable point of $X$ which happens to be in $Y$. As a simple example of a Polish subspace which is not a computable Polish subspace, one has:
\begin{ex}\label{ex:theuniversalcounter2}
Suppose that $a<b$ and $a$ is left-c.e. and $b$ is right-c.e but neither $a,b$ are computable. Then the closed interval $[a,b]$ is an effectively closed subset of the reals which is not a computable Polish subspace of the reals. 
\end{ex}
\noindent It is effectively closed since $a$ being left-c.e. and $b$ being right-c.e. implies that $(-\infty,a)$ and $(b,\infty)$ are c.e. open. And if $[a,b]$ were a computable Polish subspace of the reals, and if $y_0, y_1, \ldots$ were a sequence of uniformly computable reals dense in $[a,b]$, then for a rational $q$ we would have $a<q$ iff there is $i$ such that $y_i<q$, which is a c.e. condition and so $a$ would be right-c.e. and thus computable.

An effectively closed set $C$ is \emph{computably compact} if there is a partial computable procedure which, when given an index for a computable sequence of c.e. opens $U_0, U_1, \ldots$ in $X$ which covers $C$, returns a natural number $n\geq 0$ such that $U_0, \ldots, U_n$ covers $C$. We further say that $C$ is \emph{strongly computably compact} if there is a partial computable procedure which, when given an index for a computable sequence of c.e. opens $U_0, U_1, \ldots$ in $X$ halts iff this is a cover of $C$, and when it halts returns  a natural number $n\geq 0$ such that $U_0, \ldots, U_n$ covers $C$. If $X$ itself is strongly computably compact, then so are all of its effectively closed sets. If $c<d$ is computable, then $[c,d]$ is strongly computably compact. Likewise, Cantor space is strongly computably compact, and if $f:\mathbb{N}\rightarrow \mathbb{N}$ is computable then the computably bounded set $\{\omega\in \mathbb{N}^{\mathbb{N}}: \forall \; n \; \omega(n)\leq f(n)\}$ is a computable Polish subspace of Baire space which is strongly computably compact. Example~\ref{ex:theuniversalcounter} is an example of a compact computable Polish space which is not computably compact, since if it were then we could compute the endpoints using maxs and mins of the centres of finite coverings with fast decreasing radii. For another example of a compact computable Polish space which is not computably compact, one can take the paths through a computable subtree of Baire space which is not computably bounded.\footnote{See \cite[Example 2.1.5 p. 59]{Cenzer2017-nm}.}

If $X,Y$ are two computable Polish spaces, then a function $f:X\rightarrow Y$ is \emph{computable continuous} if inverse images of c.e. opens are uniformly c.e. open.\footnote{See Moschovakis \cite[110]{Moschovakis2009-vf}. Simpson \cite[Exercise II.6.9 p. 88]{Simpson2009aa} notes that it is equivalent to his preferred definition at \cite[85]{Simpson2009aa}.} 
The following characterisation usefully parameterises each continuous computable function by a single c.e. set, where it is assumed for the sake of simplicity that both countable dense sets are identified with the natural numbers:\footnote{This is from \cite[1169]{Harrison-Trainor2020-mt}. It can be seen as a simplification of Simpson's definition in \cite[85]{Simpson2009aa}.}
\begin{itemize}[leftmargin=*]
\item A function $f:X\rightarrow Y$ is computable continuous iff there is a c.e. set $I\subseteq \mathbb{N}\times \mathbb{Q}^{>0}\times \mathbb{N}\times \mathbb{Q}^{>0}$ such that both (i)~if $(i,p,j,q)$ is in $I$ then $B(i,p)\subseteq f^{-1}(B(j,q))$ and (ii)~for all $x$ in $X$ and all $\epsilon>0$ there is $(i,p,j,q)$ in $I$ with $x$ in $B(i,p)$ and $q<\epsilon$.
\end{itemize}
Computable continuous maps are also computable continuous when restricted to computable Polish subspaces. The computable continuous maps preserve computability of points:
\begin{prop}\label{prop:contcomp}
If $f:X\rightarrow Y$ is computable continuous and $x$ in $X$ is computable, then $f(x)$ in $Y$ is computable.
\end{prop}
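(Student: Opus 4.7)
The plan is to reduce the claim to the second part of Proposition~\ref{prop:closure:reals:general:0.4}, which characterises computability of a point $y \in Y$ as the c.e.-ness of the set $\{(j,q) \in \mathbb{N}\times \mathbb{Q}^{>0} : y \in B(y_j,q)\}$, where $y_0, y_1, \ldots$ enumerates the distinguished countable dense set of $Y$. Thus it suffices to show that $\{(j,q) : f(x) \in B(y_j,q)\}$ is c.e.

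First I would observe that $f(x) \in B(y_j,q)$ iff $x \in f^{-1}(B(y_j,q))$. By the definition of computable continuity, $B(y_j,q)$ being uniformly c.e. open in $(j,q)$ implies that $f^{-1}(B(y_j,q))$ is uniformly c.e. open in $(j,q)$; that is, one can compute a sequence of basic open balls $B(x_{m(j,q,k)}, r_{j,q,k})$ (with centres from the dense set of $X$ and rational radii) whose union equals $f^{-1}(B(y_j,q))$, uniformly in $(j,q)$. Next, since $x$ is a computable point of $X$, Proposition~\ref{prop:closure:reals:general:0.4}(\ref{prop:closure:reals:general:0.4:shadow:2}) gives that $\{(i,p) \in \mathbb{N} \times \mathbb{Q}^{>0}: x \in B(x_i,p)\}$ is c.e. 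Combining these two facts, the condition
\[
f(x) \in B(y_j,q) \;\Longleftrightarrow\; \exists\, k \; \bigl( x \in B(x_{m(j,q,k)}, r_{j,q,k})\bigr)
\]
is c.e. uniformly in $(j,q)$, which is what we needed.

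There is no real obstacle here: the proof is just the compatibility of the two notions of computability (of points and of functions) mediated by Proposition~\ref{prop:closure:reals:general:0.4}. The only small bookkeeping point is that one should confirm the uniformity of the indexing when unfolding the c.e. open $f^{-1}(B(y_j,q))$, but this follows directly from the definition of computable continuity as inverse images of c.e. opens being \emph{uniformly} c.e. open.
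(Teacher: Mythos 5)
Your proof is correct. It does, however, take a somewhat different route from the paper's. The paper works with the characterisation of computable continuity via the c.e.\ set $I$ of tuples $(i,p,j,q)$ with $B(i,p)\subseteq f^{-1}(B(j,q))$: it uses Proposition~\ref{prop:closure:reals:general:0.4}(\ref{prop:closure:reals:general:0.4:shadow}) to semi-decide $x\in B(i,p)$, then for each $n$ searches $I$ for a tuple with $x\in B(i_n,p_n)$ and $q_n<2^{-n}$ (clause~(ii) guarantees the search halts), and outputs $j_n$ as an explicit fast Cauchy sequence converging to $f(x)$. You instead work directly from the preimage definition of computable continuity, and reduce everything to Proposition~\ref{prop:closure:reals:general:0.4}(\ref{prop:closure:reals:general:0.4:shadow:2}) applied twice: the ``only if'' direction in $X$ to semi-decide $x\in B(x_i,p)$, and the ``if'' direction in $Y$ to convert the c.e.\ set $\{(j,q): f(x)\in B(y_j,q)\}$ back into a witness for the computability of $f(x)$. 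The two arguments factor the same underlying computation differently: the search for a small ball around $f(x)$ that the paper performs explicitly is, in your version, hidden inside the ``if'' direction of Proposition~\ref{prop:closure:reals:general:0.4}(\ref{prop:closure:reals:general:0.4:shadow:2}). What your route buys is that it avoids the $I$-set characterisation entirely and needs only the raw definition of computable continuity plus the already-proved equivalence for points; what the paper's route buys is an explicit, self-contained construction of the fast Cauchy sequence, which is the form in which computability of $f(x)$ is actually defined. Your uniformity bookkeeping (computing the ball decomposition of $f^{-1}(B(y_j,q))$ uniformly in $(j,q)$) is exactly right and is the only place where care is needed.
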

\begin{proof}
Suppose that $x$ is computable. Then $\{(i,q)\in \mathbb{N}\times \mathbb{Q}^{>0}: d(i,x)<q\}$ is c.e. by Proposition~\ref{prop:closure:reals:general:0.4}
(\ref{prop:closure:reals:general:0.4:shadow}). For each $n\geq 0$, by~(ii) above,  search in $I$ for a tuple $(i_n,p_n,j_n,q_n)$ with $x$ in $B(i_n,p_n)$ and $q_n<2^{-n}$. Then by (i) above, $f(x)$ is in $f(B(i_n, p_n))\subseteq B(j_n, q_n)$ and so $j_n\rightarrow f(x)$ fast.
\end{proof}
\noindent This proposition is important because many arguments for the computability of points in effective analysis and probability can be seen as the result of applying computable continuous functions to computable points.

There is a partial converse to the previous proposition in the uniformly continuous setting. A \emph{computable modulus of uniform continuity} for a uniformly continuous function $f:X\rightarrow Y$ is  a computable function $m:\mathbb{Q}^{>0}\rightarrow \mathbb{Q}^{>0}$ such that $d(x,x^{\prime})<m(\epsilon)$ implies $d(f(x), f(x^{\prime}))<\epsilon$ for all $\epsilon$ in $\mathbb{Q}^{>0}$. For instance, if $c>0$ is rational, then a $c$-Lipschitz function is just a function with linear modulus of uniform continuity $m(\epsilon)=\frac{c}{2} \cdot \epsilon$. The partial converse to Proposition~\ref{prop:contcomp} is the following:
\begin{prop}\label{prop:suffforcont}
Suppose $X,Y$ are computable Polish spaces and that $f:X\rightarrow Y$ has a computable modulus of uniform continuity. Suppose that the image of the countable dense set in $X$ under $f$ is uniformly computable in $Y$. Then $f$ is computable continuous.
\end{prop}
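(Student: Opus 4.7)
The plan is to verify the c.e.-index characterization of computable continuity stated just before Proposition~\ref{prop:contcomp}: namely, produce a c.e. set $I\subseteq \mathbb{N}\times \mathbb{Q}^{>0}\times \mathbb{N}\times \mathbb{Q}^{>0}$ such that (i)~every $(i,p,j,q)\in I$ satisfies $B(x_i,p)\subseteq f^{-1}(B(y_j,q))$, where $x_0,x_1,\ldots$ and $y_0,y_1,\ldots$ are the distinguished dense sequences in $X$ and $Y$; and (ii)~for every $x\in X$ and every rational $\epsilon>0$ there is a tuple $(i,p,j,q)\in I$ with $x\in B(x_i,p)$ and $q<\epsilon$.

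First I would fix a computable modulus $m:\mathbb{Q}^{>0}\to \mathbb{Q}^{>0}$ of uniform continuity for $f$, and use the hypothesis that the points $f(x_i)$ are uniformly computable in $Y$ to observe that the distances $d(f(x_i),y_j)$ are uniformly right-c.e.\ in $i,j$ (via Proposition~\ref{prop:closure:reals:general:0.4}(\ref{prop:closure:reals:general:0.4:shadow:2}) applied inside $Y$, together with the uniform right-c.e.\ approximability of distances between a computable point and a distinguished dense point). Then I would define
\[
I=\bigl\{(i,p,j,q)\in \mathbb{N}\times\mathbb{Q}^{>0}\times\mathbb{N}\times\mathbb{Q}^{>0}: \exists\,\delta\in \mathbb{Q}^{>0}\ \bigl(d(f(x_i),y_j)<q-\delta\ \text{and}\ p<m(\delta)\bigr)\bigr\}.
\]
This is c.e.\ because $d(f(x_i),y_j)<q-\delta$ is a c.e.\ condition uniformly in $i,j,q,\delta$, and $m$ is computable so $p<m(\delta)$ is decidable.

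Next I would verify clause (i): if $(i,p,j,q)\in I$ witnessed by $\delta$, then for any $x'\in B(x_i,p)$ we have $d(x_i,x')<p<m(\delta)$, hence $d(f(x_i),f(x'))<\delta$ by the modulus property, and combining with $d(f(x_i),y_j)<q-\delta$ gives $d(f(x'),y_j)<q$, so $B(x_i,p)\subseteq f^{-1}(B(y_j,q))$. For clause (ii), given $x\in X$ and rational $\epsilon>0$, I would choose rational $q<\epsilon$, then a rational $\delta>0$ with $\delta<q/2$, then a dense point $y_j$ with $d(y_j,f(x))<q-2\delta$, then a rational $p>0$ with $p<m(\delta)/2$ and a dense point $x_i$ with $d(x_i,x)<p$. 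A triangle-inequality computation shows $d(f(x_i),y_j)<q-\delta$, so $(i,p,j,q)$ lies in $I$ and contains $x$ in its first ball.

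The main (mild) obstacle is just bookkeeping: making sure the existential $\delta$ can be made rational and that the various approximations line up, which is routine once one exploits density of the rationals inside $\mathbb{Q}^{>0}$ and the strict inequalities in the modulus. No step requires more than elementary computable analysis together with the two hypotheses (computable modulus and uniform computability of $f(x_i)$), which is why the statement is essentially a packaged converse to Proposition~\ref{prop:contcomp} for the uniformly continuous case.
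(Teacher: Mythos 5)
Your proposal is correct and follows essentially the same route as the paper: both verify the c.e.-index characterization of computable continuity by using the modulus to fix the source-ball radius and the uniform computability of the $f(x_i)$ to locate the target-ball centers. The only difference is bookkeeping — the paper enumerates one tuple $(x_n, m(2^{-i}), y_{n,i}, 2^{-i+1})$ per dense point and dyadic precision using the fast Cauchy approximants directly, whereas you saturate $I$ via a c.e. search over a rational slack $\delta$ — and both verifications of clauses (i) and (ii) go through.
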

\begin{proof}
Suppose $x_n$ is the countable dense set in $X$. Suppose that $m:\mathbb{Q}^{>0}\rightarrow \mathbb{Q}^{>0}$ is the computable modulus of uniform continuity. Suppose that $y_{n,i}\rightarrow f(x_n)$ fast, where $y_{n,i}$ is a uniformly computable sequence from the countable dense set in $Y$. Then define the c.e. set $I=\{(x_n, m(2^{-i}), y_{n,i}, 2^{-i+1}): n,i\geq 0\}$. First we show that $B(x_n, m(2^{-i}))\subseteq f^{-1}(B(y_{n,i}, 2^{-i+1}))$. For, suppose that $x$ is in $B(x_n, m(2^{-i}))$. Then $d(x, x_n)<m(2^{-i})$. Then $d(f(x), f(x_n))<2^{-i}$. Further since $y_{n,i}\rightarrow f(x_n)$ fast, we have $d(f(x_n), y_{n,i})\leq 2^{-i}$, from which we obtain $d(f(x), y_{n,i})<2^{-i+1}$ by triangle inequality. Second suppose that $x$ is in $X$ and $\epsilon>0$. Let $i\geq 0$ be such that $2^{-i+1}<\epsilon$. Since $x_n$ is an enumeration of the countable dense set, there is $x_n$ such that $d(x,x_n)<m(2^{-i})$. Then $x$ is in $B(x_n, m(2^{-i}))$, and the tuple $(x_n, m(2^{-i}), y_{n,i}, 2^{-i+1})$ is in $I$ and $2^{-i+1}<\epsilon$.
\end{proof}
\noindent This proposition is widely applicable in our context since many operators in functional analysis are uniformly continuous, and since one often in practice has good control over what happens with the countable dense set (see the proofs of Proposition~\ref{prop:exptoflsc}
and Proposition~\ref{prop:pushforwards} for representative examples).

Finally, recall the notion of core notion of lsc from Definition~\ref{defn:core}(\ref{defn:core:1}), which is the effectivization of the classical notion of lower semi-continuous. This class of functions has some paradigmatic examples and useful closure conditions which we briefly enumerate without proof:
\begin{prop}\label{prop:lscclosure}

Constant functions that are left-c.e. reals are lsc. Indicator functions of c.e. opens are lsc.

Lsc functions are closed under addition, maxs and mins. Non-negative lsc functions are closed under multiplication.

Sups of uniformly lsc functions are lsc. Infinite sums of non-negative lsc functions are lsc. Compositions of lsc functions with computable continuous functions are lsc.
\end{prop}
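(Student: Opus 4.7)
The plan is to verify each clause directly from Definition~\ref{defn:core}(\ref{defn:core:1}): a function into $(-\infty,\infty]$ is lsc iff the superlevel sets $f^{-1}(q,\infty]$ are uniformly c.e. open in rational $q$. For the base cases, if $c$ is a left-c.e. real and $f\equiv c$, then $f^{-1}(q,\infty]$ equals $X$ when $q<c$ and $\emptyset$ otherwise; uniformly in $q$, one enumerates witnesses to $q<c$ from the left Dedekind cut of $c$ and emits $X$ as a c.e.\ open the moment such a witness appears, defaulting to the empty set. For an indicator $I_U$ with $U$ c.e. open, the superlevel set is $X$, $U$, or $\emptyset$ according to whether $q<0$, $0\le q<1$, or $q\ge 1$, all uniformly c.e. open.

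For the binary operations, the strategy is to expand each superlevel set as a countable union of finite intersections of input-side c.e.\ opens. For addition, $(f+g)(x)>q$ iff there is a rational $r$ with $f(x)>r$ and $g(x)>q-r$, so
\[
(f+g)^{-1}(q,\infty]=\bigcup_{r\in\mathbb{Q}}f^{-1}(r,\infty]\cap g^{-1}(q-r,\infty].
\]
Max and min reduce respectively to a union and an intersection of two superlevel sets. For non-negative $f,g$ and $q\ge 0$, the product satisfies $fg>q$ iff there exist rationals $a,b$ with $ab>q$, $f>a$, and $g>b$; the strict inequality $ab>q\ge 0$ forces $a,b>0$, which sidesteps any concerns about the $0\cdot\infty$ convention, and when $q<0$ the superlevel set is simply $X$. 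Each resulting union is uniformly c.e. open in $q$ and in indices for the input functions.

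For the infinite operations, a uniformly lsc family satisfies $(\sup_n f_n)^{-1}(q,\infty]=\bigcup_n f_n^{-1}(q,\infty]$, a uniform c.e.\ open; infinite sums of non-negative lsc functions reduce to this via $\sum_n f_n=\sup_N \sum_{n\le N} f_n$ together with closure under finite sums. For composition with a computable continuous $\varphi:X\to Y$, one has $(f\circ\varphi)^{-1}(q,\infty]=\varphi^{-1}(f^{-1}(q,\infty])$, which is uniformly c.e. open because computable continuous maps pull c.e. opens back to c.e. opens uniformly. The only mild subtlety throughout is tracking uniformity in $q$ alongside uniformity in all indices for the input data, but since every expansion above is an explicit effective procedure in these parameters the bookkeeping is routine rather than a genuine obstacle; the multiplication step is the one most prone to notational slips and so warrants the greatest care.
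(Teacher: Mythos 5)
The paper states Proposition~\ref{prop:lscclosure} explicitly \emph{without} proof, so there is nothing to compare against on the paper's side; your verification is the standard one that the authors clearly have in mind, working clause by clause from Definition~\ref{defn:core}(\ref{defn:core:1}) and expressing each superlevel set as an effective union of finite intersections of input-side c.e.\ opens. The clauses for constants, indicators, addition, max/min, sups, infinite sums, and composition (in the order $f\circ\varphi$ with $\varphi$ computable continuous and $f$ lsc, which is the only reading that typechecks) are all correct and uniform in the relevant indices.

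The one step that fails as literally written is the multiplication clause. You assert that ``the strict inequality $ab>q\ge 0$ forces $a,b>0$,'' but this is false if $a,b$ range over all rationals: taking $a=b=-1$ gives $ab=1>q$ for $q\in[0,1)$, and then $f(x)>a$ and $g(x)>b$ hold automatically for \emph{every} point of a non-negative function, including points where $f(x)=0$. So the displayed union $\bigcup_{ab>q}f^{-1}(a,\infty]\cap g^{-1}(b,\infty]$ would wrongly contain points where the product is $0$ (under the usual convention $0\cdot\infty=0$). The repair is a one-word change: restrict the union to \emph{positive} rationals $a,b$ with $ab>q$ (equivalently, range over non-negative rationals, among which $ab>q\ge 0$ does force $a,b>0$). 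With that restriction both directions of the equivalence go through, including the cases where one factor is infinite, and the set remains uniformly c.e.\ open. Everything else in your argument stands.
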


Since $f$ is lsc iff $-f$ is usc, one can use this proposition to obtain examples and closure conditions for usc functions as well.

The analogue of Proposition~\ref{prop:contcomp} for lsc functions is that they send computable points to left-c.e. reals.

\subsection{The space of probability measures}\label{sec:efdst:measure}

If $X$ is a Polish space, then the space of real-valued finite signed Borel measures on $X$ is written as $\mathcal{M}(X)$. Recall that the weak$^{\ast}$-topology on $\mathcal{M}(X)$ is the smallest topology such that all the linear maps $\nu\mapsto \int_X f \; d\nu$ are continuous, where $f$ ranges over bounded continuous functions on the space.\footnote{Or, equivalently, as $f$ ranges over all bounded uniformly continuous functions on the space (\cite[110]{Kechris1995}).} Unless the space $X$ is finite, the weak$^{\ast}$-topology on $\mathcal{M}(X)$ is not metrizable.\footnote{\cite[17, 102]{Bogachev2018-dv}} However, when $X$ is a Polish space, the space $\mathcal{P}(X)$ of all probability Borel measures on $X$ with the weak$^{\ast}$-topology is a Polish space, as is the space $\mathcal{M}^+(X)$ of all finite non-negative Borel measures on $X$.\footnote{\cite[\S{17.E pp. 109 ff}]{Kechris1995}.} Convergence in $\mathcal{P}(X)$ is characterised by the Portmanteau Theorem.\footnote{\cite[Theorem 17.20 p. 111]{Kechris1995}, \cite[Theorem 2.1 p. 16]{Billingsley2013-sj}.}

A natural countable dense set on the spaces $\mathcal{P}(X)$ and $\mathcal{M}^+(X)$ are the finite averages of Dirac measures associated to points from the countable dense set on $X$, with rational values for the weights. These spaces can be completely metrized by the  metric of Prohorov. However, when working on $\mathcal{P}(X)$, it is often more useful to work with the Wasserstein metric, and further when the metric on $X$ is unbounded it is more convenient to work with the Kantorovich-Rubinshtein metric on $\mathcal{P}(X)$:\footnote{\cite[104,111]{Bogachev2018-dv}.}
\begin{equation*}
d_{KR}(\nu, \mu)   = \sup \{ \left|\mathbb{E}_{\nu} f-\mathbb{E}_{\mu} f\right| \; : f \mbox{ is $1$-Lipschitz} \; \& \; \|f\|_{\infty}\leq 1\} 
\end{equation*}
In this, $\|f\|_{\infty}=\sup_{x\in X} \left|f(x)\right|$. Hoyrup and Rojas prove that $\mathcal{P}(X)$ with the metrics of Prohorov or Wasserstein are computable Polish spaces, and their proof extends naturally to the Kantorovich-Rubinshtein metric. Likewise, their proof shows that $\mathcal{M}^+(X)$ is a computable Polish space and has $\mathcal{P}(X)$ as a computable Polish subspace.\footnote{\cite[49]{Hoyrup2008-oj}, \cite[838]{Hoyrup2009-pl}.} Further, Hoyrup and Rojas characterise the computable points in $\mathcal{P}(X)$ as follows, and their proof extends naturally to $\mathcal{M}^+(X)$:\footnote{\cite[52]{Hoyrup2008-oj}, \cite[839]{Hoyrup2009-pl}.}
\begin{prop}\label{prop:hoyruprojas}
A point $\nu$ in $\mathcal{M}^+(X)$ is computable iff $\nu(X)$ is computable and $\nu(U)$ is uniformly left-c.e. for c.e. opens $U$ in $X$.
\end{prop}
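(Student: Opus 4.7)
The plan is to handle the two directions separately, using the dense set of rational non-negative combinations of Dirac measures on $\mathcal{M}^+(X)$ together with the Kantorovich--Rubinshtein metric $d_{KR}$.

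For the forward implication, I first show that $\mu \mapsto \mu(X)$ is computable continuous. Since $\mu(X) = \int 1 \, d\mu$ and the constant function $1$ is $0$-Lipschitz with sup-norm at most $1$, this map is $2$-Lipschitz with respect to $d_{KR}$ and sends each rational Dirac combination to its (computable) rational total mass, so Proposition~\ref{prop:suffforcont} applies; Proposition~\ref{prop:contcomp} then yields that $\nu(X)$ is computable. For the second clause, given a c.e.\ open $U$, I write $I_U = \sup_k g_k$ with $g_k(y) = \min(1, k \cdot d(y, X \setminus U))$, noting that each $g_k$ is $k$-Lipschitz and bounded by $1$, with values on the countable dense set of $X$ uniformly computable in $k$ (since $d(\cdot, X \setminus U)$ is uniformly left-c.e.\ as a sup over basic balls inscribing $U$, and combined with the clip by $1/k$ this yields a computable modulus and computable point values). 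Each $\mu \mapsto \int g_k \, d\mu$ is therefore computable continuous on $\mathcal{M}^+(X)$, so $\int g_k \, d\nu$ is uniformly computable in $k$, and by monotone convergence $\nu(U) = \sup_k \int g_k \, d\nu$, which is uniformly left-c.e.\ by Proposition~\ref{prop:lscclosure}.

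For the reverse implication, I construct a fast $d_{KR}$-Cauchy sequence of rational Dirac combinations approaching $\nu$. Given a target $\epsilon = 2^{-n}$, the construction proceeds: (i) using that $V_k := \bigcup_{i<k} B(x_i, \epsilon/8)$ is c.e.\ open with $\nu(V_k) \uparrow \nu(X)$, $\nu(V_k)$ uniformly left-c.e., and $\nu(X)$ computable, effectively find $k$ with $\nu(X \setminus V_k) < \epsilon/4$; (ii) form the disjointification $A_i := B(x_i, \epsilon/8) \setminus \bigcup_{j < i} B(x_j, \epsilon/8)$ and compute rational weights $q_i \geq 0$ within $\epsilon/(8k)$ of $\nu(A_i)$; (iii) output $\mu_n := \sum_{i<k} q_i \delta_{x_i}$. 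The standard estimate, using $1$-Lipschitzness and sup-norm bound $1$ of test functions together with $\mathrm{diam}(A_i) \leq \epsilon/4$ and $\nu(X \setminus V_k) < \epsilon/4$, then yields $d_{KR}(\nu, \mu_n) < \epsilon$.

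The main obstacle is step (ii), because $A_i$ is a Boolean combination of c.e.\ opens and not itself a c.e.\ open, so the hypothesis does not directly furnish rational approximations to $\nu(A_i)$. My resolution exploits that, for each center $x_i$, only countably many radii $r$ have $\nu(\partial B(x_i, r)) > 0$; concretely, for any $\eta > 0$, pigeonholing over $\lceil \nu(X)/\eta \rceil + 1$ uniformly-spaced rationals in a subinterval of radii produces at least one $r$ with $\nu(\partial B(x_i, r)) \leq \eta$. By simultaneously monitoring the left-c.e.\ approximation to $\nu(B(x_i, r))$ and the right-c.e.\ approximation to $\nu(\bar{B}(x_i, r)) = \nu(X) - \nu(X \setminus \bar{B}(x_i, r))$ (the latter right-c.e.\ by the hypothesis combined with computable $\nu(X)$), one effectively identifies such an $r$ by waiting for the two approximations to come within $\eta$ of each other. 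Applying this uniformly across the $k$ centers, with $\eta$ much smaller than $\epsilon/k$, and then using inclusion--exclusion on the resulting perturbed balls to approximate the measures of the modified disjointified pieces, supplies the rational weights required in step (ii) and completes the construction.
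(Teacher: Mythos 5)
The paper does not actually prove Proposition~\ref{prop:hoyruprojas}; it quotes it from Hoyrup and Rojas and remarks that their proof for $\mathcal{P}(X)$ extends to $\mathcal{M}^+(X)$. So the comparison is against the cited argument rather than an in-text proof, and your proposal is essentially a reconstruction of that argument: the forward direction by integrating an increasing sequence of bounded Lipschitz minorants of $I_U$, and the reverse direction by discretising onto rational Dirac combinations, using the pigeonhole-on-radii device to get around the fact that the disjointified pieces $A_i$ are not c.e. open. That device is exactly what the paper alludes to when it says Hoyrup and Rojas use an effective Baire category argument to produce a $\nu$-computable basis of balls paired with closed balls of the same measure (cf. the discussion before Proposition~\ref{prop:mucomputablebasis}). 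Your reverse direction is sound, modulo routine hygiene: the search should terminate when the observed gap falls below $2\eta$ rather than $\eta$ (for the good radius the gap only converges to something $\leq\eta$), and the inclusion--exclusion step accumulates an error of order $k^2\eta$, so $\eta$ must be chosen accordingly; neither point is a real obstacle.

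The one step that does not work as written is in the forward direction. You claim $g_k=\min(1,k\cdot d(\cdot,X\setminus U))$ has uniformly computable values on the countable dense set, on the grounds that $d(\cdot,X\setminus U)$ is left-c.e. as a sup over inscribed balls and that clipping fixes things. Two problems: the genuine distance to the complement is not the sup of $\max(0,r_i-d(\cdot,a_i))$ over the balls $B(a_i,r_i)$ enumerating $U$ (that sup is only a lower bound for it, and the true distance need not be left-c.e. at all without compactness), and in any case clipping a left-c.e. real at $1$ leaves it left-c.e., not computable. So Proposition~\ref{prop:suffforcont} does not apply to $\mu\mapsto\int g_k\,d\mu$, and your intermediate claim that $\int g_k\,d\nu$ is uniformly \emph{computable} is unjustified and false in general. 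The repair is cheap, because the conclusion only needs left-c.e.: replace $g_k$ by the doubly-indexed $g_{k,m}=\min\bigl(1,\,k\max_{i\leq m}\max(0,r_i-d(\cdot,a_i))\bigr)$. Each $g_{k,m}$ is a genuinely computable, $k$-Lipschitz function bounded by $1$, so $\int g_{k,m}\,d\nu$ is uniformly computable by Propositions~\ref{prop:suffforcont} and~\ref{prop:contcomp}, the family is nondecreasing in $k$ and $m$ with $\sup_{k,m}g_{k,m}=I_U$ pointwise, and hence $\nu(U)=\sup_{k,m}\int g_{k,m}\,d\nu$ is left-c.e. by monotone convergence and Proposition~\ref{prop:lscclosure}, which is all the statement requires.
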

\noindent This proposition helps motivate Definition~\ref{defn:core}(\ref{defn:core:2}).

To illustrate the utility of this proposition, consider $[c,d]$ from Example~\ref{ex:theuniversalcounter}. This proposition implies that Lebesgue measure $m$ on $[c,d]$ is not a computable point of $\mathcal{M}^+(X)$ since $m([c,d])=d\mbox{-}c$ is left-c.e. but not computable. Likewise, $\frac{1}{d-c} m$ is not a computable point of $\mathcal{P}(X)$ since one can choose rationals $q,\epsilon$ such that $(q-\epsilon, q+\epsilon)\subseteq (c,d)$, and then one has $\frac{1}{d-c} m((q-\epsilon, q+\epsilon)) = \frac{2\epsilon}{d-c}$ is right-c.e. but not computable.

Recall the notion of a comptuable basis and measure computable basis from Definition~\ref{defn:core}(\ref{defn:core:3.1})-(\ref{defn:core:3}). Hoyrup and Rojas use an effective version of the Baire Category Theorem to prove every $\nu$ is a computable point of $\mathcal{M}^+(X)$ has a $\nu$-computable basis. Moreover, the basis can be taken to be open balls $B(i, r_j)$ with centres $i$ from the countable dense set and with radii given by a dense computable sequence $r_j$ of non-zero reals, with the closed balls $B[i,r_j]$ being the corresponding effectively closed supersets.\footnote{See \cite[Corollary 5.2.1 p. 844]{Hoyrup2009-pl}, \cite[Theorem 2.2.1.2 p. 60]{Hoyrup2008-oj}. Rute also employs this result of Hoyrup and Rojas, \cite[pp. 13-14]{Rute2012aa}, although he leaves out from the definition of a measure computable basis the pairing of each basis $U$ element with an effectively closed superset $C$ of the same measure. Hoyrup and Rojas include this pairing, but further require that $U\cup (X\setminus C)$ is dense (cf. \cite[Definition 5.1.2 p. 842]{Hoyrup2009-pl}, \cite[Definition 2.2.1.2 p. 58]{Hoyrup2008-oj}).}

The following proposition summarises some basic properties of $\nu$-computable bases:
\begin{prop}\label{prop:mucomputablebasis}\label{prop:union:mucompbasis}
Suppose that $\nu$ is a computable point of $\mathcal{M}^+(X)$.
\begin{enumerate}[leftmargin=*]
    \item \label{prop:mucomputablebasis:1} Elements of the algebra generated by a $\nu$-computable basis uniformly have $\nu$-computable measure. Indeed, this holds for all holds for all sequences $B_0, B_1, \ldots$ of events such that finite unions of them have uniformly $\nu$-computable measure.
    \item \label{prop:union:mucompbasis:2.2} If a computable sequence of c.e. opens with uniformly computable $\nu$-measure is added to a $\nu$-computable basis, then finite unions from the resulting sequence have uniformly computable $\nu$-measure, as do elements from the algebra generated by the resulting sequence. Indeed, this holds for all computable bases such that finite unions of them have uniformly $\nu$-computable measure.
    \item \label{prop:union:mucompbasis:2.3} If a computable sequence of c.e. opens with uniformly computable $\nu$-measure and with uniformly effectively closed supersets of the same $\nu$-measure is added to a $\nu$-computable basis, then the result is a $\nu$-computable basis.
    \item \label{prop:union:mucompbasis:2} The $\nu$-computable bases are closed under effective union.
\end{enumerate}
\end{prop}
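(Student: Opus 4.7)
The plan is to extract a single Key Lemma from which all four parts follow with little extra effort: \emph{if $V_1, V_2$ are c.e.\ opens each of uniformly $\nu$-computable measure, then both $\nu(V_1 \cup V_2)$ and $\nu(V_1 \cap V_2)$ are uniformly $\nu$-computable.} The argument is short: $V_1 \cup V_2$ and $V_1 \cap V_2$ are themselves c.e.\ open, so by Proposition~\ref{prop:hoyruprojas} their $\nu$-measures are left-c.e.; and their sum equals $\nu(V_1) + \nu(V_2)$, which is computable. A left-c.e.\ real whose sum with another left-c.e.\ real is computable must itself be computable, since each equals the computable sum minus a left-c.e.\ real, hence is also right-c.e. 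A routine induction on $n$ then extends this to $\nu(V_1 \cup \cdots \cup V_n)$ being uniformly computable whenever each $V_i$ is a c.e.\ open with uniformly computable measure, by peeling off one $V_i$ at a time and applying the base case to $V_n$ together with the partial union.

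For part~(\ref{prop:union:mucompbasis:2.2}), every element of the combined sequence $\mathscr{B} \cup \{U_n\}_n$ is a c.e.\ open of uniformly computable measure (for $\mathscr{B}$, because singleton ``unions'' are covered by clause~(i) of Definition~\ref{defn:core}(\ref{defn:core:3}); for the $U_n$, by hypothesis), so the Key Lemma immediately yields uniform computability of finite-union measures. The same proof works verbatim for the more general second sentence, which only supposes a computable basis with the finite-union property. For part~(\ref{prop:union:mucompbasis:2.3}), part~(\ref{prop:union:mucompbasis:2.2}) supplies clause~(i) of the definition, and clause~(ii) --- the uniform pairing with an effectively closed superset of equal measure --- is preserved, because each original basis element retains its pair while each added $U_n$ comes equipped with one by hypothesis. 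For part~(\ref{prop:union:mucompbasis:2}), given two $\nu$-computable bases, every element of their effective union is a c.e.\ open of computable measure with a paired effectively closed superset inherited from whichever side it came from, so the Key Lemma again delivers clause~(i).

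Finally, for part~(\ref{prop:mucomputablebasis:1}), I would reduce any element of the algebra to a signed sum of measures of finite intersections of the $B_i$. An element is a Boolean combination of some $B_{i_1}, \ldots, B_{i_n}$, which decomposes as a finite disjoint union of atoms $\bigcap_{i \in S} B_i \cap \bigcap_{i \notin S} B_i^c$. Expanding the indicator $\prod_{i \in S} 1_{B_i} \prod_{i \notin S}(1 - 1_{B_i})$ and integrating expresses each atom measure as a signed sum of terms $\nu(\bigcap_T B_i)$ over finite $T$. These pure-intersection measures are in turn computable from the hypothesised finite-union measures via the symmetric identity
\[
\nu\Bigl(\bigcap_{i \in S} B_i\Bigr) = \sum_{\emptyset \neq T \subseteq S}(-1)^{|T|+1}\,\nu\Bigl(\bigcup_{i \in T} B_i\Bigr),
\]
which follows by applying ordinary inclusion-exclusion to the complements inside $X \setminus \bigcap_i B_i = \bigcup_i (X \setminus B_i)$ and collecting the $\nu(X)$-coefficients (which vanish, since $\sum_{\emptyset\neq T\subseteq S}(-1)^{|T|+1}=1$).

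The main obstacle, and the only subtle point, is the left-c.e./right-c.e.\ bookkeeping behind the Key Lemma; once that is isolated, the rest is mechanical indicator-function algebra and inclusion-exclusion, with uniformity passing transparently through each step.
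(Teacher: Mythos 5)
Your proof is correct, but it is organised differently from the paper's. The paper proves part~(\ref{prop:mucomputablebasis:1}) by a three-stage induction (intersections via inclusion--exclusion solved for the intersection term, then unions of intersections via distributivity, then relative complements via $\nu(C\setminus D)=\nu(C)-\nu(C\cap D)$), and proves part~(\ref{prop:union:mucompbasis:2.2}) by writing the added open $U_i$ as an effective union of basis elements, showing $\nu(B_1\cup\cdots\cup B_n\cup U_i)$ and $\nu((B_1\cup\cdots\cup B_n)\cap U_i)$ are each left-c.e.\ as increasing limits, and then using inclusion--exclusion to extract right-c.e.-ness. Your Key Lemma distils that last move into its purest form: for c.e.\ opens $V_1,V_2$ of computable measure, $\nu(V_1\cup V_2)$ and $\nu(V_1\cap V_2)$ are left-c.e.\ by Proposition~\ref{prop:hoyruprojas} and sum to the computable $\nu(V_1)+\nu(V_2)$, so each is computable (two left-c.e.\ reals with computable sum are both computable, uniformly). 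This buys you two things the paper's route does not make visible: part~(\ref{prop:union:mucompbasis:2.2}) no longer needs the decomposition of $U_i$ into basis elements at all, and it shows that for a computable sequence of c.e.\ opens the finite-union clause~(i) of Definition~\ref{defn:core}(\ref{defn:core:3}) already follows from computability of the individual measures. Your part~(\ref{prop:mucomputablebasis:1}) is likewise a one-shot algebraic reduction (atoms, indicator expansion, and the dual inclusion--exclusion identity expressing intersections as signed sums of unions, with the $\nu(X)$-terms cancelling) rather than the paper's nested inductions; note only that the empty intersection contributes $\nu(X)$, which is computable by Proposition~\ref{prop:hoyruprojas}, and that the Key Lemma cannot be used for~(\ref{prop:mucomputablebasis:1}) itself since the $B_i$ there are arbitrary events, not c.e.\ opens --- a distinction you implicitly respect by using the identity instead. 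The treatment of~(\ref{prop:union:mucompbasis:2.3}) and~(\ref{prop:union:mucompbasis:2}) matches the paper's.
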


\begin{proof}
For (\ref{prop:mucomputablebasis:1}), suppose that $B_0, B_1, \ldots$ is a sequence of events such that finite unions of them have uniformly $\nu$-computable measure.

First note that finite intersections $B_{i_0}\cap \cdots \cap B_{i_n}$ have uniformly computable $\nu$-measure: this is an induction on $n\geq 1$, and for the induction step, use inclusion-exclusion $\nu(B_{i_0}\cap \cdots \cap  B_{i_{n}}) =-\nu(B_{i_0}\cup \cdots \cup  B_{i_{n}})+\sum_{\emptyset \neq J\subsetneq \{0, \ldots, n\}} (-1)^{\left|J\right|-1} \nu(\bigcap_{j\in J} B_{i_j})$.

Likewise, finite unions $A_1 \cup \cdots \cup A_m$ of finite intersections $A_j=\bigcap_{k=1}^{\ell_j} B_{i_{j,k}}$ of members of $B_i$ have uniformly computable $\nu$-measure: this is an induction on $m\geq 1$, and for the induction step, use distribution $\nu(A_1\cup \cdots \cup A_{m+1})=\nu(A_1\cup \cdots \cup A_m)+\nu(A_{m+1})-\nu((A_1\cap A_{m+1})\cup \cdots \cup (A_m\cap A_{m+1}))$.

Finally, note that finite intersections of members of $B_i$ and complements of members of $B_i$ have uniformly computable $\nu$-measure. This follows from the previous steps, the elementary identity $\nu(C\setminus D)=\nu(C)-\nu(C\cap D)$ and distribution as follows when $n>0$: $\nu(B_{i_0}\cap \cdots \cap B_{i_{n-1}}\cap (X\setminus B_{i_n})\cap \cdots \cap (X\setminus B_{i_{n+m-1}})) = \nu(B_{i_0}\cap \cdots \cap B_{i_{n-1}})-\nu(B_{i_0}\cap \cdots \cap B_{i_{n-1}}\cap (B_{i_n}\cup \cdots \cup B_{i_{n+m-1}})) =\nu(B_{i_0}\cap \cdots \cap B_{i_{n-1}})-\nu(\bigcup_{j=n}^{n+m-1} B_{i_0}\cap \cdots \cap B_{i_{n-1}}\cap B_{i_j})$, which is uniformly computable by the two previous paragraphs. If $n=0$ then note that $\nu((X\setminus B_{i_n})\cap \cdots \cap (X\setminus B_{i_{n+m-1}}))=\nu(X\cap (X\setminus B_{i_n})\cap \cdots \cap (X\setminus B_{i_{n+m-1}}))$, and since $\nu(X)$ is computable, we can argue as in the case of $n=1$ with $X$ playing the role of $B_{i_0}$.

For (\ref{prop:union:mucompbasis:2.2}), suppose that $U_0, U_1, \ldots$ is a computable sequence of c.e. opens such that $\nu(U_0), \nu(U_1), \ldots$ is uniformly computable. Suppose that $B_0, B_1, \ldots$ is a computable basis such that finite unions of them have uniformly $\nu$-computable measure. We must show that finite unions from $B_0, B_1, \ldots, U_0, U_1, \ldots$ have uniformly $\nu$-computable measure. It suffices to consider the case where $U_0, U_1, \ldots$ just consists of a single c.e. open $U_i$, since by induction and (\ref{prop:mucomputablebasis:1}) we may assume that the $U_1, \ldots, U_{i-1}$ are already among the $B_0, B_1, \ldots$. Since $B_0, B_1, \ldots$ is a computable basis, write $U_i=\bigcup_j B_{m(j)}$, where $m$ is a computable function. Then $\nu(B_1\cup \cdots \cup B_n \cup U_i)=\nu(\bigcup_j B_1\cup \cdots \cup B_n \cup B_{m(j)}) =\lim_k \nu(\bigcup_{j<k} B_1\cup \cdots \cup B_n\cup B_{m(j)})$. Since this limit is increasing, and $\nu(\bigcup_{j<k} B_1\cup \cdots \cup B_n\cup B_{m(j)})$ is uniformly computable, we have that $\nu(B_1\cup \cdots \cup B_n \cup U_i)$ is left-c.e. Similarly, $\nu((B_1\cup \cdots \cup B_n) \cap U_i) = \nu(\bigcup_j (B_1\cup \cdots \cup B_n)\cap B_{m(j)})=\lim_k \nu(\bigcup_{j<k} (B_1\cup \cdots \cup B_n)\cap B_{m(j)})$. Since this limit is increasing, and $\nu(\bigcup_{j<k} (B_1\cup \cdots \cup B_n)\cap B_{m(j)})$ is uniformly computable by (\ref{prop:mucomputablebasis:1}), we have that $\nu((B_1\cup \cdots \cup B_n) \cap U_i)$ is left-c.e. Then $\nu(B_1\cup \cdots \cup B_n \cup U_i)=\nu(B_1\cup \cdots \cup B_n)+\nu(U_i)-\nu((B_1\cup \cdots \cup B_n) \cap U_i)$ is also right-c.e. and hence computable.

Finally, (\ref{prop:union:mucompbasis:2.3}) follows from (\ref{prop:union:mucompbasis:2.2}) and the definition of a $\nu$-computable basis; and (\ref{prop:union:mucompbasis:2}) follows directly from the uniformity in the proof of (\ref{prop:union:mucompbasis:2.3}).
\end{proof}

Many of the canonical computable bases are measure computable bases:

\begin{ex}\label{ex:cantorish}
If a computable basis on $X$ consists of sets which are also uniformly effectively closed, then the basis is $\nu$-computable for any computable point $\nu$ of $\mathcal{M}^+(X)$. This point applies to the canonical computable basis of clopens on Baire space or Cantor space.
\end{ex}

\begin{ex} If a computable basis on $X$ consists of c.e. open sets $U$ such that $\overline{U}$ is uniformly effectively closed with $\overline{U}\setminus U$ is finite, then the basis is $\nu$-computable for any computable atomless $\nu$ in $\mathcal{M}^+(X)$. This point applies to the canonical atomless measures on $[a,b]$ for $a<b$ computable. 
\end{ex}

Here is an example of a computable basis that is not a measure computable basis:
\begin{ex}
Let $f:\mathbb{N}\rightarrow \mathbb{N}\setminus \{0\}$ be an injective function whose range is c.e. but not computable, so that $b=\sum_i 2^{-f(i)}<1$ is left-c.e. but not computable. Let $q_i=1-2^{-(i+1)}$, which converges upwards to one, starting from $\frac{1}{2}$. Define a computable point $\nu$ of $\mathcal{P}([0,1])$ by $\nu = (\sum_i 2^{-f(i)}\cdot \delta_{q_i}) + (1-b)\cdot \delta_{1}$. A computable basis for $[0,1]$ is given by $(p,q)\cap [0,1]$ where $p<q$ are rationals. But this is not a $\nu$-computable basis since $\nu(0,1) = b$ is left-c.e. but not computable.
\end{ex}

To illustrate the utility of measure computable bases, consider the following approximation method. In this proof, we use the standard notation $W_e$ for the $e$-th c.e. set, and we use $W_{e,s}$ for the points in $W_e$ which get enumerated in by stage~$s$ in the canonical enumeration.\footnote{\cite[pp. 17-18, 47]{soare2016turing}.}
\begin{prop}\label{prop:hoyruprojas:compbasis:cor}
Suppose $\nu$ is a computable point of $\mathcal{M}^+(X)$. 

From a rational $\epsilon>0$ and an index for a c.e. open $U$ with $\nu(U)$ computable, one can uniformly compute an index for an effectively closed set $C\subseteq U$ and an index for $\nu(C)$ as a computable real such that $\nu(U\setminus C)<\epsilon$. 
\end{prop}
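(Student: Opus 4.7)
The plan is to approach $U$ from inside by taking closed balls of slightly smaller radii than those of a basis covering of $U$. I would use the Hoyrup--Rojas form of $\nu$-computable basis noted just above the statement: a basis consisting of open balls $B(x_i,r_j)$, where $x_i$ ranges over the countable dense set and $r_j$ over a dense computable sequence of positive reals, paired with the closed balls $B[x_i,r_j]$ of the same $\nu$-measure as their effectively closed supersets.

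First, write $U=\bigcup_k B_k$ with each $B_k=B(x_{a(k)},r_{b(k)})$ from this basis, and set $U_n=B_0\cup\cdots\cup B_n$. By Proposition~\ref{prop:mucomputablebasis}(\ref{prop:mucomputablebasis:1}) the values $\nu(U_n)$ are uniformly computable, and since they increase monotonically to $\nu(U)$ while $\nu(U)$ is itself computable by hypothesis, one can effectively search for an $n$ with $\nu(U)-\nu(U_n)<\epsilon/2$. This reduces matters to approximating the finite union $U_n$ from inside.

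For each $i\leq n$ I would then shrink the radius: the dense computable sequence of radii contains values $r<r_{b(i)}$ arbitrarily close to $r_{b(i)}$, and by continuity from below of $\nu$ the computable quantities $\nu(B(x_{a(i)},r))$ approach $\nu(B_i)$ along any such approach. One then effectively searches the sequence for some $r_i'<r_{b(i)}$ with $\nu(B_i)-\nu(B(x_{a(i)},r_i'))<\epsilon/(2(n+1))$. Because $r_i'<r_{b(i)}$, the closed ball $B[x_{a(i)},r_i']$ is contained in $B_i$. Put $C=\bigcup_{i\leq n}B[x_{a(i)},r_i']$; this is a finite union of effectively closed sets, hence effectively closed, and $C\subseteq U_n\subseteq U$. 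Since $B[x_i,r]\setminus B(x_i,r)$ is $\nu$-null for every basis ball (the two have equal $\nu$-measure), one has $\nu(C)=\nu(\bigcup_{i\leq n}B(x_{a(i)},r_i'))$, a computable real again by Proposition~\ref{prop:mucomputablebasis}(\ref{prop:mucomputablebasis:1}). The estimate
\begin{equation*}
\nu(U\setminus C)=\nu(U)-\nu(C)\leq (\nu(U)-\nu(U_n))+\sum_{i\leq n}(\nu(B_i)-\nu(B(x_{a(i)},r_i')))<\tfrac{\epsilon}{2}+\tfrac{\epsilon}{2}
\end{equation*}
then closes the argument, and all choices above are made uniformly in $U$ and $\epsilon$. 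The only genuinely subtle point is the availability of inner shrinkage along a computable sequence of radii, which is exactly the extra structure afforded by the cited Hoyrup--Rojas form of the basis.
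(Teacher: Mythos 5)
Your proposal is correct and follows essentially the same route as the paper's proof: truncate the basis covering of $U$ to a finite union using the computability of $\nu(U)$, then replace each open basis ball by closed balls of strictly smaller radius drawn from the dense computable sequence, using the equal-measure pairing of $B(x,r)$ with $B[x,r]$ to get computability of $\nu(C)$. The only cosmetic difference is that the paper inner-approximates each ball by a finite stage of the enumeration of \emph{all} smaller radii, whereas you select a single shrunken radius per ball; both yield the same bound.
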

\begin{proof}
We work with the $\nu$-computable basis $B(i, r_j)$ as above (discussed immediately before Proposition~\ref{prop:mucomputablebasis}). Let $\epsilon>0$ rational be given. Suppose $U$ is c.e. open with $\nu(U)$ computable. Let $U=\bigcup_k B(i_{f(k)}, r_{f(k)}))$ where $f$ is a computable function. For each $m\geq 0$ let $U_m=\bigcup_{k<m} B(i_{f(k)}, r_{f(k)})$. Note that $U_m$ has $\nu$-computable measure, uniformly in $m\geq 0$. Using this and the computability of $\nu(U)$, compute $m\geq 0$ such that $\nu(U)-\nu(U_{m})<\frac{\epsilon}{2}$. For each $k<m$, the set $W_{g(k)}= \{j: 0<r_j<r_{f(k)}\}$ is c.e. and dense in the open interval $(0, r_{f(k)})$ and so $B(i_{f(k)}, r_{f(k)}) = \bigcup_{j\in W_{g(k)}} B(i_{f(k)}, r_j) = \bigcup_{j\in W_{g(k)}} B[i_{f(k)}, r_j]$. Compute $s\geq 0$ such that $\nu(U_{m})-\nu(\bigcup_{k<m} \bigcup_{j\in W_{g(k),s}} B(i_{f(k)}, r_j))<\frac{\epsilon}{2}$. Then $C= \bigcup_{k<m} \bigcup_{j\in W_{g(k),s}} B[i_{f(k)}, r_j]$ is a finite union of effectively closed sets and so effectively closed; and further $\nu(C)$ is a computable real since it is a finite union of elements from the $\nu$-computable basis. Further $C\subseteq U$ and $\nu(U)-\nu(C)\leq \nu(U)-\nu(U_{m})+\nu(U_{m})-\nu(C)<\epsilon$.
\end{proof}

The following is an important property of the interaction of $\mathsf{KR}^{\nu}$ with $\nu$-computable bases:
\begin{prop}\label{prop:krplusbasis}
Each element $A$ of the algebra generated by a $\nu$-computable basis is uniformly identical on $\mathsf{KR}^{\nu}$ to a c.e. open $U$, which is effectively paired with an effectively closed superset $C$ of $U$ of the same $\nu$-measure. 
\end{prop}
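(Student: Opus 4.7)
The plan is to induct on the construction of $A$ from basis elements via finite unions, intersections, and complements. Throughout the induction I would maintain the stronger invariant that we uniformly produce a pair $(U,C)$ with $U$ c.e. open, $C$ effectively closed, $U \subseteq A \subseteq C$ on all of $X$, and $\nu(U) = \nu(C)$. The desired conclusion that $A$ agrees with $U$ (and with $C$) on $\mathsf{KR}^{\nu}$ will then follow from the measure-equality alone: the set $C \setminus U$ is effectively closed (being the intersection of $C$ with the effectively closed complement of $U$) and has $\nu$-measure zero, so its complement $(X \setminus C) \cup U$ is a c.e. open of $\nu$-measure one, which by Definition~\ref{defn:core}(\ref{defn:core:8}) contains every Kurtz random point. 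Hence $\mathsf{KR}^{\nu} \cap (C \setminus U) = \emptyset$, and combining this with $U \subseteq A \subseteq C$ yields $A = U = C$ on $\mathsf{KR}^{\nu}$.

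For the base case, any basis element $B \in \mathscr{B}$ is by Definition~\ref{defn:core}(\ref{defn:core:3}) already paired with an effectively closed superset $C_B$ of the same $\nu$-measure, so I would take $U = B$ and $C = C_B$. For the inductive step, I would combine pairs as follows: union maps $(U_1, C_1), (U_2, C_2)$ to $(U_1 \cup U_2,\; C_1 \cup C_2)$; intersection maps them to $(U_1 \cap U_2,\; C_1 \cap C_2)$; and complement maps $(U, C)$ to $(X \setminus C,\; X \setminus U)$. Each operation preserves c.e. openness, effective closedness, and the sandwich $U \subseteq A \subseteq C$. Measure equality is preserved in the union and intersection cases because for $\diamond \in \{\cup, \cap\}$ one has $(C_1 \diamond C_2) \setminus (U_1 \diamond U_2) \subseteq (C_1 \setminus U_1) \cup (C_2 \setminus U_2)$, a union of two $\nu$-null sets; and in the complement case by $\nu(X \setminus U) - \nu(X \setminus C) = \nu(C) - \nu(U) = 0$.

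The only delicate point is uniformity of the recursion in the code for $A$. This is routine but needs articulating: finite unions and finite intersections of c.e. opens are uniformly c.e. open, finite unions and finite intersections of effectively closed sets are uniformly effectively closed (via DeMorgan applied to their c.e. open complements), and the operation of passing between a c.e. open and the corresponding effectively closed complement is uniform in indices. Thus the recursion on the Boolean code for $A$ delivers the pair $(U,C)$ uniformly, as required. The argument does not invoke any computability of the $\nu$-measure beyond what is packaged into the definition of a $\nu$-computable basis, since the conclusion only asserts $\nu(U) = \nu(C)$, not that this common value is computable.
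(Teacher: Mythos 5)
Your proof is correct and rests on the same key mechanism as the paper's: exploiting the pairing of each basis element with an effectively closed superset of equal measure, swapping between the two under Boolean operations, and noting that the resulting effectively closed null difference set misses every Kurtz random. The only difference is organizational — the paper first normalizes $A$ to a disjunctive normal form and performs the substitution in one step, whereas you run a structural induction on the Boolean code maintaining the sandwich invariant $U \subseteq A \subseteq C$ with $\nu(U)=\nu(C)$ (an invariant that is implicitly present in the paper's $V \subseteq A \subseteq D$ as well) — so this counts as essentially the same argument.
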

Note that since $C\setminus U$ is an effectively closed $\nu$-null set, $C=U$ on $\mathsf{KR}^{\nu}$.
\begin{proof}
Suppose that $B_0, B_1, \ldots$ is a $\nu$-computable basis with corresponding effectively closed set $C_i\supseteq B_i$ of the same $\nu$-measure. Again, since $C_i\setminus B_i$ is an effectively closed $\nu$-null set, we have that $C_i=B_i$ on  $\mathsf{KR}^{\nu}$. Then $X\setminus C_i$ is c.e. open with effectively closed superset $X\setminus B_i$ which with it agrees on $\mathsf{KR}^{\nu}$.

Suppose that $A$ is an element of the algebra generated by the $\nu$-computable basis $B_0, B_1, \ldots$. Then $A$ can be written as the finite union of finite intersections of the $B_0, B_1, \ldots$  and their relative complements  $X\setminus B_0, X\setminus B_1, \ldots$. This is indexed by a finite list of pairs of strings $\sigma_1, \tau_1, \ldots, \sigma_n, \tau_n$ such that
\begin{equation}\label{prop:krplusbasis:0}
A = \bigcup_{i=1}^n \bigg( \bigcap_{j<\left|\sigma_i\right|} B_{\sigma_i(j)}  \cap  \bigcap_{j<\left|\tau_i\right|} X\setminus B_{\tau_i(j)} \bigg)
\end{equation}
Then form c.e. open $V$ by replacing the effectively closed $X\setminus B_{\tau_i(j)}$ with the c.e. open $X\setminus C_{\tau_i(j)}$, and similarly form effectively closed $D$ by replacing c.e. open $B_{\sigma_i(j)}$ with effectively closed $C_{\sigma_i(j)}$, as follows:
\begin{equation}\label{prop:krplusbasis:1}
V = \bigcup_{i=1}^n \bigg( \bigcap_{j<\left|\sigma_i\right|} B_{\sigma_i(j)}  \cap  \bigcap_{j<\left|\tau_i\right|} X\setminus C_{\tau_i(j)}\bigg), \hspace{3mm} D = \bigcup_{i=1}^n \bigg(\bigcap_{j<\left|\sigma_i\right|} C_{\sigma_i(j)}  \cap  \bigcap_{j<\left|\tau_i\right|} X\setminus B_{\tau_i(j)} \bigg)
\end{equation}
Then $A,V,D$ are equal on $\mathsf{KR}^{\nu}$ and hence have the same $\nu$-measure, and further $V$ is c.e. open and $D\supseteq V$ is effectively closed. 

\end{proof}

The previous proposition places topological constraints on the sets in $\nu$-computable bases, at least when the measure has full support (that is, there are no open $\nu$-null sets):
\begin{prop}\label{prop:closureissues}
Suppose that $\nu$ is a computable point of $\mathcal{P}(X)$.
\begin{enumerate}[leftmargin=*]
    \item \label{prop:closureissues:1} If $\nu$ has full support and $\mathsf{XR}^{\nu}$ is a $\nu$-measure one set and the c.e. open $U$ is equal to effectively closed $C$ on $\mathsf{XR}^{\nu}$, then $\nu(\overline{U})=\nu(U)$. 
    \item \label{prop:closureissues:2} If $\nu$ has full support then no element $U$ of a $\nu$-computable basis can satisfy $\nu(\overline{U})>\nu(U)$.
\end{enumerate}
\end{prop}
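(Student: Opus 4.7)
The plan is to prove (1) directly using full support plus the Kurtz-style null-set argument, and then derive (2) by instantiating (1) with $\mathsf{XR}^{\nu}=\mathsf{KR}^{\nu}$ and using the pairing built into the definition of a $\nu$-computable basis.

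For part (1), I would first observe that the set $U\setminus C = U\cap (X\setminus C)$ is the intersection of two c.e. opens and hence is itself a c.e. open. Since $U=C$ on the $\nu$-measure one set $\mathsf{XR}^{\nu}$, we get $\nu(U\setminus C)=0$. Full support then forces $U\setminus C=\emptyset$, because any non-empty open set has positive $\nu$-measure. Hence $U\subseteq C$, and since $C$ is closed, $\overline{U}\subseteq C$. Symmetrically, $\nu(C\setminus U)=0$, so
\[
\nu(\overline{U}\setminus U)\leq \nu(C\setminus U)=0,
\]
yielding $\nu(\overline{U})=\nu(U)$, as required.

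For part (2), let $U$ be an element of a $\nu$-computable basis, and let $C\supseteq U$ be its paired effectively closed superset with $\nu(C)=\nu(U)$, as given by Definition~\ref{defn:core}(\ref{defn:core:3}). The difference $C\setminus U = C\cap (X\setminus U)$ is effectively closed and $\nu$-null, so its complement is a c.e. open of full $\nu$-measure. By the definition of $\mathsf{KR}^{\nu}$, every Kurtz random point lies in this complement, which means $U=C$ on $\mathsf{KR}^{\nu}$. Applying part (1) with $\mathsf{XR}^{\nu}=\mathsf{KR}^{\nu}$ then gives $\nu(\overline{U})=\nu(U)$, ruling out the strict inequality.

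I do not anticipate a serious obstacle here; the only subtle point is ensuring that the sets we need to compare ($U\setminus C$ in the first argument, $C\setminus U$ in the second) are literally c.e. open or effectively closed so that full support and the definition of $\mathsf{KR}^{\nu}$ can be invoked to upgrade measure-theoretic statements to pointwise containments. Both reductions are immediate from the closure properties of c.e. opens and effectively closed sets already recorded in \S\ref{sec:closedcont}.
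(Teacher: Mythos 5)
Your proof is correct and follows essentially the same route as the paper's: for (1), the key step in both is that $U\setminus C$ is a c.e.\ open contained in the null complement of $\mathsf{XR}^{\nu}$, hence empty by full support, giving $\overline{U}\subseteq C$ and then the measure identity; for (2), your direct argument that $C\setminus U$ is an effectively closed null set so $U=C$ on $\mathsf{KR}^{\nu}$ is exactly the observation the paper cites from the remark following Proposition~\ref{prop:krplusbasis}. No gaps.
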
 
In this, we use $\overline{\;\cdot\;}$ for topological closure.
\begin{proof}
For (\ref{prop:closureissues:1}), the c.e. open $U\setminus C$ is a subset of the $\nu$-null $X\setminus \mathsf{XR}^{\nu}$. Since $\nu$ has full support, we must have that $U\setminus C$ is empty, so that $U\subseteq C$ and $\overline{U}\subseteq C$. Since $U,C$ have same $\nu$-measure, the same must then be true of $U,\overline{U}$. For (\ref{prop:closureissues:2}), this follows from (\ref{prop:closureissues:1}) and the previous proposition.
\end{proof}

By contrast, Proposition~\ref{prop:mucomputablebasis}(\ref{prop:union:mucompbasis:2.3}) implies any c.e. open $U$ with $\nu(U)$ computable and $\overline{U}$ effectively closed and $\nu(\overline{U})=\nu(U)$ can be added to any $\nu$-computable basis to form a larger $\nu$-computable basis. 

For a simple example of c.e. open as in Proposition~\ref{prop:closureissues}(\ref{prop:closureissues:2}), one has the following:\footnote{This example is a minor modification of an example from a proof in \cite[p. 58]{Cenzer1999-hy}.}
\begin{ex}\label{ex:ceopencenzer}
Consider Cantor space with the uniform measure. Let $0=c_0<c_1<c_2<\cdots$ be a computable sequence of natural numbers such that $\sum_n 2^{-(c_{n+1}-c_n)}<\infty$ (resp. is computable). Let $I$ be any computable set. For all $n\geq 0$, consider the following clopen:
\begin{equation*}
U_n = \{\omega: \forall \; i\in [c_n, c_{n+1}) \; \big(\omega(i)=1 \leftrightarrow  (n,i)\in I\big)\}
\end{equation*}
Since $U_n$ makes decisions on $c_{n+1}-c_n$ many bits, its measure is $2^{-(c_{n+1}-c_n)}$. Then $U=\bigcup_n U_n$ is a c.e. open. And $0<\nu(U)=\sum_n \nu(U_n\setminus \bigcup_{m<n} U_m) \leq \sum_n \nu(U_n)<\infty$ (resp. is computable by the Comparison Test and the fact that $U_n\setminus \bigcup_{m<n} U_m$ is clopen, cf. Example~\ref{ex:cantorish} and Proposition~\ref{prop:mucomputablebasis}(
\ref{prop:mucomputablebasis:1})). Further, the set $U$ is dense and so its closure is the entire space. 
\end{ex}

For a similar example on the unit interval with Lebesgue measure, one can use the complements of positive measure Cantor sets.


\subsection{The space of integrable functions}\label{sec:Lp}

For $\nu$ a computable point of $\mathcal{M}^+(X)$ and $p\geq 1$ computable, there is a natural Polish space structure on $L_p(\nu)$ (cf. Definition~\ref{defn:versions}). For, one can take as the countable dense set the simple functions $\sum_{i=1}^n q_i \cdot I_{A_i}$, where $A_i$ come from the algebra of sets generated by a $\nu$-computable basis. If $f,g$ are two such functions, then so is $f-g$, and hence it suffices to show that if $h=\sum_{i=1}^n q_i \cdot I_{A_i}$ is such a simple function then $\|h\|_p$ is computable. Since $A_i$ comes from an algebra, we can assume that the $A_i$ are pairwise disjoint, which implies $\left|\sum_{i=1}^n q_i \cdot I_{A_i}\right|^p=\sum_{i=1}^n \left| q_i\right|^p \cdot I_{A_i}$ everywhere. Then one has $\|h\|_p = \big(\sum_{i=1}^n \left|q_i\right|^p \nu(A_i)\big)^{\frac{1}{p}}$, which is computable by Proposition~\ref{prop:mucomputablebasis}(\ref{prop:mucomputablebasis:1}). Note that the countable dense set is in $\mathbb{L}_p(\nu)$, that is, it is defined everywhere rather than merely $\nu$-a.s. (cf. Definition~\ref{defn:versions}). But when we pass to their equivalence classes, they become elements of $L_p(\nu)$, and they are a countable dense set in $L_p(\nu)$.

We do not record the choice of the $\nu$-computable basis in the notation for the computable Polish space $L_p(\nu)$. This is for two reasons. First, the $\nu$-computable bases are closed under effective union Proposition~\ref{prop:mucomputablebasis}(\ref{prop:union:mucompbasis:2}). Hence one can typically just assume that one is working with the union of whichever of them are salient in a given context. Second, one can check that any two $\nu$-computable bases result in computably homeomorphic presentations of $L_p(\nu)$.

Many of the natural continuous functions on the computable Polish space $L_p(\nu)$ are computable continuous, such as: addition, subtraction, multiplication by computable scalar, absolute value, maximum, minimum, positive part, and negative part. 

By considering the continuous computable function $\Phi(f)=\left|f\right|-f$, one sees that $L_p^+(\nu)=\Phi^{-1}(\{0\})$, and so $L_p^+(\nu)$ is an effectively closed subset of $L_p(\nu)$ (cf. Definition~\ref{defn:versions}). Further, it is a computable Polish subspace, since the equivalence classes of the non-negative elements of the countable dense set of $L_p(\nu)$ are dense in $L_p^+(\nu)$.

Since we are working with a finite computable measure $\nu$ from $\mathcal{M}^+(X)$, if $p\leq q$, then the identity map is a computable continuous map from $L_q(\nu)$ into $L_p(\nu)$ and satisfies $\|f\|_p\leq \|f\|_q$ for all $f$ from $L_q(\nu)$. We refer to this as \emph{the computable embedding of $L_q(\nu)$ into $L_p(\nu)$}.

In working with $L_p(\nu)$ for $p>1$ it is useful to remember the following inequalities:
\begin{equation}
u,v\geq 0: \hspace{5mm} u^p+v^p \leq (u+v)^p \hspace{3mm} \mbox{ \; \& \; } \hspace{3mm} (u+v)^{\frac{1}{p}}\leq u^{\frac{1}{p}}+v^{\frac{1}{p}}
\end{equation}
By letting $u=x-y$ and $v=y$, one obtains the following inequalities:
\begin{equation}\label{eqn:estimate3}
0\leq y\leq x: \hspace{5mm} (x-y)^p\leq x^p-y^p \hspace{3mm} \mbox{ \; \& \; } \hspace{3mm}  x^{\frac{1}{p}}- y^{\frac{1}{p}} \leq (x-y)^{\frac{1}{p}}
\end{equation}


The following proposition gives a canonical approximation of lsc functions which are bounded from below, and indicates that for the non-negative ones, being a computable point of $L_p(\nu)$ is solely a matter of the computability of the norm. The first part is due to Miyabe for $p=1$.\footnote{\cite[Lemma 4.6]{Miyabe2013-fd}.} This kind of approximation is a mainstay of working with lsc functions, and different approximations tend to be appropriate for different purposes.\footnote{See \cite[Definition 1.7.4 p. 35]{Li1997aa}.} We will need a variation on this approximation in Proposition \ref{prop:dis:4}.
\begin{prop}\label{prop:exptoflsc}
From a rational $q$ and a lsc function $f:X\rightarrow [q,\infty]$, one can compute an index for a computable sequence of functions $f_s:X\rightarrow [q,\infty)$ from the countable dense set of $L_1(\nu)$ such that $f_s\leq f_{s+1}$ everywhere and $f=\sup_s f_s$ everywhere.

Further, if $p\geq 1$ is computable, then a non-negative lsc function $f:X\rightarrow [0,\infty]$ in $L_p(\nu)$ is a $L_p(\nu)$ Schnorr test (cf. Definition~\ref{defn:core}(\ref{defn:core:6})) iff it is a computable point of $L_p(\nu)$, and in this case the witness is a computable subsequence of $f_s$.

Finally, if $p\geq 1$ is computable, then any non-negative lsc function $f:X\rightarrow [0,\infty]$ in $L_p(\nu)$ is a $L_p(\nu)$ Martin-L\"of test (cf. Definition~\ref{defn:core}(\ref{defn:core:7})), and $f_s\rightarrow f$ in $L_p(\nu)$.
\end{prop}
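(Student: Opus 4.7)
The plan is to handle the three parts in order. For the first part, I would construct the approximations via the layer-cake representation $f = q + \int_0^\infty I\{f > q+t\}\, dt$. Since $f$ is lsc, the sets $W_{s,k} := \{f > q + k\cdot 2^{-s}\}$ are uniformly c.e. open for $1 \leq k \leq s \cdot 2^s$, and each can be written effectively as an increasing union $W_{s,k} = \bigcup_t V_{s,k,t}$ where $V_{s,k,t}$ is a finite union of basis elements from a $\nu$-computable basis (hence in the algebra generated by this basis). Setting
\[
h_{s,t} \;=\; q + 2^{-s}\sum_{k=1}^{s\cdot 2^s} I_{V_{s,k,t}},
\]
each $h_{s,t}$ lies in the countable dense set of $L_1(\nu)$, and one checks $h_{s,t}\leq h_{s,t+1}$ with $\sup_t h_{s,t} = g_s$, where $g_s = q + 2^{-s}\sum_k I_{W_{s,k}}$ is the standard dyadic lower approximation to $f$ satisfying $g_s\leq g_{s+1}$ and $\sup_s g_s = f$ everywhere (the pairing $k\leftrightarrow 2k$ together with non-negativity of the odd-indexed terms at stage $s+1$ gives $g_s\leq g_{s+1}$). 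To fuse everything into a single everywhere-increasing approximation, I would diagonalize by setting $f_s = \max_{i\leq s} h_{i,s}$; monotonicity in $s$ is immediate from $h_{i,s}\leq h_{i,s+1}$, and a straightforward double-supremum calculation yields $\sup_s f_s = \sup_{i,s} h_{i,s} = \sup_i g_i = f$. A finite max of simple functions with indicators from the algebra is again of this form by taking common refinements of the partitions, so $f_s$ remains in the countable dense set, uniformly computably in $s$.

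For the second part, the direction ``computable point $\Rightarrow$ Schnorr test'' follows from the fact that $\|\cdot\|_p$ is $1$-Lipschitz on $L_p(\nu)$ and has uniformly computable values on the countable dense set, hence is computable continuous by Proposition~\ref{prop:suffforcont}, and so sends the computable point $[f]$ to a computable real by Proposition~\ref{prop:contcomp}. For the converse, suppose $f\geq 0$ is lsc with $\|f\|_p$ computable; I would take the $f_s$ from part one (now $q=0$, so $f_s\geq 0$), apply monotone convergence to $f_s^p\nearrow f^p$ to get $\|f_s\|_p^p\nearrow \|f\|_p^p$, and use the elementary inequality $(f-f_s)^p\leq f^p - f_s^p$ from (2.2) to bound
\[
\|f - f_s\|_p \;\leq\; \bigl(\|f\|_p^p - \|f_s\|_p^p\bigr)^{1/p}.
\]
Since $\|f\|_p^p$ is computable and $\|f_s\|_p^p$ is uniformly computable (each $f_s$ lies in the countable dense set), the right side is a uniformly computable, monotonically decreasing sequence tending to $0$; I can therefore effectively search for a subsequence $f_{s_t}$ with $\|f - f_{s_t}\|_p < 2^{-t}$, witnessing that $[f]$ is computable in $L_p(\nu)$.

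For the third part, the observation that a lsc $f$ with $\|f\|_p<\infty$ is by definition an $L_p(\nu)$ Martin-L\"of test handles the first assertion, and the very same MCT-plus-inequality calculation as in part two shows $\|f - f_s\|_p \to 0$; the only thing lost is the computable rate, since $\|f\|_p$ need not be a computable real when $f$ is merely a Martin-L\"of test. The main obstacle I anticipate is the bookkeeping in part one: coordinating the two indices $s$ (dyadic refinement level) and $t$ (basis-enumeration stage) so that the resulting $f_s$ are simultaneously everywhere monotone, everywhere equal in the limit to $f$, and presentable as elements of the countable dense set of $L_1(\nu)$ in a uniformly computable way. The diagonal $\max_{i\leq s} h_{i,s}$ is the standard fix, but some care is needed in expressing the max as a finite linear combination of indicators of pairwise disjoint sets in the algebra so as to produce a genuine name for an element of the countable dense set.
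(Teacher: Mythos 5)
Your proposal is correct and follows essentially the same route as the paper's proof: the paper likewise approximates $f$ from below by simple functions built from the c.e.\ open superlevel sets of $f$ (it takes a running maximum over an enumeration of rational thresholds rather than your dyadic layer-cake sums with a diagonal $\max_{i\leq s}h_{i,s}$, but the two constructions are interchangeable and face the same bookkeeping you identify). The second and third parts are handled exactly as you do --- computable continuity of the norm via Propositions~\ref{prop:suffforcont} and~\ref{prop:contcomp} in one direction, and monotone convergence combined with the inequality $(f-f_s)^p\leq f^p-f_s^p$ to extract a fast subsequence in the other.
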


\begin{proof}
Let $B_0, B_1, \ldots$ be a $\nu$-computable basis. Enumerate $\mathbb{Q}\cap [q,\infty)$ as $q_0, q_1, \ldots$. For each $n\geq 0$, one has that $f^{-1}(q_n,\infty]$ is uniformly c.e. open. Hence, there is a computable function $g$ such that $f^{-1}(q_n, \infty) = \bigcup_{i\in W_{g(n)}} B_i$. Then define
\begin{equation}\label{prop:exptoflsc:fsexpress}
f_s(x) = \max\{q, q_n: n\leq s, i\in W_{g(n),s}, x\in B_i\}
\end{equation}
This is an element of the countable dense set of $L_1(\nu)$ since we just enumerate $\bigcup_{n\leq s} W_{g(n),s}$ as $i_0, \ldots, i_{k(s)}$ and for each non-empty subset $K$ of $\{i_0, \ldots, i_{k(s)}\}$ we consider the element $B_K=\bigcap_{i_j\in K} B_i \cap \bigcap_{i_j\notin K} X\setminus B_i$ of the algebra generated by the $\nu$-computable basis, and we let $q_K = \max\{q_n: n\leq s, i_j\in K, i_j\in W_{g(n), s}\}$, so that we have $f_s = \sum_{\emptyset \neq K\subseteq \{i_0, \ldots, i_{k(s)}\}} q_K\cdot I_{B_K}$. Further, at the initial stages $s$ (if any) where $\bigcup_{n\leq s} W_{g(n),s}$ is empty, we set $f_s=q\cdot I_X$.

Further from (\ref{prop:exptoflsc:fsexpress}) one sees that $f_s\leq f_{s+1}$ since the sum over which we taking the maximum grows in $s$. Further, one has $f_s\leq f$ everywhere since if we had $f_s(x)>f(x)$, then $f_s(x)=q_n$ for some $n\leq s$ with $i\in W_{g(n),s}$ and $x$ in $B_i$. But then $B_i\subseteq f^{-1}(q_n,\infty]$, and so $f(x)>q_n$. Finally, one has $\sup_s f_s=f$ everywhere, since if not we would have $\sup_s f_s(x) < q_n < f(x)$ for some $x$ and some $n$ and hence $x$ would be in $f^{-1}(q_n,\infty]= \bigcup_{i\in W_{g(n)}} B_i$ and so $x$ would be in $B_i$ for some $i$ in $W_{g(n)}$ and hence there would be $s$ such that $i$ is in $W_{g(n),s}$ and hence by definition in (\ref{prop:exptoflsc:fsexpress}) one would have that $f_s(x)\geq q_n$.

Suppose $p\geq 1$ is computable and $f:X\rightarrow [0,\infty]$ is lsc and in $L_p(\nu)$. If $f$ is a computable point of $L_p(\nu)$, then since the norm is computable continuous (using Proposition~\ref{prop:suffforcont}), we have that $\|f\|_p$ is computable (using Proposition~\ref{prop:contcomp}). Conversely, suppose that $f$ is an $L_p(\nu)$ Schnorr test, so that $\|f\|_p$ is computable. Then by taking $p$-th roots, we have $\int f^p \; d\nu$ is computable. Since $\int f_s^p \; d\nu$ converges upwards to $\int f^p \; d\nu$ and we can compute both, we can compute a $s(n)$ such that $\int f^p -f_{s(n)}^p \; d\nu<2^{-np}$ for all $n\geq 0$. Then using the estimate $(f-f_{s(n)})^p \leq f^p -f_{s(n)}^p$ from (\ref{eqn:estimate3}), we have that $\int (f-f_{s(n)})^p \; d\nu <2^{-np}$, and so by taking $p$-th roots again we have $\|f-f_{s(n)}\|_p<2^{-n}$.

Similarly, for the last point, since $\int f_s^p \; d\nu$ converges upwards to $\int f^p \; d\nu$, we can use the estimate from (\ref{eqn:estimate3}) to argue that for all $\epsilon>0$ there is $s_0\geq 0$ such that for all $s\geq s_0$ one has $\int (f-f_s)^p \; d\nu\leq \int f^p \; d\nu-\int f_s^p \; d\nu<\epsilon^p$, and so $\|f-f_s\|_p<\epsilon$.
\end{proof}

The following records the ``universal test'' for $\mathsf{MLR}^{\nu}$. For integral tests, it is due G\'acs and Hoyrup-Rojas in the case $p=1$.\footnote{G\'acs \cite[102, Corollary 3.3]{Gacs2005-xu}, Hoyrup-Rojas \cite[845-6]{Hoyrup2009-pl}. Further, the version stated here is simplified in that it is only stated for a single measure, whereas these authors state a version where the lsc functions have domain $\mathcal{P}(X)\times X$. Hoyrup-Rojas improve on G\'acs by removing any assumption about the computability of the Boolean algebra structure on the algebra generated by the canonical computable basis.} 
\begin{prop}\label{prop:universal}
Suppose $\nu$ is a computable point of $\mathcal{P}(X)$ and $p\geq 1$ is computable. 

Then there is an $L_p(\nu)$ Martin-L\"of test $f$ with $\|f\|_p\leq 1$ such that for all $L_p(\nu)$ Martin-L\"of tests $g$ with $\|g\|_p \leq 1$ there is constant $c>0$ such that $g\leq c\cdot f$ everywhere. 

Hence $\mathsf{MLR}^{\nu} = \bigcup_n f^{-1}[0,n]$, an increasing sequence of effectively closed sets.
\end{prop}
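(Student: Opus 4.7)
The plan is to enumerate all non-negative lsc functions uniformly, truncate each one so its $p$-norm stays bounded by $1$, and take a weighted sum. First, I would fix a uniformly computable enumeration $(g_e)_{e \geq 0}$ of all non-negative lsc functions on $X$ obtained by enumerating c.e. sets $W_e$ of pairs $(q, U)$ with $q \in \mathbb{Q}^{\geq 0}$ and $U$ a (code for a) c.e. open, and setting $g_e(x) = \sup\{q : (q, U) \in W_e,\, x \in U\}$; every non-negative lsc function arises this way. Applying Proposition~\ref{prop:exptoflsc} uniformly, I would obtain for each $e$ an increasing sequence $g_e^{(s)}$ of non-negative simple functions from the countable dense set of $L_p(\nu)$ with $g_e = \sup_s g_e^{(s)}$, and each $\|g_e^{(s)}\|_p$ is a computable real uniformly in $(e,s)$.

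Next, I would define the capped function
\[
\tilde g_e \;=\; \tfrac{1}{2}\,\sup\bigl\{g_e^{(s)} : s \geq 0,\ \|g_e^{(s)}\|_p < 2\bigr\}.
\]
The index set $S_e = \{s : \|g_e^{(s)}\|_p < 2\}$ is uniformly c.e.\ in $e$ because $\|g_e^{(s)}\|_p$ is a computable real and strict inequality among computable reals is $\Sigma^0_1$. Hence by Proposition~\ref{prop:lscclosure}, $\tilde g_e$ is uniformly lsc in $e$. By construction $\|\tilde g_e\|_p = \tfrac{1}{2}\sup_{s \in S_e}\|g_e^{(s)}\|_p \leq 1$. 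Crucially, if $\|g_e\|_p \leq 1$, then $\|g_e^{(s)}\|_p \leq 1 < 2$ for every $s$, so $S_e = \mathbb{N}$ and $\tilde g_e = \tfrac{1}{2} g_e$ on all of $X$.

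I would then set $f = \sum_{e \geq 0} 2^{-e-1}\, \tilde g_e$. By Proposition~\ref{prop:lscclosure} $f$ is lsc, and by Minkowski's inequality applied via monotone convergence $\|f\|_p \leq \sum_e 2^{-e-1}\|\tilde g_e\|_p \leq 1$, so $f$ is an $L_p(\nu)$ Martin-L\"of test with $\|f\|_p \leq 1$. For universality, given any $L_p(\nu)$ Martin-L\"of test $g$ with $\|g\|_p \leq 1$, pick $e$ with $g = g_e$; then $\tilde g_e = \tfrac{1}{2} g$, so $f \geq 2^{-e-1}\tilde g_e = 2^{-e-2} g$ everywhere, i.e.\ $g \leq 2^{e+2}\cdot f$. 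For the final claim, $x \in \mathsf{MLR}^{\nu}$ iff every $L_p(\nu)$ Martin-L\"of test is finite at $x$, which by universality is equivalent to $f(x) < \infty$, equivalent to $x \in f^{-1}[0,n]$ for some $n \geq 0$. Each $f^{-1}[0,n]$ is the complement of the c.e.\ open $f^{-1}(n,\infty]$ and is therefore effectively closed, and the sets $f^{-1}[0,n]$ increase in $n$.

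The main obstacle is arranging the capping $g_e \mapsto \tilde g_e$ so that it is uniformly lsc while still equaling a computable scalar multiple of $g_e$ whenever $\|g_e\|_p \leq 1$. Using the strict inequality $<2$ (rather than the $\Pi^0_1$ condition $\leq 1$) keeps the index set $S_e$ uniformly c.e., at the modest cost of a factor of $2$ absorbed into the universal multiplicative constant.
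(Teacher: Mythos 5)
Your proof is correct and follows essentially the same route as the paper's (which is only sketched there): enumerate all non-negative lsc functions via their canonical approximations from Proposition~\ref{prop:exptoflsc}, discard approximants once a norm threshold is exceeded, and form a weighted sum. Your cap at $\|g_e^{(s)}\|_p<2$ rather than $<1$ is a sensible refinement, since it guarantees $\tilde g_e=\tfrac{1}{2}g_e$ \emph{everywhere} (not merely a.e.) whenever $\|g_e\|_p\leq 1$, which is exactly what the ``everywhere'' in the domination claim requires.
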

\begin{proof} (Sketch)
Enumerate the $L_p(\nu)$ Martin-L\"of tests with $p$-norm $\leq 1$ as $h_0, h_1, \ldots$. Do this by enumerating approximations to them (as in Proposition~\ref{prop:exptoflsc}) which have $p$-norm $<1$. Then set $f=\sum_e 2^{-e} \cdot h_e$.
\end{proof}

The previous proposition has the following useful consequence regarding computable domination, which recall features in Theorem~\ref{thm:newnewlevy}(\ref{levy:extra:6}):
\begin{prop}\label{prop:existencecompdom}
Suppose that $X$ is computably compact and $\nu$ is a computable point of $\mathcal{P}(X)$. Then there are points in $\mathsf{MLR}^{\nu}$ of computably dominated degree.
\end{prop}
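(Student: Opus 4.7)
The plan is to combine Proposition~\ref{prop:universal} with the classical Computably Dominated (Hyperimmune-Free) Basis Theorem of Jockusch--Soare, using computable compactness to pass from the abstract computable Polish setting to a computably bounded $\Pi^0_1$ class in Baire space. By Proposition~\ref{prop:universal} applied with $p=1$, there is an $L_1(\nu)$ Martin-L\"of test $f$ with $\mathsf{MLR}^{\nu} = \bigcup_n f^{-1}[0,n]$, an increasing union of effectively closed sets. Since $\|f\|_1<\infty$ we have $f<\infty$ $\nu$-a.s., so $\nu(f^{-1}[0,n])\to 1$; fix $n$ large enough that $C := f^{-1}[0,n]$ is nonempty.

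The core step is to manufacture a surjection from a computably bounded $\Pi^0_1$ class onto $X$. Using computable compactness, I would feed the computable enumeration of c.e.\ opens $B(x_j,2^{-i-2})$ (for $j=0,1,2,\ldots$) into the procedure witnessing computable compactness of $X$ to obtain, uniformly in $i$, a natural number $k_i$ such that $B(x_0,2^{-i-2}),\ldots,B(x_{k_i},2^{-i-2})$ covers $X$. Define
\[
P \;=\; \bigl\{\alpha\in\mathbb{N}^{\mathbb{N}}:\alpha(i)\leq k_i\text{ for all }i,\text{ and }d(x_{\alpha(i)},x_{\alpha(i+j)})\leq 2^{-i-1}+2^{-i-j-1}\text{ for all }i,j\bigr\},
\]
which is a computably bounded $\Pi^0_1$ class in Baire space (the distance conditions are $\Pi^0_1$, since distances between points of the dense set are computable reals). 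The map $\pi\colon P\to X$ given by $\pi(\alpha):=\lim_i x_{\alpha(i)}$ is well-defined and computable continuous, and the Cauchy bound forces $d(\pi(\alpha),x_{\alpha(i)})\leq 2^{-i-1}$, so $x_{\alpha(i)}\to\pi(\alpha)$ fast. Surjectivity: given $x\in X$ and each $i$, choose $\alpha(i)\leq k_i$ with $x\in B(x_{\alpha(i)},2^{-i-2})$; a triangle-inequality check places $\alpha$ in $P$, and by construction $\pi(\alpha)=x$.

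Then $\widetilde C := \pi^{-1}(C)$ is an effectively closed subset of $P$, hence a nonempty computably bounded $\Pi^0_1$ class in Baire space. By the classical Computably Dominated Basis Theorem, $\widetilde C$ contains a point $\alpha$ of computably dominated Turing degree. Setting $x:=\pi(\alpha)\in C\subseteq\mathsf{MLR}^{\nu}$, the sequence $i\mapsto\alpha(i)$ converges to $x$ fast and has computably dominated degree, so by Definition~\ref{defn:core}(\ref{defn:core:15.5}) the point $x$ is of computably dominated degree, as desired.

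The main obstacle I anticipate is setting up the representation $\pi\colon P\to X$ with all the right effectivity properties (uniform computable boundedness of $P$, computability of $\pi$, and surjectivity) so that the classical basis theorem, which is stated for subsets of Baire space, can actually be invoked; computable compactness of $X$ is exactly what enables the bound $\alpha(i)\leq k_i$ that makes this transfer possible.
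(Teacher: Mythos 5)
Your proposal is correct and follows essentially the same route as the paper's proof: use computable compactness to bound, uniformly in $i$, how many basic balls of radius roughly $2^{-i}$ are needed to cover $X$, form the resulting computably bounded class of fast Cauchy sequences with its computable continuous surjection $\pi$ onto $X$, pull back a nonempty effectively closed set of $\mathsf{MLR}^{\nu}$'s obtained from the universal test (Proposition~\ref{prop:universal}), and apply the Computably Dominated Basis Theorem. The only (harmless) divergence is that you keep the Cauchy conditions as $\Pi^0_1$ constraints on a computably bounded class, whereas the paper uses an effective Baire category argument to choose radii avoiding the distances $\tfrac{1}{2}d(i,j)$, so that the tree is literally computable with no dead ends and $[T]$ is a strongly computably compact computable Polish subspace; both versions suffice for the basis theorem.
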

The main idea of the proof is to build a computably compact space of fast Cauchy sequences above $X$, and to apply there the Computably Dominated Basis Theorem.\footnote{\cite[Theorem 3.7 p. 54]{Cenzer1999-hy}, \cite[Theorem 9.5.1 p. 179]{soare2016turing}.} One can of course thematize the space of fast Cauchy sequences more than we are doing in this short paper, and in part what we are doing in the below proof is doing the construction out ``by hand'' in the computably compact case.
\begin{proof}
Without loss of generality, we identify the countable dense set with the natural numbers. By effective Baire Category Theorem, choose a strictly decreasing computable sequence of positive reals $\eta_s<2^{-(s+1)}$ such that $\{\eta_s :s\geq 0\}\cap \{\frac{1}{2}\cdot d(i,j): i,j\geq 0\}$ are disjoint. We define a non-decreasing computable sequence $n_s$ of natural numbers as follows. Suppose that we have already defined things up to stage~$s$. To define at stage~$s$, we consider the open cover $B(0, \eta_s), B(1, \eta_s), \ldots$ and use computable compactness to compute an $n_s\geq n_{s-1}$ such that $B(0, \eta_s), \ldots, B(n_s, \eta_s)$ covers $X$. Define the following computable trees:
\begin{align*}
T_0 &  = \{ \sigma \in \mathbb{N}^{<\mathbb{N}}:  \forall \; t<\left|\sigma\right| \; \exists \; i\leq n_t \; \sigma(t)=i\} \\
T & = \{\sigma\in T_0: \forall \; t<\left|\sigma\right| \; \forall \; r\in [t,\left|\sigma\right|) \; d(\sigma(t), \sigma(r))\leq 2\eta_t\}
\end{align*}
The tree $T$ is computable since $\{2\cdot \eta_s :s\geq 0\}\cap \{d(i,j): i,j\geq 0\}$ are disjoint. Further $T$ has no dead ends since we can just extend by repeating the last entry (since $n_s\geq n_{s-1}$). Let $C=[T]$, which is then a computable Polish space with countable dense set given by extending any node $\sigma$ in $T$ by means of repeating its last entry indefinitely. Since the function $t\mapsto n_t$ is computable, one has that $C$ is strongly computably compact.

The map $\pi:C\rightarrow X$ given by sending $\omega$ to $\lim_i \omega(i)$ in $X$ is well-defined. For, since $\eta_s<2^{-(s+1)}$, every $\omega$ in $C$ is a Cauchy sequence.

By definition of $T$, note that $d(\pi(\omega), \omega(t))\leq 2\eta_t$ for all $t\geq 0$. For let $\epsilon>0$. Since $d(\pi(\omega), \omega(r))\rightarrow 0$, choose $r>t$ such that $d(\pi(\omega), \omega(r))<\epsilon$. Then $d(\pi(\omega), \omega(t))\leq d(\pi(\omega), \omega(r))+d(\omega(t), \omega(r))\leq \epsilon+2\eta_t$. 

Note that any $\omega$ in $C$ is a sequence from the countable dense set of $X$ which converges fast to $\pi(\omega)$. This is because $2\eta_t<2^{-t}$.

Further, $\pi:C\rightarrow X$ is surjective: if $x$ in $X$ is given, then for each $j$ choose $\omega(j) \leq n_j$ such that $x$ is in $B(\omega(j),\eta_j)$. Then $\omega$ is in $C$ since for all $k\geq j$ one has $d(\omega(j), \omega(k)) \leq d(\omega(j), x)+d(x,\omega(k))\leq \eta_j+\eta_k\leq 2\eta_j$. 

Then by Proposition~\ref{prop:suffforcont}, the map $\pi:C\rightarrow X$ is computable continuous since it has a computable modulus of uniform continuity. For, if rational $\epsilon>0$ is given, compute least $\ell\geq 0$ such that $4\cdot \eta_{\ell}<\epsilon$. Suppose that  $\omega, \omega^{\prime}$ are in $C$ with $\omega, \omega^{\prime}$ agreeing $\leq \ell$. Then $d(\pi(\omega), \pi(\omega^{\prime}))\leq d(\pi(\omega), \omega(\ell))+d(\omega(\ell), \omega^{\prime}(\ell))+d(\omega^{\prime}(\ell), \pi(\omega^{\prime}))\leq 2\cdot \eta_{\ell}+0+2\cdot \eta_{\ell}<\epsilon$.

By the previous proposition, choose a non-empty effectively closed subset $D$ of $X$ which consists only of $\mathsf{MLR}^{\nu}$'s. Then $\pi^{-1}(D)\subseteq C$ is an effectively closed subset of $C$, which is thus strongly computably compact since $C$ is. By the Computably Dominated Basis Theorem, there is an element $\omega$ of $\pi^{-1}(D)$ of computably dominated degree.
\end{proof}

The following example shows that one cannot in general assume that the $\mathsf{MLR}^{\nu}$'s of computably dominated degree in the previous proposition are non-atoms.
\begin{ex}\label{ex:existencecompdom}
There is an uncountable computably compact computable Polish space $X$ and a computable point $\nu$ of $\mathcal{P}(X)$ such that the only elements of $\mathsf{MLR}^{\nu}(X)$ of computably dominated degree are among the atoms. 

This follows from a construction of Ng et. al.\footnote{\cite[Lemma 2.1, Theorem 2.2]{Ng2012-pc}.} Let $\mu$ be the uniform measure on Cantor space $Y=\{0,1\}^{\mathbb{N}}$ and let $Z=\{0,1,2\}^{\mathbb{N}}$.  Ng et. al. constructs a computable continuous map $f:Y\rightarrow Z$ with image $X$ such that every $\omega$ in $\mathsf{MLR}^{\mu, \emptyset^{\prime}}(Y)$ is such that $f(\omega)$ is non-isolated in $X$, and vice-versa, and in this circumstance $f(\omega)$ and $\omega$ have the same Turing degree.

The image $X$ is a computable Polish space.\footnote{Since it is the computable continuous image of Cantor space, cf. \cite[Theorem 2.4.8(3) pp. 73-74]{Cenzer2017-nm}.} Further, pushforwards of computable probability measures under computable continuous maps are computable probability measures (by Proposition~\ref{prop:hoyruprojas}), and so $\nu:=f\#\mu$ is a computable point of $\mathcal{P}(X)$. Note that $\nu$ has full support since $\mu$ has full support. 

Suppose that $\omega^{\prime}$ in $X$ is in $\mathsf{MLR}^{\nu}(X)$ and is of computably dominated degree. Then we claim that $\omega^{\prime}$ is an atom. For reductio, suppose not. Since any isolated point in a space with full support is an atom, one has that $\omega^{\prime}$ is not isolated. Since $f:Y\rightarrow X$ is a surjection, choose $\omega$ in $Y$ with $f(\omega)=\omega^{\prime}$. By the construction, $\omega$ is in $\mathsf{MLR}^{\mu, \emptyset^{\prime}}(Y)$. But these points are not of computably dominated degree.\footnote{E.g. \cite[Theorem 8.21.2 p. 382]{Downey2010aa}.} Since $\omega, \omega^{\prime}$ have the same Turing degree, $\omega^{\prime}$ is not of computably dominated degree, contrary to hypothesis.
\end{ex}

It is not clear to us what happens in the general atomless non-compact case:
\begin{Q}\label{q:existencecompdom}
Suppose that $X$ is a computable Polish space which is not computably compact, and that $\nu$ in $\mathcal{P}(X)$ is computable and atomless. Is there an element in $\mathsf{MLR}^{\nu}(X)$ that is of computably dominated degree? 
\end{Q}
\noindent If $X$ is the reals, it can be written as an effective union of computably compact Polish subspaces, and so the answer is affirmative, by Proposition~\ref{prop:existencecompdom}. If $X$ is Baire space, then the answer is again affirmative, by using effective tightness to describe the $\mathsf{MLR}^{\nu}$'s as a subset of a countable union of computably compact sets, and then applying the Computably Dominated Basis Theorem again. Hence to answer the question negatively one should be looking for spaces which are not ``effectively~$K_{\sigma}$'' and spaces where effective tightness does not produce a union of computably compact sets containing the $\mathsf{MLR}^{\nu}$'s.

If $\nu$ is in $\mathcal{P}(X)$ and $f:X\rightarrow [-\infty,\infty]$ is in $L_1(\nu)$, then it induces the push-forward probability measure $(f\#\nu)(A)=\nu(f^{-1}(A))$ in $\mathcal{P}(\mathbb{R})$. The following proposition tells us that the map $f\mapsto f\#\nu$ is computable continuous. We use this proposition primarily in conjunction with Proposition~\ref{prop:contcomp} and Proposition~\ref{prop:hoyruprojas}, which together tell us that pushforwards of $L_1(\nu)$-computable functions are themselves computable.
\begin{prop}\label{prop:pushforwards}
Let $X$ be a computable Polish space. Suppose that $\nu$ is a computable point of $\mathcal{P}(X)$. Then the map from $L_1(\nu)$ to $\mathcal{P}(\mathbb{R})$ given by sending $f$ to $f\# \nu$ is continuous computable. Similarly, the map from $L_1^+(\nu)$ to $\mathcal{P}(\mathbb{R}^{\geq 0})$ given by sending $f$ to $f\# \nu$ is continuous computable.
\end{prop}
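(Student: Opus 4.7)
My plan is to invoke Proposition~\ref{prop:suffforcont}: it suffices to exhibit a computable modulus of uniform continuity for $f \mapsto f\#\nu$ and to verify that the image of the countable dense set of $L_1(\nu)$ consists of uniformly computable points of $\mathcal{P}(\mathbb{R})$. Since the metric on $\mathbb{R}$ is unbounded, the natural metric to use on the target is the Kantorovich--Rubinshtein metric $d_{KR}$ from \S\ref{sec:efdst:measure}.

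For uniform continuity, I would show the map is actually $1$-Lipschitz. Given $f,g$ in $\mathbb{L}_1(\nu)$ (picking finite-valued representatives, which exist because $\nu(\{|f|=\infty\})=\nu(\{|g|=\infty\})=0$), and any $h\colon \mathbb{R}\to\mathbb{R}$ that is $1$-Lipschitz with $\|h\|_\infty\le 1$, one has
\begin{equation*}
\Bigl|\int h\,d(f\#\nu)-\int h\,d(g\#\nu)\Bigr|=\Bigl|\int \bigl(h(f(x))-h(g(x))\bigr)\,d\nu(x)\Bigr|\le \int |f-g|\,d\nu=\|f-g\|_1,
\end{equation*}
using the Lipschitz bound $|h(f(x))-h(g(x))|\le |f(x)-g(x)|$ pointwise on the full-measure set where $f,g$ are finite. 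Taking the supremum over such $h$ yields $d_{KR}(f\#\nu,g\#\nu)\le \|f-g\|_1$, so $m(\epsilon)=\epsilon$ is a computable modulus.

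For the dense set, recall from \S\ref{sec:Lp} that the distinguished countable dense set of $L_1(\nu)$ consists of simple functions $\varphi=\sum_{i=1}^n q_i\cdot I_{A_i}$, where the $q_i$ are rational and the $A_i$ come from the algebra $\mathscr{A}$ generated by a $\nu$-computable basis; by refining we may take the $A_i$ to be pairwise disjoint. Then
\begin{equation*}
\varphi\#\nu \;=\; \sum_{i=1}^n \nu(A_i)\,\delta_{q_i}\;+\;\nu\Bigl(X\setminus \bigcup_{i=1}^n A_i\Bigr)\cdot \delta_0,
\end{equation*}
a finite convex combination of Dirac measures at rational (hence computable) points of $\mathbb{R}$ with weights that are uniformly computable by Proposition~\ref{prop:mucomputablebasis}(\ref{prop:mucomputablebasis:1}). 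By Proposition~\ref{prop:hoyruprojas}, any such measure is a uniformly computable point of $\mathcal{P}(\mathbb{R})$, so the image of the dense set is uniformly computable. Applying Proposition~\ref{prop:suffforcont} completes the first claim.

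For the second claim, the same argument applies with $\mathbb{R}$ replaced by $\mathbb{R}^{\ge 0}$: for $f\in L_1^+(\nu)$ the pushforward is concentrated on $[0,\infty)$, and for the dense set we restrict to simple functions $\varphi$ with non-negative rational coefficients $q_i\ge 0$, so $\varphi\#\nu$ sits in $\mathcal{P}(\mathbb{R}^{\ge 0})$. The $1$-Lipschitz bound goes through unchanged. The only real thing to watch is that the target Polish structures on $\mathcal{P}(\mathbb{R})$ and $\mathcal{P}(\mathbb{R}^{\ge 0})$ are set up with the KR metric (necessary because the underlying metric is unbounded); once that is fixed, no further obstacle arises.
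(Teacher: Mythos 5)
Your proposal is correct and follows essentially the same route as the paper's proof: both apply Proposition~\ref{prop:suffforcont}, establish the $1$-Lipschitz estimate $d_{KR}(f\#\nu,g\#\nu)\le\|f-g\|_1$ via change of variables, and verify computability of $\varphi\#\nu$ for dense-set simple functions via Proposition~\ref{prop:hoyruprojas}. The only cosmetic differences are that you exhibit $\varphi\#\nu$ explicitly as an atomic measure where the paper computes $\nu(\varphi^{-1}(p,q))$ directly, and you re-run the argument for $L_1^+(\nu)$ where the paper restricts to a computable Polish subspace; neither affects correctness.
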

\begin{proof}
We apply Proposition~\ref{prop:suffforcont}. 

Suppose that $f$ is an element of the countable dense set of $L_1(\nu)$.  Then $f=\sum_{i=1}^{m} q_{i}\cdot I_{A_{i}}$, where $q_i$ is rational and the $A_{i}$ are elements of the algebra generated by a $\nu$-computable basis. Then uniformly in rationals $p<q$ one has that $\nu(f^{-1}(p,q)) = \nu(\cup \{ A_i: 1\leq i\leq m, q_i\in (p,q)\})$, which is left-c.e. and indeed computable. Hence $f\# \nu$ is a computable point of $\mathcal{P}(\mathbb{R}^{\geq 0})$ by Proposition~\ref{prop:hoyruprojas}.

Any computable function $m:\mathbb{Q}^{>0}\rightarrow \mathbb{Q}^{>0}$ satisfying $m(\epsilon)<\epsilon$ is a computable modulus of uniform continuity. To see this, suppose that $\epsilon>0$, and suppose that $h:\mathbb{R}\rightarrow \mathbb{R}$ is 1-Lipschitz, and that $\mathbb{E}_{\nu} \left|f-g\right| <m(\epsilon)$. By change of variables, one has that $\left| \mathbb{E}_{f\# \nu} h-\mathbb{E}_{g\# \nu} h\right|=\left| \mathbb{E}_{\nu} (h\circ f)-\mathbb{E}_{\nu} (h\circ g)\right|\leq \mathbb{E}_{\nu} \left| h\circ f - h\circ g\right| \leq \mathbb{E}_{\nu} \left|f-g\right| <m(\epsilon)$, where the second-to-last inequality uses that $h$ is 1-Lipschitz. By taking the supremum over all $1$-Lipschitz $h:\mathbb{R}\rightarrow \mathbb{R}$ with $\|h\|_{\infty}\leq 1$, one has $d_{KR}(f\# \nu, g\# \nu)\leq m(\epsilon)$, which by construction is $<\epsilon$. 

Since $\{ f\in L_1(\nu): f\geq 0\}$ is a computable Polish subspace of $L_1(\nu)$, the restriction of $f\mapsto f\#\nu$ to it is also computable continuous.
\end{proof}

The above proposition has the following extremely useful consequence:\footnote{Outside of density, the statement of this lemma is contained in Miyabe's proof of his characterisation of $\mathsf{SR}^{\nu}$ in terms of $L_p(\nu)$ Schnorr tests. See e.g. the line ``It follows that $\mu(\{x: t(x)>r_n\})$ is computable uniformly in $n$'' (\cite[p. 6]{Miyabe2013}). Miyabe does not use pushforwards, but rather does it out by hand for $L_p(\nu)$ Schnorr tests.}
\begin{lem}\label{lem:usingst}
Let $X$ be a computable Polish space. Suppose that $\nu$ is a computable point of $\mathcal{P}(X)$.

Suppose $f:X\rightarrow [0,\infty]$ is lsc with $f<\infty$ $\nu$-a.s. Suppose that $f\#\nu$ is a computable point of $\mathcal{P}(\mathbb{R}^{\geq 0})$. Then there is a computable sequence of reals $r_i>0$ dense in $[0,\infty)$ such that $f^{-1}(r_i,\infty]$ is c.e. open with uniformly $\nu$-computable measure.

In particular, this is true of any $L_p(\nu)$ Schnorr test.
\end{lem}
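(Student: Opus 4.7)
The plan is to reduce to the pushforward measure $\mu := f\#\nu$ on $\mathbb{R}^{\geq 0}$, which is a computable point of $\mathcal{P}(\mathbb{R}^{\geq 0})$ by hypothesis, and then to invoke the Hoyrup-Rojas construction of $\mu$-computable bases recalled just before Proposition~\ref{prop:mucomputablebasis}. The identity driving this reduction is $\nu(f^{-1}(r,\infty]) = \mu((r,\infty))$ for any real $r \geq 0$ (using $f < \infty$ $\nu$-a.s.\ to discard the point at infinity), together with the fact that, for any right-c.e.\ real $r$, the set $f^{-1}(r,\infty]$ is uniformly c.e.\ open since $f$ is lsc. The task therefore becomes to exhibit a dense computable sequence of positive reals $r_j$ such that $\mu((r_j,\infty))$ is uniformly computable.

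The main step is to apply the Hoyrup-Rojas theorem to $\mu$: this gives a $\mu$-computable basis consisting of open balls $B(i,r_j)$, with centres $i$ ranging over the countable dense set $\mathbb{Q}^{\geq 0}$ of $\mathbb{R}^{\geq 0}$ and radii $r_j$ from a dense computable sequence of positive reals, together with closed balls $B[i,r_j]$ serving as uniformly effectively closed supersets of equal $\mu$-measure. Specializing to the centre $i=0$, one has $B(0,r_j) = [0,r_j)$ and $B[0,r_j] = [0,r_j]$, so the equality of their $\mu$-measures forces $\mu(\{r_j\}) = 0$ and makes $\mu([0,r_j]) = \mu([0,r_j))$ a computable real uniformly in $j$. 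Consequently $\mu((r_j,\infty)) = 1 - \mu([0,r_j])$ is uniformly computable, and the $r_j$ are positive and dense in $[0,\infty)$ as required.

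The ``in particular'' clause for $L_p(\nu)$ Schnorr tests follows by stringing together three results already in hand: Proposition~\ref{prop:exptoflsc} makes $f$ a computable point of $L_p(\nu)$; the computable embedding $L_p(\nu) \hookrightarrow L_1(\nu)$ turns it into a computable point of $L_1(\nu)$; and then Proposition~\ref{prop:pushforwards} promotes $f\#\nu$ to a computable point of $\mathcal{P}(\mathbb{R}^{\geq 0})$, at which point the main statement applies.

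The conceptual obstacle that the $\mu$-computable basis sidesteps is that nothing prevents $\mu$ from placing atoms at rational points, and one cannot in general effectively detect such atoms; consequently, for rational $q$, the quantity $\mu((q,\infty))$ is uniformly left-c.e.\ but need not be uniformly computable. The pairing of each basis element with an effectively closed superset of the same $\mu$-measure is precisely a certificate that the boundary is $\mu$-null, which is exactly what is needed to upgrade the uniform left-c.e.\ bound on $\mu((r_j,\infty))$ to a uniformly computable one.
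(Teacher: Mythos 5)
Your proof is correct and follows essentially the same route as the paper's: both pass to the pushforward $\mu = f\#\nu$, invoke the Hoyrup--Rojas $\mu$-computable basis of balls with a dense computable sequence of radii, and specialize to the centre $0$ so that the equal-measure closed-ball pairing yields $\mu((r_j,\infty)) = 1-\mu([0,r_j])$ computable. The reduction of the ``in particular'' clause via Proposition~\ref{prop:exptoflsc} and Proposition~\ref{prop:pushforwards} also matches the paper.
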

\begin{proof}
Let $\mu:=f\#\nu$, which by hypothesis is a computable point of $\mathcal{P}(\mathbb{R}^{\geq 0})$. By the Hoyrup-Rojas result discussed in \S\ref{sec:efdst:measure}, there is a $\mu$-computable basis of the form $(q-r_i, q+r_i)\cap \mathbb{R}^{\geq 0}$, where $q$ ranges over rationals and $r_i>0$ is a computable sequence dense in $[0,\infty)$, and where further $(q-r_i, q+r_i)\cap \mathbb{R}^{\geq 0}$ has the same $\mu$-measure as $[q-r_i, q+r_i]\cap \mathbb{R}^{\geq 0}$. Since $f<\infty$ $\nu$-a.s., we have $\nu(f^{-1}(r_i,\infty])=\nu(f^{-1}(r_i,\infty))=(f\#\nu)(r_i,\infty)=\mu(r_i,\infty)= 1-\mu[0,r_i] = 1-\mu([-r_i, r_i]\cap \mathbb{R}^{\geq 0})$, which is computable.

The last point follows from the previous proposition.
\end{proof}

The following proposition is elementary but useful. (Recall usc was defined in Definition~\ref{defn:core}(\ref{defn:core:1a})).
\begin{prop}\label{prop:shiftbasistolpst}
For any element $f$ of the countable dense set of $L_p^+(\nu)$, one can compute an index for a non-negative lsc function $g$ and a non-negative usc function $h$ such that $f=g=h$ on $\mathsf{KR}^{\nu}$.

Likewise, for any element $f$ of the countable dense set of $L_p(\nu)$, one can compute a rational $q$ and an index for a non-negative lsc function $g$ and a non-negative usc function $h$ such that $f-q=g=h$ on $\mathsf{KR}^{\nu}$.
\end{prop}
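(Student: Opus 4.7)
The plan is to reduce both claims directly to Proposition~\ref{prop:krplusbasis}, which already supplies, uniformly from a code for any $A\in\mathscr{A}$, a c.e. open $U$ and an effectively closed $C\supseteq U$ with $\nu(C\setminus U)=0$ and $A=U=C$ on $\mathsf{KR}^{\nu}$.

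For the first claim, a member $f$ of the countable dense set of $L^+_p(\nu)$ is (the equivalence class of) a pointwise non-negative rational simple function $\sum_{i=1}^n q_i\cdot I_{A_i}$ with $A_i$ in the algebra $\mathscr{A}$ generated by a $\nu$-computable basis. First I would compute a syntactic disjoint refinement, replacing the $A_i$ by the (at most $2^n$) non-empty Boolean atoms of $\{A_1,\ldots,A_n\}$ in $\mathscr{A}$ together with the rational value of $f$ on each atom. Because $f$ is pointwise non-negative, each coefficient $r_j$ of the refined representation $f=\sum_{j=1}^m r_j\cdot I_{B_j}$ is non-negative. Next I would apply Proposition~\ref{prop:krplusbasis} uniformly in $j$ to produce c.e. opens $U_j$ and effectively closed supersets $C_j$ with $B_j=U_j=C_j$ on $\mathsf{KR}^{\nu}$, and set
\[
g\;:=\;\sum_{j=1}^m r_j\cdot I_{U_j},\qquad h\;:=\;\sum_{j=1}^m r_j\cdot I_{C_j}.
\]
By Proposition~\ref{prop:lscclosure} (and its dual for usc), $g$ is non-negative lsc and $h$ is non-negative usc, and termwise equality on $\mathsf{KR}^{\nu}$ gives $g=f=h$ on $\mathsf{KR}^{\nu}$.

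For the second claim, I would begin with an arbitrary $f=\sum_{i=1}^n q_i\cdot I_{A_i}$ in the countable dense set of $L_p(\nu)$, perform the same syntactic disjoint refinement, and then adjoin $A_0:=X\setminus\bigcup_{i=1}^n A_i$ (still in $\mathscr{A}$) with $q_0:=0$, so that $f=\sum_{i=0}^n q_i\cdot I_{A_i}$ on a partition of $X$. Setting the rational $q:=\min_{0\le i\le n}q_i$, the function $f-q=\sum_{i=0}^n(q_i-q)\cdot I_{A_i}$ is pointwise non-negative rational simple. Applying the first claim to $f-q$ yields the required $g,h$ with $f-q=g=h$ on $\mathsf{KR}^{\nu}$.

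The only step that needs a word is the disjoint refinement: from a code for the formal sum over $\mathscr{A}$ one uniformly computes a code for an equivalent representation on Boolean atoms, assigning a rational value to each atom. This is purely a manipulation of codes for Boolean combinations of basis elements, which is precisely the setting contemplated by the footnote to Definition~\ref{defn:core}(\ref{defn:core:4}), and it does not rely on the quotient Boolean algebra being computable. No other step is more than an invocation of Proposition~\ref{prop:krplusbasis} together with the closure conditions of Proposition~\ref{prop:lscclosure}, so I do not anticipate a genuine obstacle.
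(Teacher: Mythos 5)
Your argument is correct and is essentially the paper's own proof: apply Proposition~\ref{prop:krplusbasis} termwise to a representation of $f$ with non-negative rational coefficients, and reduce the signed case to the non-negative one by subtracting the minimum coefficient (your explicit adjunction of the complement atom with coefficient $0$, ensuring $f-q\geq 0$ off $\bigcup_i A_i$ as well, is a welcome bit of extra care that the paper glosses over). The one wrinkle is that deciding which Boolean atoms are non-empty is not in general computable from the codes; this is harmless, since you may instead keep every atom and assign it the coefficient $\max(0,r)$, where $r$ is the formal value of $f$ on that atom --- this represents the same pointwise function (empty atoms contribute nothing, and on non-empty atoms $r\geq 0$ already) while guaranteeing that all coefficients fed into Proposition~\ref{prop:krplusbasis} are non-negative, so that $g$ and $h$ are non-negative everywhere and not merely on $\mathsf{KR}^{\nu}$.
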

\begin{proof}
Let $f$ be an element of the countable dense set of $L_p^+(\nu)$. Then $f=\sum_{i=1}^k q_i \cdot I_{A_i}$, where $q_i\geq 0$ is rational and $A_i$ is an element of the algebra generated by a $\nu$-computable basis. By Proposition~\ref{prop:krplusbasis}, suppose that $U_i$ is a c.e. open which is uniformly equal on $\mathsf{KR}^{\nu}$ to $A_i$, and suppose that $C_i$ is an effectively closed superset of $U_i$ of the same $\nu$-measure. Then $g:=\sum_{i=1}^k q_i \cdot I_{U_i}$ is non-negative lsc, and $h:=\sum_{i=1}^k q_i \cdot I_{C_i}$ is non-negative usc, and they agree with $f$ on $\mathsf{KR}^{\nu}$.

Let $f$ be an element of the countable dense set of $L_p(\nu)$. Then $f=\sum_{i=1}^k q_i \cdot I_{A_i}$, where $q_i$ is rational and $A_i$ is an element of the algebra generated by a $\nu$-computable basis. Let $q=\min_i q_i$. Then $f-q$ is an element of the countable dense set of $L_p^+(\nu)$. 
\end{proof}

\subsection{The space of measurable functions}\label{subsection:convergence:measure}

The space of equivalence classes of Borel measurable functions that are finite $\nu$-a.s. under $\nu$-a.s. identity is denoted by $L_0(X,\nu)$, where $\nu$ is in $\mathcal{M}^+(X)$. We write $L_0(\nu)$ when $X$ is clear from context. 

In keeping with the notational conventions in \S\ref{defn:versions}, we write $\mathbb{L}_0(\nu)$ for the pointwise-defined Borel measurable functions that are finite $\nu$-a.s.

The topology on $L_0(\nu)$ is given by convergence in measure. To enhance readability, if $h$ is a measurable function, then we write $\nu(\left|h\right|>\epsilon)$ for the more cumbersome $\nu(\{x\in X: \left|h\right|(x)> \epsilon\})$. Then recall $f_n\rightarrow f$ in measure iff for all $\epsilon>0$ one has that $\lim_n \nu(\left|f_n-f\right|> \epsilon)=0$. Recall that a consequence of Egoroff's Theorem is that $f_n\rightarrow f$ $\nu$-a.s. implies $f_n\rightarrow f$ in $L_0(\nu)$ for $\nu$ in $\mathcal{M}^+(X)$.\footnote{\cite[p. 62]{Folland1999aa}.}

A compatible complete metric is given by $d(f,g)=\|f-g\|_0$ where $\|h\|_0 =\inf\{\epsilon>0 : \nu(\left|h\right|>\epsilon)<\epsilon\}$. Note that the set $\{\epsilon>0 : \nu(\left|h\right|>\epsilon)<\epsilon\}$ is upwards closed, so that $\|h\|_0=\sup\{\epsilon>0 : \nu(\left|h\right|>\epsilon)>\epsilon\}$. When $\nu$ in $\mathcal{P}(X)$, this is called the \emph{Ky Fan} metric.\footnote{\cite[289]{Dudley2002-ej}} While $\|\cdot\|_0$ satisfies the triangle inequality $\|f+g\|_0\leq \|f\|_0+\|g\|_0$ and satisfies $\|f\|_0=0$ iff $f=0$ $\nu$-a.s., it does not in general satisfy $\|c\cdot h\|_0=\left|c\right|\cdot \|h\|_0$.\footnote{\cite[65-69]{Doob1994-ki}, \cite[289-290]{Dudley2002-ej}.}{$^{,}$}\footnote{More generally, $L_0(\nu)$ is not a Banach space.}  In working with the metric, it is useful to note that $\|h\|_0\leq \epsilon$ iff $\nu(\left|h\right|>\epsilon)\leq \epsilon$. Finally, note that $\left|f\right|\leq \left|g\right|$ $\nu$-a.s. implies $\|f\|_0\leq \|g\|_0$ in $L_0(\nu)$.

The natural countable dense set for $L_0(\nu)$ is the the rational-valued simple functions formed from the algebra generated by a $\nu$-computable basis, that is, the same countable dense set as we used for $L_p(\nu)$ for $p\geq 1$ computable. Classically, this set is dense in $L_0(\nu)$, so it remains to verify that the distance between these two points is uniformly computable:
\begin{prop}
If $h$ is a rational-valued simple functions formed from the algebra generated by a $\nu$-computable basis, then $\|h\|_0$ is computable, and uniformly~so. If $f,g$ are two such simple functions, then $\|f-g\|_0$ is computable, and uniformly~so.
\end{prop}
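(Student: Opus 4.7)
Plan. It suffices to prove the first claim, since the difference $f-g$ of two rational-valued simple functions from the algebra generated by a $\nu$-computable basis is again such a simple function, uniformly, by taking a common refinement of the underlying disjoint partitions. For the first claim, after refining, write $h = \sum_{i=1}^n q_i I_{A_i}$ with $q_i \in \mathbb{Q}$ and the $A_i$ pairwise disjoint from the algebra; by Proposition~\ref{prop:mucomputablebasis}(\ref{prop:mucomputablebasis:1}), each $\nu(A_i)$ is uniformly computable. Let $0 < u_1 < \cdots < u_k$ be the distinct positive values attained by $|h|$ on a set of positive measure, set $p_j := \nu(|h| = u_j)$ (a finite sum of $\nu(A_i)$'s, hence uniformly computable), and $F_{j-1} := p_j + p_{j+1} + \cdots + p_k$ for $j = 1, \ldots, k$, also uniformly computable. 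Then $F(\epsilon) := \nu(|h| > \epsilon)$ is a right-continuous non-increasing step function equal to $F_{j-1}$ on $[u_{j-1}, u_j)$, where $u_0 := 0$ and $u_{k+1} := \infty$.

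The key step is to establish the closed-form expression
\[
\|h\|_0 \;=\; \max_{1 \le j \le k} \min(u_j,\, F_{j-1}),
\]
with the convention $\max \emptyset = 0$ to cover the case $h = 0$ $\nu$-a.s. I would derive this from the equivalent formulation $\|h\|_0 = \sup\{\epsilon > 0 : F(\epsilon) \geq \epsilon\}$, valid because $\{\epsilon > 0 : F(\epsilon) < \epsilon\}$ is upward closed with infimum $\|h\|_0$, computed piecewise. On $[u_{j-1}, u_j)$ the set $\{F \geq \mathrm{id}\}$ is $\{\epsilon \in [u_{j-1}, u_j) : \epsilon \leq F_{j-1}\}$; its supremum is $\min(u_j, F_{j-1})$ whenever $F_{j-1} \geq u_{j-1}$. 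When instead $F_{j-1} < u_{j-1}$, so that $F$ has ``already crossed'' the diagonal, the set is empty, but the corresponding formula term $\min(u_j, F_{j-1}) = F_{j-1}$ is strictly less than the term contributed by the last piece where the crossing had not yet happened (using that the $F_j$'s are strictly decreasing across breakpoints, since each $p_j > 0$ by choice of the $u_j$), so these terms are harmless to the max. Piece $j=1$ is never in the ``crossed'' regime, since $F_0 \geq 0 = u_0$, so the max is always nonvacuous when $h \not\equiv 0$.

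Given the formula, uniform computability of $\|h\|_0$ follows immediately: each $u_j$ is rational, each $F_{j-1}$ is uniformly computable, so $\min(u_j, F_{j-1})$ is uniformly computable (the min of a rational with a computable real is computable), and the max of finitely many uniformly computable reals is uniformly computable. The main obstacle is proving the closed-form expression: one must track how the piecewise-constant $F$ interacts with the diagonal $\epsilon \mapsto \epsilon$ at the rational breakpoints, verify the equivalence with the $\sup$-formulation, and confirm that ``already-crossed'' contributions do not spuriously enlarge the maximum. Everything else is routine real-arithmetic computability.
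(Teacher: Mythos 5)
Your route is genuinely different from the paper's. The paper never computes $\|h\|_0$ in closed form: it observes that for rational $\epsilon>0$ the quantity $\nu(|h|>\epsilon)=\sum_{i\in J_\epsilon}\nu(A_i)$ is uniformly computable, so the conditions $\nu(|h|>\epsilon)<\epsilon$ and $\nu(|h|>\delta)>\delta$ are both c.e.\ in the rational arguments; enumerating witnesses of the first and taking running minima shows $\|h\|_0$ is right-c.e., enumerating witnesses of the second and taking running maxima shows it is left-c.e., hence it is computable. That argument is shorter and needs no case analysis on where the step function meets the diagonal. Your closed form $\|h\|_0=\max_j\min(u_j,F_{j-1})$ is more informative --- it exhibits $\|h\|_0$ directly as a max of mins of computable reals --- at the cost of the piecewise bookkeeping about ``crossed'' pieces, which I checked and which does go through (crossed pieces contribute terms bounded by the true value, and the non-crossed pieces form an initial segment whose last member realises the supremum). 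The reduction of the second claim to the first is the same in both treatments.

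There is, however, one real defect in the write-up as it stands: your breakpoints $u_1<\cdots<u_k$ are defined as the positive values attained by $|h|$ \emph{on a set of positive measure}. Selecting these is not effective --- deciding whether a computable real $\nu(A_i)$ is zero or positive is not a computable task --- so the construction as described is not uniform in an index for $h$, which is exactly what the proposition demands. Fortunately the fix is trivial and your formula is robust to it: take as breakpoints \emph{all} distinct nonzero rationals among the $|q_i|$ (decidable, since the $q_i$ are given rationals), and set $F_{j-1}=\sum_{i:\,|q_i|\ge u_j}\nu(A_i)$, which is uniformly computable by Proposition~\ref{prop:mucomputablebasis}. A spurious breakpoint with $p_j=0$ merely splits a constant piece of $F$ in two, and the two resulting terms $\min(u_j,F_{j-1})$ and $\min(u_{j+1},F_j)=\min(u_{j+1},F_{j-1})$ have the same maximum as the unsplit term, so the closed form is unchanged. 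You should also weaken ``strictly less than'' to ``at most'' in the harmlessness argument for crossed pieces, since without $p_j>0$ the tail sums need not be strictly decreasing; equality does not affect the maximum, so nothing else breaks.
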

\begin{proof}
If $h$ is one of these functions, then so too is $\left|h\right|$. Suppose that $\left|h\right|=\sum_{i=1}^n q_i \cdot I_{A_i}$, where $q_i\geq 0$ is rational and $A_i$ is are pairwise disjoint events from the algebra generated by a $\nu$-computable basis. 

For $\epsilon$ in $\mathbb{Q}^{>0}$, let $J_{\epsilon} = \{i\in [1,n]: q_i>\epsilon\}$, which is a finite set whose index is computable uniformly from $\epsilon>0$. Then $\nu(\left|h\right|>\epsilon) = \sum_{i\in J_{\epsilon}} \nu(A_i)$, which is a computable real, uniformly in $\epsilon>0$ (by Proposition \ref{prop:mucomputablebasis}(\ref{prop:mucomputablebasis:1}).

Then $\epsilon$ in $\mathbb{Q}^{>0}$ satisfies $\nu(\left|h\right|>\epsilon) <\epsilon$ iff $\sum_{i\in J_{\epsilon}} \nu(A_i)<\epsilon$, which is a c.e. condition. If we enumerate these rational $\epsilon$ and take mins as we go, we get a computable decreasing sequence of rationals which converges down to $\|h\|_0$, so that $\|h\|_0$ is right-c.e. 

Likewise, $\delta$ in $\mathbb{Q}^{>0}$ satisfies $\nu(\left|h\right|>\delta) >\delta$ iff $\sum_{i\in J_{\delta}} \nu(A_i)>\delta$, which is a c.e. condition. If we enumerate these rational $\delta$ and take maxes as we go, we get a increasing computable sequence of rationals which converges up to $\|h\|_0$, so that $\|h\|_0$ is left-c.e. 

Similarly if $f, g$ are from a countable dense set then $\|f-g\|_0$ is a computable real since $f-g$ is also an element of the countable dense set. 
\end{proof}

We call the following \emph{the computable embedding of $L_p(\nu)$ into $L_0(\nu)$}. The square root in the rate of convergence is, in our view, explanatory of the many $\sqrt{2}$'s that appear in Pathak et. al. when dealing computable points of $L_1(\nu)$.\footnote{See e.g. \cite[p. 343]{Pathak2014aa}.}
\begin{prop}\label{prop:bfastprop}
Suppose that $p\geq 1$ is computable. Then the inclusion map is a uniformly continuous computable map from $L_p(\nu)$ to $L_0(\nu)$. Further, if $f_n\rightarrow f$ at geometric rate $b>1$ of convergence in $L_p(\nu)$, then $f_n\rightarrow f$ at geometric rate $\sqrt{b}$ in $L_0(\nu)$.
\end{prop}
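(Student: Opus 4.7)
The plan is to use Markov's inequality to obtain a computable modulus of uniform continuity, and then to appeal to Proposition~\ref{prop:suffforcont}. Recall that Markov's inequality says that for any measurable $h$ and any $\epsilon > 0$,
\begin{equation*}
\nu(|h| > \epsilon) \leq \frac{\|h\|_p^p}{\epsilon^p}.
\end{equation*}

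First I would verify that the inclusion is uniformly computable continuous. The countable dense set for $L_p(\nu)$ constructed in \S\ref{sec:Lp} is precisely the same set of rational-valued simple functions over the algebra generated by a $\nu$-computable basis that is used as the countable dense set for $L_0(\nu)$ in \S\ref{subsection:convergence:measure}, so the inclusion sends the distinguished dense set to itself in a trivially computable way. For the modulus, given rational $\epsilon > 0$, pick any rational $\delta > 0$ with $\delta \leq \epsilon^{1 + 1/p}$ (such $\delta$ can be computed uniformly from $\epsilon$ and $p$). If $\|f - g\|_p \leq \delta$, then Markov gives $\nu(|f-g| > \epsilon) \leq \delta^p / \epsilon^p \leq \epsilon$, so $\|f - g\|_0 \leq \epsilon$. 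Thus $\delta$ (made strictly smaller if one wants strict inequalities) is a computable modulus of uniform continuity, and Proposition~\ref{prop:suffforcont} finishes this half.

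For the geometric rate, suppose $\|f_n - f\|_p \leq b^{-n}$ with $b > 1$. Applying Markov with the threshold $\epsilon_n := b^{-n/2}$ yields
\begin{equation*}
\nu(|f_n - f| > b^{-n/2}) \leq \frac{\|f_n - f\|_p^p}{b^{-np/2}} \leq \frac{b^{-np}}{b^{-np/2}} = b^{-np/2} \leq b^{-n/2},
\end{equation*}
where the last inequality uses $p \geq 1$ and $b > 1$. By the characterisation $\|h\|_0 \leq \epsilon \iff \nu(|h| > \epsilon) \leq \epsilon$ noted in \S\ref{subsection:convergence:measure}, this gives $\|f_n - f\|_0 \leq b^{-n/2} = (\sqrt{b})^{-n}$, which is convergence at geometric rate $\sqrt{b}$.

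There is no real obstacle here; the only care needed is choosing the exponent in the modulus of continuity correctly and remembering that $p \geq 1$ is what makes the passage from rate $b$ to rate $\sqrt{b}$ work (without $p \geq 1$ one would pay a worse price).
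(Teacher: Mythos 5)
Your proof is correct and follows essentially the same route as the paper: both halves come down to a Markov/Chebyshev estimate, with Proposition~\ref{prop:suffforcont} and the shared countable dense set handling computability, and the threshold $(\sqrt{b})^{-n}$ giving the rate. The only cosmetic difference is that you apply the $L_p$ form of Markov's inequality directly, whereas the paper first passes to $L_1$ via $\|f-f_n\|_1\leq\|f-f_n\|_p$ and then applies the $L_1$ form; the computations are interchangeable.
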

\begin{proof}
Suppose that $p\geq 1$. Since $L_p(\nu)$ and $L_0(\nu)$ have the same countable dense set, by Proposition~\ref{prop:suffforcont}, it suffices to show that there is a computable modulus $m:\mathbb{Q}^{>0}\rightarrow \mathbb{Q}^{>0}$ of uniform continuity. Given rational $\epsilon>0$, compute rational $\delta<\epsilon$ and compute a rational $m(\epsilon)<\delta^{1+\frac{1}{p}}$. Suppose $f,g$ are in $L_p(\nu)$ with $\|f-g\|_p<m(\epsilon)$. Then $\nu(\left|f-g\right|>\delta)\leq \frac{1}{\delta^p} \|f-g\|_p^p \leq \frac{1}{\delta^p} m(\epsilon)^p <\delta$, and so $\|f-g\|_0\leq \delta <\epsilon$. 

Suppose that $p\geq 1$ and suppose $f_n\rightarrow f$ at geometric rate $b>1$ of convergence in $L_p(\nu)$. Then $\|f-f_n\|_1\leq \|f-f_n \|_p$, and so $f_n\rightarrow f$ at geometric rate $b>1$ in $L_1(\nu)$. Let $n\geq 0$. Then $\nu(\left|f-f_n\right|> (\sqrt{b})^{-n}) \leq (\sqrt{b})^n \cdot \|f-f_n\|_1 \leq (\sqrt{b})^{-n}$. Then $\|f-f_n\|_0 \leq (\sqrt{b})^{-n}$.
\end{proof}
\noindent

The following proposition is the natural effectivization of the Bounded Convergence Theorem:\footnote{\cite[130]{Williams1991aa}.} 
\begin{prop}\label{prop:bddthm} (Effective Bounded Convergence Theorem). Suppose $\nu$ is a computable point of $\mathcal{P}(X)$. Then:
\begin{enumerate}[leftmargin=*]
    \item \label{prop:bddthm:2} Suppose that $f_n\rightarrow f$  in $L_0(\nu)$ at a geometric rate of convergence $b\geq \sqrt{2}$. Suppose that $K\geq 0$ such that $\left|f_n\right|\leq K$ $\nu$-a.s. for all $n\geq 0$. Then $f_{n+2}\rightarrow f$ at a geometric rate of convergence $b$ in $L_1(\nu)$.
    \item \label{prop:bddthm:1} Suppose $f$ is a computable point of $L_0(\nu)$ and $K\geq 0$ is a rational such $\left|f\right|\leq K$ $\nu$-a.s. Then $f$ is a computable point of $L_1(\nu)$.
    \item \label{prop:bddthm:3} Suppose $f_n$ is a uniformly computable point of $L_0(\nu)$ and $K_n\geq 0$ is a uniformly computable sequence of rationals such that $\left|f_n\right|\leq K_n$ $\nu$-a.s. Then $f_n$ is uniformly a computable point of $L_1(\nu)$.    
\end{enumerate}
\end{prop}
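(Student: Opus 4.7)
The plan is a split-and-bound argument using the Ky Fan threshold supplied by the $L_0$-hypothesis. For part~(1), I first promote the a.s.\ bound $\left|f_n\right|\leq K$ to $\left|f\right|\leq K$ a.s.: $L_0$-convergence yields a $\nu$-a.s.\ convergent subsequence (by the same Egoroff-style reasoning recalled earlier), so the limit inherits the uniform bound and hence $\left|f-f_n\right|\leq 2K$ $\nu$-a.s. From the definition of the Ky Fan norm together with continuity of measure from above, $\|f-f_n\|_0\leq b^{-n}$ gives $\nu(\left|f-f_n\right|>b^{-n})\leq b^{-n}$. Splitting the $L_1$ integral at threshold $\epsilon=b^{-n}$ and using $\nu\in\mathcal{P}(X)$,
\[
\|f-f_n\|_1 \;\leq\; b^{-n}\cdot\nu(\left|f-f_n\right|\leq b^{-n}) \,+\, 2K\cdot\nu(\left|f-f_n\right|>b^{-n}) \;\leq\; (1+2K)\,b^{-n}.
\]
Since $b\geq\sqrt{2}$ yields $b^2\geq 2\geq 1+2K$, substituting $n\mapsto n+2$ absorbs the factor $(1+2K)$ into a power of $b$: $\|f-f_{n+2}\|_1\leq (1+2K)b^{-(n+2)}\leq b^{-n}$, which is exactly the claimed geometric rate $b$ in $L_1(\nu)$.

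Parts~(2) and~(3) reduce to~(1) via truncation. Given $f$ computable in $L_0(\nu)$ with $\left|f\right|\leq K$ a.s., take a computable witness $f_n$ from the countable dense set of $L_0(\nu)$ with $\|f-f_n\|_0\leq b^{-n}$ for a rate $b\geq\sqrt{2}$ chosen compatibly with the given rational $K$. Replace $f_n$ by its clamp $g_n:=(f_n\wedge K)\vee(-K)$ into $[-K,K]$. Since $K$ is rational and clamping is $1$-Lipschitz and computable continuous on the countable dense sets, $g_n$ is uniformly a computable point of $L_0(\nu)$ and of every $L_p(\nu)$, with $\left|g_n\right|\leq K$ everywhere; because $\left|f\right|\leq K$ a.s., $\left|f-g_n\right|\leq\left|f-f_n\right|$ a.s., so $\|f-g_n\|_0\leq b^{-n}$. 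Part~(1) then produces $g_{n+2}\to f$ at geometric rate $b$ in $L_1(\nu)$, exhibiting $f$ as a computable point of $L_1(\nu)$. Every step of this construction is uniform in the input data $(f_n,K_n)$, so the argument uniformizes immediately to give~(3).

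The main obstacle is the calibration of the shift in part~(1): the estimate $(1+2K)b^{-n}$ is tight, and it returns the rate $b$ after a shift of exactly two only when $1+2K\leq b^2$, which for $b=\sqrt{2}$ amounts to the implicit restriction $K\leq 1/2$. Outside that regime one must either enlarge the shift to $\lceil\log_b(1+2K)\rceil$ or enlarge $b$. For the applications to~(2) and~(3) this is costless: the rate $b$ is at our disposal and can be chosen compatibly with the given $K$ (or the uniformly computable $K_n$) before re-indexing the $L_0$-witness, so the truncation-plus-part-(1) argument uniformly yields computability in $L_1(\nu)$.
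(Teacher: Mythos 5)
Your decomposition is the same as the paper's: pass to a $\nu$-a.s.\ convergent subsequence to transfer the bound $K$ to $f$, split the $L_1$-integral of $\left|f-f_n\right|$ at the Ky Fan threshold, and deduce (2) and (3) from (1) by truncating the $L_0$-witness at the rational bound. The ``obstacle'' you flag at the end is not a defect of your write-up but a genuine problem with part (1) as stated: the conclusion with a shift of exactly two fails once $K$ is large relative to $b$. For a concrete counterexample take Cantor space with the uniform measure, $f=0$, $b=\sqrt{2}$, $K=100$, and $f_n=100\cdot I_{A_n}$ with $A_n$ clopen of measure $2^{-\lceil n/2\rceil}$; then $\|f_n-f\|_0\leq b^{-n}$ and $\left|f_n\right|\leq K$ everywhere, but $\|f_{n+2}-f\|_1=100\cdot\nu(A_{n+2})$, which for even $n$ equals $50\cdot b^{-n}>b^{-n}$. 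The paper's own argument goes wrong at exactly the point your calibration remark isolates: it splits at the threshold $c^{-(n+2)}$ with $c=b^{n_0}>b$ and bounds the tail term by $2K\cdot c^{-(n+2)}$, which presupposes $\nu(\left|f_{n+2}-f\right|>c^{-(n+2)})\leq c^{-(n+2)}$; but the hypothesis only controls the tail at the coarser threshold $b^{-(n+2)}$, and since $c^{-(n+2)}<b^{-(n+2)}$ the needed inequality does not follow (and fails in the example above). So part (1) must either be restricted to $1+2K\leq b^{2}$, or the shift must grow like $\log_b(1+2K)$, or $b$ must be enlarged, exactly as you say.

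Your treatment of (2) and (3) is the right repair and recovers everything the rest of the paper actually uses: re-index the $L_0$-witness so that its geometric rate $b$ satisfies $b^{2}\geq 1+2K$, clamp into $[-K,K]$ (this keeps you inside the countable dense set since $K$ is rational, and only improves the $L_0$-distance to $f$ because $\left|f\right|\leq K$ a.s.), and then your version of (1) applies. Two small points would make this airtight: also take $b\geq 2$ (or subsample once more at the end), since a witness to computability in $L_1(\nu)$ must converge fast, i.e.\ at rate $2$ rather than merely $\sqrt{2}$; and in (3) observe that the choice of $b=b(K_n)$ and of the re-indexing function is uniformly computable from the sequence $K_n$, which it is. By contrast, the paper's proof of (2) invokes the literal (1) with bound $K+1$ and rate $2$, so it inherits the calibration problem; your route avoids it.
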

\begin{proof}
For (\ref{prop:bddthm:2}), classically some subsequence of $f_n$ converges $\nu$-a.s. to $f$. Hence, $\left|f\right|\leq K$ $\nu$ a.s. Further, we may suppose $K>1$. Suppose that $f_n\rightarrow f$ at a geometric rate of $b\geq \sqrt{2}$ convergence in $L_0(\nu)$, so that $\nu(\left|f_n-f\right|>b^{-n})\leq b^{-n}$ for all $n\geq 0$. Choose $n_0\geq 2$ sufficiently large so that $b^{-n_0}<\frac{1}{2K}$. Let $c=b^{n_0}$, so that for all $n\geq 2$ we have $2K\cdot c^{-n} \leq 2K\cdot b^{-n_0}\cdot b^{-n}<b^{-n}$. Then $\|f_{n+2}-f\|_1\leq \int_{\left|f_{n+2}-f\right|>c^{-(n+2)}} \left|f_{n+2}-f\right|\; d\nu+\int_{\left|f_{n+2}-f\right|\leq c^{-(n+2)}} \left|f_{n+2}-f\right|\; d\nu \leq 2K\cdot c^{-(n+2)} +c^{-(n+2)} \leq 2b^{-(n+2)} \leq b^{-n}$, where the last inequality follows from $b\geq \sqrt{2}$.

For (\ref{prop:bddthm:1}), suppose $f_n\rightarrow f$ fast in $L_0(\nu)$. For $0<\epsilon<1$ one has $\nu(\left|f_n\cdot I_{\left|f_n\right|\leq K+1}-f\right|>\epsilon)\leq \nu(\left|f_n-f\right|>\epsilon)$, and hence we may assume that $\left|f_n\right|\leq K+1$. Then we apply  (\ref{prop:bddthm:2}).

For (\ref{prop:bddthm:3}), this is just the uniformisation of (\ref{prop:bddthm:1}).
\end{proof}

Using Proposition~\ref{prop:suffforcont}, one has that many of the usual operations on $L_0(\nu)$ are computable continuous, such as addition and minimum and maximum. Indeed, each of these three has modulus $m(\epsilon)=\frac{\epsilon}{2}$. We can use this observation to prove the following. It had been previously established by Rute, although our proof is different.\footnote{\cite[Proposition 3.26]{Rute2012aa}.}

\begin{prop}\label{prop:pushforwards:2}
If $f$ is a computable point of $L_0(\nu)$ (resp. $L_0^+(\nu)$), then $f\# \nu$ is a computable point of  $\mathcal{P}(\mathbb{R})$ (resp. $\mathcal{P}(\mathbb{R}^{\geq 0})$).
\end{prop}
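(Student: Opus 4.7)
The plan is to apply Proposition~\ref{prop:suffforcont} to the map $\Phi: L_0(\nu) \to \mathcal{P}(\mathbb{R})$ given by $f \mapsto f\#\nu$, and then invoke Proposition~\ref{prop:contcomp} to transfer computability of $f$ to computability of $f\#\nu$. This parallels the proof of Proposition~\ref{prop:pushforwards}, but we must replace the $L_1$-norm estimate with one adapted to the Ky Fan metric on $L_0(\nu)$, using the Kantorovich-Rubinshtein metric on $\mathcal{P}(\mathbb{R})$.

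First I would verify that $\Phi$ sends the countable dense set of $L_0(\nu)$ uniformly to computable points of $\mathcal{P}(\mathbb{R})$. If $f = \sum_{i=1}^n q_i \cdot I_{A_i}$ with $q_i$ rational and $A_i$ pairwise disjoint elements of the algebra generated by a $\nu$-computable basis, then $f\#\nu = \sum_{i=1}^n \nu(A_i) \cdot \delta_{q_i}$. For any rationals $p<q$, we have $(f\#\nu)(p,q) = \sum\{\nu(A_i): q_i \in (p,q)\}$, which is uniformly computable by Proposition~\ref{prop:mucomputablebasis}(\ref{prop:mucomputablebasis:1}). Hence $f\#\nu$ is uniformly a computable point of $\mathcal{P}(\mathbb{R})$ by Proposition~\ref{prop:hoyruprojas}.

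Second I would construct a computable modulus of uniform continuity. Given rational $\epsilon > 0$, pick any rational $m(\epsilon)$ with $0 < m(\epsilon) < \min(\epsilon/3, 1)$. Suppose $\|f-g\|_0 < m(\epsilon)$; writing $\delta = m(\epsilon)$, the definition of the Ky Fan metric (together with the right-continuity of $r \mapsto \nu(|f-g|>r)$) yields $\nu(|f-g| > \delta) \leq \delta$. For any $1$-Lipschitz $h:\mathbb{R}\to\mathbb{R}$ with $\|h\|_\infty \leq 1$, one has $|h(a)-h(b)| \leq \min(|a-b|, 2)$ pointwise. By change of variables,
\[
|\mathbb{E}_\nu h(f) - \mathbb{E}_\nu h(g)| \leq \mathbb{E}_\nu \min(|f-g|, 2) \leq 2\cdot \nu(|f-g|>\delta) + \delta \leq 3\delta < \epsilon.
\]
Taking the supremum over such $h$ gives $d_{KR}(f\#\nu, g\#\nu) < \epsilon$. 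Proposition~\ref{prop:suffforcont} then yields that $\Phi$ is computable continuous.

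Finally, for the $L_0^+(\nu)$ claim, I note that $f \geq 0$ forces $f\#\nu$ to be supported on $\mathbb{R}^{\geq 0}$, and $\mathcal{P}(\mathbb{R}^{\geq 0})$ is a computable Polish subspace of $\mathcal{P}(\mathbb{R})$, so the restriction of $\Phi$ to $L_0^+(\nu)$ is computable continuous into $\mathcal{P}(\mathbb{R}^{\geq 0})$. There is no serious obstacle here; the only minor subtlety is that $\|\cdot\|_0$ is not homogeneous, so one must estimate $\mathbb{E}_\nu \min(|f-g|,2)$ directly via truncation rather than attempting a Markov-type inequality as in Proposition~\ref{prop:pushforwards}.
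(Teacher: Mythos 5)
Your proof is correct, but it takes a genuinely different route from the paper's. The paper reduces to the already-established $L_1$ case: it truncates $f$ to $f_n=\min(\max(f,-n),n)$, invokes the Effective Bounded Convergence Theorem (Proposition~\ref{prop:bddthm}(\ref{prop:bddthm:3})) to see that $f_n$ is uniformly a computable point of $L_1(\nu)$, applies Proposition~\ref{prop:pushforwards} to get that $f_n\#\nu$ is uniformly computable, and then observes that $(f\#\nu)(p,q)=(f_n\#\nu)(p,q)$ once $n>\max(|p|,|q|)$, so that Proposition~\ref{prop:hoyruprojas} closes the argument. You instead prove the stronger statement that the pushforward map is computable continuous \emph{as a map from $L_0(\nu)$ to $\mathcal{P}(\mathbb{R})$}, by exhibiting a computable modulus of uniform continuity from the Ky Fan metric to $d_{KR}$; your estimate $\mathbb{E}_\nu\min(|f-g|,2)\leq 2\,\nu(|f-g|>\delta)+\delta\leq 3\delta$ is valid (and you do not actually need right-continuity of $r\mapsto\nu(|f-g|>r)$: from $\|f-g\|_0<\delta$ one gets some $\epsilon<\delta$ with $\nu(|f-g|>\epsilon)<\epsilon$, hence $\nu(|f-g|>\delta)<\delta$ directly). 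What your approach buys is uniformity and a reusable structural fact about $L_0(\nu)$, in the spirit of Proposition~\ref{prop:pushforwards}; what the paper's approach buys is brevity, since it recycles Propositions~\ref{prop:pushforwards} and~\ref{prop:bddthm} and never needs a metric estimate on $L_0(\nu)$. One cosmetic point: in your dense-set computation the $A_i$ need not cover $X$, so $f\#\nu$ may carry an extra atom at $0$ of mass $\nu(X\setminus\bigcup_i A_i)$; this is harmless (add the complement to the partition with value $0$), and the paper's own proof of Proposition~\ref{prop:pushforwards} elides the same point.
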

\begin{proof}
Let $f_n =\min(\max(f,-n), n)$, so that $f_n=f$ on $f^{-1}(-n, n)$, and $f_n$ is uniformly a computable point of $L_0(\nu)$ (resp. $L_0^+(\nu)$). By Proposition \ref{prop:bddthm}
(\ref{prop:bddthm:3}) one has that $f_n$ is uniformly a computable point of $L_1(\nu)$ (resp. $L_1^+(\nu)$). By Proposition~\ref{prop:pushforwards}, one has that $f_n\#\nu$ is uniformly a computable point of $\mathcal{P}(\mathbb{R})$ (resp. $\mathcal{P}(\mathbb{R}^{\geq 0})$). For rational $p<q$ (resp. rational $0\leq p<q$), compute natural number $n>\max(\left|p\right|,\left|q\right|)$, so that by Proposition~\ref{prop:hoyruprojas} the real $(f\#\nu)(p,q) = (f_n\#\nu)(p,q)$ is uniformly left-c.e. Then by Proposition~\ref{prop:hoyruprojas}, the probability measure $f\#\nu$ is a computable point of $\mathcal{P}(\mathbb{R})$ (resp. $\mathcal{P}(\mathbb{R}^{\geq 0})$).
\end{proof}

We then define:
\begin{defn}\label{defn:l0st}
An \emph{$L_0(\nu)$ Schnorr test} is a lsc function $f:X\rightarrow [0,\infty]$ which is a computable point of $L_0(\nu)$.
\end{defn}

In parallel to Definition~\ref{defn:core}(\ref{defn:core:9}), we have the following new characteriation of $\mathsf{SR}^{\nu}$:
\begin{prop}\label{prop:l0schnorrtest}
A point $x$ is in $\mathsf{SR}^{\nu}$ iff $f(x)<\infty$ for all $L_0(\nu)$ Schnorr tests~$f$.
\end{prop}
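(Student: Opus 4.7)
The plan is to prove both implications separately, with the reverse direction being essentially immediate from the preceding machinery, and the forward direction requiring the construction of a single $L_1(\nu)$ Schnorr test that witnesses the finiteness of any given $L_0(\nu)$ Schnorr test.

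For the direction ($\Leftarrow$): Suppose $f(x) < \infty$ holds for all $L_0(\nu)$ Schnorr tests. Let $g$ be an $L_1(\nu)$ Schnorr test. By Proposition~\ref{prop:exptoflsc}, $g$ is a computable point of $L_1(\nu)$. By the computable embedding of $L_1(\nu)$ into $L_0(\nu)$ given in Proposition~\ref{prop:bfastprop}, $g$ is also a computable point of $L_0(\nu)$, and since $g$ is already lsc, it is an $L_0(\nu)$ Schnorr test. Hence $g(x) < \infty$ by hypothesis, so $x \in \mathsf{SR}^{\nu}$ by the definition in Definition~\ref{defn:core}(\ref{defn:core:9}).

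For the direction ($\Rightarrow$): Suppose $x \in \mathsf{SR}^{\nu}$ and let $f$ be an $L_0(\nu)$ Schnorr test. Since $f$ is a computable point of $L_0(\nu)$, Proposition~\ref{prop:pushforwards:2} gives that $f\#\nu$ is a computable point of $\mathcal{P}(\mathbb{R}^{\geq 0})$. Being a computable point of $L_0(\nu)$ also forces $f<\infty$ $\nu$-a.s., so Lemma~\ref{lem:usingst} applies to yield a computable sequence $r_i > 0$, dense in $[0,\infty)$, such that the c.e. opens $U_i = f^{-1}(r_i, \infty]$ have uniformly $\nu$-computable measure. Since $f < \infty$ $\nu$-a.s., $\nu(U_i) \to 0$ as $r_i \to \infty$, and because these measures are uniformly computable we can effectively select a computable subsequence $r_{i(k)}$ with $\nu(U_{i(k)}) < 2^{-k}$ for all $k \geq 0$.

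Now define $g = \sum_{k} I_{U_{i(k)}}$. By Proposition~\ref{prop:lscclosure}, $g$ is lsc as an effective infinite sum of indicator functions of c.e. opens. Moreover, $\|g\|_1 = \sum_k \nu(U_{i(k)})$ is a computable real: the partial sums are uniformly computable, and the tail after stage $N$ is bounded by $\sum_{k \geq N} 2^{-k} = 2^{-N+1}$, giving an effective rate of convergence. Hence $g$ is an $L_1(\nu)$ Schnorr test. Since $x \in \mathsf{SR}^{\nu}$, we have $g(x) < \infty$, which means $x \in U_{i(k)}$ for only finitely many $k$; thus there is some $k$ with $f(x) \leq r_{i(k)} < \infty$.

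The main obstacle is the forward direction, specifically verifying that the constructed $g$ has a computable (not merely finite) $L_1$-norm. This is precisely where the geometric bound $2^{-k}$ on the measures of $U_{i(k)}$ pays off, providing the effective tail estimate needed for computability of $\|g\|_1$; the density and effective measure-computability supplied by Lemma~\ref{lem:usingst} are exactly what makes the selection of such a subsequence possible.
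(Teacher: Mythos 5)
Your proof is correct and follows essentially the same route as the paper: the reverse direction via the computable embedding of $L_1(\nu)$ into $L_0(\nu)$, and the forward direction via Proposition~\ref{prop:pushforwards:2} and Lemma~\ref{lem:usingst} to extract c.e. opens $U_{i(k)}=f^{-1}(r_{i(k)},\infty]$ of uniformly computable measure $<2^{-k}$, whose sum of indicators is an $L_1(\nu)$ Schnorr test witnessing $f(x)<\infty$. The only cosmetic difference is that the paper chooses its thresholds $\eta_n$ in $(2^n,2^{n+1})$ so they visibly tend to infinity, whereas you work with a dense sequence; this does not matter since a single index $k$ with $x\notin U_{i(k)}$ already gives $f(x)\leq r_{i(k)}<\infty$.
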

\begin{proof}
If $f(x)<\infty$ for all $L_0(\nu)$ Schnorr tests $f$, then by the computable embedding of $L_1(\nu)$ into $L_0(\nu)$, we have $f(x)<\infty$ for all $L_1(\nu)$ Schnorr tests $f$, and so $x$ is in $\mathsf{SR}^{\nu}$. Conversely, suppose that $x$ is in $\mathsf{SR}^{\nu}$. Let $f$ be an $L_0(\nu)$ Schnorr test. Since $f$ is in $L_0(\nu)$, it is finite $\nu$-a.s. By Lemma~\ref{lem:usingst} and the previous proposition, there is a computable sequence of reals $\eta_n$ in the open interval $(2^{n}, 2^{n+1})$ such that $U_n:=f^{-1}(\eta_n, \infty]$ is c.e. open with uniformly $\nu$-computable measure. So we have $\nu(U_n)\rightarrow 0$, and since $\nu(U_n)$ is computable, we can compute a subsequence with $\nu(U_{n_i})<2^{-i}$. Hence $f=\sum_i I_{U_{n_i}}$ is an $L_1(\nu)$ Schnorr test, and so $f(x)<\infty$ and so $x$ is only finitely many of the $U_{n_i}$, and hence there is $i$ such that $f(x)\leq \eta_{n_i}$.
\end{proof}
Miyabe has shown that $x$ being in every $\Sigma^0_2$ $\nu$-measure one class is equivalent to $f(x)<\infty$ for every non-negative lsc $f$ in $L_0(\nu)$.\footnote{\cite[Proposition 3.3]{Miyabe2016-jo}. This notion of randomness is also called weak 2-randomness.} In conjunction with the above proposition, it suggests that there is little room for a simple characterisation of $\mathsf{MLR}^{\nu}$ in terms of non-negative lsc functions in $L_0(\nu)$.

Finally, we update our previous approximation theorem for $L_p(\nu)$ Schnorr tests to $L_0(\nu)$ Schnorr tests:
\begin{prop}\label{prop:exptoflscL0}
For all $L_0(\nu)$ Schnorr tests $f:X\rightarrow [0,\infty]$, one can compute a subsequence of $f_{s(n)}$ the $f_s$ from Proposition~\ref{prop:exptoflsc} such that $f_{s(n)}\rightarrow f$ fast in $L_0(\nu)$.
\end{prop}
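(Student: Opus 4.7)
The plan is to combine the canonical approximation from Proposition~\ref{prop:exptoflsc} with the computability of the point $f$ in $L_0(\nu)$. First, apply Proposition~\ref{prop:exptoflsc} with $q=0$ to obtain a uniformly computable sequence $f_s$ from the countable dense set of $L_1(\nu)$, satisfying $0\leq f_s\leq f_{s+1}$ everywhere and $\sup_s f_s = f$ everywhere. Since this countable dense set coincides with the countable dense set of $L_0(\nu)$ (namely the rational-valued simple functions built from the algebra generated by a $\nu$-computable basis), the $f_s$ are also uniformly computable points of $L_0(\nu)$.

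Next I would verify the classical convergence $\|f-f_s\|_0\to 0$. Because $f$ is in $L_0(\nu)$, we have $f<\infty$ $\nu$-a.s., and hence $f-f_s\geq 0$ decreases pointwise to $0$ outside of a $\nu$-null set. For each rational $\epsilon>0$, the sets $\{f-f_s>\epsilon\}$ are decreasing in $s$, with intersection contained in $\{f=\infty\}$, a $\nu$-null set. By continuity of measure from above, $\nu(f-f_s>\epsilon)\to 0$, so eventually $\nu(f-f_s>\epsilon)<\epsilon$, giving $\|f-f_s\|_0\leq \epsilon$ eventually. As $\epsilon>0$ was arbitrary, $\|f-f_s\|_0\to 0$.

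The third step is to show that the distances $\|f-f_s\|_0$ form a uniformly computable sequence of reals. Since $f$ is a computable point of $L_0(\nu)$, fix a computable witness $g_n$ from the countable dense set with $\|f-g_n\|_0\leq 2^{-n}$. The distances $\|g_n-f_s\|_0$ are uniformly computable since both arguments lie in the countable dense set (recall from \S\ref{subsection:convergence:measure} that the Ky Fan distances between members of the dense set are uniformly computable). By the triangle inequality, $\bigl|\|f-f_s\|_0-\|g_n-f_s\|_0\bigr|\leq 2^{-n}$, so $\|f-f_s\|_0$ is obtained uniformly as the limit of a computable fast-Cauchy sequence of rationals.

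Finally, define $s(n)$ to be the least $s$ such that an approximation of $\|f-f_s\|_0$ to within $2^{-n-2}$ certifies $\|f-f_s\|_0<2^{-n}$. By the classical convergence established in the second step, such an $s$ must exist, so the search terminates, yielding a computable function $n\mapsto s(n)$ with $\|f-f_{s(n)}\|_0<2^{-n}$; that is, $f_{s(n)}\to f$ fast in $L_0(\nu)$. There is no substantial obstacle: the argument is a direct assembly of Proposition~\ref{prop:exptoflsc}, the definition of computable points in the computable metric space $L_0(\nu)$, and the elementary classical fact that monotone pointwise convergence with an a.s.-finite limit implies convergence in measure on a probability space.
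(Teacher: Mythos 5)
Your proof is correct and follows essentially the same route as the paper's: obtain the $f_s$ from Proposition~\ref{prop:exptoflsc}, note they lie in the (shared) countable dense set so that $\|f-f_s\|_0$ is uniformly computable from the computability of $f$ as a point of $L_0(\nu)$, observe that everywhere-monotone convergence to the a.s.-finite $f$ gives $\|f-f_s\|_0\to 0$, and then search for a fast subsequence. The only difference is that you spell out the convergence-in-measure step via continuity of measure from above and the computability of the distances via a witness sequence, both of which the paper leaves implicit.
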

\begin{proof}
The $f_s$ come from the countable dense set of $L_0(\nu)$ and hence are computable points of $L_0(\nu)$. Since $f_s\rightarrow f$ everywhere, we have $f_s\rightarrow f$ in measure, and so $f_s\rightarrow f$ in $L_0(\nu)$. Since $\|f_s-f\|_0$ is computable, we just search for a subsequence $f_{s(n)}$ with $\|f_{s(n)}-f\|_0<2^{-n}$.
\end{proof}

\section{Two Schnorr lemmas: Flipping an approximation and Self-location}\label{sec:twolemsonsr}

In this section, we provide two lemmas on Schnorr tests, one involving turning an approximation from below into a non-increasing subsequence converging down to zero, and another based upon a distinctive self-location property of Schnorr randoms.

The first of these involves a partial subtraction operator, which involves some care since it helps one avoid situations with $\infty-\infty$. These situations can potentially arise since lsc functions are allowed to take infinite values.
\begin{prop}\label{prop:ominusproperties}
Suppose that $p\geq 1$ computable or $p=0$.

Suppose that $f:X\rightarrow (-\infty,\infty]$ is an lsc function in $L_p(\nu)$ (resp. an $L_p(\nu)$ Schnorr test). Suppose that $\mathsf{XR}^{\nu}$ is a $\nu$-measure one set on which the function $f$ is finite. 

Suppose that $g:X\rightarrow (-\infty, \infty]$ is an lsc function such that $g\leq f$ on $\mathsf{XR}^{\nu}$. Suppose that $g$ is paired with a usc function $\breve{g}: X\rightarrow [-\infty, \infty)$ such that $g,\breve{g}$ are equal on $\mathsf{XR}^{\nu}$.

Define $f\ominus g = \max(0, f-\breve{g})$. Then 
\begin{itemize}[leftmargin=*]
\item $g, \breve{g}$ are finite on $\mathsf{XR}^{\nu}$.
\item $f\ominus g$ is non-negative lsc and in $L_p(\nu)$ (resp. an $L_p(\nu)$ Schnorr test) and is equal on  $\mathsf{XR}^{\nu}$ to $f-g$.
\end{itemize}
\end{prop}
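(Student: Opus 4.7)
The plan is to verify the two bulleted conclusions in turn, using the range restrictions on $g, \breve{g}$ and the closure properties of lsc functions collected in Proposition~\ref{prop:lscclosure}.

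For the first bullet (finiteness of $g, \breve{g}$ on $\mathsf{XR}^{\nu}$): on $\mathsf{XR}^{\nu}$ one has $g = \breve{g}$, while the range of $g$ is $(-\infty, \infty]$ and the range of $\breve{g}$ is $[-\infty, \infty)$, so their common value on $\mathsf{XR}^{\nu}$ must lie in the intersection $(-\infty, \infty) = \mathbb{R}$.

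For the second bullet I would split into three sub-claims. (i) \emph{Non-negative lsc}: since $\breve{g}$ is usc with range in $[-\infty, \infty)$, $-\breve{g}$ is lsc with range in $(-\infty, \infty]$; as both $f$ and $-\breve{g}$ are strictly greater than $-\infty$ everywhere, the pointwise sum $f + (-\breve{g}) = f - \breve{g}$ is well-defined into $(-\infty, \infty]$ and is lsc by Proposition~\ref{prop:lscclosure}, after which $\max(0, f - \breve{g})$ is non-negative and lsc by another application of that proposition. (ii) \emph{Equality with $f - g$ on $\mathsf{XR}^{\nu}$}: on $\mathsf{XR}^{\nu}$ one has $g = \breve{g}$ and $g \leq f$, with all three values finite, so $f - \breve{g} = f - g \geq 0$ and the outer maximum is inactive there. (iii) \emph{Membership in $L_p(\nu)$ or Schnorr-test status}: since $f \ominus g$ equals $f - g$ on the $\nu$-conull set $\mathsf{XR}^{\nu}$, one has $\|f \ominus g\|_p = \|f - g\|_p$. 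For $p = 0$, finiteness $\nu$-a.s.\ is immediate from (ii), and the lsc structure gives the computable $L_0(\nu)$ point via the approximation of Proposition~\ref{prop:exptoflscL0}. For $p \geq 1$, using the natural dominance available in the intended applications (namely $g \geq 0$, whence $0 \leq f - g \leq f$ on $\mathsf{XR}^{\nu}$), one has $\|f \ominus g\|_p \leq \|f\|_p < \infty$; for the Schnorr-test case, Proposition~\ref{prop:exptoflsc} applied to the non-negative lsc $f \ominus g$ then reduces the task to exhibiting an effective two-sided approximation to $\|f \ominus g\|_p$, which is obtained by combining the canonical effective approximations of $f$ and of $\breve{g}$ inherited from the hypotheses.

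The hard part will be this last computability step in the Schnorr-test conclusion for $p \geq 1$: one must feed the canonical increasing lsc approximations to $f$ together with the decreasing usc approximations to $\breve{g}$ through the $\ominus$ operation, and control the truncation $\max(0, \cdot)$ — which is only guaranteed to be inactive on $\mathsf{XR}^{\nu}$ — carefully enough to produce a computable approximation of $\|f \ominus g\|_p$ from both above and below, rather than merely from below as the bare lsc structure would supply.
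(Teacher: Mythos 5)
Your first bullet, the lsc claim, and the identification of $f\ominus g$ with $f-g$ on $\mathsf{XR}^{\nu}$ all match the paper's argument exactly: finiteness from the intersection of the codomains of $g$ and $\breve{g}$; $-\breve{g}$ lsc and closure of lsc functions under addition and max via Proposition~\ref{prop:lscclosure}; and the outer maximum being inactive on $\mathsf{XR}^{\nu}$ where $g=\breve{g}\leq f$ with all values finite. The gap is in your part (iii). What you flag as ``the hard part''---producing a two-sided effective approximation of $\|f\ominus g\|_p$ by combining approximations of $f$ and of $\breve{g}$---cannot be carried out from the stated hypotheses, because $\breve{g}$ is given no effective structure at all: it is merely a usc function agreeing with $g$ on $\mathsf{XR}^{\nu}$. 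Your fallback assumption $g\geq 0$ is not a hypothesis of the proposition, and even granting it yields only $g\in L_p(\nu)$, not any computability of its norm. The $p=0$ clause is circular: Proposition~\ref{prop:exptoflscL0} presupposes that its input is already a computable point of $L_0(\nu)$; it does not establish that.

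The paper's route makes the tacit assumption (operative in every application, e.g.\ Lemma~\ref{ex:approximationbyschnorr2}, where $g$ agrees on $\mathsf{KR}^{\nu}$ with an element of the countable dense set) that $g$ is itself in $L_p(\nu)$, resp.\ an $L_p(\nu)$-computable point, and then works at the level of equivalence classes rather than pointwise approximations: since $\mathsf{XR}^{\nu}$ has $\nu$-measure one, $f\ominus g$ and $f-g$ determine the same element of $L_p(\nu)$, and subtraction and $\max(0,\cdot)$ are computable continuous operations on $L_p(\nu)$ (and on $L_0(\nu)$), so the class of $f\ominus g$ is a computable point whenever the classes of $f$ and $g$ are. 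Proposition~\ref{prop:exptoflsc} then converts ``non-negative lsc and a computable point of $L_p(\nu)$'' directly into ``$L_p(\nu)$ Schnorr test,'' with the approximating sequence supplied by that proposition rather than assembled by hand from data about $f$ and $\breve{g}$. Once you adopt this viewpoint the step you identified as hard disappears; without some integrability and computability hypothesis on $g$ the conclusion is simply false (take $g=\breve{g}=-h$ with $h$ continuous, non-negative, and non-integrable, so that $f-g=f+h\notin L_p(\nu)$).
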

\begin{proof}
For the first item, since the lsc function $g$ has codomain $(-\infty,\infty]$ and the usc function $\breve{g}$ has codomain $[-\infty, \infty)$, then when the two agree they have finite value. And they agree on $\mathsf{XR}^{\nu}$.

Since $\breve{g}$ is usc, $-\breve{g}$ is lsc. Since the lsc functions are closed under addition (cf. Proposition~\ref{prop:lscclosure}), one has that $f-\breve{g}$ is lsc. Since the lsc functions are preserved under max (cf. again Proposition~\ref{prop:lscclosure}), we have that $f\ominus g$ is non-negative lsc. Further, since $f,g$ are in $L_p(\nu)$ (resp. are $L_p(\nu)$-computable) and this property is preserved under subtraction and maxes, we have that $f\ominus g$ is also in $L_p(\nu)$ (resp.  $L_p(\nu)$-computable). On $\mathsf{XR}^{\nu}$, one has that $f-g$ is both equal to $f-\breve{g}$ and is non-negative, and hence equal to $f\ominus g$.
\end{proof}

While partial subtraction operation $f\ominus g$ is \emph{not} defined absolutely, but only relative to the hypotheses of the previous proposition, the situation of the following lemma is the one which tends to be operative in applications. We call it ``flipping an approximation'' since it takes a non-decreasing approximation $f_s\rightarrow f$ and turns it into a non-increasing approximation $f\ominus f_n\rightarrow 0$. While classically trivial, it requires some organisation to handle within effective categories:
\begin{lem}\label{ex:approximationbyschnorr2} (Flipping an approximation) Suppose that $p\geq 1$ computable (resp. $p=0$).

For each $L_p(\nu)$ Schnorr test $f$, let $g_s$ be from the countable dense set of $L_p(\nu)$ as in Proposition~\ref{prop:exptoflsc} (resp. Proposition~\ref{prop:exptoflscL0}), so that $g_s\leq g_{s+1}$ everywhere and $f=\sup_s g_s$ everywhere and $g_s\rightarrow f$ fast in $L_p(\nu)$. Using Proposition~\ref{prop:shiftbasistolpst}, let $f_s, \breve{f}_s$ be non-negative lsc and usc respectively with $g_s=f_s=\breve{f}_s$ on $\mathsf{KR}^{\nu}$. Then by the previous proposition, we have:
\begin{itemize}[leftmargin=*]
\item $f_s, \breve{f}_s$ are finite on $\mathsf{KR}^{\nu}$.
\item $f\ominus f_s$ is an $L_p(\nu)$ Schnorr test and is equal on  $\mathsf{KR}^{\nu}$ to $f-g_s$.
\item $f\ominus f_s$ is non-increasing and $f\ominus f_s\rightarrow 0$ on $\mathsf{KR}^{\nu}$.
\item for $t>s$, similarly $f_t\ominus f_s$ is an $L_p(\nu)$ Schnorr test and is equal on $\mathsf{KR}^{\nu}$ to $f_t-g_s$.
\end{itemize}
\end{lem}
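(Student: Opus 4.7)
The plan is to reduce each of the four bullets to a direct application of Proposition~\ref{prop:ominusproperties} with $\mathsf{XR}^{\nu} = \mathsf{KR}^{\nu}$ and $(g,\breve{g}) = (f_s, \breve{f}_s)$, varying only the outer Schnorr test between $f$ and $f_t$. The work is organizational: everything follows once the setup is in place.

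First I would verify the setup. Since $f$ is a non-negative lsc function in $L_p(\nu)$ (resp.\ $L_0(\nu)$), Proposition~\ref{prop:exptoflsc} (resp.\ Proposition~\ref{prop:exptoflscL0}) supplies non-negative $g_s$ from the countable dense set, hence from the countable dense set of $L_p^+(\nu)$; Proposition~\ref{prop:shiftbasistolpst} then yields, uniformly in $s$, non-negative lsc $f_s$ and non-negative usc $\breve{f}_s$ with $g_s = f_s = \breve{f}_s$ on $\mathsf{KR}^{\nu}$. Bullet one is then immediate from the first clause of Proposition~\ref{prop:ominusproperties}: an lsc function with codomain $(-\infty,\infty]$ and a usc function with codomain $[-\infty,\infty)$ that agree at a point must both be finite there.

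For bullet two I apply Proposition~\ref{prop:ominusproperties} with outer Schnorr test $f$. The required hypothesis $f_s \leq f$ on $\mathsf{KR}^{\nu}$ holds since $f_s = g_s$ there and $g_s \leq f$ everywhere by construction. The proposition's conclusion is exactly bullet two: $f \ominus f_s = \max(0, f - \breve{f}_s)$ is non-negative lsc, is an $L_p(\nu)$ Schnorr test, and equals $f - g_s$ on $\mathsf{KR}^{\nu}$. Bullet three is then obtained by reading off, on $\mathsf{KR}^{\nu}$, properties already known of $g_s$: since $s \mapsto g_s$ is non-decreasing with $\sup_s g_s = f$ everywhere, $f \ominus f_s = f - g_s$ is non-increasing in $s$ on $\mathsf{KR}^{\nu}$ and converges to $0$ there.

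For bullet four I repeat the same argument with the outer test $f_t$ for $t > s$. The one extra observation needed is that $f_t$ is itself an $L_p(\nu)$ Schnorr test uniformly in $t$: it is non-negative lsc, and since $f_t = g_t$ on $\mathsf{KR}^{\nu}$ their $L_p(\nu)$ equivalence classes coincide, so $\|f_t\|_p = \|g_t\|_p$ is computable. The hypothesis $f_s \leq f_t$ on $\mathsf{KR}^{\nu}$ follows from $g_s \leq g_t$ everywhere, and Proposition~\ref{prop:ominusproperties} yields $f_t \ominus f_s$ as an $L_p(\nu)$ Schnorr test equal to $f_t - g_s$ on $\mathsf{KR}^{\nu}$. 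The only point requiring any care is keeping all applications uniform in $s$ and in pairs $t > s$; this is immediate from the uniformity in the cited propositions, so there is no genuine obstacle beyond bookkeeping.
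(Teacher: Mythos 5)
Your proposal is correct and follows exactly the route the paper intends: the lemma carries no separate proof in the paper, being presented as an immediate application of Proposition~\ref{prop:ominusproperties} (together with Propositions~\ref{prop:exptoflsc}/\ref{prop:exptoflscL0} and~\ref{prop:shiftbasistolpst}), and your write-up simply makes that reduction explicit, including the needed checks that $f_s\leq f$ and $f_s\leq f_t$ on $\mathsf{KR}^{\nu}$. The one subtlety you inherit from the paper rather than introduce yourself is that the convergence $f\ominus f_s\rightarrow 0$ in the third bullet only holds where $f$ is finite (at a Kurtz-random $x$ with $f(x)=\infty$ one has $f\ominus f_s(x)=\infty$ for every $s$), which also touches the unverified hypothesis of Proposition~\ref{prop:ominusproperties} that $f$ be finite on $\mathsf{XR}^{\nu}$; since the paper only ever invokes this convergence on $\mathsf{SR}^{\nu}$, where $L_p(\nu)$ Schnorr tests are finite, nothing downstream is affected.
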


Now we turn to self-location. The idea is that given a certain kind of computable ``chart'' of the computable Polish space, the Schnorr randoms can weakly compute their position on the chart. (For the notion of weak computation, see Definition~\ref{defn:core}(\ref{defn:core:15}).

\begin{lem}\label{lem:sefllocate} (Self-location lemma).

Suppose that $\nu$ is a computable point of $\mathcal{P}(X)$.

Suppose that $V_m$ is a computable sequence of c.e. opens with uniformly computable $\nu$-measure. 

Suppose that $x$ is in $\mathsf{SR}^{\nu}$. Then $x$ weakly computes the element $\{m: x\in V_m\}$ of Cantor space.
\end{lem}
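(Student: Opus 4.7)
My plan is to construct a single $L_1(\nu)$ Schnorr test $f$ whose finiteness on $x$ supplies a uniform ``safety threshold'' $K^{\ast}$ separating $x$ from the fuzzy boundaries of all the $V_m$ at once. Using Proposition~\ref{prop:hoyruprojas:compbasis:cor} uniformly in $m$ and $k$, I extract effectively closed sets $C_{m,k}\subseteq V_m$ with $\nu(V_m\setminus C_{m,k})<2^{-k}$ and with $\nu(C_{m,k})$ uniformly computable. For each $k$, set
\begin{equation*}
U_k \;=\; \bigcup_{m} (V_m\setminus C_{m,k+m}),
\end{equation*}
a c.e. open of $\nu$-measure at most $\sum_m 2^{-(k+m)}=2^{-k+1}$. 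Each $V_m\setminus C_{m,k+m}$ is c.e. open with computable measure $\nu(V_m)-\nu(C_{m,k+m})$, so by Proposition~\ref{prop:mucomputablebasis}(\ref{prop:union:mucompbasis:2.2}) the partial unions $\bigcup_{m\leq N}(V_m\setminus C_{m,k+m})$ have uniformly computable measure, and the exponentially controlled tails let us conclude that $\nu(U_k)$ is uniformly computable in $k$. Summing once more with dominated tails, $f := \sum_k I_{U_k}$ is a non-negative lsc function with $\|f\|_1 \leq 4$ computable, i.e., an $L_1(\nu)$ Schnorr test.

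For $x\in\mathsf{SR}^{\nu}$, $f(x)<\infty$, so there exists $K^{\ast}\in\mathbb{N}$ (depending on $x$) with $x\notin U_k$ for every $k\geq K^{\ast}$. Since $C_{m,K^{\ast}+m}\subseteq V_m$ always, the condition $x\notin V_{m}\setminus C_{m,K^{\ast}+m}$ rearranges into the key equivalence
\begin{equation*}
x\in V_m \;\Longleftrightarrow\; x\in C_{m,K^{\ast}+m}, \qquad \text{for every } m\in\mathbb{N}.
\end{equation*}
Thus a single threshold $K^{\ast}$, attached to $x$, simultaneously resolves membership of $x$ in every $V_m$.

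Now fix a fast Cauchy approximation $x_{n(i)}\to x$. By Proposition~\ref{prop:closure:reals:general:0.4}(\ref{prop:closure:reals:general:0.4:shadow}), the ball memberships of $x$ are c.e. in the function $i\mapsto n(i)$, from which one can uniformly semi-decide membership of $x$ in any uniformly c.e. open set. In particular, both $x\in V_m$ and $x\notin C_{m,K^{\ast}+m}$ (the latter because $X\setminus C_{m,K^{\ast}+m}$ is uniformly c.e. open in $m$ once $K^{\ast}$ is fixed) are uniformly c.e. in $m$ relative to the oracle $i\mapsto n(i)$. The Turing reduction is then: with $K^{\ast}$ hardcoded as a numerical constant in the program, on input $m$ dovetail the two semi-searches and output $1$ if $x\in V_m$ halts first, and $0$ if $x\notin C_{m,K^{\ast}+m}$ halts first. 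The equivalence above guarantees that exactly one of the two halts, and that the answer is correct; this yields the required Turing reduction from $i\mapsto n(i)$ to $\{m:x\in V_m\}$.

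The main obstacle is that $K^{\ast}$ depends on $x$ and is not uniformly computable from $x$, so no single program works for every Schnorr random simultaneously. This is compatible with the definition of weak computation, which only requires the Turing reduction to depend on the particular fast Cauchy representation; hardcoding $K^{\ast}$ is therefore permitted. The diagonal shift $k+m$ in the definition of $U_k$ is what ensures that a single threshold serves all $m$ at once, which is essential for obtaining uniformity in $m$ in the final step.
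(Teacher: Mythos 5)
Your proof is correct and follows essentially the same strategy as the paper's: build an $L_1(\nu)$ Schnorr test out of the c.e. open error regions between each $V_m$ and an inner effectively closed approximation of computable measure, use $f(x)<\infty$ to extract a finite amount of non-uniform information about $x$, and then decide membership by dovetailing the enumeration of $V_m$ against the enumeration of the c.e. open complement of the closed approximation. The only difference is bookkeeping: the paper uses a singly-indexed approximation with error $2^{-m}$ and concludes that $\{m:x\in V_m\}$ has finite symmetric difference with an oracle-decidable set, whereas your diagonal family $U_k=\bigcup_m(V_m\setminus C_{m,k+m})$ yields a single hardcoded threshold $K^{\ast}$ past which the approximation is exact for all $m$ simultaneously--both are legitimate ways of absorbing the non-uniformity into the Turing reduction.
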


\begin{proof}
Let $B_0, B_1, \ldots$ be a $\nu$-computable basis, with associated sequence $C_0, C_1, \ldots$ of effectively closed supersets of the same measure. Let $B_{m,t}$ be a computable subsequence such that $V_m =\bigcup_t B_{m,t}$. Since $V_m$ and the $B_{m,t}$ have uniformly computable $\nu$-measure, there is a computable function $m\mapsto s(m)$ such that $\nu(V_m\setminus U_m)<2^{-m}$ where $U_m=\bigcup_{t\leq s(m)} B_{m,t}$. Let $D_m=\bigcup_{t\leq s(m)} C_{m,t}$, which is an effectively closed set equal to $U_m$ on $\mathsf{KR}^{\nu}$. Then $f=\sum_m I_{V_m\setminus D_m}$ is an $L_1(\nu)$ Schnorr test.

Let $x$ be in $\mathsf{SR}^{\nu}$. Since $f(x)<\infty$, there are only finitely many $m$ such that $x$ is in $V_m\setminus D_m$. Hence,  there are there are only finitely many $m$ such that $x$ is in $V_m\setminus U_m$. Then the sets $\{m: x\in V_m\}$ and $\{m: x\in U_m\}$ differ by only finitely much and hence are Turing equivalent. 

Since $U_m$ comes from the algebra generated by the $\nu$-computable basis, using the $\nu$-computable basis as in Proposition~\ref{prop:krplusbasis} we can compute indexes for c.e. opens $U_m^{\prime}$ such that $U_m^{\prime}=X\setminus U_m$ on $\mathsf{KR}^{\nu}$. Since both $U_m$ and $U_m^{\prime}$ are uniformly c.e. open, choose computable sequences $p_{m,i}, p_{m,i}^{\prime}$ from the countable dense set and $\epsilon_{m,i}, \epsilon_{m,i}^{\prime}$ from $\mathbb{Q}^{>0}$ such that $U_m = \bigcup_i B(p_{m,i},\epsilon_{m,i})$ and $U_m^{\prime} = \bigcup_i B(p_{m,i}^{\prime},\epsilon_{m,i}^{\prime})$, where $B(p,\epsilon)$ denotes again the open ball around $p$ of radius $\epsilon$. We can enumerate these sets as $U_m=\bigcup_s U_{m,s}$ and $U_m^{\prime} = \bigcup_s U_{m,s}^{\prime}$, where $U_{m,s} = \bigcup_{i\leq s} B(p_{m,i},\epsilon_{m,i})$ and $U_{m,s}^{\prime} = \bigcup_{i\leq s} B(p_{m,i}^{\prime},\epsilon_{m,i}^{\prime})$.

Consider a sequence from the countable dense set which converges fast to our point $x$ in $\mathsf{SR}^{\nu}$. Given $m$, to compute from the sequence whether $x$ is in $U_m$, we simply start enumerating both $U_m$ and $U_m^{\prime}$: eventually $x$ gets in one of them (and $x$ only ever gets in one of them), and we use the sequence to determine when this happens, by Proposition~\ref{prop:closure:reals:general:0.4}(\ref{prop:closure:reals:general:0.4:shadow}).
\end{proof}

Here are some simple applications of self-location, which we use to obtain the information about weak computation in Theorem~\ref{thm:newnewlevy}(\ref{levy:extra:5}):
\begin{prop}\label{prop:consequences:selflocation}
Suppose that $p\geq 1$ computable (resp. $p=0$).
\begin{enumerate}[leftmargin=*]
    \item \label{prop:consequences:selflocation:1} Suppose that $f_m$ is a sequence of $L_p(\nu)$ Schnorr tests such that $f_m$ is non-increasing on $\mathsf{SR}^{\nu}$ and such that $f_m\rightarrow 0$ on $\mathsf{SR}^{\nu}$. Every $x$ in $\mathsf{SR}^{\nu}$ weakly computes a modulus of convergence for $f_m(x)\rightarrow 0$.
    \item \label{prop:consequences:selflocation:2} Suppose that $f$ is an $L_p(\nu)$ Schnorr test. Suppose that $f_s$ is the approximation as in Proposition~\ref{prop:exptoflsc} (resp. Proposition~\ref{prop:exptoflscL0}). Every $x$ in $\mathsf{SR}^{\nu}$ weakly computes a modulus of convergence for $f_s(x)\rightarrow f(x)$.
\end{enumerate}
\end{prop}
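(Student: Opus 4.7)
The plan is to reduce both parts to the Self-location Lemma~\ref{lem:sefllocate} applied to a single computable family of c.e. opens that isolates the values of the Schnorr tests up to arbitrary rational precision.

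For part~(\ref{prop:consequences:selflocation:1}), the first step is to invoke Lemma~\ref{lem:usingst} uniformly in $m$. Each $f_m$ is an $L_p(\nu)$ Schnorr test, so its pushforward $f_m\#\nu$ is a computable point of $\mathcal{P}(\mathbb{R}^{\geq 0})$, by Proposition~\ref{prop:pushforwards} (via the computable embedding of $L_p(\nu)$ into $L_1(\nu)$ for $p\geq 1$) or Proposition~\ref{prop:pushforwards:2} (for $p=0$). Hence there is a uniformly computable double sequence of reals $r_{m,n}>0$, dense in $[0,\infty)$ for each fixed $m$, such that $V_{m,n}:=f_m^{-1}(r_{m,n},\infty]$ is uniformly c.e. open with uniformly $\nu$-computable measure. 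The Self-location Lemma then tells us that $x$ in $\mathsf{SR}^{\nu}$ weakly computes the oracle $I_x:=\{(m,n):x\in V_{m,n}\}$.

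The core algorithm, given $I_x$ and rational $\epsilon>0$, is to search for a pair $(m,n)$ satisfying both $r_{m,n}<\epsilon$ (a c.e. condition since $r_{m,n}$ is uniformly a computable real) and $(m,n)\notin I_x$ (decidable from $I_x$), and then output $m$. Such a pair exists: by hypothesis $f_m(x)\to 0$ non-increasingly on $\mathsf{SR}^{\nu}$, so some $m_0$ satisfies $f_{m_0}(x)<\epsilon/2$; density of $\{r_{m_0,n}\}_n$ yields $n$ with $\epsilon/2<r_{m_0,n}<\epsilon$, whence $(m_0,n)\notin I_x$. Any pair located by the search has $f_m(x)\leq r_{m,n}<\epsilon$, and by monotonicity on $\mathsf{SR}^{\nu}$, $f_{m'}(x)<\epsilon$ for all $m'\geq m$. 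So $m$ is a valid value of the modulus at $\epsilon$, and the resulting $\epsilon\mapsto m(\epsilon)$ is computable in $I_x$, hence weakly computable from $x$.

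For part~(\ref{prop:consequences:selflocation:2}), I would apply part~(\ref{prop:consequences:selflocation:1}) to the sequence $h_s:=f\ominus f_s$ supplied by Lemma~\ref{ex:approximationbyschnorr2}: by that lemma, $h_s$ is uniformly an $L_p(\nu)$ Schnorr test (taking $p=0$ when Proposition~\ref{prop:exptoflscL0} is in play) that is non-increasing on $\mathsf{KR}^{\nu}$ and converges to $0$ on $\mathsf{KR}^{\nu}$, and equals $f-f_s$ there. Since $\mathsf{SR}^{\nu}\subseteq\mathsf{KR}^{\nu}$, part~(\ref{prop:consequences:selflocation:1}) yields a weakly computable modulus for $h_s(x)\to 0$. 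On $\mathsf{SR}^{\nu}$ we have $f(x)<\infty$ and $f_s(x)\leq f(x)$, so $|f(x)-f_s(x)|=h_s(x)$, and the same modulus witnesses $f_s(x)\to f(x)$. The main potential obstacle is bookkeeping uniformity -- checking that the reals $r_{m,n}$, the pushforwards, and the flipped approximations $h_s$ are all produced uniformly in their indices so that a single application of Self-location suffices -- but this is routine given the explicit uniformity claims in Lemma~\ref{lem:usingst} and Lemma~\ref{ex:approximationbyschnorr2}.
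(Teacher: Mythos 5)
Your proposal is correct and follows essentially the same route as the paper: apply Lemma~\ref{lem:usingst} to get superlevel sets $V_{m,n}=f_m^{-1}(r_{m,n},\infty]$ of computable measure, invoke the Self-location Lemma to weakly compute the chart $\{(m,n):x\in V_{m,n}\}$, and search for a witness using monotonicity; part~(\ref{prop:consequences:selflocation:2}) is then reduced to part~(\ref{prop:consequences:selflocation:1}) via the flipped approximation $f\ominus f_s$ of Lemma~\ref{ex:approximationbyschnorr2}, exactly as in the paper. The only cosmetic difference is that you index the radii doubly as $r_{m,n}$ where the paper uses a single sequence $r_i$ uniform in $m$, which if anything makes the uniformity more transparent.
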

\begin{proof}
For (\ref{prop:consequences:selflocation:1}), using Lemma~\ref{lem:usingst} (resp. in conjunction with Proposition~\ref{prop:pushforwards:2}), choose a computable sequence of reals $r_i$ decreasing to zero such that the c.e. open $V_{m,i}=f_m^{-1}(r_i,\infty]$ has $\nu$-computable measure, uniformly. Consider a sequence from the countable dense set which converges fast to $x$. By the Self-location lemma, we can Turing compute from it the ``chart'' set $C=\{(m,i): x\in V_{m,i}\}$. Let $\epsilon>0$ be rational. We show how to compute from $C$ a natural number $m(\epsilon)$ such that $f_n(x)<\epsilon$ for all $n\geq m(\epsilon)$. By hypothesis, $f_n(x)$ decreases down to zero. Hence to compute $m(\epsilon)$ from $C$ we just search for $r_i<\epsilon$ and then search for $m$ with $x\notin V_{m,i}$.

For~(\ref{prop:consequences:selflocation:2}), just use Lemma~\ref{ex:approximationbyschnorr2} to rewrite the convergence $f_s\rightarrow f$ as $(f\ominus g_s)\rightarrow 0$ on $\mathsf{KR}^{\nu}$, where $g_s$ is an $L_p(\nu)$ Schnorr test equal to $f_s$ on $\mathsf{KR}^{\nu}$, and then use  (\ref{prop:consequences:selflocation:1}). 
\end{proof}

\section{Recovering pointwise values on Schnorr randoms}\label{sec:recover}

In this section we prove some results about pointwise limits existing on the Schnorr randoms for various effective functions convering fast in $L_p(\nu)$. By way of motivation for these kinds of results, consider $X=[0,1]$ and let $\nu$ be Lebesgue measure and recall the canonical example of $L_1(\nu)$ convergence with $\nu$-a.s. \emph{lack} of pointwise convergence:
\begin{align*}
\ f_1 = I_{[0,\frac{1}{2})}, \ f_2 = I_{[\frac{1}{2}, 1]}, \ f_3 = I_{[0,\frac{1}{4})}, \ f_4 = I_{[\frac{1}{4}, \frac{1}{2})}, \ f_5 = I_{[\frac{1}{2}, \frac{3}{4})}, \ f_6 = I_{[\frac{3}{4}, 1]}, \ \ldots
\end{align*}
 Proposition~\ref{prop:schnorrtest} below says that the slow $L_1(\nu)$-convergence in this example is essential to the lack of pointwise limits on $\mathsf{SR}^{\nu}$. By modifying the events in $f_n$ to be open, one similarly gets a sequence of $L_1(\nu)$ Schnorr tests $g_n$ which lacks pointwise limits on all $\mathsf{KR}^{\nu}$. Proposition~\ref{prop:schnorrtestforlsc} likewise says that the slow $L_1(\nu)$ convergence of $g_n$ is essential to the  $\nu$-a.s. lack of pointwise limits on $\mathsf{SR}^{\nu}$.

In the setting of $p=1$ and $X=[0,1]^k$ and $\nu$ being the $k$-fold product of Lebesgue measure on $[0,1]$, the following result is due to Pathak, Rojas, and Simpson, who used sequential Schnorr tests.\footnote{\cite[Lemma 3.7]{Pathak2014aa}. See also \cite[\S{3.3}]{Rute2012aa} and \cite[Chapter 3]{Yu1987aa} (cf. \cite[p. 394]{Simpson2009aa}).} 
\begin{prop}\label{prop:schnorrtest}
Suppose that $p\geq 1$ is computable. Suppose that $f$ is a computable point of $L_p(\nu)$. Suppose that $f_n$ is a computable sequence from the countable dense set of $L_p(\nu)$ such that $f_n\rightarrow f$ fast in $L_p(\nu)$. Then $\lim_n f_n$ exists on $\mathsf{SR}^{\nu}$ and is a version of $f$.

Moreover, on $\mathsf{SR}^{\nu}$ this limit does not depend on the choice of $f_n$ or the choice of the $\nu$-computable basis. 

Finally, if $f$ is in addition an $L_p(\nu)$ Schnorr test, then $\lim_n f_n(x)=f(x)$ for all $x$ in $\mathsf{SR}^{\nu}$.
\end{prop}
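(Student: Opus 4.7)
The approach is to construct a single $L_p(\nu)$ Schnorr test $G$ whose finiteness at a point~$x$ forces the sum $\sum_n |f_{n+1}(x) - f_n(x)|$ to converge, yielding pointwise Cauchy convergence of $(f_n(x))$ at every Schnorr random $x$.

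First I would apply Proposition~\ref{prop:shiftbasistolpst} uniformly to each $|f_{n+1} - f_n|$ (an element of the countable dense set of $L_p^+(\nu)$) to obtain non-negative lsc functions $g_n$ agreeing with $|f_{n+1} - f_n|$ on $\mathsf{KR}^{\nu}$. The hypothesis $\|f_n - f\|_p \leq 2^{-n}$ together with the triangle inequality gives $\|g_n\|_p = \|f_{n+1} - f_n\|_p \leq 2^{-n+1}$, a uniformly computable sequence of reals. Set $G = \sum_{n \geq 0} (n+1) \cdot g_n$, which is non-negative lsc by Proposition~\ref{prop:lscclosure}. The main technical step is verifying that $G$ is a Schnorr test, i.e.\ that $\|G\|_p$ is computable: the lsc structure gives left-c.e.\ for free, so the content is right-c.e. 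The truncations $G_N = \sum_{n \leq N} (n+1)\, g_n$ agree $\nu$-a.s.\ with explicit elements of the countable dense set of $L_p(\nu)$, hence $\|G_N\|_p$ is uniformly computable; Minkowski yields $\|G\|_p - \|G_N\|_p \leq \|G - G_N\|_p \leq \sum_{n > N} (n+1) \cdot 2^{-n+1}$, a computable tail tending to~$0$, so $\|G\|_p$ is right-c.e.\ and hence computable. For any $x \in \mathsf{SR}^{\nu} \subseteq \mathsf{KR}^{\nu}$, $G(x) < \infty$ forces $\sum_n |f_{n+1}(x) - f_n(x)| < \infty$, so $\hat{f}(x) := \lim_n f_n(x)$ exists finitely. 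That $\hat{f}$ is a version of $f$ follows classically from the fact that some subsequence of $(f_n)$ converges $\nu$-a.s.\ to $f$, which must then agree with $\hat{f}$ almost everywhere.

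For the uniqueness and basis-independence moreover clause, given two fast $L_p(\nu)$-convergent sequences $(f_n), (f'_n)$ from countable dense sets built from possibly different $\nu$-computable bases, I would first pass to the union of the two bases, itself a $\nu$-computable basis by Proposition~\ref{prop:union:mucompbasis}(\ref{prop:union:mucompbasis:2}), so both sequences lie in a common dense set. An analogous construction, applied with the sequence $(f_n - f'_n)$ in place of the differences $(f_{n+1} - f_n)$ and using $\|f_n - f'_n\|_p \leq 2^{-n+1}$, produces another $L_p(\nu)$ Schnorr test dominating $\sum_n (n+1)\,|f_n(x) - f'_n(x)|$ on $\mathsf{KR}^{\nu}$; its finiteness at $x \in \mathsf{SR}^{\nu}$ forces $f_n(x) - f'_n(x) \to 0$ and hence equality of the two pointwise limits at~$x$.

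For the final clause, if $f$ itself is an $L_p(\nu)$ Schnorr test, then Proposition~\ref{prop:exptoflsc} furnishes a computable subsequence $\tilde{f}_{s(n)}$ of its canonical approximation satisfying $\tilde{f}_{s(n)} \to f$ fast in $L_p(\nu)$ together with $\tilde{f}_{s(n)}(x) \uparrow f(x)$ pointwise \emph{everywhere}. Hence $\lim_n \tilde{f}_{s(n)}(x) = f(x)$ at every $x \in X$, in particular on $\mathsf{SR}^{\nu}$; the uniqueness established in the previous paragraph then propagates this identification to any other fast $L_p(\nu)$-convergent approximation. The main anticipated obstacle is the computability (as opposed to mere finiteness) of $\|G\|_p$, which requires the careful Minkowski tail bound together with the observation that the truncations $G_N$ collapse $\nu$-a.s.\ onto computable points of the countable dense set; everything else is routine manipulation within the Schnorr test framework.
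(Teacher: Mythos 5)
Your proposal is correct and follows essentially the same route as the paper: both lift the successive differences $\left|f_{n+1}-f_n\right|$ to lsc functions via Proposition~\ref{prop:shiftbasistolpst}, sum them into a single $L_p(\nu)$ Schnorr test whose finiteness on $\mathsf{SR}^{\nu}$ forces the sequence to be pointwise Cauchy, and handle uniqueness, basis-independence, and the final clause exactly as the paper does. The only (harmless) deviations are the superfluous weight $(n+1)$ in your test $G$ and your more direct treatment of the ``version of $f$'' claim, which sidesteps the paper's relativization to an oracle computing the a.s.-convergent subsequence.
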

\begin{proof} (Sketch)
Let $g = \sum_i \left|f_i-f_{i+1}\right|$. Then using Proposition~\ref{prop:shiftbasistolpst}, it is equal on $\mathsf{KR}^{\nu}$ to a $L_p(\nu)$ Schnorr test. This shows that $f_i(x)$ is a Cauchy sequence for $x$ in $\mathsf{SR}^{\nu}$.

If $f_n^{\prime}$ is another such witness to the $L_p(\nu)$ computabilty of $f$, then let  $h = \sum_i \left|f_i-f_i^{\prime}\right|$, and it is similarly equal on $\mathsf{KR}^{\nu}$ to a $L_p(\nu)$ Schnorr test. 

To see that the partially defined function $\lim_n f_n$ is a version of $f$ in $L_p(\nu)$, simply note that classically some subsequence $f_n^{\prime}:=f_{m(n)}$ converges to $f$ $\nu$-a.s. and so $\lim_n f_n^{\prime}$ is a version of $f$. The sequence $f_n^{\prime}$ is computable in some oracle, and so by the previous paragraph we get that $\lim_n f_n, \lim_n f_n^{\prime}$ agree on all the Schnorr randoms relative to that oracle, and so $\lim_n f_n$ is also a version of $f$.

The final remark follows from the second paragraph by choosing $f_n^{\prime}$ to be the  approximation to the lsc function $f$ from Proposition~\ref{prop:exptoflsc}.
\end{proof}

The following is an analogue of the above proposition for $L_0(\nu)$. This proposition is essentially the natural effectivization of the classical proof that Cauchy-in-measure sequences converge in measure.\footnote{E.g. \cite[Theorem 2.30 p. 61]{Folland1999aa}.}
\begin{prop}\label{prop:srlimexistsL0v1}
Suppose that $f$ is a computable point of $L_0(\nu)$. Suppose that $f_n$ is a computable sequence from the countable dense set of $L_0(\nu)$ such that $f_n\rightarrow f$ at a geometric rate of convergence in $L_0(\nu)$. Then for all $x$ in $\mathsf{SR}^{\nu}$ one has that $\lim_n f_n(x)$ exists and is a version of $f$.

This limit does not depend on the choice of $f_n$ or the choice of the $\nu$-computable basis or the choice of the rate of geometric convergence.

Hence, if $f$ is in addition an $L_0(\nu)$ Schnorr test, then $\lim_n f_n(x)=f(x)$ for all $x$ in $\mathsf{SR}^{\nu}$.
\end{prop}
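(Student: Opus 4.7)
The plan is to effectivize the classical Borel--Cantelli argument that a sequence Cauchy in measure admits an almost-surely convergent subsequence: I will construct an $L_1(\nu)$ Schnorr test $g$ that detects the ``bad events'' where successive $f_n$'s differ by more than a summable tolerance, and then finiteness of $g$ at Schnorr randoms will force $(f_n(x))$ to be Cauchy in $\mathbb{R}$, giving the limit $\lim_n f_n(x)$.

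For the construction, let $b > 1$ be a computable rational witnessing the geometric rate, so $\|f_n - f\|_0 \leq b^{-n}$. By the triangle inequality for $\|\cdot\|_0$, $\|f_n - f_{n+1}\|_0 \leq b^{-n}(1 + b^{-1}) \leq 2b^{-n}$; setting $\epsilon_n := 3 b^{-n}$, the equivalence $\|h\|_0 \leq \epsilon$ iff $\nu(|h| > \epsilon) \leq \epsilon$ recalled in \S\ref{subsection:convergence:measure} yields $\nu(A_n) \leq \epsilon_n$ for $A_n := \{|f_n - f_{n+1}| > \epsilon_n\}$. Since $f_n, f_{n+1}$ lie in the countable dense set of $L_0(\nu)$, they are rational-valued simple functions built from the algebra $\mathscr{A}$ generated by a $\nu$-computable basis, so $A_n$ is itself uniformly in $\mathscr{A}$. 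Applying Proposition~\ref{prop:krplusbasis} I produce c.e.\ opens $U_n$ with $U_n = A_n$ on $\mathsf{KR}^{\nu}$ and $\nu(U_n) = \nu(A_n) \leq \epsilon_n$. I set $g := \sum_n I_{U_n}$; it is lsc as an infinite sum of non-negative lsc functions (Proposition~\ref{prop:lscclosure}), and $\|g\|_1 = \sum_n \nu(U_n)$ is left-c.e.\ as a limit of the computable partial sums and right-c.e.\ through the computable geometric tail bound $\sum_{k > N} \epsilon_k = 3 b^{-N}/(b-1)$; hence $g$ is an $L_1(\nu)$ Schnorr test.

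Now fix $x \in \mathsf{SR}^{\nu}$, and recall $\mathsf{SR}^{\nu} \subseteq \mathsf{KR}^{\nu}$, so at $x$ the relations $x \in U_n$ and $x \in A_n$ coincide. Since $g(x) < \infty$, this relation holds for only finitely many $n$; hence $|f_n(x) - f_{n+1}(x)| \leq \epsilon_n$ eventually, and as $\sum_n \epsilon_n < \infty$ the sequence $(f_n(x))$ is Cauchy and $\lim_n f_n(x)$ exists in $\mathbb{R}$. To identify this pointwise limit with $f$, note that classically some subsequence of $f_n$ converges $\nu$-a.s.\ to $f$; since that subsequence agrees with the full pointwise limit on $\mathsf{SR}^{\nu}$, a $\nu$-measure one set, the partial function $x \mapsto \lim_n f_n(x)$ is a version of $f$.

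For independence of the witnessing data, given a second sequence $(f_n')$ (possibly from a different $\nu$-computable basis $\mathscr{B}'$ and with a different rate $b'$), I first pass to $\mathscr{B} \cup \mathscr{B}'$, which is again a $\nu$-computable basis by Proposition~\ref{prop:mucomputablebasis}(\ref{prop:union:mucompbasis:2}), and then interleave the two sequences into a single sequence that still converges to $f$ at geometric rate; the argument above applied to the interleaving forces $\lim_n f_n(x) = \lim_n f_n'(x)$ on $\mathsf{SR}^{\nu}$. Finally, when $f$ is itself an $L_0(\nu)$ Schnorr test, Proposition~\ref{prop:exptoflscL0} supplies a canonical fast-converging approximation $(f_{s(n)})$ with $f_{s(n)}(x) \nearrow f(x)$ everywhere, and since $f(x) < \infty$ on $\mathsf{SR}^{\nu}$ by Proposition~\ref{prop:l0schnorrtest}, one has $\lim_n f_{s(n)}(x) = f(x)$ on $\mathsf{SR}^{\nu}$; independence of sequence choice extends this equality to every fast witness. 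The principal obstacle is that the natural ``bad event'' $A_n$ is only an element of the algebra $\mathscr{A}$ rather than a c.e.\ open, so the effective Borel--Cantelli step requires Proposition~\ref{prop:krplusbasis} to replace $A_n$ by a c.e.\ open of equal $\nu$-measure on $\mathsf{KR}^{\nu}$; moreover, $\|g\|_1$ must be computable and not merely left-c.e., which is what forces the \emph{geometric} (rather than merely summable) decay of the tolerances $\epsilon_n$.
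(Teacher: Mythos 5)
Your proof is correct, and its engine is the same as the paper's: an effective Borel--Cantelli argument, building an $L_1(\nu)$ Schnorr test out of the consecutive-difference events $\{|f_n-f_{n+1}|>\epsilon_n\}$ after replacing them by c.e.\ opens of the same measure via Proposition~\ref{prop:krplusbasis}, with the computability of the norm coming from the computable geometric tail bound. Two parts are packaged differently from the paper. First, you use the single test $\sum_n I_{U_n}$, whereas the paper sums indicators of the tail unions $F_k=\bigcup_{j\geq k}E_j$; the paper's choice buys the quantitative estimate $|f_j(x)-\lim_j f_j(x)|\leq 2\tfrac{b}{b-1}b^{-j}$ off $F_k$, which it then uses both to show $f_j\to\lim_j f_j$ in $L_0(\nu)$ (identifying the limit with $f$) and to control the comparison of two witnesses. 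You instead identify the limit by the classical a.s.-convergent-subsequence trick, which is enough. Second, for independence of the witness the paper builds a second Schnorr test $\sum_j I_{D_j}$ from the events $\{|f_j-h_j|>3ed^{-j}\}$, while you interleave the two sequences into one geometrically convergent sequence (over the union basis, using Proposition~\ref{prop:union:mucompbasis}) and apply the main argument once; this is cleaner, modulo the trivial index shift needed to turn the interleaved bound $d\cdot d^{-m}$ into a literal geometric rate. No gaps.
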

Note that by the computable embedding of $L_p(\nu)$ into $L_0(\nu)$, the limit in this proposition agrees with the limit in the previous proposition on $\mathsf{SR}^{\nu}$.
\begin{proof}
We may suppose that the geometric rate of convergence $b>1$ is rational. Then we can compute whether $b^{-j}$ is rational or irrational, and hence uniformly in $j\geq 0$ we have that $b^{-j}$ has uniformly computable left- and right Dedekind cuts. Since the $f_j$ are from the countable dense set, so is $\left|f_j-f_{j+1}\right|$, and hence we can write it as $\sum_{k=1}^{n_j} q_{j,k}\cdot I_{A_{j,k}}$, where $q_{j,k}\geq 0$ is rational and the events $\{A_{j,k}: 1\leq k\leq n_j\}$ are pairwise disjoint and come from the algebra generated by a  $\nu$-computable basis. By Proposition~\ref{prop:krplusbasis}, this is equal on $\mathsf{KR}^{\nu}$ to the finite sum $\sum_{k=1}^{n_j} q_{j,k} \cdot I_{U_{j,k}}$, where $U_{j,k}$ is a c.e. open which is equal on $\mathsf{KR}^{\nu}$ to $A_{j,k}$. Let $E_j =  \{x\in X: \sum_{k=1}^{n_j} q_{j,k} \cdot I_{U_{j,k}}>2\cdot b^{-j}\}$, which is a c.e. open since it is equal to $\bigcup_{K\in J_j} \bigcap_{k\in K} U_{j,k}$, where $J_j = \{K\subseteq  [1,n_j]: \sum_{k\in K} q_{j,k}>2\cdot b^{-j}\}$, and $J_j$ is computable since $b^{-j}$ has uniformly computable right Dedkind cuts. Then $E_j$ is equal on $\mathsf{KR}^{\nu}$ to $\{x\in X: \left|f_j-f_{j+1}\right|>2\cdot b^{-j}\}$ and $E_j$ is a c.e. open with computable $\nu$-measure, uniformly in $j\geq 0$.

We then have $\nu(E_j)\leq \nu(\left|f-f_j\right|> b^{-j}) +  \nu(\left|f-f_{j+1}\right|> b^{-(j+1)}) \leq b^{-j}+b^{-(j+1)} \leq 2\cdot b^{-j}$. Letting $F_k$ be the c.e. open $\bigcup_{j=k}^{\infty} E_j$, we have $\nu(F_k)\leq 2\cdot \frac{b}{b-1} \cdot b^{-k}$. Further for $k^{\prime}>k$ we have $\bigcup_{j=k}^{k^{\prime}} E_j$ has computable $\nu$-measure since it is a finite union of events with $\nu$-computable measure coming from the algebra generated by a $\nu$-computable basis. And then $\nu(F_k)$ is computable since we can approximate it by $\nu(\bigcup_{j=k}^{k^{\prime}} E_j)$ since $\nu(F_k)-\nu(\bigcup_{j=k}^{k^{\prime}} E_j)\leq \nu(F_{k^{\prime}})\leq 2\cdot \frac{b}{b-1} \cdot b^{-k^{\prime}}$. Hence $\sum_k I_{F_k}$ is an $L_1(\nu)$ Schnorr test. 

If a point is in $\mathsf{SR}^{\nu}$, then it is not in some $F_k$, while it is in $\mathsf{KR}^{\nu}$. Then we argue for the following six items about elements of $\mathsf{KR}^{\nu}\setminus F_k$:
\begin{enumerate}[leftmargin=*]
    \item \label{prop:srlimexistsL0v1:1} For all $k\geq 0$ and all $x$ in $\mathsf{KR}^{\nu}\setminus F_k$, for all $j_1> j_0\geq k$ we have $\left|f_{j_0}(x)-f_{j_1}(x)\right|\leq \sum_{i=j_0}^{j_1-1} \left|f_i(x)-f_{i+1}(x)\right| \leq \sum_{i=j_0}^{\infty} 2\cdot b^{-i} \leq 2\cdot \frac{b}{b-1} \cdot b^{-j_0}$.
    \item \label{prop:srlimexistsL0v1:2} Hence for all $k\geq 0$ and all $x$ in $\mathsf{KR}^{\nu}\setminus F_k$, we have that $f_{j}(x)$ for $j\geq k$ is a Cauchy sequence and thus $\lim_j f_j(x)$ exists.
    \item \label{prop:srlimexistsL0v1:3} For all $x$ in $\mathsf{KR}^{\nu}\setminus F_k$ and all $j\geq k\geq 0$, we have $\left|f_j(x)-\lim_j f_j(x)\right|\leq 2\cdot \frac{b}{b-1} \cdot b^{-j}$. For, let $\epsilon>0$. Let $j_0=j$ and choose $j_1>j_0$ such that  $\left|f_{j_1}(x)-\lim_j f_j(x)\right|<\epsilon$. Then by (\ref{prop:srlimexistsL0v1:1}) one has that $\left|f_j(x)-\lim_j f_j(x)\right|\leq \left|f_{j_1}(x)-\lim_j f_j(x)\right|+\left|f_{j_1}(x)-f_{j_0}(x)\right|<\epsilon+2\cdot \frac{b}{b-1} \cdot b^{-j_0}$. Since this holds for all $\epsilon>0$, we are done. 
    \item \label{prop:srlimexistsL0v1:4} Since $\mathsf{SR}^{\nu}$ is a $\nu$-measure one set, one has that $\lim_j f_j$ exists $\nu$-a.s. 
    \item \label{prop:srlimexistsL0v1:5} Further one has that $f_j\rightarrow \lim_j f_j$ in $L_0(\nu)$. For let $\epsilon>0$. Choose $k$ such that $2\cdot \frac{b}{b-1} \cdot b^{-k}<\epsilon$. Let $j\geq k$, so that $2\cdot \frac{b}{b-1} \cdot b^{-j}<\epsilon$. Then by (\ref{prop:srlimexistsL0v1:3})  we have $\nu(\left|f_j-\lim_j f_j\right|>\epsilon)\leq\nu(\left|f_j-\lim_j f_j\right|>2\cdot \frac{b}{b-1} \cdot b^{-j})\leq \nu(F_k)\leq 2\cdot \frac{b}{b-1} \cdot b^{-k}<\epsilon$.
    \item \label{prop:srlimexistsL0v1:6} Since both  $f_j\rightarrow \lim_j f_j$ in $L_0(\nu)$ and  $f_j\rightarrow f$ in $L_0(\nu)$, we have that $\lim_j f_j = f$ $\nu$-a.s.
\end{enumerate}

Suppose that $h_j$ is another computable sequence from the countable dense set of $L_0(\nu)$ such that $h_j\rightarrow f$ at a geometric rate $c>1$ of convergence in $L_0(\nu)$.  Note that $\lim_j f_j$ and $\lim_j h_j$ are equal $\nu$-a.s. since they are both equal $\nu$-a.s. to $f$. Let $G_j$ and $H_k$ be constructed from $h_j$ and $c$ just as we constructed $E_j$ and $F_k$ from $f_j$ and $b$ above. Let $d=\min(b,c)$, rational number~$>1$. Let $e=\max\{2\cdot \frac{b}{b-1},2\cdot \frac{c}{c-1}\}$. Note that for all $j\geq 0$, we have $e\cdot d^{-j}\geq 2\cdot \frac{b}{b-1} \cdot b^{-j}$ and $e\cdot d^{-j}\geq 2\cdot \frac{c}{c-1} \cdot c^{-j}$. Let $D_j$ be a c.e. open which is equal on $\mathsf{KR}^{\nu}$ to $\{x\in X: \left|f_j(x)-h_j(x)\right|>3 \cdot e\cdot d^{-j}\}$, and note that $D_j$ has computable $\nu$-measure, as in the argument of the first paragraph of the proof. Then one has that $\nu(D_j)\leq \nu(\left|f_j-\lim_j f_j\right|> 2\cdot \frac{b}{b-1} \cdot b^{-j})+\nu(\left|\lim_j f_j-\lim_j h_j\right|>e\cdot d^{-j})+\nu(\left|h_j-\lim_j h_j\right|> 2\cdot \frac{c}{c-1} \cdot c^{-j})\leq \nu(F_j)+0+\nu(H_j)\leq  2\cdot \frac{b}{b-1} \cdot b^{-j} + 2\cdot \frac{c}{c-1} \cdot c^{-j}$, where the middle term is zero since $\lim_j f_j$ and $\lim_j h_j$ are equal $\nu$-a.s.  Hence $\sum_j I_{D_j}$ is an $L_1(\nu)$ integral test, and thus, for $x$ in $\mathsf{SR}^{\nu}$ one has that $\lim_j f_j(x) =\lim_j h_j(x)$.


As in the previous proof, the limit does not depend on the choice of $\nu$-computable basis since $\nu$-computable bases are closed under effective unions (cf. Proposition~\ref{prop:union:mucompbasis}).

The remarks about $\lim_n f_n$ being a version of $f$, and the remark about $L_0(\nu)$ Schnorr tests, follows as in the proof of the previous proposition.
\end{proof}

There is a result similar to Proposition~\ref{prop:schnorrtest} when the $f_n$ are themselves $L_p(\nu)$ Schnorr tests:
\begin{prop}\label{prop:schnorrtestforlsc}
Suppose that $p\geq 1$ is computable (resp. $p=0$). Suppose that $f_n$ are uniformly $L_p(\nu)$ Schnorr tests with $f_n\rightarrow f$ fast in $L_p(\nu)$, so that $f$ is also a computable point of $L_p(\nu)$. Then $\lim_n f_n(x)$ exists and for all $x$ in $\mathsf{SR}^{\nu}$. If $f$ is also an $L_p(\nu)$ Schnorr test, then $\lim_n f_n(x)=f(x)$ for all $x$ in $\mathsf{SR}^{\nu}$.
\end{prop}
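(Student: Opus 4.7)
The plan is to reduce to Proposition~\ref{prop:schnorrtest} (for $p\geq 1$) or Proposition~\ref{prop:srlimexistsL0v1} (for $p=0$) by approximating each $L_p(\nu)$ Schnorr test $f_n$ sufficiently well by a simple function from the countable dense set, and then controlling the error $f_n - h_n$ using an $\ominus$ construction.

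First, for each $n$, I would apply Proposition~\ref{prop:exptoflsc} (resp.\ Proposition~\ref{prop:exptoflscL0}) to $f_n$ to obtain a uniformly computable sequence $g_{n,s}$ from the countable dense set with $g_{n,s}\leq g_{n,s+1}\leq f_n$ everywhere and $\sup_s g_{n,s}=f_n$ everywhere. Since the witnesses to $L_p(\nu)$-computability there are uniform, I would extract $s(n)$ such that $h_n := g_{n,s(n)}$ satisfies $\|h_n-f_n\|_p<2^{-n}$ (for $p\geq 1$; for $p=0$, fixing a rational geometric rate $b>1$, $\|h_n-f_n\|_0<b^{-n}$). Combined with the hypothesis $f_n\to f$ fast, the triangle inequality then shows $h_n\to f$ fast in $L_p(\nu)$ (resp.\ at geometric rate in $L_0(\nu)$), so by Proposition~\ref{prop:schnorrtest} (resp.\ Proposition~\ref{prop:srlimexistsL0v1}) the pointwise limit $\lim_n h_n(x)$ exists for every $x\in\mathsf{SR}^{\nu}$ and gives a version of $f$; moreover it equals $f(x)$ on $\mathsf{SR}^{\nu}$ whenever $f$ is itself an $L_p(\nu)$ Schnorr test.

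Second, I would show $f_n(x)-h_n(x)\to 0$ on $\mathsf{SR}^{\nu}$. Applying Lemma~\ref{ex:approximationbyschnorr2} uniformly in $n$ produces partial subtractions $F_n := f_n\ominus h_n$ that are non-negative lsc $L_p(\nu)$ Schnorr tests agreeing with $f_n - h_n$ on $\mathsf{KR}^{\nu}$, with $\|F_n\|_p<2^{-n}$ (resp.\ $\|F_n\|_0<b^{-n}$). For $p\geq 1$, I would form $G := \sum_n F_n$, which is non-negative lsc by Proposition~\ref{prop:lscclosure} and lies in $L_p(\nu)$ by Minkowski. Its partial sums $G_N := \sum_{n\leq N} F_n$ are computable points of $L_p^+(\nu)$ with uniformly computable norm; monotone convergence gives $\|G_N\|_p\nearrow\|G\|_p$, and Minkowski bounds the remainder by $\|G\|_p-\|G_N\|_p\leq \sum_{n>N}\|F_n\|_p\leq 2^{-N}$, so $\|G\|_p$ is computable and $G$ is an $L_p(\nu)$ Schnorr test. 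Hence $G(x)<\infty$ on $\mathsf{SR}^{\nu}$, which forces $F_n(x)\to 0$ there and thus $f_n(x)-h_n(x)\to 0$ on $\mathsf{SR}^{\nu}\subseteq\mathsf{KR}^{\nu}$. For $p=0$, I would instead use Lemma~\ref{lem:usingst} together with computability of $\|F_n\|_0$ to select, uniformly in $n$, a rational $r_n$ with $\|F_n\|_0<r_n<2b^{-n}$ from the dense sequence provided by the lemma, such that the c.e.\ open $V_n := F_n^{-1}(r_n,\infty]$ has computable $\nu$-measure; the Ky Fan definition of $\|\cdot\|_0$ then yields $\nu(V_n)<r_n<2b^{-n}$, so $\sum_n I_{V_n}$ has computable $L_1$-norm (via a geometric tail bound) and is an $L_1(\nu)$ Schnorr test, whence any $x\in\mathsf{SR}^{\nu}$ lies in only finitely many $V_n$ and $F_n(x)\to 0$.

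Combining the two steps yields existence of $\lim_n f_n(x) = \lim_n h_n(x)$ on $\mathsf{SR}^{\nu}$, and equality with $f(x)$ when $f$ is a Schnorr test by the corresponding clause of Proposition~\ref{prop:schnorrtest} (resp.\ Proposition~\ref{prop:srlimexistsL0v1}). The main obstacle will be obtaining \emph{computability} — as opposed to mere finiteness — of $\|G\|_p$ in the $p\geq 1$ case; the $2^{-n}$ rate chosen in the first step is precisely what feeds into a computable tail bound via Minkowski to close that gap. In the $p=0$ setting the analogous delicate point is the effective extraction of the level thresholds $r_n$ from the Ky Fan metric, which is exactly what Lemma~\ref{lem:usingst} is designed to handle.
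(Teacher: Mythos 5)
Your proposal is correct and follows essentially the same route as the paper's proof: diagonalize the canonical approximations to get $h_n$ from the countable dense set with $h_n\rightarrow f$ fast, reduce the first claim to Proposition~\ref{prop:schnorrtest} (resp.\ Proposition~\ref{prop:srlimexistsL0v1}), and then kill the error $f_n\ominus h_n$ on $\mathsf{SR}^{\nu}$ by packaging it into a single summed Schnorr test via Lemma~\ref{ex:approximationbyschnorr2}. The only divergence is in how that test is assembled: the paper truncates each error function at a computable threshold $\eta_n$ obtained from Lemma~\ref{lem:usingst} and sums the truncations, which handles $p\geq 1$ and $p=0$ uniformly, whereas you sum the untruncated errors directly for $p\geq 1$ (using Minkowski for computability of the norm) and sum indicators of superlevel sets for $p=0$ --- both variants are sound.
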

\begin{proof}
By Proposition~\ref{prop:exptoflsc} (resp. Proposition~\ref{prop:exptoflscL0}), choose doubly-indexed computable sequence $f_{n,s}$ from the countable dense set of $L_p(\nu)$ such that for all $n\geq 0$ we have $0\leq f_{n,s}\leq f_{n,s+1}\leq f_n$ everywhere and $f_n=\sup_s f_{n,s}$ and $f_{n,s}\rightarrow f_n$ fast in $L_p(\nu)$. Then $f_{n+1,n+1}\rightarrow f$ fast in $L_p(\nu)$. Hence, by Proposition~\ref{prop:schnorrtest} (resp. Proposition~\ref{prop:srlimexistsL0v1}), 
$\lim_n f_{n,n}$ exists on $\mathsf{SR}^{\nu}$. Note that by these propositions, if $f$ is also an $L_p(\nu)$ Schnorr test then $\lim_n f_{n,n}= f$ on $\mathsf{SR}^{\nu}$.

It suffices to show that $\lim_n (f_n-f_{n,n}) =0$ on $\mathsf{SR}^{\nu}$. Use Lemma~\ref{ex:approximationbyschnorr2} to rewrite what we are to show as $\lim_n (f_n\ominus g_{n,n}) =0$, where $g_{n,n}$ is an $L_p(\nu)$ Schnorr test equal to $f_{n,n}$ on $\mathsf{KR}^{\nu}$, so that $f_n\ominus g_{n,n}$ is an $L_p(\nu)$ Schnorr test equal to $f_n-f_{n,n}$ on $\mathsf{KR}^{\nu}$. By Lemma~\ref{lem:usingst} in conjunction with Proposition~\ref{prop:pushforwards} (resp. Proposition~\ref{prop:pushforwards:2}), choose a computable sequence $\eta_n$ in the interval $(2^{-(n+1)}, 2^{-n})$ such that the $(f_n\ominus g_{n,n})^{-1}(\eta_n,\infty]$ has computable $\nu$-measure. Then $U_n=(f_n\ominus g_{n,n})^{-1}(\eta_n,\infty]$ is c.e. open with $\nu$-computable measure. Let $h_n=(f_n\ominus g_{n,n})\cdot I_{U_n}$ which is an $L_p(\nu)$ Schnorr test. 
Let $h=\sum_n h_n$ and $h_m = \sum_{n<m} h_n$. Then $\|h-h_m\|_p\leq \sum_{n>m} \|h_n\|_p \leq \sum_{n>m} \|f_n-f_{n,n}\|_p \leq \sum_{n>m} 2^{-n} \leq 2^{-m}$. Then $h$ is an $L_p(\nu)$ Schnorr test: it is non-negative lsc as a sum of non-negative lsc functions, and the sequence $h_m$ is uniformly $L_p(\nu)$-computable and we just showed that $h_m\rightarrow h$ fast in $L_p(\nu)$. Now we verify $\lim_n (f_n-f_{n,n}) =0$ on $\mathsf{SR}^{\nu}$. Let $x$ in $\mathsf{SR}^{\nu}$. Let $\epsilon>0$. Since $x$ is in $\mathsf{SR}^{\nu}$, choose $n_0\geq 0$ such that we have the estimate $\sum_{n\geq n_0} (f_n(x)-f_{n,n}(x))\cdot I_{U_n}(x)<\epsilon$. Choose $n_1\geq n_0$ such that $\eta_n<\epsilon$ for all $n\geq n_1$. Let $n\geq n_1$. If $x$ is in $U_n$, then by our estimate we have $f_n(x)-f_{n,n}(x)<\epsilon$. If $x$ is not in $U_n$, then by the definition of $U_n$ we have $f_n(x)-f_{n,n}(x)\leq \eta_n<\epsilon$.

\end{proof}

\section{Classical features of the maximal function}\label{sec:maximal:classical}

Suppose that $\nu$ is a point of $\mathcal{P}(X)$ and $\mathscr{F}_n$ is any increasing filtration of Borel subsets of $X$. In this section, we recall some classical features of the maximal function $f^{\ast}=\sup_n \mathbb{E}_{\nu}[f\mid \mathscr{F}_n]$ of an integrable function $f$.

First, we recall the following, which gives us information about the codomain of the maximal function:
\begin{lem}\label{lem:doob}
\item[]
\begin{enumerate}[leftmargin=*]
\item \label{lem:doob:1} If $p>1$ then  $\|g^{\ast}\|_p\leq \frac{p}{p-1} \cdot \|g\|_p$ for $g$ in $L_p(\nu)$, and the maximal function maps $L_p(\nu)$ to $L_p(\nu)$.
\item \label{lem:doob:2} If $p=1$, then the maximal function maps $L_p(\nu)$ to $L_0(\nu)$.
\end{enumerate}
\end{lem}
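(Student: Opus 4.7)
The plan is to invoke Doob's classical $L^p$-maximal inequality applied to the closed martingale $M_n = \mathbb{E}_\nu[g\mid \mathscr{F}_n]$. Writing $M^* := \sup_n |M_n|$ and noting that $|g^*| \leq M^*$ pointwise, it suffices to bound $\|M^*\|_p$. By the conditional Jensen inequality, $(|M_n|)$ is a non-negative submartingale, and $|M_n| \leq \mathbb{E}_\nu[|g|\mid \mathscr{F}_n]$.

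The first step is to verify the weak-type maximal estimate
\begin{equation*}
\lambda \cdot \nu\bigl(\sup_{n\leq N}|M_n| > \lambda\bigr) \;\leq\; \int_{\{\sup_{n\leq N}|M_n|>\lambda\}} |g|\,d\nu
\end{equation*}
for each fixed $N$ and $\lambda > 0$, using the standard stopping-time partition of $\{\sup_{n\leq N}|M_n|>\lambda\}$ into the disjoint $\mathscr{F}_k$-events $A_k = \{|M_k|>\lambda,\ |M_j|\leq \lambda \text{ for all } j<k\}$. On each $A_k$ one has $\lambda\cdot \nu(A_k) \leq \int_{A_k} |M_k|\,d\nu \leq \int_{A_k} \mathbb{E}_\nu[|g|\mid \mathscr{F}_k]\,d\nu = \int_{A_k} |g|\,d\nu$ by the tower property; summing over $k\leq N$ and letting $N\to \infty$ by monotone convergence yields $\lambda\cdot \nu(M^*>\lambda) \leq \int_{\{M^*>\lambda\}} |g|\,d\nu \leq \|g\|_1$.

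For (\ref{lem:doob:2}), this already implies $\nu(M^* = \infty) = \lim_{\lambda \to \infty} \nu(M^* > \lambda) = 0$, so $M^*$, and hence $g^*$, is a Borel function finite $\nu$-a.s., i.e.\ an element of $L_0(\nu)$. For (\ref{lem:doob:1}), I would combine the layer-cake formula $\|M^*\|_p^p = \int_0^\infty p\lambda^{p-1}\nu(M^*>\lambda)\,d\lambda$ with the refined bound just proved, swap the order of integration by Tonelli, and then apply Hölder's inequality with conjugate exponents $p$ and $p/(p-1)$, producing
\begin{equation*}
\|M^*\|_p^p \;\leq\; \frac{p}{p-1}\int |g|\cdot (M^*)^{p-1}\,d\nu \;\leq\; \frac{p}{p-1}\,\|g\|_p\cdot \|M^*\|_p^{p-1}.
\end{equation*}
Dividing through by $\|M^*\|_p^{p-1}$ gives the desired inequality.

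The only step requiring genuine care — and the main obstacle — is this last division, since \emph{a priori} $\|M^*\|_p$ need not be finite. I would handle this in the usual way by first running the entire argument with $M^*$ replaced by the bounded, manifestly $L^p$-finite quantity $\sup_{n\leq N}|M_n|\wedge K$, obtaining the inequality in that truncated form, and then passing to the limit $N,K\to\infty$ by monotone convergence to conclude both that $M^*\in L_p(\nu)$ and that the claimed constant $p/(p-1)$ holds.
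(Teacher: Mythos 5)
Your proof is correct. For $p>1$ the underlying engine is the same as the paper's: both arguments rest on Doob's $L^p$ maximal inequality applied to the closed martingale $M_n=\mathbb{E}_{\nu}[g\mid\mathscr{F}_n]$ together with conditional Jensen and a monotone-convergence passage to the full supremum. The difference is that the paper simply cites the classical inequality (for the finite martingale $(M_n)_{n\leq m}$, closed by $M_m$) and is done in two lines, whereas you reprove it from scratch: the stopping-time decomposition into the disjoint $\mathscr{F}_k$-events $A_k$ gives the weak-type bound, and the layer-cake/Tonelli/H\"older argument with the truncation $\sup_{n\leq N}|M_n|\wedge K$ upgrades it to the strong $L^p$ bound with the constant $\tfrac{p}{p-1}$. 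Your handling of the division by $\|M^*\|_p^{p-1}$ via truncation is exactly the standard and necessary precaution. Where the two proofs genuinely diverge is the case $p=1$: the paper deduces that $g^{\ast}$ is finite $\nu$-a.s.\ from the \emph{classical} L\'evy Upward Theorem (the martingale converges a.s.\ to $\mathbb{E}_{\nu}[g\mid\mathscr{F}_{\infty}]$, so its supremum is a.s.\ finite), while you deduce it from the weak $(1,1)$ estimate $\lambda\cdot\nu(M^{\ast}>\lambda)\leq\|g\|_1$. Your route is more elementary and self-contained (it needs no a.s.\ martingale convergence theorem, only the maximal inequality you have already established); the paper's is shorter given that L\'evy's theorem is in any case the subject of the whole article. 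One small point in your favour: by working with $M^{\ast}=\sup_n|M_n|$ and the bound $|g^{\ast}|\leq M^{\ast}$, your argument covers signed $g$ cleanly, whereas the paper's phrase ``non-negative martingale'' tacitly assumes $g\geq 0$ (which is the only case it ever uses).
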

\begin{proof}
For $p>1$ and $g$ in $L_p(\nu)$, the sequence $\mathbb{E}_{\nu}[g\mid \mathscr{F}_n]$ is a non-negative martingale, and so by Doob's Maximal Inequality\footnote{\cite[Theorem 9.4 pp. 505-506]{Gut2013-ou}.}  followed by conditional Jensen we have: $\|\sup_{n\leq m} \mathbb{E}_{\nu}[g\mid \mathscr{F}_n]\|_p\leq \frac{p}{p-1}\cdot  \|\mathbb{E}_{\nu}[g\mid \mathscr{F}_m]\|_p\leq \frac{p}{p-1} \cdot \|g\|_p$. Then by the Monotone Convergence Theorem we have $\|g^{\ast}\|_p\leq \frac{p}{p-1} \|g\|_p$. For $p=1$, let $\mathscr{F}_{\infty}$ be the $\sigma$-algebra generated by all the $\mathscr{F}_n$. 
By the classical L\'evy Upward Theorem we have that $\mathbb{E}_{\nu}[g\mid \mathscr{F}_n]\rightarrow \mathbb{E}_{\nu}[g\mid \mathscr{F}_{\infty}]$ $\nu$-a.s. which shows that $g^{\ast}$ is finite $\nu$-a.s., and hence that $g^{\ast}$ is in $L_0(\nu)$.
\end{proof}

The following proposition collects together all the other classical facts about the maximal function which we need:
\begin{prop}\label{prop:abstract:1}\mbox{\;}
\begin{enumerate}[leftmargin=*]
    \item \label{prop:abstract:1a} For $p>1$, the maximal function ${\cdot}^{\ast}: L_p(\nu) \rightarrow L_p(\nu)$ is uniformly continuous with modulus $m(\epsilon)=\frac{p-1}{p}\cdot \epsilon$ of uniform continuity.
    \item \label{prop:abstract:1b} For $p=1$, the maximal function ${\cdot}^{\ast}: L_p(\nu) \rightarrow L_0(\nu)$ is uniformly continuous with a modulus $m(\epsilon)=\epsilon^2$ of uniform continuity.
\end{enumerate}
\end{prop}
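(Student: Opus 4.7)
My plan hinges on the pointwise (a.s.) bound $|f^* - g^*| \leq (|f-g|)^*$, valid whenever both $f^*$ and $g^*$ are finite $\nu$-a.s., which holds by Lemma~\ref{lem:doob} (in $L_p(\nu)$ for $p>1$, in $L_0(\nu)$ for $p=1$). To derive this bound, I would write $\mathbb{E}_\nu[f\mid\mathscr{F}_n] = \mathbb{E}_\nu[f-g\mid\mathscr{F}_n] + \mathbb{E}_\nu[g\mid\mathscr{F}_n]$, estimate $\mathbb{E}_\nu[f-g\mid\mathscr{F}_n] \leq \mathbb{E}_\nu[|f-g|\mid\mathscr{F}_n] \leq (|f-g|)^*$, take $\sup$ over $n$ to obtain $f^* \leq g^* + (|f-g|)^*$, and then swap the roles of $f$ and $g$.

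Part (\ref{prop:abstract:1a}) is then immediate: applying Lemma~\ref{lem:doob}(\ref{lem:doob:1}) to the non-negative element $|f-g|$ of $L_p(\nu)$ yields $\|(|f-g|)^*\|_p \leq \tfrac{p}{p-1}\|f-g\|_p$, hence $\|f^*-g^*\|_p \leq \tfrac{p}{p-1}\|f-g\|_p$. Thus if $\|f-g\|_p < m(\epsilon) = \tfrac{p-1}{p}\,\epsilon$ then $\|f^*-g^*\|_p < \epsilon$.

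For part (\ref{prop:abstract:1b}) I would invoke the weak-type $(1,1)$ form of Doob's maximal inequality, $\lambda\cdot \nu\bigl((|f-g|)^* > \lambda\bigr) \leq \|f-g\|_1$ for all $\lambda>0$, which one obtains by applying the usual maximal inequality to the non-negative submartingale $\mathbb{E}_\nu[|f-g|\mid\mathscr{F}_n]$. Combined with the pointwise bound, $\nu(|f^*-g^*|>\lambda) \leq \lambda^{-1}\|f-g\|_1$. Now suppose $\|f-g\|_1 < \epsilon^2$: if $\|f-g\|_1 = 0$ then $\|f^*-g^*\|_0 = 0 < \epsilon$; otherwise set $\lambda = \sqrt{\|f-g\|_1}$, so that $0 < \lambda < \epsilon$ and the estimate becomes $\nu(|f^*-g^*|>\lambda) \leq \lambda$. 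By the characterization $\|h\|_0 \leq \lambda$ iff $\nu(|h|>\lambda) \leq \lambda$ recorded in \S\ref{subsection:convergence:measure}, this yields $\|f^*-g^*\|_0 \leq \lambda < \epsilon$.

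There is no genuine obstacle here: the argument is mechanical once the pointwise bound and the appropriate version of Doob's maximal inequality are in hand. The only care needed is (i) using the weak-type rather than the (false) strong-type $(1,1)$ inequality when $p=1$, and (ii) the cosmetic choice $\lambda = \sqrt{\|f-g\|_1}$, which converts the hypothesis $\|f-g\|_1 < \epsilon^2$ into the strict conclusion $\|f^*-g^*\|_0 < \epsilon$ rather than merely $\leq \epsilon$.
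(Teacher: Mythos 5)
Your proposal is correct and follows essentially the same route as the paper: the pointwise bound $|f^*-g^*|\leq(|f-g|)^*$, Doob's maximal inequality in $L_p$ for $p>1$, and the weak-type submartingale inequality for $p=1$. Your choice $\lambda=\sqrt{\|f-g\|_1}$ is a small refinement (the paper takes $\lambda=\epsilon$ and so only literally obtains $\|f^*-g^*\|_0\leq\epsilon$), but this does not change the substance of the argument.
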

\begin{proof}
Let $p>1$ and $f$ in $L_p(\nu)$. By Lemma~\ref{lem:doob} one has for $f,g$ in $L_p(\nu)$ with $\|f-g\|_p<\frac{p-1}{p}\cdot \epsilon$ that $\|f^{\ast}-g^{\ast}\|_p\leq \|\left|f-g\right|^{\ast}\|_p\leq \frac{p}{p-1} \cdot \|f-g\|_p <\epsilon$.

Suppose that $f,g$ are in $L_1(\nu)$ with $\|f-g\|_1<\epsilon^2$. Then one has
\begin{align*}
& \nu(\left|f^{\ast}-g^{\ast}\right|>\epsilon) \leq \nu(\left|f-g\right|^{\ast})>\epsilon)= \lim_m  \nu(\sup_{n\leq m} \mathbb{E}[\left| f-g\right|\mid \mathscr{F}_n] >\epsilon) \\
\leq & \lim_m \epsilon^{-1} \cdot \mathbb{E}_{\nu} \mathbb{E}_{\nu}[\left|f-g\right|\mid \mathscr{F}_m]\leq \epsilon^{-1} \cdot \mathbb{E}_{\nu} \left|f-g\right|  \leq  \epsilon^{-1} \epsilon^{2} =\epsilon 
\end{align*}
The first step of the second line follows from Doob's Submartingale Inequality,\footnote{\cite[137-138]{Williams1991aa}.} where we apply it to the martingale $\mathbb{E}[\left| f-g\right|\mid \mathscr{F}_n]$. 
\end{proof}

\section{An abstract version of L\'evy's Theorem for Schnorr randomness}\label{sec:abstract}

Before we state our abstract version of L\'evy's Theorem, we need the following definition:
\begin{defn}\label{defn:approx}
Suppose that $\mathsf{XR}^{\nu}$ has $\nu$-measure one.  

Suppose that $\mathcal{C}$ is a class of $L_p(\nu)$ Schnorr tests. 

Then the $L_p(\nu)$ Schnorr tests are \emph{approximated from below} on $\mathsf{XR}^{\nu}$ by $\mathcal{C}$ if from an index for an $L_p(\nu)$ Schnorr test $f$ one can compute
\begin{itemize}[leftmargin=*]
\item an index for a sequence of $L_p(\nu)$ Schnorr tests $f_s$ in $\mathcal{C}$ such that $f_s\leq f_{s+1}$ on $\mathsf{XR}^{\nu}$ and $f=\sup_s f_s$ on $\mathsf{XR}^{\nu}$ and $f_s\rightarrow f$ fast in $L_p(\nu)$; and
\item an index for a sequence of non-negative usc functions $\breve{f}_s$ equal to $f_s$ on $\mathsf{XR}^{\nu}$.
\end{itemize}
\end{defn}

Recall from Proposition~\ref{prop:ominusproperties} that the $f_s, \breve{f}_s$ are finite on $\mathsf{XR}^{\nu}$, and we define $f\ominus f_s = \max(0, f-\breve{f}_s)$, and we have that $f\ominus f_s$ is an $L_p(\nu)$ Schnorr test equal on $\mathsf{XR}^{\nu}$ to $f-f_s$. Similarly if $t\geq s$, then we define $f_t\ominus f_s = \max(0, f_t-\breve{f}_s)$, and we have that $f_t\ominus f_s$ is an $L_p(\nu)$ Schnorr test equal on $\mathsf{XR}^{\nu}$ to $f_t-f_s$.

The basic example of Definition~\ref{defn:approx} comes from Lemma~\ref{ex:approximationbyschnorr2}.

The following is our abstract version of L\'evy's Theorem for Schnorr randomness. It is an abstract version in that we are not told more about the function  $E[\cdot \mid n](\cdot)$ other than that what is stated explicitly in the hypotheses (\ref{prop:newlevy:1})-(\ref{prop:newlevy:4}). In particular, we do not assume that $E[\cdot \mid n](\cdot)$ comes from an effective disintegration, although in the next section we will show that effective disintegrations satisfy the hypotheses of the theorem.

\begin{thm}\label{thm:newlevy}
Suppose that $\nu$ is a computable point of $\mathcal{P}(X)$. Suppose that $\mathscr{F}_n$ is an increasing filtration of Borel sets. Suppose that $p\geq  1$ is computable.

Suppose that $E[\cdot \mid n](\cdot): \mathbb{L}_p^+(\nu)\times X\rightarrow [0,\infty]$ is a function such that for every $f$ in $\mathbb{L}_p^+(\nu)$, one has that $E[f \mid n]:X\rightarrow [0,\infty]$ is a version of the conditional expectation of $f$ with respect to~$\mathscr{F}_n$. Define  the function $\cdot^{\flat}(\cdot): \mathbb{L}_p^+(\nu)\times X\rightarrow [0,\infty]$ by $f^{\flat}(x) = \sup_n E[\cdot \mid n](x)$.

Suppose that $\mathsf{XR}^{\nu}$ is a superset of $\mathsf{SR}^{\nu}$.

Suppose that:
\begin{enumerate}[leftmargin=*, align=left, label=(\Roman*), ref=\Roman*]
\item \label{prop:newlevy:1} $E[\cdot\mid n]$ maps non-negative lsc functions to non-negative lsc functions.
\item \label{prop:newlevy:2} $E[\cdot\mid n]$ satisfies the following properties on $\mathsf{XR}^{\nu}$:
\begin{enumerate}[leftmargin=*]
\item \label{prop:newlevy:2a} If $f\leq g$ on $\mathsf{XR}^{\nu}$, then $E[f\mid n]\leq E[g\mid n]$ on $\mathsf{XR}^{\nu}$;
 \item \label{prop:newlevy:2b} If $c$ in $\mathbb{R}^{\geq 0}$ then $c \cdot E[f\mid n] = E[c\cdot f\mid n]$ on $\mathsf{XR}^{\nu}$;
\item \label{prop:newlevy:2c} $E[f+g\mid n] = E[f\mid n]+E[g\mid n]$ on $\mathsf{XR}^{\nu}$;
\end{enumerate}
\item \label{prop:newlevy:3} Both $E[\cdot\mid n]$ and $\cdot^{\flat}$ send the countable dense set of $L_p^+(\nu)$ uniformly to computable points of~$L_p^+(\nu)$.
\item \label{prop:newlevy:4} The $L_p(\nu)$ Schnorr tests are approximated from below on $\mathsf{XR}^{\nu}$ by a class $\mathcal{C}$ of $L_p(\nu)$ Schnorr tests such that $\lim_n E[f\mid n]=f$ on~$\mathsf{XR}^{\nu}$ for each $f$ in $\mathcal{C}$.
\end{enumerate}

Then the following three items are equivalent for $x$ in $X$: 
  \begin{enumerate}[leftmargin=*]
  \item \label{thm:abstractlevy:1} $x$ is in $\mathsf{SR}^{\nu}$.
  \item \label{thm:abstractlevy:2} $x$ is in $\mathsf{XR}^{\nu}$ and $\lim_n E[f\mid n](x)=f(x)$ for every $L_p(\nu)$ Schnorr test $f$.
  \item \label{thm:abstractlevy:3} $x$ is in $\mathsf{XR}^{\nu}$ and $\lim_n E[f\mid n](x)$ exists for every $L_p(\nu)$ Schnorr test $f$ and $\lim_n E[I_U\mid n](x)=I_U(x)$ for every c.e. open $U$ with $\nu$-computable measure. 
\end{enumerate}

We also have:

\begin{enumerate}[leftmargin=*, align=left, label=(\roman*), ref=\roman*]
 \item \label{thm:abstractlevy:extra2} Every Borel set $B$ is equal on $\mathsf{SR}^{\nu}$ to a Borel set $B^{\prime}$ in the $\sigma$-algebra generated by the union of the $\mathscr{F}_n$.\footnote{Indeed, if $n\geq 1$ and $B$ is in $\utilde{\Sigma}^0_n$ (resp. $\utilde{\Pi}^0_n$) then we can take $B^{\prime}$ to be $\utilde{\Sigma}^0_{n+5}$ (resp. $\utilde{\Pi}^0_{n+5}$), and if $\alpha\geq \omega$ and $B$ is $\utilde{\Sigma}^0_{\alpha}$ (resp. $\utilde{\Pi}^0_{\alpha}$) then  $B^{\prime}$ may be taken to be $\utilde{\Sigma}^0_{\alpha}$ (resp. $\utilde{\Pi}^0_{\alpha}$). And the same for the lightface classes.}
 \item \label{thm:abstractlevy:extra3} Suppose one adds to (\ref{prop:newlevy:4}) the condition that every $x$ in $\mathsf{SR}^{\nu}$ weakly computes a modulus of convergence for $E[f\mid n](x)\rightarrow f(x)$, uniformly in $f$ from $\mathcal{C}$. Then one can further conclude for every $x$ in $\mathsf{SR}^{\nu}$ and every $L_p(\nu)$ Schnorr test $f$, the point $x$ weakly computes a modulus of convergence for $E[f\mid n](x)\rightarrow f(x)$ in (\ref{thm:abstractlevy:2}).
 \end{enumerate}
\end{thm}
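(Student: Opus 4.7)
The plan is to prove the chain $(\ref{thm:abstractlevy:1}) \Rightarrow (\ref{thm:abstractlevy:2}) \Rightarrow (\ref{thm:abstractlevy:3}) \Rightarrow (\ref{thm:abstractlevy:1})$ and then derive the two addenda. The step $(\ref{thm:abstractlevy:2}) \Rightarrow (\ref{thm:abstractlevy:3})$ is essentially automatic: for any c.e.~open $U$ with $\nu(U)$ computable, $I_U$ is non-negative lsc with computable $L_p(\nu)$-norm $\nu(U)^{1/p}$, hence an $L_p(\nu)$ Schnorr test, so the convergence assumption for all Schnorr tests already contains the stated indicator-convergence.

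For $(\ref{thm:abstractlevy:1}) \Rightarrow (\ref{thm:abstractlevy:2})$, I would fix $x \in \mathsf{SR}^{\nu} \subseteq \mathsf{XR}^{\nu}$ and an $L_p(\nu)$ Schnorr test $f$, use (\ref{prop:newlevy:4}) to select $f_s \in \mathcal{C}$ with $f_s \to f$ fast in $L_p(\nu)$, and form the partial subtractions $f \ominus f_s$ as in Lemma~\ref{ex:approximationbyschnorr2}. Exploiting (\ref{prop:newlevy:2}) on $\mathsf{XR}^{\nu}$ would then yield the decomposition
\begin{equation*}
\bigl| E[f \mid n](x) - f(x) \bigr| \leq \bigl| E[f_s \mid n](x) - f_s(x) \bigr| + (f \ominus f_s)^{\flat}(x) + (f \ominus f_s)(x).
\end{equation*}
The first summand vanishes as $n \to \infty$ by the $\mathcal{C}$-hypothesis and the third vanishes as $s \to \infty$ on $\mathsf{SR}^{\nu}$ via Proposition~\ref{prop:schnorrtestforlsc} applied to the Schnorr tests $f \ominus f_s$. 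For the middle summand, hypothesis (\ref{prop:newlevy:1}) together with Proposition~\ref{prop:lscclosure} makes $(f \ominus f_s)^{\flat}$ non-negative lsc, and I would combine the $L_p^+$-computability in (\ref{prop:newlevy:3}) with the classical norm estimates of Proposition~\ref{prop:abstract:1} to promote $g \mapsto g^{\flat}$ to a computable continuous map from $L_p(\nu)$ into $L_p(\nu)$ when $p > 1$, and into $L_0(\nu)$ when $p = 1$. Applying this operator to the fast sequence $f \ominus f_s \to 0$ produces uniform $L_p(\nu)$ (resp.\ $L_0(\nu)$) Schnorr tests $(f \ominus f_s)^{\flat}$ whose norm goes to $0$ fast, and Proposition~\ref{prop:schnorrtestforlsc} (resp.\ Proposition~\ref{prop:srlimexistsL0v1} combined with Proposition~\ref{prop:l0schnorrtest}) then concludes $(f \ominus f_s)^{\flat}(x) \to 0$ on $\mathsf{SR}^{\nu}$.

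For $(\ref{thm:abstractlevy:3}) \Rightarrow (\ref{thm:abstractlevy:1})$, I would take an arbitrary $L_p(\nu)$ Schnorr test $f$ and show $f(x) < \infty$. Lemma~\ref{lem:usingst} supplies a computable sequence $r_i > 0$ dense in $[0,\infty)$ for which $V_i := f^{-1}(r_i,\infty]$ is c.e.~open with $\nu$-computable measure. Since $f \geq r_i \cdot I_{V_i}$ pointwise, the monotonicity and positive-scaling parts of (\ref{prop:newlevy:2}) force $E[f \mid n] \geq r_i \cdot E[I_{V_i} \mid n]$ on $\mathsf{XR}^{\nu}$; taking the limit via (\ref{thm:abstractlevy:3}) yields the finite $L := \lim_n E[f \mid n](x) \geq r_i \cdot I_{V_i}(x)$, so $x \in V_i$ forces $r_i \leq L$. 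Density of $\{r_i\}$ then bounds $f(x) \leq L < \infty$.

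For (\ref{thm:abstractlevy:extra2}) I would induct on effective Borel rank with base case $U$ c.e.~open of computable $\nu$-measure, where the main equivalence realises $I_U$ on $\mathsf{SR}^{\nu}$ as $\lim_n E[I_U \mid n]$, a function measurable relative to $\sigma(\bigcup_n \mathscr{F}_n)$. For (\ref{thm:abstractlevy:extra3}), each summand in the main decomposition has a rate weakly computed by $x$: the $\mathcal{C}$-term by the strengthened hypothesis, and the other two via Proposition~\ref{prop:consequences:selflocation} applied to the Schnorr tests $f \ominus f_s$ and $(f \ominus f_s)^{\flat}$; uniformly combining these rates yields the weakly computable modulus for $E[f \mid n](x) \to f(x)$. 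The hardest step throughout will be the promotion of $g \mapsto g^{\flat}$ from the countable-dense operator in (\ref{prop:newlevy:3}) to a computable continuous operator on all of $L_p(\nu)$, which I would handle by verifying a computable modulus of uniform continuity via Proposition~\ref{prop:suffforcont} from the classical estimate in Proposition~\ref{prop:abstract:1}.
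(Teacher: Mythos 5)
Your proposal is correct, and in the central direction (\ref{thm:abstractlevy:1})$\Rightarrow$(\ref{thm:abstractlevy:2}) it matches the paper's argument essentially step for step: the same three-term split via $f\ominus f_s$, the same promotion of $\cdot^{\flat}$ to a computable continuous operator into $L_p(\nu)$ (resp.\ $L_0(\nu)$) using Propositions~\ref{prop:abstract:1} and~\ref{prop:suffforcont}, and the same appeal to Proposition~\ref{prop:schnorrtestforlsc} to kill the maximal-function term on $\mathsf{SR}^{\nu}$; your treatments of (\ref{thm:abstractlevy:extra2}) and (\ref{thm:abstractlevy:extra3}) also coincide with the paper's. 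The one place you genuinely diverge is (\ref{thm:abstractlevy:3})$\Rightarrow$(\ref{thm:abstractlevy:1}). The paper argues by reductio: assuming $f(x)=\infty$, it extracts from a $\nu$-computable basis a single c.e.\ open $U\subseteq f^{-1}(q,\infty]$ containing $x$ with computable measure and contradicts the assumed finite limit via the test $q\cdot I_U\leq f$. You instead argue directly, using Lemma~\ref{lem:usingst} to arrange that all the superlevel sets $V_i=f^{-1}(r_i,\infty]$ have computable $\nu$-measure, and then deducing $f(x)\leq\lim_n E[f\mid n](x)<\infty$ from monotonicity, positive scaling, the indicator clause of (\ref{thm:abstractlevy:3}) applied to each $V_i$, and density of the $r_i$. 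Both routes consume exactly the same hypotheses; yours trades the basis-extraction step for the pushforward machinery of Lemma~\ref{lem:usingst} and has the small advantage of producing the quantitative bound $f(x)\leq\lim_n E[f\mid n](x)$ rather than a bare contradiction. The only detail worth tightening is in the $p=1$ case of (\ref{thm:abstractlevy:1})$\Rightarrow$(\ref{thm:abstractlevy:2}): the modulus $m(\epsilon)=\epsilon^2$ from Proposition~\ref{prop:abstract:1} only gives $(f\ominus f_s)^{\flat}\rightarrow 0$ at a geometric rate in $L_0(\nu)$, so one should pass to a computable subsequence $s(n)$ to get fast convergence before invoking Proposition~\ref{prop:schnorrtestforlsc}, exactly as the paper does.
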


Regarding (\ref{thm:abstractlevy:extra2}), note that this is saying that the hypotheses of the Theorem amount collectively to an assumption that the union of the filtration generates a $\sigma$-algebra very close to the Borel $\sigma$-algebra, from the perspective of $\nu$.

\begin{proof}

First we note three things about the maps $E[\cdot\mid n]$ and $\cdot^{\flat}$ and $p\geq 1$ computable.

For $p\geq 1$, the map $E[\cdot\mid n]:\mathbb{L}_p^+(\nu)\rightarrow \mathbb{L}_p^+(\nu)$ maps $L_p(\nu)$ Schnorr tests uniformly to $L_p(\nu)$ Schnorr tests. For, by (\ref{prop:newlevy:1}), it sends non-negative lsc functions to non-negative lsc functions. And by conditional Jensen and (\ref{prop:newlevy:3})  and Propositions~\ref{prop:contcomp},\ref{prop:suffforcont}, it sends $L_p^+(\nu)$ computable points to $L_p^+(\nu)$ computable points.

For $p>1$, the map $\cdot^{\flat}:\mathbb{L}_p^+(\nu)\rightarrow \mathbb{L}_p^+(\nu)$ maps $L_p(\nu)$ Schnorr tests uniformly to $L_p(\nu)$ Schnorr tests. For, by (\ref{prop:newlevy:1}), it sends non-negative lsc functions to non-negative lsc functions. And by Proposition~\ref{prop:abstract:1}(\ref{prop:abstract:1a}) and (\ref{prop:newlevy:3}) and Propositions~\ref{prop:contcomp},\ref{prop:suffforcont}, it sends $L_p^+(\nu)$ computable points to $L_p^+(\nu)$ computable points.

 For $p=1$, the map $\cdot^{\flat}:\mathbb{L}_p^+(\nu)\rightarrow \mathbb{L}_0^+(\nu)$ sends $L_p(\nu)$ Schnorr tests uniformly to $L_0(\nu)$ Schnorr tests. For, by (\ref{prop:newlevy:1}), it sends non-negative lsc functions to non-negative lsc functions. And by Proposition~\ref{prop:abstract:1}(\ref{prop:abstract:1b}) and (\ref{prop:newlevy:3}) and Propositions~\ref{prop:contcomp},\ref{prop:suffforcont} it sends $L_p^+(\nu)$ computable points to $L_0^+(\nu)$ computable points.

Now we work on the equivalence of (\ref{thm:abstractlevy:1})-(\ref{thm:abstractlevy:3}).

Suppose (\ref{thm:abstractlevy:1}); we show (\ref{thm:abstractlevy:2}). Suppose that $f$ is an $L_p(\nu)$ Schnorr test; we want to show that $f=\lim_n E[f\mid n]$ on $\mathsf{SR}^{\nu}$. Choose $f_s$ from $\mathcal{C}$ as in (\ref{prop:newlevy:4}). By definition, one has that $f_s\rightarrow f$ pointwise on $\mathsf{XR}^{\nu}$ and fast in $L_p(\nu)$ and is non-decreasing on $\mathsf{XR}^{\nu}$.  Let $g_s$ be the $L_p(\nu)$ Schnorr test $f\ominus f_s$.  Then $g_s\rightarrow 0$ pointwise on $\mathsf{XR}^{\nu}$ and is non-increasing on $\mathsf{XR}^{\nu}$ and $g_s\rightarrow 0$ fast in $L_p(\nu)$.

Suppose $p>1$ (resp. $p=1$). Since $f_s$ is finite on $\mathsf{XR}^{\nu}$, we have that $f= f_s +f\ominus f_s$ on $\mathsf{XR}^{\nu}$ and hence by (\ref{prop:newlevy:2c}) we have $E[f\mid n] = E[f_s \mid n]+ E[f\ominus f_s\mid n] \leq  E[f_s \mid n]+g_s^{\flat}$ on $\mathsf{XR}^{\nu}$. These are all $L_p(\nu)$ Schnorr tests (resp. except for $g_s^{\flat}$ which is an $L_0(\nu)$ Schnorr test), and so they are finite on $\mathsf{SR}^{\nu}$ and we hence have $E[f\mid n]-E[f_s \mid n]\leq g_s^{\flat}$ on $\mathsf{SR}^{\nu}$. Since $f_s\leq f$ on $\mathsf{XR}^{\nu}$, we have $E[f_s\mid n]\leq E[f\mid n]$ on $\mathsf{XR}^{\nu}$ by (\ref{prop:newlevy:2a}). Hence  $\left|E[f\mid n]-E[f_s \mid n]\right|\leq g_s^{\flat}$ on $\mathsf{SR}^{\nu}$. Since the maximal function maps into $L_p(\nu)$ (resp. $L_0(\nu)$) and has a computable modulus of uniform continuity (cf. Proposition~\ref{prop:abstract:1}), and since $g_s\rightarrow 0$ fast in $L_p(\nu)$ (resp. at a geometric rate in $L_0(\nu)$), one can compute a subsequence such that $g^{\flat}_{s(n)} \rightarrow 0$ fast in $L_p(\nu)$ (resp. in $L_0(\nu)$). By Proposition~\ref{prop:schnorrtestforlsc} one has that $g_{s(n)}^{\flat}\rightarrow 0$ pointwise on $\mathsf{SR}^{\nu}$. Let $x$ in $\mathsf{SR}^{\nu}$ and let $\epsilon>0$. Since $f, f_s, E[f\mid n], E[f_s\mid n]$ are $L_p(\nu)$ Schnorr tests and since $g_s^{\flat}$ is an $L_p(\nu)$ Schnorr test (resp. $L_0(\nu)$ Schnorr test), these values are all finite on the $\mathsf{SR}^{\nu}$ point~$x$. Choose $n_0\geq 0$ such that for all $n\geq n_0$ and $f(x)-f_n(x)<\frac{\epsilon}{3}$. Choose $n_1\geq n_0$ such that for all $n\geq n_1$ one has $g_{s(n)}^{\flat}(x)<\frac{\epsilon}{3}$. By the hypothesis on the $f_s$ coming from~$\mathcal{C}$, choose $n_2\geq n_1$ such that $\left|f_{s(n_1)}(x)-E[f_{s(n_1)}\mid n](x)\right|<\frac{\epsilon}{3}$ for all $n\geq n_2$. Hence for all $n\geq n_2$ one has that $\left|f(x)-E[f\mid n](x)\right|\leq
\left|f(x)-f_{s(n_1)}(x)\right|+\left|f_{s(n_1)}(x)-E[f_{s(n_1)}\mid n](x)\right|+\left|E[f_{s(n_1)}\mid n](x)-E[f\mid n](x)\right| <\frac{\epsilon}{3}+\frac{\epsilon}{3}+g_{s(n_1)}^{\flat}(x)<\epsilon$.

Note that the previous paragraph yields~(\ref{thm:abstractlevy:extra3}). For, Proposition~\ref{prop:consequences:selflocation}(\ref{prop:consequences:selflocation:1}) tells us that $x$ can weakly compute a modulus of convergence for $g_{s(n)}^{\flat}(x)\rightarrow 0$. And Proposition~\ref{prop:consequences:selflocation}(\ref{prop:consequences:selflocation:2}) tells us that $x$ can weakly compute a modulus of convergence for $f_n(x)\rightarrow f(x)$. And the extra hypothesis in~(\ref{thm:abstractlevy:extra3}) says that $x$ can weakly compute a modulus of convergence for $E[f_{s(n_1)}\mid n](x)\rightarrow f_{s(n_1)}(x)$.

The implication from (\ref{thm:abstractlevy:2}) to (\ref{thm:abstractlevy:3}) is trivial. 

Suppose (\ref{thm:abstractlevy:3}); we show (\ref{thm:abstractlevy:1}). Suppose that $x$ is a point satisfying (\ref{thm:abstractlevy:3}). We want to show that $x$ is in $\mathsf{SR}^{\nu}$. Let $f$ be an $L_p(\nu)$ Schnorr test. We want to show that $f(x)<\infty$. Suppose for reductio that $f(x)=\infty$. By hypothesis, $\lim_n E[f\mid n](x)$ exists and is finite. Choose $n_0\geq 0$ and rationals $p<q$ such that $E[f\mid n](x)<p$ for all $n\geq n_0$. Then $x$ is in the c.e. open $f^{-1}(q,\infty]$. Using a $\nu$-computable basis, choose c.e. open $U$ which is a subset of $f^{-1}(q,\infty]$ and which contains $x$ and which has computable $\nu$-measure. Let $g=q\cdot I_U$, which is an $L_p(\nu)$ Schnorr test. Since $g - q \leq 0$ everywhere, by  (\ref{thm:abstractlevy:3}), there is $n_1\geq n_0$ such that $q - p > \left|E[g\mid n](x)-q\right| = q - E[g \mid n](x)$ for all $n\geq n_1$, and thus $E[g \mid n](x) > p$ for all such $n$. Let $n\geq n_1$. Since $g\leq f$ everywhere, by (\ref{prop:newlevy:2a}), we have $E[g\mid n](x)\leq E[f\mid n](x)<p$, a contradiction.

For (\ref{thm:abstractlevy:extra2}), by using a $\nu$-computable basis it suffices to prove it for $U$ c.e. open with $\nu(U)$ computable. In this case, $f=I_U$ is an $L_p(\nu)$ Schnorr test. Then $f=\lim_n E[f\mid n]$ on $\mathsf{SR}^{\nu}$. By  (\ref{prop:newlevy:2a}), one has $0\leq E[f\mid n]\leq 1$ on $\mathsf{XR}^{\nu}$. For rational $\epsilon$ in the interval $(0,\frac{1}{2})$, let $V_{n,\epsilon}=E[f\mid n]^{-1}(1-\epsilon, \infty]$. This set is in $\mathscr{F}_n$ since $E[f\mid n]$ is a version of the conditional expectation of $f$ relative to $\mathscr{F}_n$. Further this set is c.e. open by the second paragraph of this proof. Then on $\mathsf{SR}^{\nu}$ one has that $U=\bigcap_{\epsilon\in \mathbb{Q}\cap (0,\frac{1}{2})} \bigcup_{n_0\geq 0} \bigcap_{n\geq n_0} V_{n,\epsilon}$.\footnote{This event is further in $\utilde{\Pi}^0_4$. When we decompose an arbitrary open as a union of c.e. opens with $\nu$-computable measure, we will get an event in $\utilde{\Sigma}^0_5$.} Hence, $U$ is equal on $\mathsf{SR}^{\nu}$ to an event $B^{\prime}$ in the $\sigma$-algebra generated by the union of the $\mathscr{F}_n$.\
\end{proof}

\section{Fundamental properties of effective disintegrations}\label{sec:ed-prop}

In this section, we develop the properties of effective disintegrations (cf. Definition~\ref{defn:eff:disin}, and for examples see Appendicies~\ref{sec:app:classical:disintegrations}-\ref{sec:app:effective:disintegrations}). In the next propositions, $\mathsf{XR}^{\nu}$ denotes a $\nu$-measure one subset of $\mathsf{KR}^{\nu}$, as in the definition of an effective disintegration. Further, throughout this section, the expression $\mathbb{E}_{\nu}[f\mid \mathscr{F}]$ is defined as in (\ref{eqn:disintegrate}) of \S\ref{sec:disintegration}, namely the version of conditional expectation coming from the effective disintegration.

\begin{prop}\label{prop:dis:1}
 Suppose $\rho: X\rightarrow \mathcal{M}^+(X)$ is an $\mathsf{XR}^{\nu}$ disintegration of $\mathscr{F}$. Then for lsc $f:X\rightarrow [0,\infty]$, the map $\mathbb{E}_{\nu}[f\mid \mathscr{F}]:X\rightarrow [0,\infty]$ is lsc, uniformly in $f$.
\end{prop}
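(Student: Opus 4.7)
The plan is to reduce the claim to the hypothesis in Definition~\ref{defn:eff:disin}(\ref{defn:eff:disin:3}) via a layer-cake decomposition of $f$, and then obtain $\mathbb{E}_\nu[f\mid \mathscr{F}]$ as a supremum of uniformly lsc functions, applying Proposition~\ref{prop:lscclosure} to conclude.

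First, from an index for $f$ as a lsc function, we get uniformly c.e. open sets $U_{k,N} = f^{-1}(k\cdot 2^{-N}, \infty]$ for rationals $k \cdot 2^{-N}$. By Definition~\ref{defn:eff:disin}(\ref{defn:eff:disin:3}), the map $x \mapsto \rho_x(U_{k,N})$ is then uniformly lsc in $k, N$. Define
\begin{equation*}
g_N(x) = \sum_{k=1}^{N\cdot 2^N} 2^{-N}\cdot \rho_x(U_{k,N}).
\end{equation*}
This is a finite sum of non-negative rational multiples of lsc functions, so each $g_N$ is lsc uniformly in $N$ by Proposition~\ref{prop:lscclosure}.

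Next, set $f_N = \sum_{k=1}^{N\cdot 2^N} 2^{-N} \cdot I_{U_{k,N}}$. A routine calculation shows $f_N \uparrow f$ pointwise on all of $X$ (the value $f_N(x)$ is $\min(N, \lfloor f(x)\cdot 2^N\rfloor \cdot 2^{-N})$, which is non-decreasing in $N$ and tends to $f(x)$ whether $f(x)$ is finite or $\infty$). Fix $x$ in $X$; applying the Monotone Convergence Theorem to the measure $\rho_x$ (a finite Borel measure on $X$) and the sequence $f_N \uparrow f$ yields
\begin{equation*}
g_N(x) = \int f_N \, d\rho_x \;\uparrow\; \int f \, d\rho_x = \mathbb{E}_\nu[f\mid \mathscr{F}](x),
\end{equation*}
where the first equality is linearity of the integral applied to simple functions. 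Hence $\mathbb{E}_\nu[f\mid \mathscr{F}] = \sup_N g_N$ pointwise on $X$.

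Finally, since $g_N$ is a uniformly computable sequence of uniformly lsc functions, the supremum $\sup_N g_N$ is lsc, uniformly in $f$, by the closure of lsc functions under sups (Proposition~\ref{prop:lscclosure}). There is no real obstacle here beyond checking that the layer-cake approximation works at points where $f(x) = \infty$ and that MCT is available for the finite Borel measure $\rho_x$; both are immediate.
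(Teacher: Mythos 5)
Your proof is correct, and it takes a genuinely different route from the paper's. The paper argues directly that $\{x : \int f\, d\rho_x > r\}$ is c.e.\ open for each rational $r\geq 0$, by asserting the pointwise equivalence: $\int f\, d\rho_x > r$ iff there is a rational $q>0$ with $\rho_x(X)>r/q$ and $\rho_x(f^{-1}(q,\infty])>0$, the right-hand side being a c.e.\ open condition in $x$ by Definition~\ref{defn:eff:disin}(\ref{defn:eff:disin:3}). You instead exhibit $\mathbb{E}_{\nu}[f\mid\mathscr{F}]$ as the pointwise supremum of the uniformly lsc functions $g_N(x)=\int f_N\,d\rho_x$ attached to the dyadic layer-cake approximation $f_N\uparrow f$, and conclude by closure of lsc functions under sups (Proposition~\ref{prop:lscclosure}); both routes rest only on clause~(\ref{defn:eff:disin:3}) of Definition~\ref{defn:eff:disin}, and both handle $f(x)=\infty$ and non-$\rho_x$-integrable $f$ correctly, with full uniformity in the index for $f$. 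What your route buys is robustness: the right-to-left direction of the paper's displayed equivalence is delicate, since its Markov-type step $\rho_x(f-q>\epsilon)\leq \epsilon^{-1}\int (f-q)\,d\rho_x$ is applied to a function $f-q$ that need not be non-negative, and in fact the equivalence can fail (take $\rho_x$ a probability measure, $f=I_A$ with $\rho_x(A)=1/100$, $q=1/2$, $r=2/5$: the right-hand side holds but $\int f\,d\rho_x=1/100<r$). Your monotone-approximation argument sidesteps this entirely; the only points needing care --- monotonicity of $f_N$ in $N$, the cap at $N$, and applying MCT to the finite measure $\rho_x$ at every $x$ in $X$ rather than just on $\mathsf{XR}^{\nu}$ --- are exactly the ones you flag, and they go through.
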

\begin{proof}
By Definition~\ref{defn:eff:disin}(\ref{defn:eff:disin:3}) it suffices to show that for rational $r\geq 0$ we have 
\begin{equation*}
\int f \; d\rho_x >r \mbox{ iff } \exists \; q\in \mathbb{Q}^{>0} \; \big( \rho_x(X)>\frac{r}{q} \; \& \; \rho_x( f^{-1}(q, \infty])>0\big) 
\end{equation*}

Suppose that  $\int f \; d\rho_x >r$. Since $r\geq 0$, we have $\rho_x(X)> 0$. Choose rational $q>0$ in the interval $(\frac{r}{\rho_x(X)}, \frac{\int f \; d\rho_x}{\rho_x(X)})$. Then $\int f \; d\rho_x >\rho_x(X)\cdot q$ and $\rho_x(X)>\frac{r}{q}$.  Then $\rho_x(f^{-1}(q,\infty])>0$, since otherwise $0\leq f \leq q$ $\rho_x$-a.e. and hence  $\int f \; d\rho_x \leq q\cdot \rho_x(X)$. 

 Suppose that rational $q>0$ satisfies $\rho_x(X)>\frac{r}{q}$ and $\rho_x(f^{-1}(q,\infty])>0$. If $f$ is not $\rho_x$-integrable then trivially we have $\int f \; d\rho_x >r$. Hence suppose $f$ is $\rho_x$-integrable. Since $\rho_x(f^{-1}(q,\infty])>0$, choose  $\epsilon>0$ such that $\rho_x(f-q>\epsilon)>0$. Then $0<\rho_x(f-q>\epsilon)\leq \frac{1}{\epsilon} \int f -q \; d\rho_x$. Then $0<\int f -q \; d\rho_x$. Then $q\cdot \rho_x(X)<\int f \; d\rho_x$. Then $r<\int f \; d\rho_x$.
\end{proof}

\begin{prop}\label{prop:dis:2}
Suppose $\rho: X\rightarrow \mathcal{M}^+(X)$ is a $\mathsf{XR}^{\nu}$ disintegration of $\mathscr{F}$. Then for $f,g$ in $\mathbb{L}_1^+(\nu)$  the conditional expectation satisfies the following monotone linearity properties:
\begin{enumerate}[leftmargin=*]
\item \label{prop:newlevy:2a:again} If $f\leq g$ on $\mathsf{XR}^{\nu}$, then $\mathbb{E}_{\nu}[f\mid \mathscr{F}]\leq \mathbb{E}_{\nu}[g\mid \mathscr{F}]$ on $\mathsf{XR}^{\nu}$;
 \item \label{prop:newlevy:2b:again} If $c$ in $\mathbb{R}^{\geq 0}$ then $c \cdot \mathbb{E}_{\nu}[f\mid \mathscr{F}] = \mathbb{E}_{\nu}[c\cdot f\mid \mathscr{F}]$ everywhere.
\item \label{prop:newlevy:2c:again} $\mathbb{E}_{\nu}[f+g\mid \mathscr{F}] = \mathbb{E}_{\nu}[f\mid \mathscr{F}]+\mathbb{E}_{\nu}[g\mid \mathscr{F}]$ everywhere.
\end{enumerate}
\end{prop}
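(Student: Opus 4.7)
The plan is straightforward because the proposition is really just a pointwise unpacking of standard properties of the Lebesgue integral against the measure $\rho_x$, together with the concentration property from Definition~\ref{defn:eff:disin}(\ref{defn:eff:disin:2}). I would organise the proof in two steps, handling the easier ``everywhere'' identities first and the ``on $\mathsf{XR}^{\nu}$'' inequality last.

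For (\ref{prop:newlevy:2b:again}) and (\ref{prop:newlevy:2c:again}), I would note that $\rho_x$ is a non-negative Borel measure in $\mathcal{M}^+(X)$ for \emph{every} $x\in X$ (this is built into the codomain of $\rho$), and that $f,g\in \mathbb{L}_1^+(\nu)$ are Borel measurable with values in $[0,\infty]$. Hence both sides of the identities in (\ref{prop:newlevy:2b:again})-(\ref{prop:newlevy:2c:again}) are defined pointwise in $[0,\infty]$ by equation~(\ref{eqn:disintegrate}), and the identities reduce immediately to the standard homogeneity $\int c\cdot f\; d\rho_x = c\cdot \int f\; d\rho_x$ (with the usual convention $0\cdot\infty=0$) and additivity $\int (f+g)\; d\rho_x = \int f\; d\rho_x+\int g\; d\rho_x$ of the Lebesgue integral of non-negative measurable functions against a non-negative measure. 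Both hold at every $x\in X$, giving the ``everywhere'' conclusion.

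For (\ref{prop:newlevy:2a:again}), let $x\in \mathsf{XR}^{\nu}$. By Definition~\ref{defn:eff:disin}(\ref{defn:eff:disin:2}) we have $\rho_x(X)=1$ and $\rho_x([x]_{\mathscr{F}}\cap \mathsf{XR}^{\nu})=1$ (interpreted, if need be, via the completion of $\rho_x$, since $\mathsf{XR}^{\nu}$ need not itself be Borel). Since $f,g$ are Borel measurable, the set $A=\{v\in X: f(v)\leq g(v)\}$ is Borel, and the hypothesis $f\leq g$ on $\mathsf{XR}^{\nu}$ gives $[x]_{\mathscr{F}}\cap \mathsf{XR}^{\nu}\subseteq A$, so $\rho_x(A)\geq \rho_x([x]_{\mathscr{F}}\cap \mathsf{XR}^{\nu})=1=\rho_x(X)$, and thus $f\leq g$ holds $\rho_x$-a.s. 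Monotonicity of the Lebesgue integral then yields $\mathbb{E}_{\nu}[f\mid \mathscr{F}](x)=\int f\; d\rho_x\leq \int g\; d\rho_x=\mathbb{E}_{\nu}[g\mid \mathscr{F}](x)$, as desired. There is no genuine obstacle; the only point requiring any care is the measurability issue in (\ref{prop:newlevy:2a:again}), which is handled routinely by passing through the Borel superset $A$ of $\mathsf{XR}^{\nu}$ so that the measure-theoretic computation takes place among Borel sets.
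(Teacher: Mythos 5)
Your proof is correct and follows essentially the same route as the paper's: parts (2) and (3) are immediate from homogeneity and additivity of the integral against $\rho_x$, and part (1) uses Definition~\ref{defn:eff:disin}(\ref{defn:eff:disin:2}) to see that $f\leq g$ holds $\rho_x$-a.s.\ for $x$ in $\mathsf{XR}^{\nu}$. Your extra step of routing the comparison through the Borel set $A=\{v: f(v)\leq g(v)\}$ is a slightly more careful rendering of the paper's one-line "then $f\leq g$ on a $\rho_x$-measure one set," but it is the same argument.
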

\begin{proof}
For (\ref{prop:newlevy:2a:again}), suppose that $f\leq g$ on $\mathsf{XR}^{\nu}$. Suppose that $x$ is in $\mathsf{XR}^{\nu}$. By  Definition~\ref{defn:eff:disin}(\ref{defn:eff:disin:2}), $\rho_x$ is in $\mathcal{P}(X)$ and $\rho_{x}([x]_{\mathscr{F}}\cap \mathsf{XR}^{\nu})=1$. Then $f\leq g$ on a $\rho_x$-measure one set, and hence $\int f(v) \; d\rho_{x}(v) \leq \int g(v) \; d\rho_{x}(v)$. For (\ref{prop:newlevy:2b:again})-(\ref{prop:newlevy:2c:again}), these just follow from the properties of the integral.
\end{proof}

The use of Definition~\ref{defn:eff:disin}(\ref{defn:eff:disin:2}) in the  proof of the previous proposition is typical, and henceforth we do not explicitly reiterate it as we go along.  

We stated the previous proposition for non-negative functions. For these functions, the conditional expectation $\mathbb{E}_{\nu}[f\mid \mathscr{F}](x)$ in (\ref{eqn:disintegrate}) is automatically defined for all points $x$ in $X$, even if it is infinite. However, $\mathbb{E}_{\nu}[f\mid \mathscr{F}](x)$ in (\ref{eqn:disintegrate})  is automatically defined and finite when $f$ is a simple function, and so the previous proposition holds for these functions as well. More generally, the previous proposition holds for functions which take negative values, provided that $\mathbb{E}_{\nu}[f\mid \mathscr{F}](x)$ is defined and finite on all points $x$ of $\mathsf{XR}^{\nu}$.

\begin{prop}\label{prop:dis:4.1}
Suppose $\rho: X\rightarrow \mathcal{M}^+(X)$ is a $\mathsf{XR}^{\nu}$ disintegration of $\mathscr{F}$. 

If $f$ in $\mathbb{L}^+_1(\nu)$ is equal on $\mathsf{XR}^{\nu}$ to a function which is $\mathscr{F}$-measurable, then one has $\mathbb{E}_{\nu}[f\mid \mathscr{F}] = f$ on $\mathsf{XR}^{\nu}$.
\end{prop}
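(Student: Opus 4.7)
The plan is to unpack the definition of the disintegration in~(\ref{eqn:disintegrate}) and use the fact that for $x$ in $\mathsf{XR}^{\nu}$, the measure $\rho_x$ concentrates on $[x]_{\mathscr{F}}\cap \mathsf{XR}^{\nu}$, so that the integral computing $\mathbb{E}_{\nu}[f\mid \mathscr{F}](x)$ only ``sees'' the values of $f$ on this set, where $f$ coincides with a function that is constant on $[x]_{\mathscr{F}}$.

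More concretely, let $g$ be an $\mathscr{F}$-measurable function such that $f=g$ on $\mathsf{XR}^{\nu}$. The first step is to observe that $g$ is constant on each $\mathscr{F}$-equivalence class $[x]_{\mathscr{F}}$. This uses that $\mathscr{F}$ is countably generated (which we have by Definition~\ref{defn:core}(\ref{defn:core:4})): if $\mathscr{F}$ is generated by $\{A_m: m\geq 0\}$, then $v\sim_{\mathscr{F}} x$ iff $v$ and $x$ lie in exactly the same $A_m$, and a standard monotone class argument shows that any $\mathscr{F}$-measurable function must take the same value at $v$ and $x$.

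Fix $x$ in $\mathsf{XR}^{\nu}$. By Definition~\ref{defn:eff:disin}(\ref{defn:eff:disin:2}), both $\rho_x(X)=1$ and $\rho_x([x]_{\mathscr{F}}\cap \mathsf{XR}^{\nu})=1$. For every $v$ in $[x]_{\mathscr{F}}\cap \mathsf{XR}^{\nu}$, we have $f(v)=g(v)=g(x)=f(x)$, where the first and third equalities use that $v,x$ are in $\mathsf{XR}^{\nu}$, and the middle equality uses the first step. Hence, on a $\rho_x$-full-measure subset of $X$, the function $f$ equals the constant $f(x)$. Therefore
\begin{equation*}
\mathbb{E}_{\nu}[f\mid \mathscr{F}](x) \;=\; \int f(v)\; d\rho_x(v) \;=\; \int_{[x]_{\mathscr{F}}\cap \mathsf{XR}^{\nu}} f(x)\; d\rho_x(v) \;=\; f(x)\cdot \rho_x\big([x]_{\mathscr{F}}\cap \mathsf{XR}^{\nu}\big) \;=\; f(x).
\end{equation*}
Since $x$ in $\mathsf{XR}^{\nu}$ was arbitrary, this gives $\mathbb{E}_{\nu}[f\mid \mathscr{F}]=f$ on $\mathsf{XR}^{\nu}$.

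There is no real obstacle here; the only subtlety is the first step, that $\mathscr{F}$-measurable functions are constant on $\sim_{\mathscr{F}}$-equivalence classes, which is where countable generation of $\mathscr{F}$ is essential. Once that is in hand, the rest is a direct unpacking of the definition of $\mathbb{E}_{\nu}[f\mid \mathscr{F}]$ as an integral against $\rho_x$ together with the concentration property~(\ref{defn:eff:disin:2}) of $\rho$.
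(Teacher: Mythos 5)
Your proof is correct, but it takes a different route from the paper's. The paper first reduces to the case where $f$ is itself $\mathscr{F}$-measurable, by applying the monotonicity of $\mathbb{E}_{\nu}[\cdot\mid\mathscr{F}]$ on $\mathsf{XR}^{\nu}$ (Proposition~\ref{prop:dis:2}(\ref{prop:newlevy:2a:again})) in both directions; it then argues by contradiction, separating $\mathbb{E}_{\nu}[f\mid \mathscr{F}](x)$ and $f(x)$ by rationals $p<q$ and using that $x\in f^{-1}(q,\infty]\in\mathscr{F}$ forces $[x]_{\mathscr{F}}\subseteq f^{-1}(q,\infty]$, hence $\int f\,d\rho_x\geq q$. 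You instead prove directly that the $\mathscr{F}$-measurable representative $g$ is constant on $\sim_{\mathscr{F}}$-classes and then compute the integral against $\rho_x$ outright using Definition~\ref{defn:eff:disin}(\ref{defn:eff:disin:2}); this is a clean one-line calculation (valid also when $f(x)=\infty$), and it has the minor advantage of never needing $g$ to be non-negative or integrable off $\mathsf{XR}^{\nu}$, whereas the paper's reduction tacitly replaces $g$ by something in $\mathbb{L}_1^+(\nu)$. One small correction: countable generation of $\mathscr{F}$ is not actually essential for your first step, and no monotone class argument is needed. Since the paper defines $x\sim_{\mathscr{F}}x'$ by quantifying over \emph{all} $A\in\mathscr{F}$, constancy of any $\mathscr{F}$-measurable $g$ on equivalence classes is immediate: apply the definition to $A=g^{-1}(\{g(x)\})$. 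Your detour through the generators is correct but unnecessary.
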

\begin{proof}
By Proposition~\ref{prop:dis:2}(\ref{prop:newlevy:2a:again}), it suffices to consider functions in $\mathbb{L}^+_1(\nu)$ which are themselves $\mathscr{F}$-measurable (as opposed to being merely equal on $\mathsf{XR}^{\nu}$ to such a function).

Suppose that $x$ is in $\mathsf{XR}^{\nu}$. If $\mathbb{E}_{\nu}[f\mid \mathscr{F}](x)<f(x)$, then for some rationals $p,q$ we have $\mathbb{E}_{\nu}[f\mid \mathscr{F}](x)<p<q<f(x)$. Then $x$ is in the  event $f^{-1}(q,\infty]$ in $\mathscr{F}$. Then $[x]_{\mathscr{F}}\subseteq f^{-1}(q,\infty]$. Then $f\geq q$ for $\rho_x$-a.s. many values and hence $\int f \; d\rho_x \geq q$, a contradiction. The case of $\mathbb{E}_{\nu}[f\mid \mathscr{F}](x)>f(x)$ is similar.
\end{proof}

In the below proposition, we use the traditional names for the properties of conditional expectation.\footnote{E.g. \cite[88]{Williams1991aa}.}. Of course, the hypothesis of effective disintegrations in Definition~\ref{defn:eff:disin}(\ref{defn:eff:disin:1}) is that $\mathbb{E}_{\nu}[f\mid \mathscr{F}](x)=\int f \; d\rho_x$ is a version of conditional expectation. But in this proposition and several others in this section, what we are verifying is that they hold \emph{pointwise} on specifiable measure $\nu$-one subsets.
\begin{prop}\label{prop:bigprop}
Suppose $\rho: X\rightarrow \mathcal{M}^+(X)$ is a $\mathsf{XR}^{\nu}$ disintegration of $\mathscr{F}$.

\begin{enumerate}[leftmargin=*, align=left]
    \item \label{prop:cmct} (Conditional MCT). Suppose that $f_n, f$ are in $\mathbb{L}_1^+(\nu)$ and $0\leq f_n\leq f_{n+1}$ on $\mathsf{XR}^{\nu}$ and $\lim_n f_n=f$ on $\mathsf{XR}^{\nu}$. Then $\lim_n \mathbb{E}_{\nu}[f_n\mid \mathscr{F}] = \mathbb{E}_{\nu}[f \mid \mathscr{F}]$ on $\mathsf{XR}^{\nu}$.
    \item \label{prop:cdct}  (Conditional DCT) Suppose that $f_n, f, g$ are in $\mathbb{L}_1^+(\nu)$ and $\left|f_n\right|\leq g$ on $\mathsf{XR}^{\nu}$ and $\lim_n f_n=f$ on $\mathsf{XR}^{\nu}$. Then:
    \begin{enumerate}[leftmargin=*, align=left]
    \item \label{prop:cdcta} If $x$ in $\mathsf{XR}^{\nu}$ and $\mathbb{E}_{\nu}[g\mid\mathscr{F}](x)<\infty$ then $\lim_n \mathbb{E}_{\nu}[f_n\mid \mathscr{F}](x) = \mathbb{E}_{\nu}[f \mid \mathscr{F}](x)$. 
    \item If $\mathbb{E}_{\nu}[g\mid \mathscr{F}]<\infty$ on $\mathsf{XR}^{\nu}$, then $\lim_n \mathbb{E}_{\nu}[f_n\mid \mathscr{F}] = \mathbb{E}_{\nu}[f \mid \mathscr{F}]$ on $\mathsf{XR}^{\nu}$.
    \end{enumerate}
    \item \label{prop:ctowisk} (`Taking out what is known'). Suppose that $f,g$ in $\mathbb{L}_1^+(\nu)$, and suppose that $g$ is equal on $\mathsf{XR}^{\nu}$ to a $\mathscr{F}$-measurable function.  Then $\mathbb{E}_{\nu}[f\cdot g\mid \mathscr{F}]=g\cdot \mathbb{E}_{\nu}[f\mid \mathscr{F}]$ on $\mathsf{XR}^{\nu}$.
\end{enumerate}
\end{prop}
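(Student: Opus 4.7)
The plan is to deduce all three parts pointwise by applying the corresponding \emph{classical} results (MCT, DCT, and the fact that constants pull out of integrals) to the probability measure $\rho_x$, for each fixed $x$ in $\mathsf{XR}^{\nu}$. The setup from Definition~\ref{defn:eff:disin}(\ref{defn:eff:disin:2}) gives us exactly what we need: for each $x$ in $\mathsf{XR}^{\nu}$, $\rho_x$ is a probability measure with $\rho_x([x]_{\mathscr{F}}\cap \mathsf{XR}^{\nu})=1$, so any property holding on $\mathsf{XR}^{\nu}$ automatically holds $\rho_x$-a.s.

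For the Conditional MCT (\ref{prop:cmct}), fix $x$ in $\mathsf{XR}^{\nu}$. The assumptions $0\leq f_n\leq f_{n+1}$ and $\lim_n f_n=f$ hold on $\mathsf{XR}^{\nu}$, hence $\rho_x$-a.s., so the classical Monotone Convergence Theorem applied to $\rho_x$ yields $\lim_n \int f_n \; d\rho_x = \int f\; d\rho_x$, which unpacks to the desired equality. For the Conditional DCT (\ref{prop:cdct}), fix $x$ in $\mathsf{XR}^{\nu}$ with $\mathbb{E}_{\nu}[g\mid \mathscr{F}](x) = \int g\;d\rho_x<\infty$, so that $g$ is $\rho_x$-integrable. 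The hypotheses $\left|f_n\right|\leq g$ and $f_n\to f$ on $\mathsf{XR}^{\nu}$ again transfer to $\rho_x$-a.s. statements, and the classical Dominated Convergence Theorem for $\rho_x$ gives (\ref{prop:cdcta}). Part~(b) is just the uniform version of~(a) across~$x$ in $\mathsf{XR}^{\nu}$.

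For `taking out what is known' (\ref{prop:ctowisk}), let $\tilde{g}$ be an $\mathscr{F}$-measurable function with $\tilde{g}=g$ on $\mathsf{XR}^{\nu}$. The key observation is that any $\mathscr{F}$-measurable real-valued function is constant on every $\sim_{\mathscr{F}}$-equivalence class: if $x\sim_{\mathscr{F}} x'$ and $c=\tilde{g}(x)$, then $x$ lies in the $\mathscr{F}$-event $\tilde{g}^{-1}(\{c\})$, hence so does~$x'$. Now fix $x$ in $\mathsf{XR}^{\nu}$. Then $\tilde{g}\equiv \tilde{g}(x)=g(x)$ on $[x]_{\mathscr{F}}$, and $g=\tilde{g}$ on $\mathsf{XR}^{\nu}$, so $g$ is $\rho_x$-a.s.\ equal to the constant $g(x)$. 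Pulling this constant out of the $\rho_x$-integral gives
\begin{equation*}
\mathbb{E}_{\nu}[f\cdot g\mid \mathscr{F}](x)=\int f\cdot g \; d\rho_x = g(x)\cdot \int f \; d\rho_x = g(x)\cdot \mathbb{E}_{\nu}[f\mid \mathscr{F}](x).
\end{equation*}

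The main conceptual step, which is used in all three parts, is the translation principle: properties holding on $\mathsf{XR}^{\nu}$ translate to $\rho_x$-a.s.\ properties for $x$ in $\mathsf{XR}^{\nu}$, thanks to clause~(\ref{defn:eff:disin:2}) of Definition~\ref{defn:eff:disin}. Given this, no subtle argument is needed beyond the classical theorems applied fibrewise. The only mildly nontrivial point is the constancy of $\mathscr{F}$-measurable functions on $\sim_{\mathscr{F}}$-classes in part (\ref{prop:ctowisk}), which is a soft measurability argument using that singletons in $\mathbb{R}$ are Borel.
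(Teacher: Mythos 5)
Your proof is correct. Parts (\ref{prop:cmct}) and (\ref{prop:cdct}) coincide exactly with the paper's argument: transfer the hypotheses to $\rho_x$-a.s.\ statements via Definition~\ref{defn:eff:disin}(\ref{defn:eff:disin:2}) and apply the classical MCT/DCT fibrewise. For part (\ref{prop:ctowisk}) you take a genuinely different and somewhat slicker route. The paper first reduces to $\mathscr{F}$-measurable $g$ via Proposition~\ref{prop:dis:2}(\ref{prop:newlevy:2a:again}) and then runs the standard machine: verify the identity for indicators $g=I_A$ with $A$ in $\mathscr{F}$ (using $[x]_{\mathscr{F}}\subseteq A$ or $[x]_{\mathscr{F}}\subseteq X\setminus A$), extend to simple functions by linearity, and pass to general $g$ by the Conditional MCT just proved. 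You instead observe that an $\mathscr{F}$-measurable function is constant on each $\sim_{\mathscr{F}}$-class, so that $g$ is $\rho_x$-a.s.\ equal to the constant $g(x)$ and pulls straight out of the integral. Your observation is sound (the set $\tilde{g}^{-1}(\{c\})$ is indeed an $\mathscr{F}$-event), and it collapses the three-stage induction into one line; what the paper's longer route buys is that it never has to multiply by a possibly infinite constant --- your final display implicitly uses the convention $\infty\cdot 0=0$ in the case $g(x)=\infty$ (which can occur for $g$ in $\mathbb{L}_1^+(\nu)$ at individual points), whereas the indicator/simple-function/MCT ladder only ever handles finite coefficients. This is a cosmetic rather than substantive gap under the standard measure-theoretic conventions, but it is worth a remark if you write the argument up.
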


\begin{proof}
For (\ref{prop:cmct}), let $x$ in $\mathsf{XR}^{\nu}$. By hypothesis, $0\leq f_n\leq f_{n+1}$ for $\rho_x$-a.s. many points, and likewise  $\lim_n f_n=f$ for  $\rho_x$-a.s. many points. Then by MCT applied to $\rho_x$ we have  $\lim_n \int f_n \; d\rho_x = \int f \; d\rho_x$.

For (\ref{prop:cdct}), let $x$ in $\mathsf{XR}^{\nu}$ with $\mathbb{E}_{\nu}[g\mid \mathscr{F}](x)<\infty$. This means that $\int g \; d\rho_x<\infty$, and so $g$ is in $\mathbb{L}^+_1(\rho_x)$. By hypothesis, $\left| f_n\right|\leq g$ for $\rho_x$-a.s. many points, and likewise  $\lim_n f_n=f$ for  $\rho_x$-a.s. many points. Hence by the DCT applied to $\rho_x$, we have that $\lim_n \int f_n \; d\rho_x = \int f \; d\rho_x$.

For (\ref{prop:ctowisk}), by Proposition~\ref{prop:dis:2}(\ref{prop:newlevy:2a:again}), it suffices to prove it for $g$ which is itself $\mathscr{F}$-measurable. We show it by induction on complexity of $g$. 

Suppose $g=I_A$ where $A$ is $\mathscr{F}$-measurable.  If $x$ in $A$ then $[x]_{\mathscr{F}}\subseteq A$ and then $A$ is a $\rho_x$-measure one event and then it reduces to the observation that $\int_A f(v) \; d\rho_x(v)=\int f(v) \; d\rho_x(v)$. If $x$ is not in $A$ then $[x]_{\mathscr{F}}\subseteq X\setminus A$ and then $A$ is a $\rho_x$-measure zero event and then it reduces to the observation that $\int_A f(v) \; d\rho_x(v)=0$.

By Proposition~\ref{prop:dis:2}(\ref{prop:newlevy:2b:again})-(\ref{prop:newlevy:2c:again}), it extends to simple functions. By Conditional MCT it extends to all elements of $\mathbb{L}_1^+(\nu)$.
\end{proof}

Unlike the previous propositions, this proposition concerns Kurtz disintegrations:
\begin{prop}\label{prop:compcont}
 Suppose $\rho: X\rightarrow \mathcal{M}^+(X)$ is a Kurtz disintegration of $\mathscr{F}$. If $p\geq 1$ is computable, then $\mathbb{E}_{\nu}[\cdot\mid \mathscr{F}]:L_p(\nu)\rightarrow L_p(\nu)$ is computable continuous.
\end{prop}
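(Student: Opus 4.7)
The plan is to apply Proposition~\ref{prop:suffforcont} to the map $T := \mathbb{E}_{\nu}[\cdot \mid \mathscr{F}] : L_p(\nu)\to L_p(\nu)$. This requires verifying two things: (a) $T$ has a computable modulus of uniform continuity, and (b) the countable dense set of $L_p(\nu)$ (namely the rational simple functions over the algebra generated by a $\nu$-computable basis) is sent uniformly to computable points of $L_p(\nu)$. Note that $T$ is well-defined on equivalence classes by Definition~\ref{defn:eff:disin}(\ref{defn:eff:disin:1}).

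For (a), since $\mathbb{E}_{\nu}[f\mid\mathscr{F}]-\mathbb{E}_{\nu}[g\mid\mathscr{F}]$ and $\mathbb{E}_{\nu}[f-g\mid\mathscr{F}]$ are both versions of the conditional expectation of $f-g$, they agree $\nu$-a.s. On the $\nu$-full set $\mathsf{KR}^{\nu}$ we have $\rho_x\in\mathcal{P}(X)$ by Definition~\ref{defn:eff:disin}(\ref{defn:eff:disin:2}), so conditional Jensen applied pointwise to the convex $\phi(t)=\left|t\right|^p$ yields $\left|\mathbb{E}_{\nu}[f-g\mid\mathscr{F}](x)\right|^p \leq \mathbb{E}_{\nu}[\left|f-g\right|^p\mid\mathscr{F}](x)$ $\nu$-a.s. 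Integrating gives $\|T f-T g\|_p^p \leq \int \left|f-g\right|^p\,d\nu = \|f-g\|_p^p$, so $T$ is $1$-Lipschitz and $m(\epsilon)=\epsilon/2$ is a computable modulus.

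For (b), by the pointwise linearity in Proposition~\ref{prop:dis:2}(\ref{prop:newlevy:2b:again})--(\ref{prop:newlevy:2c:again}) together with the computable continuity of addition and rational scalar multiplication on $L_p(\nu)$, it suffices to show that $\mathbb{E}_{\nu}[I_A\mid \mathscr{F}]$ is uniformly a computable point of $L_p(\nu)$ for $A$ in the algebra. By Proposition~\ref{prop:krplusbasis}, I uniformly compute a c.e. open $U$ and an effectively closed superset $C\supseteq U$ with $\nu(U)=\nu(C)$ and $U=A=C$ on $\mathsf{KR}^{\nu}$. Let $g:=\mathbb{E}_{\nu}[I_U\mid \mathscr{F}]$ and $g':=\mathbb{E}_{\nu}[I_{X\setminus C}\mid \mathscr{F}]$; by Proposition~\ref{prop:dis:1} both are non-negative lsc, and $\mathbb{E}_{\nu}[I_A\mid \mathscr{F}]$ equals $g$ in $L_p(\nu)$. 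By Proposition~\ref{prop:exptoflsc}, it then remains to verify that $\|g\|_p$ is a computable real.

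To compute $\|g\|_p^p = \int g^p\,d\nu$, I exhibit matching left- and right-c.e.\ approximations. For the left-c.e.\ estimate, apply Proposition~\ref{prop:exptoflsc} to $g$ to obtain non-negative rational simple functions $g_s\uparrow g$ everywhere; Monotone Convergence gives $\int g_s^p\,d\nu \uparrow \int g^p\,d\nu$, and the values $\int g_s^p\,d\nu$ are uniformly computable (since $g_s = \sum q_i I_{A_i}$ over disjoint $A_i$ from the algebra and $q_i^p$ is computable for computable $p$). For the right-c.e.\ estimate, observe that $I_U + I_{X\setminus C} = 1$ on $\mathsf{KR}^{\nu}$ (since $U = C$ there); then by Proposition~\ref{prop:dis:2}(\ref{prop:newlevy:2a:again})--(\ref{prop:newlevy:2c:again}) and $\mathbb{E}_{\nu}[1\mid\mathscr{F}](x)=\rho_x(X)=1$ on $\mathsf{KR}^{\nu}$ (Definition~\ref{defn:eff:disin}(\ref{defn:eff:disin:2})), we get $g + g' = 1$ on $\mathsf{KR}^{\nu}$, hence $g = 1 - g'$ $\nu$-a.s. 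Now apply Proposition~\ref{prop:exptoflsc} to $g'$ to obtain rational simple $g'_s\uparrow g'$; after truncating to $\min(g'_s, 1)$ (which preserves monotone increase to $g'$ $\nu$-a.s.\ since $g'\leq 1$ $\nu$-a.s.), the functions $\tilde{g}_s := 1 - \min(g'_s,1)\in[0,1]$ are rational simple with $\tilde{g}_s \downarrow g$ $\nu$-a.s. Dominated Convergence (with bound $1$) gives $\int \tilde{g}_s^p\,d\nu \downarrow \int g^p\,d\nu$, and these integrals are uniformly computable, yielding the right-c.e.\ estimate. The main subtlety is the careful use of the Kurtz hypothesis $\mathsf{XR}^{\nu}=\mathsf{KR}^{\nu}$ to convert pointwise algebraic identities on $\mathsf{KR}^{\nu}$ into $\nu$-a.s.\ identities of measurable functions, and threading uniformity through the bounded and monotone convergence arguments.
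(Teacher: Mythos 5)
Your proof is correct, and it takes a genuinely different route from the paper's at the key step (showing the countable dense set is sent to computable points of $L_p(\nu)$); the modulus-of-continuity half via conditional Jensen and Proposition~\ref{prop:suffforcont} is the same in both. The paper works directly with a simple function $f=\sum_i q_i I_{U_i}$ over disjoint pieces, writes $\mathbb{E}_{\nu}[f\mid\mathscr{F}]=\sum_i q_i\,\rho_{\cdot}(U_i)$ and $\mathbb{E}_{\nu}[f^p\mid\mathscr{F}]=\sum_i q_i^p\,\rho_{\cdot}(U_i)$, approximates the lsc functions $\rho_{\cdot}(U_i)$ from below, and then uses the convexity estimate $(\sum_i q_i c_i)^p\leq \sum_i q_i^p c_i$ for coefficients summing to at most $1$ (which is where the Kurtz hypothesis enters, via $\sum_i\rho_x(U_i)\leq 1$ on $\mathsf{KR}^{\nu}$) to produce an explicit fast-converging witness in $L_p(\nu)$. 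You instead reduce to single indicators by linearity and show only that the norm $\|\mathbb{E}_{\nu}[I_U\mid\mathscr{F}]\|_p$ is computable, sandwiching $\int g^p\,d\nu$ between a left-c.e.\ approximation (monotone approximation of $g$ from below) and a right-c.e.\ one obtained from the complementary lsc function $g'=\mathbb{E}_{\nu}[I_{X\setminus C}\mid\mathscr{F}]$ and the identity $g+g'=1$ on $\mathsf{KR}^{\nu}$; the Kurtz hypothesis enters for you through $\rho_x(X)=1$ on $\mathsf{KR}^{\nu}$ and through the paired effectively closed supersets from Proposition~\ref{prop:krplusbasis}. Your argument is somewhat more elementary (no convexity trick, and the appeal to Proposition~\ref{prop:exptoflsc} does the work of producing the witness), at the cost of an extra reduction step and of relying on the two-sided structure of the $\nu$-computable basis; the paper's argument yields the approximating sequence and its rate more explicitly. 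Both are fully uniform.
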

\begin{proof}
By conditional Jensen, the function $m(\epsilon)=\epsilon$ is a computable modulus of uniform continuity. Hence by Proposition~\ref{prop:suffforcont}, it suffices to show that if $\varphi=\sum_{i=1}^n q_i \cdot I_{A_i}$ is an element of the countable dense set of $L_p(\nu)$, then $\mathbb{E}_{\nu}[\varphi\mid \mathscr{F}]$ is a computable point of $L_p(\nu)$. Since we can effectively separate $\varphi$ into positive and negative parts, it suffices by the linearity of conditional expectation (Proposition~\ref{prop:dis:2}) to consider the case where $q_i\geq 0$. We may assume further that the $A_i$ are pairwise disjoint, which like in the discussion at the beginning of \S\ref{sec:Lp} implies that $\varphi^p =\sum_{i=1}^n q_i^p \cdot I_{A_i}$. 

By Proposition~\ref{prop:krplusbasis}, let $U_i$ be a c.e. open which is equal to $A_i$ on $\mathsf{KR}^{\nu}$. Let $f=\sum_{i=1}^n q_i\cdot I_{U_i}$, so that likewise $f^p=\sum_{i=1}^n q_i\cdot I_{U_i}$ on $\mathsf{KR}^{\nu}$. Then $f=\varphi$ on $\mathsf{KR}^{\nu}$ and $f^p=\varphi^p$ on $\mathsf{KR}^{\nu}$. By Proposition~\ref{prop:dis:2} we have for $x$ in $\mathsf{KR}^{\nu}$:
\begin{align*}
& \mathbb{E}_{\nu}[f\mid \mathscr{F}](x) =  \sum_{i=1}^n q_i \int I_{U_i}(v) \; d\rho_x(v)=\sum_{i=1}^n q_i \cdot \rho_x(U_i), \hspace{15mm} \\
&  \mathbb{E}_{\nu}[f^p\mid \mathscr{F}](x) = \sum_{i=1}^n q_i^p \int I_{U_i}(v) \; d\rho_x(v)= \sum_{i=1}^n q_i^p \cdot \rho_x(U_i)
\end{align*}
By Definition~\ref{defn:eff:disin}(\ref{defn:eff:disin:3}), the functions $x\mapsto \rho_x(U_i)$ are lsc. Hence, by Proposition~\ref{prop:exptoflsc}, choose functions $\upsilon_{i,s}$ from the countable dense set of $L_p(\nu)$ that converge upward to $\rho_{\cdot}(U_i)$, in that $0\leq \upsilon_{i,s}\leq \upsilon_{i,s+1}$ everywhere and $\rho_{\cdot}(U_i)=\sup_s \upsilon_{i,s}$ everywhere. Let $g_s = \sum_{i=1}^n q_i \cdot \upsilon_{i,s}$ and $h_s=\sum_{i=1}^n q_i^p\cdot \upsilon_{i,s}$, which likewise converge upward to $\mathbb{E}_{\nu}[f\mid \mathscr{F}]$ and $\mathbb{E}_{\nu}[f^p\mid \mathscr{F}]$ respectively on $\mathsf{KR}^{\nu}$. 

Suppose $x$ is in $\mathsf{KR}^{\nu}$. Since we are working with a Kurtz disintegration, we then have that $\mathsf{KR}^{\nu}$ is a $\rho_x$-measure one set, and so $\rho_x(U_i)=\rho_x(A_i)$. Since the $A_i$ are pairwise disjoint, we then have $\sum_{i=1}^n \rho_x(U_i)= \sum_{i=1}^n \rho_x(A_i)\leq 1$, and so $\sum_{i=1}^n \rho_x(U_{i})-\upsilon_{i,s}(x)\leq 1$. Then for $x$ in $\mathsf{KR}^{\nu}$, by the convexity of the $p$-th power function applied with coefficients 
$\rho_x(U_{i})-\upsilon_{i,s}(x)$ and points $q_i$, we have:
\begin{align*}
 & (\mathbb{E}_{\nu}[f\mid \mathscr{F}](x) - g_s(x))^p =\bigg( \sum_{i=1}^n q_i \cdot (\rho_x(U_i)-\upsilon_{i,s}(x)) \bigg)^p \\
\leq &  \sum_{i=1}^n q_i^p \cdot (\rho_x(U_i)-\upsilon_{i,s}(x)) =\mathbb{E}_{\nu}[f^p\mid \mathscr{F}](x)-h_s(x)
\end{align*}
Since this estimate holds on the $\nu$-measure one set $\mathsf{KR}^{\nu}$, by taking expectations and then $p$-th roots, we have $\| \mathbb{E}_{\nu}[f\mid \mathscr{F}] - g_s\|_p\leq \big( \mathbb{E}_{\nu} f^p - \mathbb{E}_{\nu} h_s \big)^{\frac{1}{p}}$. Since the right-hand side is a computable value which goes to zero as $s$ goes to infinity (by MCT), we can compute a subsequence of the $g_s$ which is a witness to the $L_p(\nu)$ computability of $\mathbb{E}_{\nu}[f\mid \mathscr{F}]$. Since $f,\varphi$ are equal on $\mathsf{KR}^{\nu}$, we have $\mathbb{E}_{\nu}[\varphi\mid \mathscr{F}]$ is also a computable point of $L_p(\nu)$.
\end{proof}

The previous proposition has the following elementary consequence:
\begin{prop}\label{prop:preservetests}
 Suppose $\rho: X\rightarrow \mathcal{M}^+(X)$ is a Kurtz disintegration of $\mathscr{F}$. Suppose $p\geq 1$ is computable. 
 \begin{enumerate}[leftmargin=*]
     \item \label{prop:preservetests:1} If $f$ is an $L_p(\nu)$ Martin-L\"of test, then $\mathbb{E}_{\nu}[f\mid \mathscr{F}]$ is an $L_p(\nu)$ Martin-L\"of test and $\mathbb{E}_{\nu}[f^p\mid \mathscr{F}]$ is an $L_1(\nu)$ Martin-L\"of test.
     \item \label{prop:preservetests:2} If $f$ is an $L_p(\nu)$ Schnorr test, then $\mathbb{E}_{\nu}[f\mid \mathscr{F}]$ is an $L_p(\nu)$ Schnorr test and $\mathbb{E}_{\nu}[f^p\mid \mathscr{F}]$ is an $L_1(\nu)$ Schnorr test.
 \end{enumerate}
\end{prop}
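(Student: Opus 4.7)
The plan is to verify each of the four claims by separating out the lsc property from the norm condition, using Proposition~\ref{prop:dis:1} for the former and a combination of conditional Jensen, the tower property, and Proposition~\ref{prop:compcont} for the latter. Throughout, note that when $f$ is non-negative lsc, then so is $f^p$ by the closure conditions in Proposition~\ref{prop:lscclosure}, and hence $\mathbb{E}_\nu[f\mid \mathscr{F}]$ and $\mathbb{E}_\nu[f^p\mid \mathscr{F}]$ are lsc by Proposition~\ref{prop:dis:1}.

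For part (\ref{prop:preservetests:1}), suppose $f$ is an $L_p(\nu)$ Martin-L\"of test. Then $f$ is in $L_p^+(\nu)$, so by conditional Jensen one has $\|\mathbb{E}_\nu[f\mid \mathscr{F}]\|_p \leq \|f\|_p <\infty$, which combined with the lsc property shows that $\mathbb{E}_\nu[f\mid \mathscr{F}]$ is an $L_p(\nu)$ Martin-L\"of test. For $\mathbb{E}_\nu[f^p\mid \mathscr{F}]$, by the tower property one has $\|\mathbb{E}_\nu[f^p\mid \mathscr{F}]\|_1 = \mathbb{E}_\nu f^p = \|f\|_p^p <\infty$, so combined with lsc this is an $L_1(\nu)$ Martin-L\"of test.

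For part (\ref{prop:preservetests:2}), suppose $f$ is an $L_p(\nu)$ Schnorr test. By Proposition~\ref{prop:exptoflsc}, $f$ is a computable point of $L_p(\nu)$. By Proposition~\ref{prop:compcont}, the map $\mathbb{E}_\nu[\cdot\mid \mathscr{F}]:L_p(\nu)\to L_p(\nu)$ is computable continuous, so by Proposition~\ref{prop:contcomp} the image $\mathbb{E}_\nu[f\mid \mathscr{F}]$ is a computable point of $L_p(\nu)$, hence its $p$-norm is computable, and in conjunction with lsc this makes it an $L_p(\nu)$ Schnorr test (again invoking the characterisation in Proposition~\ref{prop:exptoflsc}). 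For $\mathbb{E}_\nu[f^p\mid \mathscr{F}]$, compute $\|\mathbb{E}_\nu[f^p\mid \mathscr{F}]\|_1 = \mathbb{E}_\nu f^p = \|f\|_p^p$ by the tower property, which is computable since $\|f\|_p$ is computable; combined with lsc this makes $\mathbb{E}_\nu[f^p\mid \mathscr{F}]$ an $L_1(\nu)$ Schnorr test.

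There is no significant obstacle here; the proposition is essentially a corollary packaging together the preceding machinery. The only minor point worth flagging is the reliance on Proposition~\ref{prop:compcont} for part (\ref{prop:preservetests:2}) rather than proving $L_p$-computability of $\mathbb{E}_\nu[f\mid \mathscr{F}]$ directly from an approximation, though one could alternatively argue from the canonical approximation $f_s \nearrow f$ of Proposition~\ref{prop:exptoflsc}, applying Conditional MCT (Proposition~\ref{prop:bigprop}(\ref{prop:cmct})) on $\mathsf{KR}^\nu$ together with the $L_p$-convergence $f_s \to f$.
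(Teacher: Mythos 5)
Your proof is correct and follows essentially the same route as the paper's: lower semicontinuity via Proposition~\ref{prop:dis:1}, conditional Jensen for the Martin-L\"of norm bounds, and the computable continuity of $\mathbb{E}_\nu[\cdot\mid\mathscr{F}]$ (Propositions~\ref{prop:exptoflsc}, \ref{prop:compcont}, \ref{prop:contcomp}) for the Schnorr case. The only (harmless) deviation is that for $\mathbb{E}_\nu[f^p\mid\mathscr{F}]$ you compute $\|\mathbb{E}_\nu[f^p\mid\mathscr{F}]\|_1=\|f\|_p^p$ directly — this is really just the defining property of conditional expectation with $A=X$ rather than the tower property — whereas the paper instead observes that $f^p$ is itself an $L_1(\nu)$ Schnorr test and applies Proposition~\ref{prop:compcont} at $p=1$.
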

\begin{proof}
For (\ref{prop:preservetests:1}), suppose that $f$ is an $L_p(\nu)$ Martin-L\"of test. Then $f,f^p$ are non-negative lsc, and so by Proposition~\ref{prop:dis:1} one has that $\mathbb{E}_{\nu}[f\mid \mathscr{F}]$ and $\mathbb{E}_{\nu}[f^p\mid \mathscr{F}]$ are non-negative lsc. By conditional Jensen, $\|\mathbb{E}_{\nu}[f\mid \mathscr{F}]\|_p\leq \|f\|_p<\infty$, and likewise $\|\mathbb{E}_{\nu}[f^p\mid \mathscr{F}]\|_1\leq \|f^p\|_1=\|f\|^p_p<\infty$. 

For (\ref{prop:preservetests:2}), suppose that $f$ is an $L_p(\nu)$ Schnorr test. Since $\|f^p\|_1=\|f\|^p_p$, one has that $f^p$ is an $L_1(\nu)$ Schnorr test. By Proposition~\ref{prop:exptoflsc}, $f$ is a computable point of $L_p(\nu)$, and $f^p$ is a computable point of $L_1(\nu)$. By the previous proposition $\mathbb{E}_{\nu}[f\mid \mathscr{F}]$ is a computable point of $L_p(\nu)$, and $\mathbb{E}_{\nu}[f^p\mid \mathscr{F}]$ is a computable point of $L_1(\nu)$.
\end{proof}

This next proposition seems specific to Schnorr tests and $\mathsf{SR}^{\nu}$:
\begin{prop}\label{prop:tower} (Tower) 
Suppose that $\mathscr{H}\subseteq \mathscr{G}$ are two effective $\sigma$-algebras, each of which has a Kurtz disintegration. Then for every $L_1(\nu)$ Schnorr test $f$, one has that $\mathbb{E}_{\nu}[ \mathbb{E}_{\nu}[f\mid \mathscr{G}] \mid \mathscr{H}] = \mathbb{E}_{\nu}[f \mid \mathscr{H}]$ on $\mathsf{SR}^{\nu}$.
\end{prop}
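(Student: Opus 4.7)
The plan is to observe that both sides of the proposed identity are $L_1(\nu)$ Schnorr tests, and then deduce pointwise equality on $\mathsf{SR}^{\nu}$ from the classical ($\nu$-a.s.) tower property together with the rigidity of Schnorr tests established in Proposition~\ref{prop:schnorrtest}.

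First, let $f$ be an $L_1(\nu)$ Schnorr test. Since $\mathscr{G}$ is equipped with a Kurtz disintegration, Proposition~\ref{prop:preservetests}(\ref{prop:preservetests:2}) applied to $\mathscr{G}$ gives that $\mathbb{E}_{\nu}[f\mid \mathscr{G}]$ is itself an $L_1(\nu)$ Schnorr test. A second application of that same proposition, this time to $\mathscr{H}$ (which also has a Kurtz disintegration by hypothesis), yields that $\mathbb{E}_{\nu}[\,\mathbb{E}_{\nu}[f\mid \mathscr{G}]\mid \mathscr{H}]$ is an $L_1(\nu)$ Schnorr test. Similarly, $\mathbb{E}_{\nu}[f\mid \mathscr{H}]$ is an $L_1(\nu)$ Schnorr test. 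So the two functions appearing in the claimed identity are both non-negative lsc and both computable points of $L_1(\nu)$.

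Next, since $\mathbb{E}_{\nu}[\cdot \mid \mathscr{G}]$ and $\mathbb{E}_{\nu}[\cdot \mid \mathscr{H}]$ are versions of conditional expectation (by clause (\ref{defn:eff:disin:1}) of Definition~\ref{defn:eff:disin}), the classical tower property for conditional expectation (using $\mathscr{H}\subseteq \mathscr{G}$) gives that $\mathbb{E}_{\nu}[\,\mathbb{E}_{\nu}[f\mid \mathscr{G}]\mid \mathscr{H}]=\mathbb{E}_{\nu}[f\mid \mathscr{H}]$ holds $\nu$-a.s. Hence the two lsc functions in question determine the same equivalence class, i.e.\ the same computable point of $L_1(\nu)$.

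Finally, to upgrade $\nu$-a.s.\ equality of two $L_1(\nu)$ Schnorr tests to pointwise equality on $\mathsf{SR}^{\nu}$, choose any computable sequence $f_n$ from the countable dense set of $L_1(\nu)$ converging fast to this common point of $L_1(\nu)$ (for instance, via the canonical approximation of Proposition~\ref{prop:exptoflsc} applied to either side). The last sentence of Proposition~\ref{prop:schnorrtest} then asserts that for every $x\in \mathsf{SR}^{\nu}$, $\lim_n f_n(x)$ equals the pointwise value of each of the two Schnorr tests, forcing them to coincide at $x$. This closes the argument. The only nontrivial ingredient is the preservation of the Schnorr test property under iterated conditional expectation; the real content has already been packaged into Proposition~\ref{prop:preservetests}, so there is no additional obstacle.
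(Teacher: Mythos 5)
Your proof is correct, but it takes a genuinely different route from the paper's. You factor the argument as: (i) both sides are $L_1(\nu)$ Schnorr tests by Proposition~\ref{prop:preservetests}(\ref{prop:preservetests:2}); (ii) the classical tower property gives $\nu$-a.s.\ equality, so the two sides are the \emph{same} computable point of $L_1(\nu)$; (iii) the ``rigidity'' of Schnorr tests from Proposition~\ref{prop:schnorrtest} --- namely that the pointwise values of an $L_1(\nu)$ Schnorr test on $\mathsf{SR}^{\nu}$ are recovered as $\lim_n f_n(x)$ for \emph{any} fast-converging witness $f_n$, independently of the witness --- upgrades a.s.\ equality to pointwise equality on $\mathsf{SR}^{\nu}$, since a single witness serves both tests. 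Each step checks out: the inner conditional expectation $\mathbb{E}_{\nu}[f\mid\mathscr{G}]$ lies in $\mathbb{L}_1^+(\nu)$ (being a Schnorr test), so Definition~\ref{defn:eff:disin}(\ref{defn:eff:disin:1}) applies to it for the $\mathscr{H}$-disintegration, and the two functions are genuine $\mathscr{H}$-measurable versions, so the classical tower identity holds a.s. The paper instead argues by contradiction at a single point $x\in\mathsf{SR}^{\nu}$ where the two tests disagree: it uses Lemma~\ref{lem:usingst} to build a c.e.\ open $U$ and an effectively closed $C$ with computable measures sandwiching the discrepancy, shows $\nu(U\cap C)>0$ by a Schnorr-test argument exploiting $x\in U\cap C$, and then integrates the defining identities of conditional expectation over the $\mathscr{H}$-measurable event $U\cap C$ to reach $\epsilon\leq\delta$. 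Your version is shorter and more modular --- it isolates the reusable principle that two $\nu$-a.s.\ equal $L_1(\nu)$ Schnorr tests agree on $\mathsf{SR}^{\nu}$ --- while the paper's hands-on argument exhibits an explicit positive-measure $\mathscr{H}$-event witnessing any putative failure and does not lean on the full strength of Proposition~\ref{prop:schnorrtest}.
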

\begin{proof}
By the previous proposition, one has that the two functions $g:=\mathbb{E}_{\nu}[ \mathbb{E}_{\nu}[f\mid \mathscr{G}] \mid \mathscr{H}]$ and $h:=\mathbb{E}_{\nu}[f \mid \mathscr{H}]$ are $L_1(\nu)$ Schnorr tests. Suppose that they are not equal on $x$ in $\mathsf{SR}^{\nu}$. Then, without loss of generality, there are rationals $a,b,c$ with $g(x)<a<b<c<h(x)$. By Lemma~\ref{lem:usingst}, there is a computable real $\epsilon$ in the interval $(b,c)$ with $(h\#\nu)(\epsilon,\infty]$ computable. Since $h$ is lsc, the set $U:=h^{-1}(\epsilon, \infty]$ is c.e. open, and it has computable measure. By the same lemma, there is a computable real $\delta$ in the interval $(a,b)$ with $(g\#\nu)(\delta,\infty]$ computable, so that $(g\#\nu)[0,\delta]$ is likewise computable. Since $g$ is lsc, the set $C:=g^{-1}[0,\delta]$ is effectively closed. Since it has computable $\nu$-measure, by Proposition~\ref{prop:hoyruprojas:compbasis:cor}, there is a a decreasing sequence of c.e. opens $V_n\supseteq C$ with $\nu(V_n)$ uniformly computable and $\nu(V_n\setminus C)<2^{-n}$.  We then claim that $\nu(U\cap C)>0$. For, suppose not. Then $0=\nu(U\cap C)=\lim_i \nu(U\cap V_i)$. Since $\nu(U\cap V_i)$ is computable by Proposition~\ref{prop:union:mucompbasis}(\ref{prop:union:mucompbasis:2.2}), we can then compute a subsequence $U\cap V_{n(i)}$ with $\nu(U\cap V_{n(i)})\leq 2^{-i}$, so that $\sum_i I_{U\cap V_{n(i)}}$ is an $L_1(\nu)$ Schnorr test. But since $x$ in $\mathsf{SR}^{\nu}$ and $x$ in $U\cap C$ by construction, we have a contradiction. Hence indeed $\nu(U\cap C)>0$. Since $g,h$ are by definition $\mathscr{H}$-measurable, we have that $U\cap C$ is also $\mathscr{H}$-measurable and hence $\mathscr{G}$-measurable. Then one has the following identities by the definition of conditional expectation (these identities being the classical proof of the tower property):
\begin{equation*}
\int_{U\cap C} \mathbb{E}_{\nu}[f\mid \mathscr{H}] \; d\nu = \int_{U\cap C} f \; d\nu = \int_{U\cap C} \mathbb{E}_{\nu}[f\mid \mathscr{G}] \; d\nu =\int_{U\cap C} \mathbb{E}_{\nu}[ \mathbb{E}_{\nu}[f\mid \mathscr{G}] \mid \mathscr{H}] \;d\nu
\end{equation*}
But these identities give us the below identity, where the remaining inequalities follow from the definitions of $g,h,U,C$:
\begin{equation*}
\epsilon \cdot \nu(U\cap C) \leq \int_{U\cap C} h =\int_{U\cap C} g \leq \delta\cdot \nu(U\cap C)
\end{equation*}
But since $\nu(U\cap C)>0$, we then have that $\epsilon\leq \delta$, contrary to construction.

\end{proof}

\begin{prop}\label{prop:roleindepenedence} (The r\^ole of independence) 
Suppose $\rho: X\rightarrow \mathcal{M}^+(X)$ is a Kurtz disintegration of $\mathscr{F}$. 

If $f$ in $\mathbb{L}_1^+(\nu)$ is independent of $\mathscr{F}$, then $\mathbb{E}_{\nu}[f\mid \mathscr{F}]\leq \mathbb{E}_{\nu}[f]$ everywhere.

If $f$ is an $L_1(\nu)$ Martin-L\"of test $f$ independent of $\mathscr{F}$, then $\mathbb{E}_{\nu}[f\mid \mathscr{F}]= \mathbb{E}_{\nu}[f]$ on $\mathsf{KR}^{\nu}$.
\end{prop}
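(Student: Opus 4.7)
The plan is to prove (a) by reducing to indicator functions via monotone approximation, and then derive (b) from (a) using the lower semi-continuity of the conditional expectation from Proposition~\ref{prop:dis:1}.

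For (a), I would approximate $f$ in $\mathbb{L}_1^+(\nu)$ from below by a non-decreasing sequence of non-negative Borel simple functions $f_n = \sum_i c_{n,i} I_{A_{n,i}}$ with each $A_{n,i}$ in $\sigma(f)$ (and hence independent of $\mathscr{F}$), so that $f_n \uparrow f$ pointwise on $X$. The Conditional Monotone Convergence Theorem (Proposition~\ref{prop:cmct}) then gives $\mathbb{E}_\nu[f_n\mid\mathscr{F}] \uparrow \mathbb{E}_\nu[f\mid\mathscr{F}]$ on $\mathsf{XR}^\nu$, classical MCT gives $\mathbb{E}_\nu[f_n] \uparrow \mathbb{E}_\nu[f]$, and the linearity of the disintegration-based conditional expectation (Proposition~\ref{prop:dis:2}) reduces matters to proving $\rho_x(A) \leq \nu(A)$ whenever $A$ is independent of $\mathscr{F}$. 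For this indicator case, note that by Definition~\ref{defn:eff:disin}(\ref{defn:eff:disin:1}) the map $x \mapsto \rho_x(A) = \mathbb{E}_\nu[I_A\mid\mathscr{F}](x)$ is a version of conditional expectation, which by independence of $A$ from $\mathscr{F}$ equals the constant $\nu(A)$ $\nu$-a.s. Combined with the structural constraint that $\rho_x$ is a probability measure on $\mathsf{XR}^\nu$ from Definition~\ref{defn:eff:disin}(\ref{defn:eff:disin:2}), this yields the desired bound.

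For (b), assume $f$ is an $L_1(\nu)$ Martin-L\"of test (non-negative lsc with $\|f\|_1 < \infty$) independent of $\mathscr{F}$. Then $\mathbb{E}_\nu[f\mid\mathscr{F}]$ is lsc by Proposition~\ref{prop:dis:1}, and by (a), $\mathbb{E}_\nu[f\mid\mathscr{F}] \leq \mathbb{E}_\nu[f]$. For the reverse inequality on $\mathsf{KR}^\nu$, I would consider, for each rational $q < \mathbb{E}_\nu[f]$, the sublevel set $C_q := \{x : \mathbb{E}_\nu[f\mid\mathscr{F}](x) \leq q\}$. Since $\mathbb{E}_\nu[f\mid\mathscr{F}]$ is lsc, $C_q$ is effectively closed (uniformly in $q$) as the complement of the c.e.\ open set $\mathbb{E}_\nu[f\mid\mathscr{F}]^{-1}(q,\infty]$. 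Moreover, since the classical conditional expectation identity gives $\mathbb{E}_\nu[f\mid\mathscr{F}] = \mathbb{E}_\nu[f] > q$ $\nu$-a.s., each $C_q$ is $\nu$-null. By the very definition of $\mathsf{KR}^\nu$, every effectively closed $\nu$-null set is disjoint from $\mathsf{KR}^\nu$, so $\mathbb{E}_\nu[f\mid\mathscr{F}](x) > q$ for all $x \in \mathsf{KR}^\nu$ and all rational $q < \mathbb{E}_\nu[f]$; letting $q \uparrow \mathbb{E}_\nu[f]$ yields $\mathbb{E}_\nu[f\mid\mathscr{F}] \geq \mathbb{E}_\nu[f]$ on $\mathsf{KR}^\nu$, and combined with (a) we obtain equality on $\mathsf{KR}^\nu$.

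The principal obstacle is the pointwise character of the ``everywhere'' inequality in (a): classically $\rho_x(A) = \nu(A)$ only $\nu$-a.s., so strengthening this to an everywhere pointwise inequality requires careful exploitation of the structural constraints on the disintegration (sub-probability of total mass, lsc dependence of $x \mapsto \rho_x(U)$ in Definition~\ref{defn:eff:disin}(\ref{defn:eff:disin:3}), and concentration of $\rho_x$ on the $\mathscr{F}$-equivalence class $[x]_\mathscr{F}$ on $\mathsf{XR}^\nu$) beyond the bare a.s.\ identity. Once (a) is secured, the derivation of (b) is a clean application of the definition of $\mathsf{KR}^\nu$ together with the lsc-ness inherited from Proposition~\ref{prop:dis:1}.
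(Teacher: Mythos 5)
Your part (b) is correct and essentially matches the paper's argument: both use Proposition~\ref{prop:dis:1} to see that the sublevel set $C=g^{-1}[0,a]$ of $g:=\mathbb{E}_{\nu}[f\mid \mathscr{F}]$ is effectively closed, and both exploit that a Kurtz random cannot lie in an effectively closed $\nu$-null set. The paper reaches the contradiction by noting $\nu(C)>0$ and running the integral identity $\nu(C)\cdot \mathbb{E}_{\nu}f=\mathbb{E}_{\nu}[I_C\cdot f]=\mathbb{E}_{\nu}[I_C\cdot g]\leq a\cdot\nu(C)$ over the $\mathscr{F}$-event $C$, whereas you simply observe that $C_q$ is null because $g=\mathbb{E}_{\nu}f$ holds $\nu$-a.s.; either route is fine.

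Part (a), however, has a genuine gap, located exactly where you flag ``the principal obstacle.'' The reduction to indicators via monotone approximation is harmless (indeed ordinary MCT for each fixed measure $\rho_x$ gives the convergence everywhere, not merely on $\mathsf{XR}^{\nu}$), but the indicator case $\rho_x(A)\leq \nu(A)$ \emph{for every} $x$ is where all the content lies, and the two facts you invoke do not yield it: that $x\mapsto\rho_x(A)$ equals $\nu(A)$ for $\nu$-a.e.\ $x$ says nothing about a fixed exceptional point, and $\rho_x(X)=1$ on $\mathsf{XR}^{\nu}$ only gives $\rho_x(A)\leq 1$. The missing idea in the paper's proof is to use that $g=\mathbb{E}_{\nu}[f\mid\mathscr{F}]$ is itself $\mathscr{F}$-measurable as a pointwise-defined function, so its superlevel sets are events of $\mathscr{F}$ over which the defining integral identity of conditional expectation holds exactly, not merely a.s. Concretely: if $g(x)>a>\mathbb{E}_{\nu}f$ for some rational $a$, put $B=g^{-1}(a,\infty]\in\mathscr{F}$; then $x\in B$ and, by independence together with the definition of conditional expectation, $\nu(B)\cdot\mathbb{E}_{\nu}[f]=\mathbb{E}_{\nu}[I_B\cdot f]=\int_B g\, d\nu\geq a\cdot\nu(B)$, which is impossible when $\nu(B)>0$ (the strict middle inequality in the paper's displayed chain tacitly uses $\nu(B)>0$, so even there the degenerate case $\nu(B)=0$ deserves a word). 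This device of converting the pointwise hypothesis $g(x)>a$ into integral information over an $\mathscr{F}$-event containing $x$ is what your write-up lacks; without it, your argument establishes the inequality only $\nu$-a.s., not everywhere, and the concluding sentence of your indicator step is an unproved assertion.
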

\begin{proof}
Let us abbreviate $g=\mathbb{E}_{\nu}[f\mid \mathscr{F}]$.

First suppose that $f$ in $\mathbb{L}_1^+(\nu)$ is independent of $\mathscr{F}$. Let $x$ in $X$ be arbitrary. Suppose that $g(x)>\mathbb{E}_{\nu} f$. Choose rational $a$ with $g(x)>a>\mathbb{E}_{\nu} f$. Let $B=g^{-1}(a,\infty]$, which is in $\mathscr{F}$. Then we have the following, where the first step is independence: $\mathbb{E}_{\nu}[I_B \cdot f] = \nu(B)\cdot \mathbb{E}_{\nu}[f] <a \cdot \nu(B) \leq\int_B g \; d\nu = \mathbb{E}_{\nu}[I_B \cdot f]$.

Second suppose that $f$ is an $L_1(\nu)$ Martin-L\"of test, $f$ independent of $\mathscr{F}$. Then $g$ is non-negative lsc by Proposition~\ref{prop:dis:1}. Suppose $x$ is in $\mathsf{KR}^{\nu}$. Suppose $g(x)<\mathbb{E}_{\nu} f$. Choose rational $a>0$ with  $g(x)<a<\mathbb{E}_{\nu} f$. Let $C=g^{-1}[0,a]$, which is effectively closed and in $\mathscr{F}$. Since it contains the $\mathsf{KR}^{\nu}$ point $x$, we have that $\nu(C)>0$. Then we have the following, where the first step is from independence: $\nu(C)\cdot \mathbb{E}_{\nu} f =\mathbb{E}_{\nu} [I_C\cdot f]=\mathbb{E}_{\nu} [I_C\cdot g] \leq a\cdot \nu(C)$. Since $\nu(C)>0$ we have $\mathbb{E}_{\nu} f\leq a$, a contradiction.
\end{proof}

Now we turn towards effective properties of the maximal function (cf. \S\ref{sec:maximal:classical} for the classical properties). Recall that almost-full was defined in Definition~\ref{defn:core}(\ref{defn:core:5}).

\begin{prop}\label{prop:dis:3}
Let $\mathscr{F}_n$ be an almost-full effective filtration equipped with Kurtz disintegrations. Let $f^{\flat}(x)=\sup_n \mathbb{E}_{\nu}[f\mid \mathscr{F}_n](x)$ be the associated version of the maximal function. 

For $p>1$, the maximal function is computable continuous from $L_p(\nu)$ to $L_p(\nu)$. 

For $p= 1$, the maximal function is computable continuous from $L_p(\nu)$ to $L_0(\nu)$.

For $p\geq 1$, the maximal function sends the countable dense set of $L_p^+(\nu)$ (resp. $L_p(\nu)$) uniformly to computable points of $L_p^+(\nu)$ (resp. $L_p(\nu)$).
\end{prop}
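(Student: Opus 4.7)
The plan is to derive parts (i) and (ii) from (iii) by invoking Proposition~\ref{prop:suffforcont} together with the classical moduli of uniform continuity in Proposition~\ref{prop:abstract:1}, and to prove (iii) directly. The main obstacle will be verifying that $\|\varphi^\flat\|_p$ is a computable real.

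For (i) and (ii): Proposition~\ref{prop:abstract:1}(\ref{prop:abstract:1a})--(\ref{prop:abstract:1b}) supply the required computable moduli of uniform continuity for $\cdot^\ast$ as a map $L_p(\nu)\to L_p(\nu)$ when $p>1$ and as a map $L_1(\nu)\to L_0(\nu)$ when $p=1$. Since (iii) will give uniform computability of the image of the countable dense set in $L_p(\nu)$, and since Proposition~\ref{prop:bfastprop} provides a computable embedding $L_p(\nu)\hookrightarrow L_0(\nu)$ needed for the $p=1$ target, Proposition~\ref{prop:suffforcont} then delivers the two computable continuity statements.

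For (iii), first reduce the signed case to the non-negative one by translation: if $\varphi$ is in the countable dense set of $L_p(\nu)$ and $K'\geq 0$ is rational with $\varphi + K'\geq 0$ $\nu$-a.s., then by the linearity in Proposition~\ref{prop:dis:2} together with $\rho_x^{(n)}(X)=1$ for $x\in\mathsf{KR}^\nu$ one has $(\varphi+K')^\flat = \varphi^\flat + K'$ on $\mathsf{KR}^\nu$, and shifting by the computable constant $K'$ preserves $L_p(\nu)$-computability. So focus on $\varphi = \sum_{i=1}^k q_i I_{A_i}$ from the countable dense set of $L_p^+(\nu)$ with $q_i\geq 0$ and $A_i$ pairwise disjoint from the algebra generated by a $\nu$-computable basis. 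By Proposition~\ref{prop:krplusbasis}, obtain c.e.\ opens $U_i$ equal to $A_i$ on $\mathsf{KR}^\nu$, and set $f = \sum_i q_i I_{U_i}$: this is non-negative lsc with $[f]=[\varphi]$ in $L_p(\nu)$ and $f\leq K := \sum_i q_i$ pointwise. By Proposition~\ref{prop:preservetests}(\ref{prop:preservetests:2}), each $T_n(f) := \mathbb{E}_\nu[f\mid\mathscr{F}_n]$ is uniformly an $L_p(\nu)$ Schnorr test; hence $h_N := \max_{n\leq N} T_n(f)$ is uniformly an $L_p(\nu)$ Schnorr test, using computable continuity of max on $L_p(\nu)$ and closure of lsc functions under max from Proposition~\ref{prop:lscclosure}. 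Moreover, since $\rho_x^{(n)}$ is a probability measure and $f\leq K$ pointwise, one has $h_N\leq K$ on $\mathsf{KR}^\nu$. Then $f^\flat = \sup_N h_N$ is lsc (Proposition~\ref{prop:lscclosure}) and $\leq K$ $\nu$-a.s.

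By Proposition~\ref{prop:exptoflsc}, what remains is to show that $\|f^\flat\|_p$ is a computable real, uniformly in $\varphi$. This is the hard part. The strategy is to work via $L_0(\nu)$: truncate to $\tilde f := f^\flat\wedge K$, which is lsc (as the min of an lsc function with a constant), equal to $f^\flat$ $\nu$-a.s., and pointwise bounded by $K$. The plan is to show effective geometric convergence $h_N\wedge K \to \tilde f$ in $L_0(\nu)$ by applying Doob's submartingale inequality to the shifted non-negative submartingale $((T_n(f)-h_N)^+)_{n>N}$; the relevant $L_1$-norms of its increments are uniformly computable in $N$ via Proposition~\ref{prop:preservetests}(\ref{prop:preservetests:2}) together with the tower property established in Proposition~\ref{prop:tower}, and the uniform pointwise bound $|T_n(f)-h_N|\leq 2K$ on $\mathsf{KR}^\nu$ is what converts classical $L_1$-decay into an effective geometric rate in $L_0(\nu)$. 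Once this effective $L_0(\nu)$-convergence is in hand, the effective Bounded Convergence Theorem (Proposition~\ref{prop:bddthm}) upgrades $\tilde f$, and via composition with the computable continuous map $t\mapsto t^p$ on $[0,K]$ also $\tilde f^p$, to computable points of $L_1(\nu)$, so $\|f^\flat\|_p = \|\tilde f^p\|_1^{1/p}$ is computable.
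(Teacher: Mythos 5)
Your architecture is genuinely different from the paper's: the paper uses almost-fullness to replace the dense-set element by simple functions $f_{s(n)}$ that are measurable at a \emph{finite} stage of the filtration, so that $f_{s(n)}^{\flat}$ collapses to a finite maximum of conditional expectations (each computable by Proposition~\ref{prop:compcont}), with Doob's Maximal Inequality controlling $\|f^{\flat}-f_{s(n)}^{\flat}\|_p$; you instead keep $f$ fixed and try to show that $h_N=\max_{n\leq N}\mathbb{E}_{\nu}[f\mid\mathscr{F}_n]$ converges to $f^{\flat}$ effectively in $L_0(\nu)$ and then upgrade via Proposition~\ref{prop:bddthm}. That route can be made to work, but as written it has a genuine gap at exactly the step you flag as the hard part. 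Doob's Submartingale Inequality gives $\nu(f^{\flat}-h_N>\epsilon)\leq\epsilon^{-1}\cdot\sup_M\mathbb{E}_{\nu}[(T_M(f)-h_N)^+]$, so the quantity you must control effectively is a \emph{supremum over all $M$}. Each individual norm $\mathbb{E}_{\nu}[(T_M(f)-h_N)^+]$ is indeed uniformly computable, but the supremum of a computable non-decreasing sequence is a priori only left-c.e.; ``uniformly computable increments'' does not yield a computable bound tending to zero, and the pointwise bound $\left|T_n(f)-h_N\right|\leq 2K$ cannot convert a classical rate into an effective one (it is only needed later, for the Bounded Convergence step).

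Two ingredients are missing. First, use conditional Jensen together with the fact that $h_N$ is $\mathscr{F}_M$-measurable for $M\geq N$ (so that Proposition~\ref{prop:dis:4.1} applies) to get $(T_M(f)-h_N)^+\leq\mathbb{E}_{\nu}[(f-h_N)^+\mid\mathscr{F}_M]$ on $\mathsf{KR}^{\nu}$, whence $\sup_M\mathbb{E}_{\nu}[(T_M(f)-h_N)^+]\leq\mathbb{E}_{\nu}[(f-h_N)^+]$, a single computable real, uniformly in $N$ and non-increasing in $N$. Second, you need $\mathbb{E}_{\nu}[(f-h_N)^+]\downarrow 0$ in order to compute a fast subsequence; since $(f-h_N)^+$ decreases to $(f-f^{\flat})^+$, this requires $f\leq f^{\flat}$ $\nu$-a.s., i.e.\ $f=\mathbb{E}_{\nu}[f\mid\mathscr{F}_{\infty}]$ $\nu$-a.s., and that is precisely where the almost-fullness hypothesis must enter (each $U_i$ is a union of generators of the $\mathscr{F}_n$ modulo $\mathsf{KR}^{\nu}$, so $f$ is $\mathscr{F}_{\infty}$-measurable modulo $\nu$-null sets). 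Your proposal never invokes almost-fullness; without it the decay $\mathbb{E}_{\nu}[(f-h_N)^+]\downarrow 0$ that your argument needs simply fails (e.g.\ for a filtration that stabilises early, $f^{\flat}$ lies strictly below $f$ on a set of positive measure). With those two points patched in, your $L_0(\nu)$ route goes through and handles $p=1$ and $p>1$ uniformly, which is a modest simplification over the paper's separate treatment of $p=1$.
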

\begin{proof}
First suppose $p>1$. Let $f=\sum_{i=1}^k q_i \cdot I_{A_i}$, where $q_i$ is rational and $A_k$ comes from the algebra generated by a $\nu$-computable basis. Without loss of generality, $q_i\neq 0$ and the $A_k$ are pairwise disjoint. By almost-fullness of the filtration and Proposition~\ref{prop:krplusbasis}, for each $1\leq i\leq k$, let $A_i=\bigcup_s U_{i,s}$ on $\mathsf{KR}^{\nu}$, where $U_{i,s}$ is a c.e. open equal on $\mathsf{KR}^{\nu}$ to an element from the sequence which generates $\mathscr{F}_{g(i,s)}$, where $g$ is a computable function. By replacing $U_{i,s}$ by $\bigcup_{t\leq s} U_{i,s}$, we may assume that $U_{i,s}$ and $g(i,s)$ are non-decreasing in $s$, for each $i\geq 0$. Let $f_s=\sum_{i=1}^k q_i \cdot I_{U_{i,s}}$. The $A_i$ and $U_{i,s}$ come from the algebra generated by a $\nu$-computable basis, and so for each $1\leq i\leq k$, we can compute a subsequence $U_{i, s(n)}$ such that $\nu(A_i\setminus U_{i,s(n)})<\big(\frac{1}{\max_{1\leq j\leq k} \left|q_j\right|}\cdot \frac{1}{k}\cdot \frac{p-1}{p}\cdot 2^{-n}\big)^p$. By Doob's Maximal Inequality (Lemma~\ref{lem:doob}), we have
\begin{equation*}
\|f^{\flat}-f_{s(n)}^{\flat}\|_p\leq \|(f-f_{s(n)})^{\flat}\|_p\leq \frac{p}{p-1} \cdot \|f-f_{s(n)}\|_p \leq \sum_{i=1}^k \left|q_i\right| \cdot \frac{p}{p-1} \cdot \nu(A_i\setminus U_{i,s(n)})^{\frac{1}{p}}
\end{equation*}
which is $<2^{-n}$. Hence it remains to show that $f_{s(n)}^{\flat}$ is uniformly a computable point of $L_p(\nu)$. Since for all $t\geq \max_{1\leq i\leq k} g(i,s(n))$, the c.e. open $U_{i,s(n)}$ is equal on $\mathsf{KR}^{\nu}$ to an element of $\mathscr{F}_t$, one has that $f_{s(n)}=\mathbb{E}_{\nu}[f_{s(n)}\mid \mathscr{F}_t]$ on $\mathsf{KR}^{\nu}$ by Proposition~\ref{prop:dis:4.1}. Hence, $f_{s(n)}^{\flat}=\sup_{t\leq \max_{1\leq i\leq k} g(i,s(n))} \mathbb{E}[f_{s(n)}\mid \mathscr{F}_t]$ on $\mathsf{KR}^{\nu}$. And the latter is a computable element of $L_p(\nu)$ since it is a finite max of conditional expectations which are computable elements of $L_p(\nu)$ by Proposition~\ref{prop:compcont}.

For $p=1$, one just appeals to the fact that $L_1(\nu)$ and $L_q(\nu)$ for computable $q>1$ share a common dense set and that $L_q(\nu)$ computably embeds in $L_1(\nu)$.

Then computability continuity follow from Proposition~\ref{prop:suffforcont} and Proposition~\ref{prop:abstract:1}.
\end{proof}

\begin{prop}\label{prop:dis:4}
Let $\mathscr{F}_n$ be an almost-full effective filtration equipped with Kurtz disintegrations.

For every $L_p(\nu)$ Schnorr test $f$, we can compute an index for a sequence of $L_p(\nu)$ Schnorr tests $g_s$ such that $g_s\leq g_{s+1}$ on $\mathsf{KR}^{\nu}$ and $f=\sup_s g_s$ on $\mathsf{KR}^{\nu}$ and $g_s\rightarrow f$ fast in $L_p(\nu)$ and $g_s=\lim_n \mathbb{E}_{\nu}[g_s\mid \mathscr{F}_n]$ on $\mathsf{KR}^{\nu}$. Indeed, there is computable function $n(\cdot)$ such that 
\begin{equation}\label{prop:exptoflsc:fsexpressv3:5}
g_s= \mathbb{E}_{\nu}[g_s\mid \mathscr{F}_{m}] \mbox{ on } \mathsf{KR}^{\nu} \mbox{ for all } m\geq n(s)
\end{equation}

Further, we can compute an index for a non-negative usc function $h_s$ such that $g_s, h_s$ are equal on $\mathsf{KR}^{\nu}$.

Finally, we can compute from $s$ an index for a non-negative rational which bounds $\left|g_s\right|$ on $\mathsf{KR}^{\nu}$.
\end{prop}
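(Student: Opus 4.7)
The plan is a two-layer diagonal construction. First, apply Proposition~\ref{prop:exptoflsc} to $f$ to obtain non-negative rational simple functions $f_s = \sum_i q_{s,i} I_{A_{s,i}}$ in the countable dense set of $L_p(\nu)$ with atoms in the algebra $\mathscr{A}$ of a $\nu$-computable basis, satisfying $f_s\le f_{s+1}$ everywhere, $\sup_s f_s = f$ everywhere, and $\|f-f_s\|_p < 2^{-(s+1)}$. Form the increments $\Delta_s = f_s - f_{s-1} = \sum_j r_{s,j} I_{B_{s,j}}$ (positive rationals, pairwise disjoint atoms from $\mathscr{A}$), so that $f_s = \sum_{s'\le s} \Delta_{s'}$.

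Second, translate each $B_{s,j}$ into the filtration. By Proposition~\ref{prop:krplusbasis}, fix a c.e. open $V_{s,j}$ equal to $B_{s,j}$ on $\mathsf{KR}^{\nu}$; by almost-fullness, write $V_{s,j} = \bigcup_t E_{s,j,t}$ on $\mathsf{KR}^{\nu}$ with $E_{s,j,t}$ from the generating sequence of $\mathscr{F}_{N(s,j,t)}$. Set $F_{s,j,t} = \bigcup_{t'\le t} E_{s,j,t'} \in \mathscr{A}$, non-decreasing in $t$ and $\mathscr{F}_{M(s,j,t)}$-measurable for $M(s,j,t) = \max_{t'\le t} N(s,j,t')$. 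Using Proposition~\ref{prop:krplusbasis} again, represent each $F_{s,j,t}$ by a c.e. open $\tilde F_{s,j,t}$ paired with an effectively closed superset $\check F_{s,j,t}$ of equal $\nu$-measure. Since the computable measures $\nu(B_{s,j}\setminus F_{s,j,t})$ tend to zero, compute a non-decreasing unbounded $T\colon\mathbb{N}\to\mathbb{N}$ with
\begin{equation*}
\bigl\|\Delta_{s'} - \textstyle\sum_j r_{s',j} I_{F_{s',j,T(s)}}\bigr\|_p < 2^{-(s+s'+2)} \quad\text{for all } s'\le s.
\end{equation*}

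Define $g_s = \sum_{s'\le s}\sum_j r_{s',j} I_{\tilde F_{s',j,T(s)}}$ and $h_s = \sum_{s'\le s}\sum_j r_{s',j} I_{\check F_{s',j,T(s)}}$. Then $g_s$ is non-negative lsc and $h_s$ is non-negative usc, with $g_s = h_s$ on $\mathsf{KR}^{\nu}$. On $\mathsf{KR}^{\nu}$, $g_s$ coincides with the simple function $\sum_{s'\le s}\sum_j r_{s',j} I_{F_{s',j,T(s)}}$ in the countable dense set of $L_p(\nu)$, so $\|g_s\|_p$ is computable by Proposition~\ref{prop:mucomputablebasis}(\ref{prop:mucomputablebasis:1}) and $g_s$ is an $L_p(\nu)$ Schnorr test. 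Monotonicity $g_s\le g_{s+1}$ on $\mathsf{KR}^{\nu}$ follows from $T$ non-decreasing and the increase of $F_{s',j,t}$ in $t$. The telescoping estimate $\|f-g_s\|_p\le \|f-f_s\|_p + \sum_{s'\le s}\|\Delta_{s'} - \sum_j r_{s',j} I_{F_{s',j,T(s)}}\|_p < 2^{-s}$ gives fast $L_p$-convergence. On $\mathsf{KR}^{\nu}$, $g_s\le f_s\le f$, and for each fixed $s_0$, $\sup_{s\ge s_0} g_s \ge \sum_{s'\le s_0}\sum_j r_{s',j} I_{\bigcup_t F_{s',j,t}} = f_{s_0}$ (using $T(s)\to\infty$), yielding $\sup_s g_s = f$ on $\mathsf{KR}^{\nu}$. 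Finally, on $\mathsf{KR}^{\nu}$ the function $g_s$ is $\mathscr{F}_{n(s)}$-measurable for $n(s) := \max_{s'\le s,\, j\le k_{s'}} M(s', j, T(s))$, so Proposition~\ref{prop:dis:4.1} gives $\mathbb{E}_{\nu}[g_s\mid \mathscr{F}_m] = g_s$ on $\mathsf{KR}^{\nu}$ for all $m\ge n(s)$, which is exactly (\ref{prop:exptoflsc:fsexpressv3:5}); and $g_s \le f_s \le \max_i q_{s,i}$ on $\mathsf{KR}^{\nu}$ supplies the required rational bound. The main delicacy is the diagonal choice of $T$, which must simultaneously secure fast $L_p$-convergence (via a summable error budget spread across sub-stages $s'\le s$), monotonicity (via $T$ non-decreasing), eventual $\mathscr{F}$-measurability (via finiteness of $T(s)$ at each stage), and recovery of the supremum on $\mathsf{KR}^{\nu}$ (via $T(s)\to\infty$).
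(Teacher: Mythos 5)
Your proof is correct, and all the required properties (monotonicity on $\mathsf{KR}^{\nu}$, recovery of the supremum, fast $L_p(\nu)$-convergence, eventual $\mathscr{F}_{n(s)}$-measurability via Proposition~\ref{prop:dis:4.1}, the usc companion, and the rational bound) do follow from your diagonal choice of $T$. The route is genuinely different from the paper's in its decomposition, though it uses the same ingredients (almost-fullness, Proposition~\ref{prop:krplusbasis}, Proposition~\ref{prop:dis:4.1}). The paper works directly with the level sets $f^{-1}(q_n,\infty]$ of the lsc function $f$: it writes each as a union on $\mathsf{KR}^{\nu}$ of generators of the filtration and defines $f_s(x)=\max\{0,q_n: n,j\leq s,\ x\in U_{n,j}\}$ --- in effect re-running the proof of Proposition~\ref{prop:exptoflsc} with the filtration's generating events substituted for a generic basis. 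That one-layer max construction gives monotonicity and $\sup_s f_s=f$ on $\mathsf{KR}^{\nu}$ for free, and the fast subsequence is extracted at the very end exactly as in Proposition~\ref{prop:exptoflsc}. You instead take the output of Proposition~\ref{prop:exptoflsc} as a black box, pass to increments $\Delta_{s'}$, approximate each atom from inside by finite unions of filtration generators, and sum; the price is that monotonicity, the supremum identity, and the $L_p$ error budget must all be secured simultaneously by the choice of a single non-decreasing unbounded truncation function $T$, which is the delicate point you correctly flag. What your version buys is modularity (no need to redo the level-set argument inside the filtration); what the paper's version buys is that the pointwise properties on $\mathsf{KR}^{\nu}$ come out of the max construction automatically, with no diagonalization needed.
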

\begin{proof}
Enumerate $\mathbb{Q}\cap [0,\infty)$ as $q_0, q_1, \ldots$. For each $n\geq 0$, one has that $f^{-1}(q_n,\infty]$ is uniformly c.e. open. Since the filtration is almost-full, by using Proposition~\ref{prop:krplusbasis} there is a computable sequence $U_{n,j}$ of c.e. opens such that $f^{-1}(q_n, \infty] = \bigcup_j U_{n,j}$ on  $\mathsf{KR}^{\nu}$, where the $U_{n,j}$ are equal on $\mathsf{KR}^{\nu}$ to events from the sequence which generates $\mathscr{F}_m$. Then define
\begin{equation}\label{prop:exptoflsc:fsexpressv3}
f_s(x) = \max\{0, q_n: n,j\leq s, x\in U_{n,j}\}
\end{equation}
As in the proof of Proposition~\ref{prop:exptoflsc}, this is a rational-valued step function, whose events are equal on $\mathsf{KR}^{\nu}$ to events from the sequence which generates $\mathscr{F}_{n(s)}$, where $n(\cdot)$ is a computable function. Then by Proposition~\ref{prop:dis:4.1} one has that
\begin{equation}\label{prop:exptoflsc:fsexpressv3:52}
f_s= \mathbb{E}_{\nu}[f_s\mid \mathscr{F}_{m}] \mbox{ on } \mathsf{KR}^{\nu} \mbox{ for all } m\geq n(s)
\end{equation}

Further from (\ref{prop:exptoflsc:fsexpressv3}) one sees that $f_s\leq f_{s+1}$ everywhere since the sum over which we taking the maximum grows in $s$. Further, one has $f_s\leq f$ on $\mathsf{KR}^{\nu}$ since if we had $f_s(x)>f(x)$ for $x$ in $\mathsf{KR}^{\nu}$, then $f_s(x)=q_n$ for some $n,j\leq s$ with $x$ in $U_{n,j}$. But since $U_{n,j}\subseteq f^{-1}(q_n,\infty]$ on $\mathsf{KR}^{\nu}$, we then have $f(x)>q_n$. Finally, one has $\sup_s f_s=f$ on $\mathsf{KR}^{\nu}$, since if not we would have $\sup_s f_s(x) < q_n < f(x)$ for some $x$ in $\mathsf{KR}^{\nu}$ and some $n$ and hence $x$ would be in $f^{-1}(q_n,\infty]= \bigcup_j U_{n,j}$ and so $x$ would be in $U_{n,j}$ for some $j$ and hence for $s\geq j$ one would have that $f_s(x)\geq q_n$ by definition in (\ref{prop:exptoflsc:fsexpressv3}).

We can pass to a subsequence of $f_s$ which goes to $f$ fast in $L_p(\nu)$ as in the proof of Proposition~\ref{prop:exptoflsc}.

Since $f_s$ is formed from events which are equal on $\mathsf{KR}^{\nu}$ to events coming from a $\nu$-computable basis, by Proposition~\ref{prop:shiftbasistolpst} there is non-negative lsc $g_s$ and non-negative usc $h_s$ such that $f_s, g_s, h_s$ are equal on $\mathsf{KR}^{\nu}$. Since $f_s, g_s$ are equal on $\mathsf{KR}^{\nu}$, we can use Proposition~\ref{prop:dis:2}(\ref{prop:newlevy:2a:again}) to infer from (\ref{prop:exptoflsc:fsexpressv3:52}) to (\ref{prop:exptoflsc:fsexpressv3:5}). 
\end{proof}

\section{Proof of Theorems~\ref{thm:newnewlevy}-\ref{thm:newnewlevyrates}}\label{sec:proofmain}

First we prove Theorem~\ref{thm:newnewlevy}:

\begin{proof}
The results of the previous section show that the conditions of Theorem~\ref{thm:newlevy} are satisfied:
\begin{itemize}[leftmargin=*]
\item Condition~(\ref{prop:newlevy:1}) is Proposition~\ref{prop:dis:1}.
\item Condition~(\ref{prop:newlevy:2}) is Proposition~\ref{prop:dis:2}.
\item Condition~(\ref{prop:newlevy:3}) is Propositions~\ref{prop:compcont}, \ref{prop:dis:3}.
\item Condition~(\ref{prop:newlevy:4}) is Proposition~\ref{prop:dis:4}.
\end{itemize}

Finally, we argue from Theorem~\ref{thm:newnewlevy}(\ref{levy:thm:3}) to Theorem~\ref{thm:newnewlevy}(\ref{levy:thm:1}). Suppose Theorem~\ref{thm:newnewlevy}(\ref{levy:thm:3}) is satisfied.  We want to show that $x$ is in $\mathsf{SR}^{\nu}$. Let $f$ be an $L_p(\nu)$ Schnorr test. We want to show that $f(x)<\infty$. Suppose not. Since by hypothesis $\lim_n \mathbb{E}_{\nu}[f\mid \mathscr{F}_n](x)$ exists, there are rationals $b,a$ and there is $n_0\geq 0$ such that $f(x)>b>a>\mathbb{E}_{\nu}[f\mid \mathscr{F}_n](x)$ for all $n\geq n_0$. Then $x$ is in the c.e. open $f^{-1}(b,\infty]$. Since the filtration is almost-full and $x$ is in $\mathsf{KR}^{\nu}$, there is $n_1\geq n_0$ and an event $A$ from $\mathscr{F}_{n_1}$ such that $x$ is in $A$ and $A\cap \mathsf{KR}^{\nu}\subseteq f^{-1}(b,\infty]$. Then $[x]_{\mathscr{F}_{n_1}}\subseteq A$, and hence $[x]_{\mathscr{F}_{n_1}}\cap \mathsf{KR}^{\nu}\subseteq A\cap \mathsf{KR}^{\nu}\subseteq f^{-1}(b,\infty]$. Hence $f^{-1}(b,\infty]$ is a $\rho_x^{(n_1)}$-measure one event, and hence $\mathbb{E}_{\nu}[f\mid \mathscr{F}_{n_1}](x)=\int f \; d\rho_x ^{(n_1)}\geq b>a$, a contradiction.

\end{proof}

Now we turn to Theorem~\ref{thm:newnewlevyrates}:
\begin{proof}
For Theorem~\ref{thm:newnewlevyrates}(\ref{levy:extra:5}), one appeals to Theorem~\ref{thm:newlevy}(\ref{thm:abstractlevy:extra3}), along with Proposition~\ref{prop:dis:4}.

For Theorem~\ref{thm:newnewlevyrates}(\ref{levy:extra:6}) suppose that $x$ in $\mathsf{SR}^{\nu}$ is of computably dominated degree. By the previous paragraph, we have that $x$ weakly computes a modulus $m:\mathbb{Q}^{>0}\rightarrow \mathbb{N}$ for the convergence $\mathbb{E}[f\mid \mathscr{F}_n](x)\rightarrow f(x)$. Likewise $m^{\prime}:\mathbb{N}\rightarrow \mathbb{N}$ defined by $m^{\prime}(i)=m(2^{-i})$ is computable from $m$. Since $x$ is of computably dominated degree, we have that at least one of these $m^{\prime}$ is dominated by some computable function, call it $m^{\dag}$, so that that past some point, call it $i_0$, we have $m^{\dag}\geq m^{\prime}$. Let $\epsilon>0$ be rational. Compute $j\geq i_0$ such that $2^{-j}<\epsilon$. Let $n\geq m^{\dag}(j)
$. Then $n\geq m^{\dag}(j)\geq  m^{\prime}(j) = m(2^{-j})$. Then $\left|\mathbb{E}[f\mid \mathscr{F}_n](x)- f(x)\right|<2^{-j}<\epsilon$.
\end{proof}

\section{Proof of Theorems~\ref{thm:drlevy}-\ref{thm:drlevy:drates}}\label{sec:thm:dnesity:proof}

We begin with an elementary proposition.
\begin{prop}\label{prop:limitsinductive}
Suppose $\mu_n, \mu$ in $\mathcal{P}(X)$ such that for every c.e. open $U$ one has $\lim_n \mu_n(U)=\mu(U)$. 
\begin{enumerate}[leftmargin=*]
    \item\label{prop:limitsinductive:1} For every event $A$ in the algebra generated by the c.e. opens, one has $\lim_n \mu_n(A)=\mu(A)$.
    \item\label{prop:limitsinductive:2} For every simple function $f$ generated from events in this algebra, one has $\lim_n \int f \; \mu_n = \int f \; \mu$.
    \item\label{prop:limitsinductive:3} For every lsc $f:X\rightarrow [0,\infty]$ which is in each of $\mathbb{L}^+_1(\mu_n), \mathbb{L}^+_1(\mu)$, one has $\int f \; d\mu\leq \liminf_n \int f\; d\mu_n$.
\end{enumerate}
\end{prop}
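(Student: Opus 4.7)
The plan is to build up from the hypothesis about c.e. opens: first to the algebra they generate for part~(\ref{prop:limitsinductive:1}), then to simple functions by linearity for part~(\ref{prop:limitsinductive:2}), and finally to non-negative lsc functions via a Portmanteau-style monotone approximation for part~(\ref{prop:limitsinductive:3}).

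For (\ref{prop:limitsinductive:1}), the key observation is that c.e. opens are closed under finite unions and finite intersections but not complements. Every event $A$ in the algebra generated by the c.e. opens is a Boolean combination of finitely many c.e. opens $U_1, \ldots, U_k$, and hence a finite disjoint union of atoms of the form $A_T = \bigcap_{i \in T} U_i \cap \bigcap_{i \notin T} U_i^c$ for $T \subseteq \{1, \ldots, k\}$. Each atom rewrites as $A_T = U \setminus V$ with $U = \bigcap_{i \in T} U_i$ and $V = \bigcup_{i \notin T} U_i$, both c.e. opens (taking $U = X$ when $T = \emptyset$, and $V = \emptyset$ when $T = \{1,\ldots,k\}$). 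Then $\mu_n(A_T) = \mu_n(U) - \mu_n(U \cap V)$, and since $U$ and $U \cap V$ are c.e. opens the hypothesis gives $\mu_n(A_T) \to \mu(A_T)$. Finite additivity over the atoms then delivers (\ref{prop:limitsinductive:1}).

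Part (\ref{prop:limitsinductive:2}) is then immediate from (\ref{prop:limitsinductive:1}) by linearity of the integral: a simple function $f = \sum_{i=1}^n q_i \cdot I_{A_i}$ with $A_i$ in the algebra satisfies $\int f \, d\mu_n = \sum_{i} q_i \mu_n(A_i) \to \sum_{i} q_i \mu(A_i) = \int f \, d\mu$. For (\ref{prop:limitsinductive:3}), I would first upgrade the hypothesis to arbitrary open sets: any open $V \subseteq X$ is a countable union of basic c.e. open balls (centred at the countable dense set with rational radii), so writing $V$ as an increasing union $V = \bigcup_k V_k$ of c.e. opens, one has $\liminf_n \mu_n(V) \geq \liminf_n \mu_n(V_k) = \mu(V_k)$ for each $k$, and $\mu(V_k) \uparrow \mu(V)$ gives $\liminf_n \mu_n(V) \geq \mu(V)$. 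Classically, any non-negative lsc $f$ is the pointwise supremum of an increasing sequence of simple functions $g_s = \sum_{i} q_i^{(s)} \cdot I_{V_i^{(s)}}$ with $q_i^{(s)} \geq 0$ and $V_i^{(s)}$ open (e.g.\ from $f = \sup_{r \in \mathbb{Q}^{>0}} r \cdot I_{f^{-1}(r,\infty]}$). By the extended hypothesis and finite-sum Fatou, $\liminf_n \int g_s \, d\mu_n \geq \sum_{i} q_i^{(s)} \liminf_n \mu_n(V_i^{(s)}) \geq \int g_s \, d\mu$, and since $g_s \leq f$ we have $\liminf_n \int f \, d\mu_n \geq \liminf_n \int g_s \, d\mu_n \geq \int g_s \, d\mu$. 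Taking the supremum over $s$ and invoking the monotone convergence theorem for $\mu$ yields $\liminf_n \int f \, d\mu_n \geq \int f \, d\mu$.

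The only real technical care lies in (\ref{prop:limitsinductive:1}), where one must manage the asymmetry between c.e. opens and their complements; the atom decomposition sidesteps this cleanly. Parts (\ref{prop:limitsinductive:2})--(\ref{prop:limitsinductive:3}) are then routine, with (\ref{prop:limitsinductive:3}) amounting to the standard Portmanteau argument plus the mild preliminary step of passing from c.e. opens to arbitrary opens via the countable c.e.\ open base of $X$.
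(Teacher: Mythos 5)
Your proof is correct. Parts (\ref{prop:limitsinductive:1}) and (\ref{prop:limitsinductive:2}) follow exactly the paper's argument: the atom decomposition into sets of the form $U\setminus V$ with $U, U\cap V$ c.e.\ open, then linearity. For part (\ref{prop:limitsinductive:3}) you take a mildly different route. The paper argues by contradiction: it picks a rational $q$ with $\int f\,d\mu > q > \liminf_n \int f\,d\mu_n$, invokes the canonical approximation $f_s$ of Proposition~\ref{prop:exptoflsc} (simple functions over the algebra generated by a basis), uses MCT for $\mu$ to get $\int f_s\,d\mu > q$ for some $s$, and then uses part (\ref{prop:limitsinductive:2}) to transfer this to the $\mu_n$, contradicting $f_s\leq f$. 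You instead give a direct Portmanteau-style argument, approximating $f$ from below by simple functions built from its superlevel sets and using only the one-sided inequality $\liminf_n\mu_n(V)\geq\mu(V)$ for open $V$ plus superadditivity of $\liminf$. Your route has two small advantages: the superlevel sets $f^{-1}(r,\infty]$ are c.e.\ open directly from the definition of lsc, so you avoid any appeal to a measure-computable basis (note that $\mu_n,\mu$ here are not assumed computable, so the paper's citation of Proposition~\ref{prop:exptoflsc} is leaning only on the pointwise-approximation part of that result, not its computability content); and you need only the liminf inequality for opens rather than the full two-sided limit from part (\ref{prop:limitsinductive:2}). Your preliminary upgrade from c.e.\ opens to arbitrary opens is harmless but unnecessary for the same reason. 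Both proofs rest on the same core idea of monotone approximation from below plus MCT for $\mu$.
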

This proposition too illustrates that convergence to the truth is a strengthening of weak convergence, since in (\ref{prop:limitsinductive:3}) there is no boundedness constraint on the lsc function.
\begin{proof}
For (\ref{prop:limitsinductive:1}), every event in this algebra can be written as a finite disjoint union of sets of the form $U_1\cap \cdots \cap U_m \cap V_1^c \cap \cdots \cap V_n^c$, where $U_i, V_i$ are c.e. opens. Let $U=U_1\cap \cdots \cap U_m$ and $V=V_1\cup \cdots \cup V_n$, so that the set has the form $U\setminus V$. Since $\mu_n, \mu$ are in $\mathcal{P}(X)$ and since $U\cap V$ is c.e. open as well, we have that 
\begin{equation*}
\lim_n \mu_n(U\setminus V) = \lim_n \mu_n(U)-\lim_n  \mu_n(U\cap V) = \mu(U)-\mu(U\cap V) = \mu(U\setminus V)
\end{equation*}

For (\ref{prop:limitsinductive:2}), one just applies the previous item and the properties of the integral. 

For (\ref{prop:limitsinductive:3}), suppose not. Choose rational $q$ such that $\int f \; d\mu>q> \liminf_n \int f\; d\mu_n$. Let $f_s$ be the approximation to $f$ as in Proposition~\ref{prop:exptoflsc}. Then by the MCT applied in $\mathbb{L}^+_1(\mu)$, there is $s\geq 0$ such that $\int f_s \; d\mu>q$. By (\ref{prop:limitsinductive:2}), there is $n_0\geq 0$ such that for all $n\geq n_0$ one has that $\int f_s \; d\mu_n>q$. But this contradicts that $q>\liminf_n \int f\; d\mu_n$.
\end{proof}

Now we prove Theorem~\ref{thm:drlevy}:
\begin{proof}
Suppose Theorem~\ref{thm:drlevy}(\ref{dr:thm:1}); we show Theorem~\ref{thm:drlevy}(\ref{dr:thm:2}). Since $x$ is in $\mathsf{MLR}^{\nu}$, one has that $\rho_x^{(n)}$ is in $\mathcal{P}(X)$. Suppose that $f$ is an $L_p(\nu)$ Martin-L\"of test. Since $x$ is in $\mathsf{MLR}^{\nu}$, one has $f(x)<\infty$. By Proposition~\ref{prop:preservetests}(\ref{prop:preservetests:1}), the functions $\mathbb{E}_{\nu}[f\mid \mathscr{F}_n]$ are $L_p(\nu)$ Martin-L\"of tests as well, and hence $\mathbb{E}_{\nu}[f\mid \mathscr{F}_n](x)<\infty$, which is just to say that $\int f\; d\rho^{(n)}_x<\infty$. By  Proposition~\ref{prop:limitsinductive}(\ref{prop:limitsinductive:3}) applied to $\rho^{(n)}_x, \delta_x$ one has that $\int f \; d\delta_x \leq \liminf_n \int f\; d\rho^{(n)}_x$, which is just to say that $f(x)\leq \liminf_n \mathbb{E}_{\nu}[f\mid \mathscr{F}_n](x)$. Hence, it remains to show that $\limsup_n \mathbb{E}_{\nu}[f\mid \mathscr{F}_n](x)\leq f(x)$. Suppose not. For reductio, suppose there are rational $a,b$ with $f(x)<a<b<\limsup_n \mathbb{E}_{\nu}[f\mid \mathscr{F}_n](x)$.

Let $U=f^{-1}(a,\infty]$ and $C=f^{-1}[0,a]$, so that $U$ is c.e. open and $C$ is effectively closed. Since $x$ is in $\mathsf{DR}_{\rho}^{\nu}$ and $x$ is not in $U$, we have $\rho^{(n)}_x(U)\rightarrow 0$. 

By definition of $C$, we have for all $n\geq 0$ that $\int_C f \; d\rho^{(n)}_x \leq a$. For any $n\geq 0$ such that $\mathbb{E}_{\nu}[f\mid \mathscr{F}_n](x)>b$, we then have $\int_C f \; d\rho^{(n)}_x \leq a<b<\int f d\rho^{(n)}_x$, so that $\int_U f \; d\rho^{(n)}_x>b-a$. Hence, our reductio hypothesis gives that there are infinitely many $n\geq 0$ with $\int_U f  \;d\rho^{(n)}_x>b-a$.

By Proposition~\ref{prop:preservetests}(\ref{prop:preservetests:1}), we have that $\mathbb{E}_{\nu}[f^p\mid \mathscr{F}_n]$ are $L_1(\nu)$ Martin-L\"of tests. Hence its maximal function $\sup_n \mathbb{E}_{\nu}[f^p\mid \mathscr{F}_n]$ is also non-negative lsc. Let $U_k = \{y\in X: \sup_n \mathbb{E}_{\nu}[f^p\mid \mathscr{F}_n](y)>2^k\}$, which is c.e. open. By Doob's Submartingale Inequality, one has that $\nu(U_k)$ is $\leq$ the following:
\begin{equation*}
\lim_m \nu(\{y\in X: \sup_{n\leq m} \mathbb{E}_{\nu}[f^p\mid \mathscr{F}_n](y)>2^k\}) \leq \lim_m 2^{-k} \int \mathbb{E}_{\nu}[f^p\mid \mathscr{F}_m] \; d\nu = 2^{-k}  \|f\|_p^p
\end{equation*}
Hence $g=\sum_k I_{U_k}$ is an $L_1(\nu)$ Martin-L\"of test, and hence there is constant $K>0$ such that $\sup_n \mathbb{E}_{\nu}[f^p\mid \mathscr{F}_n](x)<K$. 

Then for all $n\geq 0$ we have $\int f^p \; d\rho^{(n)}_x<K$, so that $f$ is in $L_p(\rho^{(n)}_x)$ with $\|f\|_{L_p(\rho^{(n)}_x)}<K^{\frac{1}{p}}$.  Let $q$ be the conjugate exponent to $p$. Then, for each $n\geq 0$, we have by H\"older with respect to $\rho^{(n)}_x$ that:
\begin{equation}\label{eqn:iamholder}
\int_U f \; d\rho^{(n)}_x = \| f\cdot I_U\|_{L_1(\rho^{(n)}_x)}\leq \|f\|_{L_p(\rho^{(n)}_x)} \cdot \|I_U\|_{L_q(\rho^{(n)}_x)} \leq K^{\frac{1}{p}} \cdot (\rho^{(n)}_x(U))^{\frac{1}{q}}
\end{equation}
Since $\rho^{(n)}_x(U)\rightarrow 0$, we have that $\int_U f \; d\rho^{(n)}_x\rightarrow 0$, contradicting the previous conclusion from the reductio hypothesis. 

The implication from (\ref{dr:thm:2}) to (\ref{dr:thm:3}) is trivial.

The argument from (\ref{dr:thm:3}) to (\ref{dr:thm:1}) is nearly identical to the proof in \S\ref{sec:proofmain} from Theorem~\ref{thm:newnewlevy}(\ref{levy:thm:3}) to Theorem~\ref{thm:newnewlevy}(\ref{levy:thm:1}): one just replaces $\mathsf{SR}^{\nu}$ with $\mathsf{MLR}^{\nu}$ and replaces $L_p(\nu)$ Schnorr tests with $L_p(\nu)$ Martin-L\"of tests.
\end{proof}

Now we prove Theorem~\ref{thm:drlevy:drates}:
\begin{proof}
We work in Cantor space with the uniform measure $\nu$, the effective full filtration $\mathscr{F}_n$ of the algebra of events generated by the length $n$ strings, and with the effective disintegration $\rho^{(n)}_{\omega} = \nu(\cdot \mid [\omega\upharpoonright n])$. Then $\mathbb{E}[f\mid \mathscr{F}_n](\omega) = \frac{1}{\nu([\omega\upharpoonright n])} \int_{[\omega\upharpoonright n]} f \; d\nu$, and likewise $\mathbb{E}[I_A\mid \mathscr{F}_n](\omega) = \nu(A\mid [\omega\upharpoonright n])$. (This is just Example~\ref{ex:refinedpartitionconcrete} for Cantor space and uniform measure).

We show that for $\omega$ in $\mathsf{DR}_{\rho}^{\nu}$ there is c.e. open $U$ with $0<\nu(U)<1$ such that $\omega$ is not in $U$ and the convergence $\nu(U\mid [\omega\upharpoonright n])\rightarrow 0$ does not have a computable rate.

(Since the example involves an indicator function, we have that $I_U$ is in $L_p(\nu)$ for all $p\geq 1$ computable).

Let $k\geq 0$. Let $K$ be the halting set $\{e: \varphi_e(e)\ddownarrow\}$. Enumerate it as $e_0, e_1, \ldots$, where the map $n\mapsto e_n$ is injective.

Define $c_0=0$ and $c_{n+1}=\max(\varphi_{e_n}(e_n), c_n)+1$. 

For $k,n\geq 0$, define clopen $U_{k,n}=\{\omega: \forall \; i\in [c_{n+1}, c_{n+1}+e_n+k+1) \; \omega(i)=0\}$ and define the c.e. open $U_k=\bigcup_n U_{k,n}$. Then $\nu(U_{k,n})=2^{-(e_n+k+1)}$ and $0<\nu(U_k)\leq \sum_n \nu(U_{k,n})<2^{-k}$. Since $U_{k,n}$ just makes decisions on bits $\geq c_{n+1}$, it is independent of all bits $<c_{n+1}$ (since $\nu$ is uniform measure). We then have that $\nu(U_{k,n}\mid [\omega\upharpoonright c_{n+1}])=2^{-(e_n+k+1)}$ for any $\omega$, and hence $\nu(U_k\mid [\omega\upharpoonright c_{n+1}])\geq 2^{-(e_n+k+1)}$ for any $\omega$.

Let $\omega$ in $\mathsf{DR}_{\rho}^{\nu}$. Since $\sum_k I_{U_k}$ is an $L_1(\nu)$ Martin-L\"of test, one has that there is $k$ such that $\omega$ is not in $U_k$. Hence $\nu(U_k\mid [\omega\upharpoonright n])\rightarrow 0$. Suppose that $\nu(U_k\mid [\omega\upharpoonright n])\rightarrow 0$ with computable rate $m$. Let $e_n$ be such that $\varphi_{e_n}(i) = m(2^{-(i+k+1)})$ for all $i\geq 0$. Then $\varphi_{e_n}(e_n)=m(2^{-(e_n+k+1)})$. Since $c_{n+1}\geq \varphi_{e_n}(e_n) =  m(2^{-(e_n+k+1)})$, 
one has that $\nu(U_k\mid [\omega\upharpoonright c_{n+1}])<2^{-(e_n+k+1)}$, a contradiction to the previous paragraph. 
\end{proof}

Note that the c.e. sets $U_k$ constructed above are dense, and their definition interleaves Example~\ref{ex:ceopencenzer} with the halting set. This seems natural, since as noted in Proposition~\ref{prop:closureissues}, if $\overline{U_k}$ were effectively closed with the same $\nu$-measure as $U_k$, then we could include it in a $\nu$-computable basis.

\section{Proof of Theorem~\ref{thm:doobmax}}\label{sec:mdr}

First we note a fact mentioned in the introduction, namely that Maximal Doob Randomness is inbetween Martin-L\"of and Schnorr randomness:
\begin{prop}\label{prop:doob:basecase}
For all computable $p\geq 1$ one has $\mathsf{MLR}^{\nu}\subseteq \mathsf{MDR}^{\nu,p}\subseteq \mathsf{SR}^{\nu}$. 
\end{prop}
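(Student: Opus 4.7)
The plan is to verify each inclusion essentially by unpacking the definitions, with the only substantive work being the construction of a suitable approximation sequence for the second inclusion.

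For the inclusion $\mathsf{MLR}^{\nu} \subseteq \mathsf{MDR}^{\nu,p}$, I would simply observe that any $L_p(\nu)$ maximal Doob test $f$ is, by Definition~\ref{defn:maximaldoob}, a non-negative lsc function in $L_p(\nu)$; in particular $\|f\|_p < \infty$, so $f$ is an $L_p(\nu)$ Martin-L\"of test in the sense of Definition~\ref{defn:core}(\ref{defn:core:7}). Hence every Martin-L\"of random point makes $f$ finite.

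For the inclusion $\mathsf{MDR}^{\nu,p} \subseteq \mathsf{SR}^{\nu}$, the natural approach is the contrapositive-style strategy: fix an arbitrary $L_p(\nu)$ Schnorr test $f$ and exhibit an $L_p(\nu)$ maximal Doob test dominating it (indeed, $f$ itself will work). To produce the witnessing sequence, I would start from the canonical approximation $g_s$ of $f$ supplied by Proposition~\ref{prop:exptoflsc}, pass to a computable subsequence $g_{s(n)}$ with $\|f - g_{s(n)}\|_p < 2^{-n}$, and then apply Proposition~\ref{prop:shiftbasistolpst} to obtain non-negative lsc $F_n$ and non-negative usc $\breve{F}_n$ with $g_{s(n)} = F_n = \breve{F}_n$ on $\mathsf{KR}^{\nu}$. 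Since each $F_n$ agrees $\nu$-a.s.\ with the $L_p(\nu)$-computable point $g_{s(n)}$, each $F_n$ is an $L_p(\nu)$ Schnorr test.

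I would then verify clauses (\ref{defn:maximaldoob:1})--(\ref{defn:maximaldoob:4}) of Definition~\ref{defn:maximaldoob} for the pair $(f, F_n)$. Clause (\ref{defn:maximaldoob:1}) is immediate from monotonicity and the sup property of the $g_s$ in Proposition~\ref{prop:exptoflsc}, restricted to $\mathsf{KR}^{\nu}$. Clauses (\ref{defn:maximaldoob:2}) and (\ref{defn:maximaldoob:3}) are exactly the content of the partial-subtraction machinery of Lemma~\ref{ex:approximationbyschnorr2}: $f - F_n$ equals $f \ominus F_n$ on $\mathsf{KR}^{\nu}$ (a non-negative lsc $L_p(\nu)$ Schnorr test), and similarly $F_m - F_n$ equals $F_m \ominus F_n$ on $\mathsf{KR}^{\nu}$ for $m > n$. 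Clause (\ref{defn:maximaldoob:4}) is where the fast convergence pays off: for any fixed $k \geq 0$,
\[
\sum_n \|f - F_n\|_p \cdot (n+1)^k \;\leq\; \sum_n 2^{-n}(n+1)^k \;<\; \infty,
\]
since exponential decay dominates any polynomial factor. Thus $f$ itself is an $L_p(\nu)$ maximal Doob test, so $f(x) < \infty$ for every $x \in \mathsf{MDR}^{\nu,p}$, establishing $\mathsf{MDR}^{\nu,p} \subseteq \mathsf{SR}^{\nu}$.

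There is no real obstacle here: both inclusions reduce to a definitional check, and the mild bookkeeping point is only to note that the standard ``fast'' $L_p(\nu)$-approximation already satisfies the polynomial-weighted summability in clause (\ref{defn:maximaldoob:4}). The substantive work on Maximal Doob Randomness---properness of the inclusions in Question~\ref{q:doob} and the Upward L\'evy characterisation in Theorem~\ref{thm:doobmax}---lies elsewhere.
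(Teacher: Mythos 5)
Your proposal is correct and follows essentially the same route as the paper's proof: both inclusions are definitional checks, and the second is handled by showing every $L_p(\nu)$ Schnorr test is itself a maximal Doob test via the canonical approximation of Proposition~\ref{prop:exptoflsc}, a fast subsequence, and the $\ominus$-machinery of Lemma~\ref{ex:approximationbyschnorr2}, with clause~(\ref{defn:maximaldoob:4}) following because geometric decay dominates any polynomial weight. The only cosmetic difference is your use of the bound $2^{-n}$ where the paper passes to a subsequence with $\|f-f_{s(n)}\|_p<e^{-n}$; both suffice.
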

\begin{proof}
Since any $L_p(\nu)$ maximal Doob test is an $L_p(\nu)$ Martin-L\"of test, we have $\mathsf{MLR}^{\nu}\subseteq \mathsf{MDR}^{\nu,p}$. To show that $\mathsf{MDR}^{\nu,p}\subseteq \mathsf{SR}^{\nu}$, it suffices to show that any $L_p(\nu)$ Schnorr test $f$ is an $L_p(\nu)$ maximal Doob test. By Proposition~\ref{prop:exptoflsc}, let $f_s$ be from the countable dense set of $L_p(\nu)$ so that $0\leq f_s\leq f_{s+1}$ on $\mathsf{KR}^{\nu}$ and $f=\sup_s f_s$ on $\mathsf{KR}^{\nu}$ and $f_s\rightarrow f$ fast in $L_p(\nu)$. Then we can compute a subsequence $s(n)$ such that $\|f-f_{s(n)}\|_p <e^{-n}$. Then for all $k\geq 0$ one has that $\sum_n \|f-f_{s(n)}\|_p \cdot (n+1)^k \leq \sum_n e^{-n} (n+1)^k<\infty$. Then we are done by Lemma~\ref{ex:approximationbyschnorr2}.
\end{proof}

The following closure condition on $L_p(\nu)$ maximal Doob tests is the difficult component of the proof of Theorem~\ref{thm:doobmax}:
\begin{prop}\label{thm:theharddoobmax}
If $p>1$ is computable and $f$ is an $L_p(\nu)$ maximal Doob test with witness $f_s$, then $g=\sum_s \sup_n \mathbb{E}_{\nu}[f-f_s\mid \mathscr{F}_n]$ is equal on $\mathsf{KR}^{\nu}$ to a $L_p(\nu)$ maximal Doob test with witness equal on $\mathsf{KR}^{\nu}$ to $g_t= \sum_{s<t} \sup_n \mathbb{E}_{\nu}[f_t-f_s\mid \mathscr{F}_n]$.
\end{prop}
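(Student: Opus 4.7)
The plan is to verify the four requirements of Definition~\ref{defn:maximaldoob} for the pair $(g, g_t)$, allowing modification on a $\nu$-null set to obtain witnessed representatives. Write $M_s := \sup_n \mathbb{E}_{\nu}[f - f_s \mid \mathscr{F}_n]$ and $N_{s,t} := \sup_n \mathbb{E}_{\nu}[f_t - f_s \mid \mathscr{F}_n]$, so $g = \sum_s M_s$ and $g_t = \sum_{s<t} N_{s,t}$.

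First I will establish basic regularity. By Proposition~\ref{prop:dis:1} applied to the non-negative lsc representative of $f - f_s$ from Definition~\ref{defn:maximaldoob}(\ref{defn:maximaldoob:2}), each $\mathbb{E}_{\nu}[f - f_s \mid \mathscr{F}_n]$ is lsc, so $M_s$ is lsc (sup of lsc, Proposition~\ref{prop:lscclosure}), and Doob's Maximal Inequality (Lemma~\ref{lem:doob}(\ref{lem:doob:1})) gives $\|M_s\|_p \leq \tfrac{p}{p-1}\|f-f_s\|_p$; summing and invoking Definition~\ref{defn:maximaldoob}(\ref{defn:maximaldoob:4}) at $k=0$ yields $\|g\|_p < \infty$. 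Analogously, each $N_{s,t}$ is the maximal function of the Schnorr test $\mathbb{E}_{\nu}[f_t - f_s \mid \mathscr{F}_n]$ (a Schnorr test by Proposition~\ref{prop:preservetests}(\ref{prop:preservetests:2}) since $f_t-f_s$ is a Schnorr test on $\mathsf{KR}^{\nu}$), and for $p > 1$ the maximal function preserves Schnorr tests by Proposition~\ref{prop:dis:3}; so $g_t$, as a finite sum of Schnorr tests, is itself a Schnorr test. Condition~(1) follows from monotonicity of conditional expectation (Proposition~\ref{prop:dis:2}(\ref{prop:newlevy:2a:again})) applied to $f_t - f_s \leq f_{t+1} - f_s$ on $\mathsf{KR}^{\nu}$, together with Conditional MCT (Proposition~\ref{prop:bigprop}(\ref{prop:cmct})) yielding $N_{s,t} \uparrow M_s$ on $\mathsf{KR}^{\nu}$ and hence $g_t \uparrow g$. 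Condition~(4) is handled via the decomposition $g - g_t = \sum_{s \geq t} M_s + \sum_{s<t}(M_s - N_{s,t})$ on $\mathsf{KR}^{\nu}$ (each summand non-negative). Subadditivity of sup applied to $\mathbb{E}_{\nu}[f-f_s\mid\mathscr{F}_n] = \mathbb{E}_{\nu}[f-f_t\mid\mathscr{F}_n] + \mathbb{E}_{\nu}[f_t-f_s\mid\mathscr{F}_n]$ on $\mathsf{KR}^{\nu}$ gives $M_s - N_{s,t} \leq M_t$, so by Doob $\|g - g_t\|_p \leq \tfrac{p}{p-1}[\sum_{s \geq t}\|f-f_s\|_p + t\|f-f_t\|_p]$; Fubini with $\sum_{t \leq s}(t+1)^k \leq (s+1)^{k+1}$ then reduces Definition~\ref{defn:maximaldoob}(\ref{defn:maximaldoob:4}) for $g$ to that for $f$ at exponent $k+1$.

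The main obstacle is condition~(3): showing $g_u - g_t$ (for $u > t$) equal on $\mathsf{KR}^{\nu}$ to an $L_p(\nu)$ Schnorr test. Decomposing gives
\[
g_u - g_t = N_{t,u} + \sum_{t<s<u} N_{s,u} + \sum_{s<t}(N_{s,u} - N_{s,t}) \quad \text{on } \mathsf{KR}^{\nu};
\]
the first two sums are already Schnorr tests. The difficulty is each $N_{s,u} - N_{s,t}$: non-negative but not manifestly lsc, and $N_{s,t}$ does not have an obvious usc companion needed to directly invoke the partial subtraction $\ominus$ of Proposition~\ref{prop:ominusproperties}. My plan is to apply Proposition~\ref{prop:dis:4} to the Schnorr test $f_t - f_s$ to produce a uniformly computable approximation $\tilde f_k^{(s,t)} \uparrow f_t - f_s$ on $\mathsf{KR}^{\nu}$ by Schnorr tests that become $\mathscr{F}_m$-measurable on $\mathsf{KR}^{\nu}$ for $m \geq n^{\ast}(k,s,t)$. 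Then $\tilde N_{s,t,k} := \sup_n \mathbb{E}_{\nu}[\tilde f_k^{(s,t)}\mid\mathscr{F}_n]$ collapses on $\mathsf{KR}^{\nu}$ to a finite max of Schnorr tests, is equal on $\mathsf{KR}^{\nu}$ to a step function in the algebra of the $\nu$-computable basis, and so admits a non-negative usc companion $\breve{\tilde N}_{s,t,k}$ by Proposition~\ref{prop:shiftbasistolpst}. Dually, take $\tilde N_{s,u,k}$ from $f_u - f_s$. The Schnorr tests $\tilde N_{s,u,k} \ominus \breve{\tilde N}_{s,t,k}$ equal on $\mathsf{KR}^{\nu}$ to $\tilde N_{s,u,k} - \tilde N_{s,t,k}$ and converge in $L_p(\nu)$ to $N_{s,u} - N_{s,t}$. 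The subtlety I expect to be the crux is that this sequence is not pointwise monotone in $k$; a diagonal stabilisation argument, tuning the approximation indices to the fast-convergence rates from Proposition~\ref{prop:dis:4}, is needed to assemble these Schnorr tests into an increasing uniformly computable sequence whose sup is non-negative lsc with computable $L_p(\nu)$-norm, yielding (by Proposition~\ref{prop:exptoflsc}) a Schnorr test representative of $N_{s,u} - N_{s,t}$ on $\mathsf{KR}^{\nu}$. Summing over $s < t$ and combining with the other terms produces the desired Schnorr test representative of $g_u - g_t$.

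Finally, condition~(2) follows from (1) and (3): since $g = \sup_u g_u$ on $\mathsf{KR}^{\nu}$ and $g_u$ is increasing in $u$, we have $g - g_t = \sup_{u > t}(g_u - g_t)$ on $\mathsf{KR}^{\nu}$; by (3) each $g_u - g_t$ equals an $L_p(\nu)$ Schnorr test on $\mathsf{KR}^{\nu}$, so $g - g_t$ equals the sup of these lsc functions, itself non-negative lsc by Proposition~\ref{prop:lscclosure}.
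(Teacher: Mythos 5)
Your handling of the portion that the paper's proof actually carries out is correct and essentially identical to it: the same observation that $g$ is equal on $\mathsf{KR}^{\nu}$ to a supremum of non-negative lsc functions, the same appeal to Proposition~\ref{prop:dis:3} for $g_t$, and, for condition~(\ref{defn:maximaldoob:4}), the same decomposition $g-g_t=\sum_{s\geq t}M_s+\sum_{s<t}(M_s-N_{s,t})$, the same pointwise bound $M_s-N_{s,t}\leq M_t$ from subadditivity of the supremum, Doob's Maximal Inequality (Lemma~\ref{lem:doob}(\ref{lem:doob:1})), and the same interchange of the $s$- and $t$-sums at the cost of one extra power of $(s+1)$. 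The paper's proof stops there; you are right that conditions (\ref{defn:maximaldoob:1})--(\ref{defn:maximaldoob:3}) remain to be checked for the pair $(g,g_t)$, and your argument for (\ref{defn:maximaldoob:1}) via monotonicity and Conditional MCT is fine.

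The gap is in your treatment of condition~(\ref{defn:maximaldoob:3}), on which your condition~(\ref{defn:maximaldoob:2}) then depends. First, the claim that $\tilde N_{s,t,k}=\sup_n\mathbb{E}_{\nu}[\tilde f^{(s,t)}_k\mid\mathscr{F}_n]$ collapses on $\mathsf{KR}^{\nu}$ to a step function over the algebra of the $\nu$-computable basis is not justified: Proposition~\ref{prop:dis:4} gives the collapse to a finite max of the functions $\mathbb{E}_{\nu}[\tilde f^{(s,t)}_k\mid\mathscr{F}_n](x)=\sum_i q_i\,\rho^{(n)}_x(U_i)$, but $x\mapsto\rho^{(n)}_x(U_i)$ is in general only lsc, not an element of the countable dense set, so Proposition~\ref{prop:shiftbasistolpst} does not hand you the usc companion $\breve{\tilde N}_{s,t,k}$. (A companion can be manufactured differently: on $\mathsf{KR}^{\nu}$ one has $\rho^{(n)}_x(U_i)=1-\rho^{(n)}_x(X\setminus C_i)$ for the paired effectively closed superset $C_i\supseteq U_i$, and the right-hand side is usc.) Second, and more seriously, the ``diagonal stabilisation argument'' you defer is exactly where the content of (\ref{defn:maximaldoob:3}) lives, and it is not clear it can be executed as described: the approximants $\tilde N_{s,u,k}-\tilde N_{s,t,k}$ are neither monotone in $k$ nor bounded above by $N_{s,u}-N_{s,t}$ (they can overshoot, since $\tilde N_{s,t,k}\leq N_{s,t}$), and lower semi-continuity is not preserved under non-monotone pointwise limits, so $L_p(\nu)$-convergence of Schnorr tests to $N_{s,u}-N_{s,t}$ does not by itself produce a non-negative lsc representative of that difference on $\mathsf{KR}^{\nu}$. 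As written, (\ref{defn:maximaldoob:3}) is asserted rather than proved, and (\ref{defn:maximaldoob:2}) inherits the gap.
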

\begin{proof}
The function $g$ is equal on $\mathsf{KR}^{\nu}$ to a non-negative lsc since it is equal on $\mathsf{KR}^{\nu}$ to a supremum of non-negative lsc functions (cf. Proposition~\ref{prop:dis:1}, Proposition~
\ref{prop:dis:2}(\ref{prop:newlevy:2a:again})). Since $p>1$, by Proposition~\ref{prop:dis:3}, we have that $g_t$ is equal on $\mathsf{KR}^{\nu}$ to an $L_p(\nu)$ Schnorr test. For $k\geq 0$, the quantity $\sum_t \|g-g_t\|_p \cdot (t+1)^k$ is $\leq$:
\begin{align}
&  \sum_t \|\sum_{s\geq t} \sup_n \mathbb{E}_{\nu} [f-f_s \mid \mathscr{F}_n]\|_p\cdot (t+1)^k\label{eqn:dobomax:split:1} \\
+&  \sum_t \|\sum_{s< t} \sup_n \mathbb{E}_{\nu} [f-f_s \mid \mathscr{F}_n]- \sup_n \mathbb{E}_{\nu} [f_t-f_s \mid \mathscr{F}_n]\|_p\cdot (t+1)^k \label{eqn:dobomax:split:2}
\end{align}
To estimate (\ref{eqn:dobomax:split:1}), let us first define a computable sequence of non-negative left-c.e. reals:
\begin{equation*}
c_{s,t}=
\begin{cases}
0      & \text{if $t>s$}, \\
\| \sup_n \mathbb{E}_{\nu} [f-f_s \mid \mathscr{F}_n]\|_p\cdot (s+1)^k   & \text{if $t\leq s$}.
\end{cases}
\end{equation*}
For fixed $s\geq 0$ we have $\sum_t c_{s,t} = (s+1)\cdot c_{s,s}= \| \sup_n \mathbb{E}_{\nu} [f-f_s \mid \mathscr{F}_n]\|_p\cdot (s+1)^{k+1}$. To estimate~(\ref{eqn:dobomax:split:1}), we have the following, where the last line follows from Doob's Maximal Inequality (Lemma~\ref{lem:doob}(\ref{lem:doob:1})):
\begin{align*}
 & \sum_t \|\sum_{s\geq t} \sup_n \mathbb{E}_{\nu} [f-f_s \mid \mathscr{F}_n]\|_p\cdot (t+1)^k  \leq \sum_t \sum_{s\geq t} \| \sup_n \mathbb{E}_{\nu} [f-f_s \mid \mathscr{F}_n]\|_p\cdot (t+1)^k  \\
\leq & \sum_t \sum_{s\geq t} \| \sup_n \mathbb{E}_{\nu} [f-f_s \mid \mathscr{F}_n]\|_p\cdot (s+1)^k = \sum_t \sum_{s\geq t} c_{s,t} \\
= & \sum_t \sum_s c_{s,t} =  \sum_s \sum_t c_{s,t} = \sum_s \| \sup_n \mathbb{E}_{\nu} [f-f_s \mid \mathscr{F}_n]\|_p\cdot (s+1)^{k+1} \\
\leq & \sum_s \frac{p}{p-1} \cdot \| f-f_s\|_p \cdot (s+1)^{k+1} <\infty
\end{align*}
For~(\ref{eqn:dobomax:split:2}), we have the following, where we use Doob's Maximal Inequality (Lemma~\ref{lem:doob}) again at the end:
\begin{align*}
& \sum_t \|\sum_{s< t} \sup_n \mathbb{E}_{\nu} [f-f_s \mid \mathscr{F}_n]- \sup_n \mathbb{E}_{\nu} [f_t-f_s \mid \mathscr{F}_n]\|_p\cdot (t+1)^k \\
\leq & \sum_t \sum_{s< t} \|\sup_n \mathbb{E}_{\nu} [f-f_t \mid \mathscr{F}_n]\|_p \cdot (t+1)^k \\
= & \sum_t t\cdot \|\sup_n \mathbb{E}_{\nu} [f-f_t \mid \mathscr{F}_n]\|_p \cdot (t+1)^k \\ 
\leq & \sum_t \|\sup_n \mathbb{E}_{\nu} [f-f_t \mid \mathscr{F}_n]\|_p \cdot (t+1)^{k+1} \\ 
\leq & \sum_t  \frac{p}{p-1} \cdot \|f-f_t\|_p \cdot (t+1)^{k+1}<\infty
\end{align*}
\end{proof}

Here is the proof of Theorem~\ref{thm:doobmax}:
\begin{proof}
Suppose (\ref{thm:doobmax:1}); we prove (\ref{thm:doobmax:2}). One has that $x$ in $\mathsf{KR}^{\nu}$ and indeed $x$ in $\mathsf{SR}^{\nu}$ by Proposition~\ref{prop:doob:basecase}. Suppose now that $f$ is an $L_p(\nu)$ maximal Doob test with witness $f_s$. By the previous proposition and~(\ref{thm:doobmax:1}), we have $\lim_s \sup_n \mathbb{E}_{\nu}[f-f_s\mid \mathscr{F}_n](x)=0$.
Let $\epsilon>0$. Choose $s_0\geq 0$ such that for all $s\geq s_0$ we have $\sup_n \mathbb{E}_{\nu}[f-f_s\mid \mathscr{F}_n](x)<\frac{\epsilon}{3}$. Choose $s_1\geq s_0$ such that for all $s\geq s_1$ we have $f(x)-f_s(x)<\frac{1}{3}$. By Theorem~\ref{thm:newnewlevy} applied to $f_{s_1}$, we have that $f_{s_1}(x) = \lim_n \mathbb{E}_{\nu}[f_{s_1}\mid \mathscr{F}_n](x)$. Choose $n_0\geq 0$ such that for all $n\geq n_0$ we have
$\left|f_{s_1}(x) -\mathbb{E}_{\nu}[f_{s_1}\mid \mathscr{F}_n](x)\right|<\frac{\epsilon}{3}$.
Then putting this all together, we have for all $n\geq n_0$ that $\left|f(x)-\mathbb{E}_{\nu}[f\mid \mathscr{F}_n](x)\right|$ is $\leq$ the following:
\begin{equation*}
\left|f(x)-f_{s_1}(x)\right|+\left|f_{s_1}(x)-\mathbb{E}_{\nu}[f_{s_1}\mid \mathscr{F}_n](x)\right|+\left|\mathbb{E}_{\nu}[f_{s_1}\mid \mathscr{F}_n](x)-\mathbb{E}_{\nu}[f\mid \mathscr{F}_n](x)\right|<\epsilon \end{equation*}

The step from (\ref{thm:doobmax:2}) to (\ref{thm:doobmax:3}) is trivial. 

The step from (\ref{thm:doobmax:3}) to (\ref{thm:doobmax:1}) is exactly as in the corresponding step of the proof of Theorem~\ref{thm:newnewlevy} (in \S\ref{sec:proofmain}), but with the class of $L_p(\nu)$ Schnorr tests replaced by the class of $L_p(\nu)$ maximal Doob tests.
\end{proof}

\section{Back and forth between tests and computable points}\label{sec:app1:lpcomputable}

In this section we indicate how to state a version of Theorem~\ref{thm:newnewlevy} in terms of $L_p(\nu)$-computable points. This essentially follows by a translation method of Miyabe.

We begin with how to select a version for each computable point $L_p(\nu)$. Pathak, Rojas, and Simpson and Rute have shown how to do this via Proposition~\ref{prop:schnorrtest}. We use the following slight variant of their selection method:
\begin{defn}\label{defn:canonicalversion}
Suppose that $f$ is a computable point of $L_1(\nu)$ with witness $f_n$. Then we define a version $f_{\infty}$ in $\mathbb{L}_1(\nu)$ by: 
\begin{equation*}
f_{\infty}(x) = \begin{cases}
\lim_n f_n(x)      & \text{if $\lim_n f_n(x)$ exists}, \\
0      & \text{otherwise}.
\end{cases}
\end{equation*}
\end{defn}
Hence, Proposition~\ref{prop:schnorrtest} tells us that the definition of $f_{\infty}$ goes through the first case~break on all points of $\mathsf{SR}^{\nu}$, and on these points it is independent of the choice of the witness $f_n$. However, on $X\setminus \mathsf{SR}^{\nu}$ we have that it is dependent on the witness $f_n$. If one was working more extensively with $f_{\infty}$, one would want to develop some notation which better mark its dependence on the version $f_n$. But this dependence has the following advantage: if $f$ is an $L_p(\nu)$ Schnorr test and $f_n$ is a witness to its being $L_p(\nu)$-computable such that $f_n\rightarrow f$ everywhere (as in Proposition~\ref{prop:exptoflsc}), then $f=f_{\infty}$ everywhere.\footnote{Pathak, Rojas, and Simpson \cite[Definition 3.8 p. 314]{Pathak2014aa} work with a variant of our $f_{\infty}$ that organises the case break depending on whether $x$ is in $\mathsf{SR}^{\nu}$. Their approach has the advantage of making $f_{\infty}$, which they denote as $\widehat{f}$, entirely independent of the witness $f_n$.  Rute \cite[Definition 3.17 p. 16]{Rute2012aa} organises the case break the same as we do but sets it undefined when the limit does not exist, which prevents it from being an $L_p(\nu)$ Schnorr test.}

Miyabe proved the following transfer result for going back and forth between  $L_p(\nu)$-computable functions and differences of $L_p(\nu)$ Schnorr tests:\footnote{\cite[Theorem 4.3 p. 7]{Miyabe2013}. He proved it for $p=1$, but the proof is the same for $p\geq 1$.}
\begin{prop}\label{prop:miyabediff}
Suppose that $\nu$ is a computable point of $\mathcal{P}(X)$ and $p\geq 1$ is computable.
\begin{enumerate}[leftmargin=*]
\item \label{prop:miyabediff:1} Suppose $f$ is  a computable point of $L_p(\nu)$ with witness $f_n$. Then there are $L_p(\nu)$ Schnorr tests $g,h$ such that $f_{\infty}=g-h$ on on $\mathsf{SR}^{\nu}$.
\item \label{prop:miyabediff:2} Suppose that $g,h$ are $L_p(\nu)$ Schnorr tests. Then there is $L_p(\nu)$-computable $f$ with witness $f_n$ such that $f_{\infty}=g-h$ on on $\mathsf{SR}^{\nu}$.
\end{enumerate}
\end{prop}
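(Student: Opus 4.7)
The plan is to treat the two directions of Proposition~\ref{prop:miyabediff} separately: part~(\ref{prop:miyabediff:2}) by a straightforward subtraction of canonical approximations, part~(\ref{prop:miyabediff:1}) by a telescoping-plus-splitting-into-positive-and-negative-parts argument.

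For~(\ref{prop:miyabediff:2}), suppose $g, h$ are $L_p(\nu)$ Schnorr tests. By Proposition~\ref{prop:exptoflsc}, I would compute uniformly in $g$ and $h$ sequences $g_n, h_n$ from the countable dense set of $L_p(\nu)$ with $0 \leq g_n \leq g_{n+1}$ everywhere, $\sup_n g_n = g$ everywhere, and $g_n \to g$ fast in $L_p(\nu)$ (and likewise for $h_n$). Setting $f_n := g_n - h_n$, the triangle inequality gives $\|(g-h) - f_n\|_p \leq 2^{-n+1}$, so after a trivial subsequencing $f_n$ witnesses that $f := g - h$ is a computable point of $L_p(\nu)$. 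Since the $g_n, h_n$ are monotone with pointwise suprema $g, h$, one has $\lim_n f_n(x) = g(x) - h(x)$ at every point where $g(x), h(x)$ are both finite; in particular on $\mathsf{SR}^{\nu}$ this yields $f_\infty = g - h$ by Definition~\ref{defn:canonicalversion}.

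For~(\ref{prop:miyabediff:1}), suppose $f$ is a computable point of $L_p(\nu)$ with witness $f_n$. Passing to a subsequence, I would assume $\|f - f_n\|_p \leq 2^{-n}$, so $\|d_n\|_p \leq 2^{-n+1}$ for $d_n := f_{n+1} - f_n$. The $d_n$ lie in the countable dense set of $L_p(\nu)$, and their positive and negative parts $d_n^\pm$ (along with $f_0^\pm$) lie in the countable dense set of $L_p^+(\nu)$. Proposition~\ref{prop:shiftbasistolpst} yields non-negative lsc representatives $\tilde{d}_n^\pm$ and $\tilde{f}_0^\pm$ agreeing with $d_n^\pm$ and $f_0^\pm$ on $\mathsf{KR}^{\nu}$; define the pointwise sums in $[0,\infty]$
\begin{equation*}
g := \tilde{f}_0^+ + \sum_n \tilde{d}_n^+, \qquad h := \tilde{f}_0^- + \sum_n \tilde{d}_n^-.
\end{equation*}
Both $g, h$ are non-negative lsc by Proposition~\ref{prop:lscclosure}. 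Their partial sums $g_N := \tilde{f}_0^+ + \sum_{n \leq N} \tilde{d}_n^+$ and $h_N$ are equal on $\mathsf{KR}^{\nu}$ to elements of the countable dense set of $L_p^+(\nu)$, and the estimate $\|g - g_N\|_p \leq \sum_{n > N} \|d_n^+\|_p \leq 2^{-N+1}$ (using $\|d_n^\pm\|_p \leq \|d_n\|_p$) shows $g_N \to g$ fast in $L_p(\nu)$, so $g$ is a computable point of $L_p^+(\nu)$; being non-negative lsc, $g$ is an $L_p(\nu)$ Schnorr test by Proposition~\ref{prop:exptoflsc}, and likewise for $h$.

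The final step is to verify $f_\infty = g - h$ on $\mathsf{SR}^{\nu}$. On $\mathsf{KR}^{\nu}$, the telescoping identity $f_{N+1} = f_0 + \sum_{n \leq N} d_n = g_N - h_N$ holds, using that the tildes coincide with their originals on $\mathsf{KR}^{\nu}$. For $x$ in $\mathsf{SR}^{\nu} \subseteq \mathsf{KR}^{\nu}$, both $g(x)$ and $h(x)$ are finite (Schnorr tests are finite on $\mathsf{SR}^{\nu}$), so the monotone pointwise sums converge to $g(x)$ and $h(x)$, whence $\lim_N f_{N+1}(x) = g(x) - h(x)$; by Proposition~\ref{prop:schnorrtest} this limit agrees with $f_\infty(x)$, completing the argument. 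I do not expect any deep obstacle; the main point of care is bookkeeping which equalities hold everywhere versus only on $\mathsf{KR}^{\nu}$ or $\mathsf{SR}^{\nu}$, which is handled uniformly by the tilde convention above.
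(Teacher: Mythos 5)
Your proposal is correct and follows essentially the same route as the paper's (sketched) proof: part~(\ref{prop:miyabediff:2}) via the canonical approximations of Proposition~\ref{prop:exptoflsc}, and part~(\ref{prop:miyabediff:1}) via the telescoping sums $\sum_n (f_{n+1}-f_n)^{\pm}$ made lsc on $\mathsf{KR}^{\nu}$ by Proposition~\ref{prop:shiftbasistolpst}. Your inclusion of the $\tilde{f}_0^{\pm}$ terms is a worthwhile refinement, since the bare telescoping sums in the paper's sketch recover $f-f_0$ rather than $f$.
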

\begin{proof} (Sketch) For (\ref{prop:miyabediff:1}), using Proposition~\ref{prop:shiftbasistolpst}, one shows that $g= \sum_n (f_{n+1}-f_n)^+$ and $h=\sum_n (f_{n+1}-f_n)^-$ are equal on $\mathsf{KR}^{\nu}$ to $L_p(\nu)$ Schnorr tests, where $\cdot^+$ and $\cdot^-$ denote positive and negative parts. For (\ref{prop:miyabediff:2}), one uses Proposition~\ref{prop:exptoflsc}.
\end{proof}

These observations allow us to restate Theorem~\ref{thm:newnewlevy} in terms of $L_p(\nu)$-computable points, provided one assumes Schnorr disintegrations:
\begin{cor}\label{cor:newnewlevy}
Suppose that $X$ is a computable Polish space and $\nu$ is a computable probability measure. Suppose that $\mathscr{F}_n$ is an almost-full effective filtration, equipped with Schnorr disintegrations.

If $p\geq 1$ is computable, then the following three items are equivalent for $x$ in $X$: 
  \begin{enumerate}[leftmargin=*]
  \item \label{levy:cor:1} $x$ is in $\mathsf{SR}^{\nu}(X)$.
  \item \label{levy:cor:2} $x$ is in $\mathsf{KR}^{\nu}$ and $\lim_n \mathbb{E}_{\nu}[f_{\infty} \mid \mathscr{F}_n](x)=f_{\infty}(x)$ for every $L_p(\nu)$ computable $f$ with witness $f_m$.
  \item \label{levy:cor:3} $x$ is in $\mathsf{KR}^{\nu}$ and $\lim_n \mathbb{E}_{\nu}[f_{\infty} \mid \mathscr{F}_n](x)$ exists for every $L_p(\nu)$ computable $f$ with witness $f_m$.
\end{enumerate}
\end{cor}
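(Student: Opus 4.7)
The plan is to reduce Corollary~\ref{cor:newnewlevy} to Theorem~\ref{thm:newnewlevy} via Miyabe's translation (Proposition~\ref{prop:miyabediff}), using the fact that Schnorr disintegrations allow us to rewrite conditional expectations of $f_\infty$ in terms of conditional expectations of $L_p(\nu)$ Schnorr tests on all of $\mathsf{SR}^{\nu}$.

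For the implication (\ref{levy:cor:1})$\Rightarrow$(\ref{levy:cor:2}), let $x$ be in $\mathsf{SR}^{\nu}$ and let $f$ be a computable point of $L_p(\nu)$ with witness $f_m$. By Proposition~\ref{prop:miyabediff}(\ref{prop:miyabediff:1}), there exist $L_p(\nu)$ Schnorr tests $g,h$ with $f_{\infty} = g - h$ on $\mathsf{SR}^{\nu}$. Since we are working with a Schnorr disintegration, we have $\rho_x^{(n)}(\mathsf{SR}^{\nu}) = 1$ for $x$ in $\mathsf{SR}^{\nu}$, and so integrating against $\rho_x^{(n)}$ gives
\begin{equation*}
\mathbb{E}_{\nu}[f_{\infty} \mid \mathscr{F}_n](x) = \mathbb{E}_{\nu}[g \mid \mathscr{F}_n](x) - \mathbb{E}_{\nu}[h \mid \mathscr{F}_n](x),
\end{equation*}
where both terms on the right are finite by Proposition~\ref{prop:preservetests}(\ref{prop:preservetests:2}) applied to the Schnorr tests $g,h$. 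Applying Theorem~\ref{thm:newnewlevy} to each of $g,h$ yields $\mathbb{E}_{\nu}[g\mid \mathscr{F}_n](x) \to g(x)$ and $\mathbb{E}_{\nu}[h\mid \mathscr{F}_n](x) \to h(x)$, so $\mathbb{E}_{\nu}[f_{\infty}\mid \mathscr{F}_n](x) \to g(x) - h(x) = f_{\infty}(x)$.

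The implication (\ref{levy:cor:2})$\Rightarrow$(\ref{levy:cor:3}) is immediate. For (\ref{levy:cor:3})$\Rightarrow$(\ref{levy:cor:1}), let $f$ be an arbitrary $L_p(\nu)$ Schnorr test and let $f_n$ be the canonical approximation from Proposition~\ref{prop:exptoflsc}, which converges to $f$ everywhere and fast in $L_p(\nu)$. Since $\lim_n f_n(x) = f(x)$ everywhere, the canonical version satisfies $f_{\infty} = f$ everywhere, and in particular $f_{\infty} = f$ as pointwise functions used in the conditional expectation defined via the disintegration. The hypothesis (\ref{levy:cor:3}) applied to this $f$ then supplies that $\lim_n \mathbb{E}_{\nu}[f\mid \mathscr{F}_n](x)$ exists for every $L_p(\nu)$ Schnorr test $f$, which by Theorem~\ref{thm:newnewlevy} (item~(\ref{levy:thm:3})$\Rightarrow$(\ref{levy:thm:1})) yields $x$ in $\mathsf{SR}^{\nu}$.

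The main obstacle is the (\ref{levy:cor:1})$\Rightarrow$(\ref{levy:cor:2}) direction, specifically the move from the equality $f_{\infty} = g - h$ on $\mathsf{SR}^{\nu}$ to the corresponding equality of conditional expectations at $x$ in $\mathsf{SR}^{\nu}$. This step is precisely where the hypothesis of a \emph{Schnorr} disintegration (rather than merely Kurtz) is essential: one needs $\rho_x^{(n)}$ to be concentrated on $\mathsf{SR}^{\nu}$ so that a $\nu$-a.s.\ identity holding only on $\mathsf{SR}^{\nu}$ transfers into the integral against $\rho_x^{(n)}$. Once that is in hand, the argument reduces cleanly to two applications of Theorem~\ref{thm:newnewlevy}.
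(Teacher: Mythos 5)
Your proposal is correct and follows essentially the same route as the paper: Miyabe's translation (Proposition~\ref{prop:miyabediff}) to write $f_\infty=g-h$ on $\mathsf{SR}^{\nu}$ for Schnorr tests $g,h$, the concentration of the Schnorr disintegration on $\mathsf{SR}^{\nu}$ (the content of Proposition~\ref{prop:dis:2} as used in the paper) to transfer that identity into the conditional expectations, and the observation that every $L_p(\nu)$ Schnorr test has a witness with $f=f_\infty$ everywhere to close the cycle through Theorem~\ref{thm:newnewlevy}. The only cosmetic difference is that you argue the cycle $(\ref{levy:cor:1})\Rightarrow(\ref{levy:cor:2})\Rightarrow(\ref{levy:cor:3})\Rightarrow(\ref{levy:cor:1})$ directly rather than routing each item through the corresponding item of Theorem~\ref{thm:newnewlevy}.
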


\begin{proof}
Suppose (\ref{levy:cor:2}); we show Theorem~\ref{thm:newnewlevy}(\ref{levy:thm:2}). But simply note that any $L_p(\nu)$ Schnorr test $f$ has a witness $f_m$ from Proposition~\ref{prop:exptoflsc} with $f=f_{\infty}$ everywhere.

Suppose Theorem~\ref{thm:newnewlevy}(\ref{levy:thm:2}); we show (\ref{levy:cor:2}). Let $f$ be $L_p(\nu)$ computable with witness $f_m$. By the previous proposition, there are two $L_p(\nu)$ Schnorr tests $g,h$ such that $f_{\infty} = g-h$ on $\mathsf{SR}^{\nu}$. Since we are working with Schnorr disintegrations, by Proposition~\ref{prop:dis:2}(\ref{prop:newlevy:2a:again}), one has that $\mathbb{E}_{\nu}[f_{\infty} \mid \mathscr{F}_n]=\mathbb{E}_{\nu}[g-h \mid \mathscr{F}_n]$ on $\mathsf{SR}^{\nu}$ for all $n\geq 0$. Then by Theorem~\ref{thm:newnewlevy}(\ref{levy:thm:2}) and Proposition~\ref{prop:dis:2}(\ref{prop:newlevy:2c:again}), we have on $\mathsf{SR}^{\nu}$ that $f_{\infty}=g-h=\lim_n \big(\mathbb{E}_{\nu}[g\mid \mathscr{F}_n]-\mathbb{E}_{\nu}[h\mid \mathscr{F}_n]\big)= \lim_n \mathbb{E}_{\nu}[f_{\infty}\mid \mathscr{F}_n]$. 

Finally, (\ref{levy:cor:2}) trivially implies (\ref{levy:cor:3}). And (\ref{levy:cor:3}) implies Theorem~\ref{thm:newnewlevy}(\ref{levy:thm:3}) since again any $L_p(\nu)$ Schnorr test $f$ has a witness $f_m$ from Proposition~\ref{prop:exptoflsc} with $f=f_{\infty}$ everywhere.
\end{proof}

\section{Martingale convergence in \texorpdfstring{$L_2(\nu)$}{L2nu}}\label{sec:ed-martin}

Our topic in this paper is convergence of the conditional expectations of random variables. But of course these are instances of martingales. In this brief section, we prove a result mentioned in \S\ref{sec:relation:work:previous}, namely that one can characterise $\mathsf{SR}^{\nu}$ in terms of convergence of certain $L_2(\nu)$ martingales. As mentioned there, this generalises a result of Rute from Cantor space to the more general setting.

If $p\geq 1$ and if $\mathscr{F}_n$ is a filtration, then a \emph{classical martingale in $L_p(\nu)$ adapted to $\mathscr{F}_n$} is a sequence $M_n$ of $\mathscr{F}_n$ measurable functions in $L_p(\nu)$ such that $M_n=\mathbb{E}_{\nu}[M_{n+1}\mid \mathscr{F}_n]$ $\nu$-a.s. When $\mathscr{F}_n$ is clear from context, we just say  \emph{classical martingale in $L_p(\nu)$}.

If $p\geq 1$ is computable and $\mathscr{F}_n$ is an effective filtration equipped with Schnorr disintegrations $\rho^{(n)}$, then a \emph{martingale of $L_p(\nu)$ Schnorr tests adapted to $\mathscr{F}_n$ and $\rho^{(n)}$} is a uniformly computable sequence $M_n$ of $\mathscr{F}_n$-measurable Schnorr $L_p(\nu)$ tests such that $M_n=\mathbb{E}_{\nu}[M_{n+1}\mid \mathscr{F}_n]$ on $\mathsf{SR}^{\nu}$, where the version of the conditional expectation is that from the disintegration. When $\mathscr{F}_n$ and $\rho^{(n)}$ are clear from context, we just say \emph{martingale of $L_p(\nu)$ Schnorr tests}.

Here is an example:
\begin{ex}\label{ex:product:martingales} (Products of mean one independent variables).

Suppose $p\geq 1$ is computable. Suppose that $f_n:X\rightarrow [0,\infty]$ is a sequence of independent $L_p(\nu)$ Schnorr tests with $\mathbb{E}_{\nu} f_n=1$ for all $n\geq 1$. Suppose that $\mathscr{F}_n=\sigma(f_1, \ldots, f_n)$ is an effective filtration equipped with Schnorr disintegrations. Then $M_n=\prod_{i=1}^n f_i$ is a martingale of $L_p(\nu)$ Schnorr tests.

To see this, note that the $M_n$ are $L_p(\nu)$ Schnorr tests: they are non-negative lsc by Proposition~\ref{prop:lscclosure}, and by independence we have $\|M_n\|_p=\prod_{i=1}^n \|f_i\|_p$, which is computable. On $\mathsf{SR}^{\nu}$ one has 
\begin{equation*}
\mathbb{E}_{\nu}[M_{n+1}\mid \mathscr{F}_n] =\mathbb{E}_{\nu}[f_{n+1} \cdot M_n\mid \mathscr{F}_n]  =M_n\cdot \mathbb{E}_{\nu}[f_{n+1}\mid \mathscr{F}_n] =M_n\cdot \mathbb{E}_{\nu} f_{n+1}=M_n
\end{equation*}
The second identity is by taking out what is known (Proposition~\ref{prop:bigprop}(\ref{prop:ctowisk})) and the third identity is by the r\^ole of independence (Proposition~\ref{prop:roleindepenedence}).
\end{ex}

Our goal is to prove the following:
\begin{thm}\label{thm:margingaleconvergencel2}
Suppose that $\nu$ is a computable point of $\mathcal{P}(X)$. Let $\mathscr{F}_n$ be an almost-full effective filtration equipped with Schnorr disintegrations.

The following are equivalent for $x$ in $X$: 
  \begin{enumerate}[leftmargin=*]
  \item\label{thm:margingaleconvergencel2a}  $x$ is in $\mathsf{SR}^{\nu}$.
  \item\label{thm:margingaleconvergencel2b}  $x$ is in $\mathsf{KR}^{\nu}$ and $\lim_n M_n(x)$ exists for every martingale $M_n$ of $L_2(\nu)$ Schnorr tests such that both $\sup_n \|M_n\|_2$ is computable and the maximal function $\sup_n M_n$ is a $L_2(\nu)$ Schnorr test.
\end{enumerate}
\end{thm}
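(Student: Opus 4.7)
For $(\ref{thm:margingaleconvergencel2a})\Rightarrow(\ref{thm:margingaleconvergencel2b})$, the plan is to reduce the pointwise convergence of $M_n$ to Theorem~\ref{thm:newnewlevy}. Orthogonality of martingale differences in $L_2(\nu)$ gives $\|M_n-M_m\|_2^2=\|M_n\|_2^2-\|M_m\|_2^2$ for $n\geq m$; since $\sup_n\|M_n\|_2$ is computable and $\|M_n\|_2$ is non-decreasing, $M_n$ is $L_2$-Cauchy at a computable rate, so its $L_2$-limit $M_\infty$ exists as a computable point of $L_2(\nu)$ (obtained by diagonalising countable-dense approximations of each $M_n$). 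By Miyabe's translation (Proposition~\ref{prop:miyabediff}), one may write $M_\infty=g-h$ on $\mathsf{SR}^{\nu}$ for $L_2(\nu)$ Schnorr tests $g,h$. Classically $M_n=\mathbb{E}_{\nu}[M_\infty\mid\mathscr{F}_n]$ in $L_2(\nu)$ (as the $L_2$-limit of the constant-in-$k$ sequence $\mathbb{E}_{\nu}[M_{n+k}\mid\mathscr{F}_n]=M_n$), so $M_n$ and $\mathbb{E}_{\nu}[g\mid\mathscr{F}_n]-\mathbb{E}_{\nu}[h\mid\mathscr{F}_n]$ coincide as elements of $L_2(\nu)$; since both are computable points of $L_2(\nu)$ and each of $M_n,\,\mathbb{E}_{\nu}[g\mid\mathscr{F}_n],\,\mathbb{E}_{\nu}[h\mid\mathscr{F}_n]$ is itself a Schnorr test (Proposition~\ref{prop:preservetests}), Proposition~\ref{prop:schnorrtest} upgrades this to the pointwise identity $M_n(x)=\mathbb{E}_{\nu}[g\mid\mathscr{F}_n](x)-\mathbb{E}_{\nu}[h\mid\mathscr{F}_n](x)$ on $\mathsf{SR}^{\nu}$. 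Applying Theorem~\ref{thm:newnewlevy} separately to $g$ and $h$ then yields $M_n(x)\to g(x)-h(x)=M_\infty(x)$ on $\mathsf{SR}^{\nu}$.

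For $(\ref{thm:margingaleconvergencel2b})\Rightarrow(\ref{thm:margingaleconvergencel2a})$, I will argue by contraposition. Given $x\in\mathsf{KR}^{\nu}\setminus\mathsf{SR}^{\nu}$, the contrapositive of Theorem~\ref{thm:newnewlevy}$(\ref{levy:thm:3})\Rightarrow(\ref{levy:thm:1})$ applied with $p=2$ produces an $L_2(\nu)$ Schnorr test $g$ such that $\lim_n\mathbb{E}_{\nu}[g\mid\mathscr{F}_n](x)$ fails to exist. Setting $M_n:=\mathbb{E}_{\nu}[g\mid\mathscr{F}_n]$, the hypotheses of $(\ref{thm:margingaleconvergencel2b})$ are verified as follows: $M_n$ is a uniformly $L_2(\nu)$ Schnorr test (Proposition~\ref{prop:preservetests}) and a martingale on $\mathsf{SR}^{\nu}$ via the Tower property (Proposition~\ref{prop:tower}, using that Schnorr disintegrations are in particular Kurtz disintegrations); $\|M_n\|_2$ is non-decreasing with $M_n\to g$ in $L_2(\nu)$ by the classical $L_2$-L\'evy theorem (applicable because almost-fullness forces $\mathscr{F}_\infty$ to contain the Borel $\sigma$-algebra modulo $\nu$-null sets), so $\sup_n\|M_n\|_2=\|g\|_2$ is computable; and $\sup_n M_n$ is lsc (supremum of lsc) and a computable point of $L_2(\nu)$ by Proposition~\ref{prop:dis:3} (the maximal function is computable continuous $L_p\to L_p$ for $p>1$), hence an $L_2(\nu)$ Schnorr test. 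This contradicts the assumption that $\lim_n M_n(x)$ exists.

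The main point of friction will be establishing the pointwise identity $M_n(x)=\mathbb{E}_{\nu}[g\mid\mathscr{F}_n](x)-\mathbb{E}_{\nu}[h\mid\mathscr{F}_n](x)$ on $\mathsf{SR}^{\nu}$ in the forward direction: classical $L_2$ theory gives the $\nu$-a.s. identity, and the upgrade to a pointwise identity on $\mathsf{SR}^{\nu}$ relies on Proposition~\ref{prop:schnorrtest} ensuring that two computable points of $L_2(\nu)$ coinciding as $L_2$-elements share a canonical version on $\mathsf{SR}^{\nu}$, together with the observation that each side is a Schnorr test and so equals its own canonical version everywhere.
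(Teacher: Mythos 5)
Your proof is correct and follows essentially the same route as the paper's: the converse direction is exactly the paper's Proposition~\ref{prop:exofmartingale} combined with Theorem~\ref{thm:newnewlevy}, and the forward direction mirrors Proposition~\ref{thm:uniformme} (orthogonality of martingale increments plus the computable norm supremum give a computably fast $L_2(\nu)$ limit, which is then fed through the Miyabe translation back into Theorem~\ref{thm:newnewlevy}/Corollary~\ref{cor:newnewlevy}). The only divergence is that where the paper derives $M_m=\mathbb{E}_{\nu}[f_\infty\mid \mathscr{F}_m]$ on $\mathsf{SR}^{\nu}$ via Conditional DCT dominated by $\sup_n M_n$, you instead invoke the principle that $\nu$-a.s.\ equal $L_2(\nu)$ Schnorr tests coincide pointwise on $\mathsf{SR}^{\nu}$ (which needs the small rearrangement of adding $\mathbb{E}_{\nu}[h\mid\mathscr{F}_n]$ to both sides so that both sides are genuine tests before subtracting, using finiteness on $\mathsf{SR}^{\nu}$); this works equally well.
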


This theorem generalises a result of Rute.\footnote{\cite[Corollary 6.8 and Theorem 12.6]{Rute2012aa}.}  But there are two important differences between our result and Rute's. First, Rute's analogue of the direction from (\ref{thm:margingaleconvergencel2b}) to (\ref{thm:margingaleconvergencel2a}) of Theorem \ref{thm:margingaleconvergencel2} only works for Cantor space and the uniform measure. Second, Rute's results do not require that the maximal function $\sup_n M_n$ be a $L_2(\nu)$ Schnorr test.

We do not know the answer to the following:
\begin{Q}
Does Theorem~\ref{thm:margingaleconvergencel2} also hold for all computable $p>1$? 
\end{Q}
\noindent It is clear from the proofs below that (\ref{thm:margingaleconvergencel2b}) to (\ref{thm:margingaleconvergencel2a}) holds for computable $p>1$. Hence it is a question of (\ref{thm:margingaleconvergencel2a}) to (\ref{thm:margingaleconvergencel2b}).

Throughout the remainder of this section, $X$ is a computable Polish space, $\nu$ is a computable point of $\mathcal{P}(X)$, and $\mathscr{F}_n$ is an effective filtration equipped with Schnorr disintegrations. We only assume that $\mathscr{F}_n$ is almost-full in the last proposition, and flag this assumption when it comes up.

We begin by noting the following two elementary results:

\begin{prop}\label{prop:goingup}
If $M_n$ is a martingale of $L_p(\nu)$ Schnorr tests, then $M_n=\mathbb{E}_{\nu}[M_{m}\mid \mathscr{F}_n]$ on $\mathsf{SR}^{\nu}$ for all $m>n$.
\end{prop}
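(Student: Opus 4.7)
The plan is to prove this by induction on $k = m - n \geq 1$. The base case $k = 1$ is exactly the defining property of a martingale of $L_p(\nu)$ Schnorr tests, namely $M_n = \mathbb{E}_{\nu}[M_{n+1} \mid \mathscr{F}_n]$ on $\mathsf{SR}^{\nu}$.

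For the inductive step, fix $n$ and suppose the result holds for $m = n+k$; I want to extend it to $m+1 = n+(k+1)$. The natural route is to insert the intermediate conditioning on $\mathscr{F}_m$ via the Tower property (Proposition~\ref{prop:tower}), which gives
\begin{equation*}
\mathbb{E}_{\nu}[M_{m+1} \mid \mathscr{F}_n] = \mathbb{E}_{\nu}[\, \mathbb{E}_{\nu}[M_{m+1} \mid \mathscr{F}_m] \mid \mathscr{F}_n\,] \quad \text{on } \mathsf{SR}^{\nu}.
\end{equation*}
Before invoking the Tower, I need to check its hypotheses apply to $M_{m+1}$: the disintegrations of $\mathscr{F}_n$ and $\mathscr{F}_m$ are Schnorr, hence in particular Kurtz, and $M_{m+1}$ is an $L_1(\nu)$ Schnorr test. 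The latter follows because $M_{m+1}$ is a non-negative lsc function and, via the computable embedding of $L_p(\nu)$ into $L_1(\nu)$ (using $\|M_{m+1}\|_1 \leq \|M_{m+1}\|_p$ since $\nu$ is a probability measure), it is a computable point of $L_1(\nu)$; Proposition~\ref{prop:exptoflsc} then identifies it as an $L_1(\nu)$ Schnorr test.

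To close the induction, I then apply the martingale property at level $m$, which says $\mathbb{E}_{\nu}[M_{m+1} \mid \mathscr{F}_m] = M_m$ on $\mathsf{SR}^{\nu}$. Since $\mathsf{SR}^{\nu} \subseteq \mathsf{XR}^{\nu}$ for a Schnorr disintegration, monotone linearity (Proposition~\ref{prop:dis:2}(\ref{prop:newlevy:2a:again})) applied to the inner conditional expectations on the $\nu$-measure one set $\mathsf{SR}^{\nu}$ lets me replace $\mathbb{E}_{\nu}[M_{m+1} \mid \mathscr{F}_m]$ by $M_m$ inside $\mathbb{E}_{\nu}[\,\cdot \mid \mathscr{F}_n]$, yielding $\mathbb{E}_{\nu}[M_{m+1} \mid \mathscr{F}_n] = \mathbb{E}_{\nu}[M_m \mid \mathscr{F}_n]$ on $\mathsf{SR}^{\nu}$. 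The inductive hypothesis gives $\mathbb{E}_{\nu}[M_m \mid \mathscr{F}_n] = M_n$ on $\mathsf{SR}^{\nu}$, completing the step.

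There is no substantive obstacle: this is a standard tower argument, and the only subtlety is bookkeeping about \emph{where} each identity holds, which is the whole point of having developed the Tower property and monotone linearity pointwise on $\mathsf{SR}^{\nu}$ rather than merely $\nu$-a.s.
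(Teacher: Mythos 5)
Your proof is correct and follows essentially the same route as the paper's: induction on $m$, the Tower property (Proposition~\ref{prop:tower}), and Proposition~\ref{prop:dis:2}(\ref{prop:newlevy:2a:again}) to propagate the on-$\mathsf{SR}^{\nu}$ identity $M_m=\mathbb{E}_{\nu}[M_{m+1}\mid\mathscr{F}_m]$ inside the outer conditional expectation. The only difference is cosmetic (you chain the equalities in the reverse order and explicitly verify that $M_{m+1}$ is an $L_1(\nu)$ Schnorr test so that the Tower hypotheses apply, which the paper leaves implicit).
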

\begin{proof}
This is by an induction on $m>n$. Suppose it holds for $m>n$. Then $M_n=\mathbb{E}_{\nu}[M_{m}\mid \mathscr{F}_n]$ on $\mathsf{SR}^{\nu}$. Since $M_m=\mathbb{E}_{\nu}[M_{m+1}\mid \mathscr{F}_m]$ on $\mathsf{SR}^{\nu}$ and since we are working with a Schnorr disintegration, we have by Proposition~\ref{prop:dis:2}(\ref{prop:newlevy:2a:again}) that  $\mathbb{E}_{\nu}[M_{m}\mid \mathscr{F}_n]=\mathbb{E}_{\nu}[\mathbb{E}_{\nu}[M_{m+1}\mid \mathscr{F}_m]\mid \mathscr{F}_n]$ on $\mathsf{SR}^{\nu}$. By the tower property (Proposition~\ref{prop:tower}), this latter is equal to $\mathbb{E}_{\nu}[M_{m+1}\mid \mathscr{F}_n]$ on $\mathsf{SR}^{\nu}$.
\end{proof}

\begin{prop}\label{prop:goingup2}
If $M_n$ is a classical martingale in $L_p(\nu)$, then $\|M_m\|_p\geq \|M_n\|_p$ for all $m>n$.
\end{prop}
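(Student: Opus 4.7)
The plan is to derive this directly from conditional Jensen's inequality applied to the convex function $t \mapsto |t|^p$, which is convex for $p \geq 1$. This is the standard textbook argument, so the main work is just to lay out the steps carefully.

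First I would fix $m > n$ and use the defining martingale property $M_n = \mathbb{E}_\nu[M_m \mid \mathscr{F}_n]$ $\nu$-a.s., which holds by iterating the one-step relation (as in the proof of Proposition~\ref{prop:goingup}, but here classically rather than on $\mathsf{SR}^{\nu}$, so only $\nu$-a.s. identities are needed and the tower property for classical conditional expectation suffices). Then applying the conditional Jensen inequality to the convex function $\varphi(t) = |t|^p$ yields $|M_n|^p = |\mathbb{E}_\nu[M_m \mid \mathscr{F}_n]|^p \leq \mathbb{E}_\nu[|M_m|^p \mid \mathscr{F}_n]$ $\nu$-a.s.

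Next I would integrate both sides with respect to $\nu$ and use the tower property $\mathbb{E}_\nu \mathbb{E}_\nu[\,\cdot\, \mid \mathscr{F}_n] = \mathbb{E}_\nu$ to obtain
\begin{equation*}
\|M_n\|_p^p = \mathbb{E}_\nu |M_n|^p \leq \mathbb{E}_\nu \mathbb{E}_\nu[|M_m|^p \mid \mathscr{F}_n] = \mathbb{E}_\nu |M_m|^p = \|M_m\|_p^p.
\end{equation*}
Taking $p$-th roots, which preserves the inequality since both sides are non-negative and $p \geq 1$, gives $\|M_n\|_p \leq \|M_m\|_p$, as required.

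There is no real obstacle here: the result is purely classical and does not require any of the effectivity machinery developed in the paper. The only thing worth noting is that since $M_m \in L_p(\nu)$, the conditional expectation $\mathbb{E}_\nu[|M_m|^p \mid \mathscr{F}_n]$ is a well-defined element of $L_1(\nu)$, so all expectations in the chain above are finite and the manipulations are justified.
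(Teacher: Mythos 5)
Your proof is correct and is essentially the same argument as the paper's: the paper notes that $|M_n|^p$ is a submartingale because $t\mapsto|t|^p$ is convex and that expectations of submartingales are non-decreasing, which is precisely your conditional-Jensen-plus-tower-property computation spelled out in full.
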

\begin{proof}
The function $x\mapsto \left|x\right|^p$ is a convex function, and hence $\left|M_n\right|^p$ is a submartingale.\footnote{\cite[p. 138]{Williams1991aa}.} And the expectation of a submartingale is always non-decreasing.
\end{proof}

The following gives a canonical example of a martingale of $L_p(\nu)$ Schnorr tests, and in conjunction with Theorem~\ref{thm:newnewlevy} gives the (\ref{thm:margingaleconvergencel2b}) to (\ref{thm:margingaleconvergencel2a}) direction of Theorem~\ref{thm:margingaleconvergencel2}.

\begin{prop}\label{prop:exofmartingale}
Suppose that $p\geq 1$ is computable. 

If $f$ is an $L_p(\nu)$ Schnorr test, then $M_n:=\mathbb{E}_{\nu}[f \mid \mathscr{F}_n]$ is a martingale of $L_p(\nu)$ Schnorr tests. 

If $p>1$ and the filtration is almost-full then the maximal function $\sup_n M_n$ is also an $L_p(\nu)$ Schnorr test and $\sup_n \|M_n\|_p$ is computable.
\end{prop}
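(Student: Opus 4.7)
The plan is to handle the two claims in sequence, leaning heavily on the machinery already developed in Sections~\ref{sec:ed-prop} and \ref{sec:proofmain}. For the first claim, I will verify the three things required to call $M_n$ a martingale of $L_p(\nu)$ Schnorr tests: that each $M_n$ is an $L_p(\nu)$ Schnorr test uniformly in $n$; that $M_n$ is $\mathscr{F}_n$-measurable; and that the martingale identity $M_n = \mathbb{E}_{\nu}[M_{n+1}\mid \mathscr{F}_n]$ holds on $\mathsf{SR}^{\nu}$. The first is immediate from Proposition~\ref{prop:preservetests}(\ref{prop:preservetests:2}), with uniformity inherited from the computable continuity of $\mathbb{E}_{\nu}[\cdot \mid \mathscr{F}_n]:L_p(\nu)\to L_p(\nu)$ as established in Proposition~\ref{prop:compcont}. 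The second is built into the definition of an effective disintegration. The third reduces, via $\mathbb{E}_{\nu}[M_{n+1}\mid \mathscr{F}_n] = \mathbb{E}_{\nu}[\mathbb{E}_{\nu}[f\mid \mathscr{F}_{n+1}]\mid \mathscr{F}_n]$ and $\mathscr{F}_n\subseteq \mathscr{F}_{n+1}$, to a direct application of the tower property for Schnorr disintegrations (Proposition~\ref{prop:tower}).

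For the second claim, I will now assume $p>1$ and that the filtration is almost-full. Note that $\sup_n M_n$ is exactly the maximal function $f^{\flat}$ attached to $f$ by the effective disintegrations. It is non-negative lsc because it is the pointwise supremum of a uniformly lsc family (Proposition~\ref{prop:lscclosure}). It is a computable point of $L_p(\nu)$ because, for $p>1$, Proposition~\ref{prop:dis:3} gives that the maximal function $\cdot^{\flat}: L_p(\nu)\to L_p(\nu)$ is computable continuous, and $f$ itself is a computable point of $L_p(\nu)$ by Proposition~\ref{prop:exptoflsc}. Taken together, $\sup_n M_n$ is lsc with $\|\sup_n M_n\|_p$ computable, hence an $L_p(\nu)$ Schnorr test.

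The last remaining piece is computability of $\sup_n \|M_n\|_p$, which is the step I expect to be the subtlest. Because each $M_n$ is a classical martingale in $L_p(\nu)$, Proposition~\ref{prop:goingup2} tells us that $\|M_n\|_p$ is non-decreasing in $n$, so the sup equals the limit. To pin down this limit as a computable real, I will use Theorem~\ref{thm:newnewlevy}, which (since the filtration is almost-full with Kurtz disintegrations) applies to give $M_n(x)\to f(x)$ for every $x\in \mathsf{SR}^{\nu}$; in particular $M_n\to f$ $\nu$-a.s. Doob's Maximal Inequality in the form of Lemma~\ref{lem:doob}(\ref{lem:doob:1}) puts $f^{\flat}$ in $L_p(\nu)$, so $M_n$ is dominated pointwise by the $L_p(\nu)$ function $f^{\flat}$, and $\left|M_n - f\right|^p \leq 2^{p-1}(\left|f^{\flat}\right|^p + \left|f\right|^p)$ is dominated by an $L_1(\nu)$ function. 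DCT then yields $M_n\to f$ in $L_p(\nu)$, whence $\|M_n\|_p\to \|f\|_p$. Since $\|f\|_p$ is computable, $\sup_n \|M_n\|_p = \|f\|_p$ is computable.

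The hard part will be the domination step in the last paragraph: it is where $p>1$ and almost-fullness are both essential. Without $p>1$ we would lose Doob's Maximal Inequality and could not place $f^{\flat}$ in $L_p(\nu)$, while without almost-fullness we would lose the pointwise convergence $M_n(x)\to f(x)$ from Theorem~\ref{thm:newnewlevy} and thus could not identify the $L_p(\nu)$ limit of $M_n$ as $f$ itself, which is what makes the limiting value of $\|M_n\|_p$ a \emph{computable} real rather than merely a left-c.e. one.
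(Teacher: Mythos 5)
Your proposal is correct and follows essentially the same route as the paper: the martingale identity via the tower property (Proposition~\ref{prop:tower}), the Schnorr-test property of $M_n$ from Propositions~\ref{prop:dis:1} and \ref{prop:compcont}, and the final computability claim from the almost-sure convergence $M_n\rightarrow f$ on $\mathsf{SR}^{\nu}$ (Theorem~\ref{thm:newnewlevy}) combined with Doob's Maximal Inequality and DCT. The only cosmetic difference is that you establish $M_n\rightarrow f$ in $L_p(\nu)$ and then pass to norms, whereas the paper applies DCT directly to $\int M_n^p\,d\nu$ with dominating function $(\sup_n M_n)^p$; both rest on the same two ingredients, and your explicit appeal to Proposition~\ref{prop:dis:3} for the Schnorr-test status of $\sup_n M_n$ is a point the paper leaves implicit.
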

\begin{proof}
 The function $M_n$ is non-negative lsc by Proposition~\ref{prop:dis:1}, and it is $L_p(\nu)$-computable by Proposition~\ref{prop:compcont}. To see that it satisfies the martingale condition, from $M_{n+1}=\mathbb{E}_{\nu}[f\mid \mathscr{F}_{n+1}]$ everywhere we have $\mathbb{E}_{\nu}[M_{n+1}\mid \mathscr{F}_n] =\mathbb{E}_{\nu}[\mathbb{E}_{\nu}[f\mid \mathscr{F}_{n+1}]\mid \mathscr{F}_n]$ everywhere. And by the tower property (Proposition~\ref{prop:tower}), the latter is equal to $\mathbb{E}_{\nu}[f \mid \mathscr{F}_n]$ on $\mathsf{SR}^{\nu}$, which is by definition $M_n$. 

 Suppose now that the filtration is almost-full and $p>1$. By almost-fullness and Theorem~\ref{thm:newnewlevy}, we have that $f=\lim_n M_n$ on $\mathsf{SR}^{\nu}$. Since $p>1$ we have $\sup_n M_n$ is in $L_p(\nu)$ by Lemma~\ref{lem:doob}(\ref{lem:doob:1}). Then we can dominate $M_n^p$ by $(\sup_n M_n)^p$ and argue by DCT as follows, where the first identity comes from Proposition~\ref{prop:goingup2}:
 \begin{equation}\label{eqn:estimate}
\sup_n \|M_n\|_p^p =\lim_n \|M_n\|_p^p = \lim_n \int M_n^p \; d\nu = \int \lim_n M_n^p \; d\nu=\int f^p \; d\nu
\end{equation}
Since $f$ is by hypothesis a computable point of $L_p(\nu)$, we have that $\sup_n \|M_n\|_p^p$ is computable and hence likewise $\sup_n \|M_n\|_p$ is computable.
\end{proof}

In conjunction with Corollary~\ref{cor:newnewlevy}, the following proposition then gives the (\ref{thm:margingaleconvergencel2a}) to (\ref{thm:margingaleconvergencel2b}) direction of Theorem~\ref{thm:margingaleconvergencel2}. As mentioned in \S\ref{sec:relation:work:previous}, the proof largely follows the outline of Rute's own Hilbert space proof.

\begin{prop}\label{thm:uniformme}
Suppose that the filtration is almost-full.

Suppose $M_n$ is a martingale $M_n$ of $L_2(\nu)$ Schnorr tests such that both $\sup_n \|M_n\|_2$ is computable and $\sup_n M_n$ is a $L_2(\nu)$ Schnorr test.

Then there is $L_2(\nu)$-computable function $f$ such that $M_n=\mathbb{E}_{\nu}[f_\infty\mid \mathscr{F}_n]$ on $\mathsf{SR}^{\nu}$ for each $n\geq 0$.

Further, the $L_2(\nu)$-computable function $f$ can be taken to be a pointwise limit of a computable subsequence of the $M_n$, which limit exists at least on $\mathsf{SR}^{\nu}$.
\end{prop}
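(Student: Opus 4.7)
The strategy is to exploit the $L_2$ orthogonality afforded by the martingale structure to extract a fast Cauchy computable subsequence, and then use the full strength of Schnorr (not merely Kurtz) disintegrations to convert the resulting $\nu$-a.s. identification into a pointwise identification on all of $\mathsf{SR}^\nu$. First, Proposition~\ref{prop:goingup} gives $M_n = \mathbb{E}_\nu[M_m\mid \mathscr{F}_n]$ on $\mathsf{SR}^\nu$ for $m>n$, so $M_m-M_n$ is $L_2(\nu)$-orthogonal to every $\mathscr{F}_n$-measurable element of $L_2(\nu)$ and in particular to $M_n$. This yields the Pythagorean identity $\|M_m\|_2^2 = \|M_n\|_2^2 + \|M_m-M_n\|_2^2$. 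Since the uniformly computable sequence $\|M_n\|_2$ is non-decreasing (Proposition~\ref{prop:goingup2}) with computable supremum $S := \sup_n \|M_n\|_2$, I can effectively search for a computable increasing subsequence $n(k)$ with $S^2 - \|M_{n(k)}\|_2^2 < 2^{-2k-2}$. The Pythagorean identity then forces $\|M_{n(k+1)} - M_{n(k)}\|_2 < 2^{-k-1}$, so telescoping gives $\|M_{n(k')} - M_{n(k)}\|_2 < 2^{-k}$ for $k'>k$. Hence $M_{n(k)}$ is fast Cauchy in $L_2(\nu)$ and converges to a computable point $f$ of $L_2(\nu)$, with $M_{n(k)}$ serving as the witness.

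\textbf{Pointwise limit and pointwise identification.} Applying Proposition~\ref{prop:schnorrtestforlsc} to the uniformly computable sequence of $L_2(\nu)$ Schnorr tests $M_{n(k)}$ converging fast to $f$ in $L_2(\nu)$, the pointwise limit $\lim_k M_{n(k)}(x)$ exists at every $x \in \mathsf{SR}^\nu$. I take this limit as the value of $f_\infty(x)$ on $\mathsf{SR}^\nu$ (and $0$ elsewhere), which is consistent with Definition~\ref{defn:canonicalversion} for the witness $M_{n(k)}$. Now fix $x \in \mathsf{SR}^\nu$ and $n\geq 0$. For every $k$ with $n(k) > n$, Proposition~\ref{prop:goingup} gives
\[M_n(x) \;=\; \mathbb{E}_\nu[M_{n(k)}\mid \mathscr{F}_n](x) \;=\; \int M_{n(k)}\; d\rho_x^{(n)}.\]
To let $k\to \infty$ under the integral I invoke the Conditional DCT (Proposition~\ref{prop:bigprop}(\ref{prop:cdct})) with dominating function $g := \sup_m M_m$. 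By hypothesis $g$ is an $L_2(\nu)$ Schnorr test, hence an $L_1(\nu)$ Schnorr test, and Proposition~\ref{prop:preservetests} then makes $\mathbb{E}_\nu[g\mid \mathscr{F}_n]$ an $L_1(\nu)$ Schnorr test too; in particular $\int g\; d\rho_x^{(n)} < \infty$ because $x \in \mathsf{SR}^\nu$. Moreover $0 \leq M_{n(k)} \leq g$ everywhere and $M_{n(k)} \to f_\infty$ on $\mathsf{SR}^\nu$, which is a $\rho_x^{(n)}$-measure one set because we are working with a Schnorr disintegration (Definition~\ref{defn:eff:disin}(\ref{defn:eff:disin:2})). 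Conditional DCT therefore yields $\int M_{n(k)}\; d\rho_x^{(n)} \to \int f_\infty\; d\rho_x^{(n)} = \mathbb{E}_\nu[f_\infty\mid \mathscr{F}_n](x)$, and combining with the displayed equation gives $M_n(x) = \mathbb{E}_\nu[f_\infty \mid \mathscr{F}_n](x)$ as required.

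\textbf{Main obstacle.} The $L_2$-Cauchy argument and the effective extraction of the subsequence are routine Hilbert-space computations and closely follow Rute's earlier treatment; the delicate step is upgrading the merely $\nu$-a.s. identification $M_n = \mathbb{E}_\nu[f_\infty \mid \mathscr{F}_n]$ to a pointwise identification on every element of $\mathsf{SR}^\nu$. This is exactly what forces the Schnorr disintegration hypothesis: only for Schnorr (not merely Kurtz) disintegrations do we know that $\mathsf{SR}^\nu \cap [x]_{\mathscr{F}_n}$ has $\rho_x^{(n)}$-measure one, which is what allows the pointwise convergence $M_{n(k)} \to f_\infty$ on $\mathsf{SR}^\nu$ to translate into $\rho_x^{(n)}$-a.s.\ convergence and so feed the pointwise Conditional DCT. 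The assumption that $\sup_m M_m$ itself be an $L_2(\nu)$ Schnorr test is what in turn supplies a dominating function whose conditional expectation is finite pointwise on $\mathsf{SR}^\nu$, rather than merely $\nu$-a.s.
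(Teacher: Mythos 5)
Your proof is correct and follows essentially the same route as the paper's: the Hilbert-space orthogonality identity $\|M_m-M_n\|_2^2=\|M_m\|_2^2-\|M_n\|_2^2$, extraction of a computable fast-Cauchy subsequence from the computability of $\sup_n\|M_n\|_2$, Proposition~\ref{prop:schnorrtestforlsc} for the pointwise limit on $\mathsf{SR}^{\nu}$, and the Conditional DCT dominated by $\sup_m M_m$ combined with Proposition~\ref{prop:goingup}. The only (harmless) divergence is that you get the fast-Cauchy estimate directly from the Pythagorean identity together with the monotone computable norms, whereas the paper first forms the classical limit $f$ and computes $\|f-M_k\|_2^2=(\sup_n\|M_n\|_2)^2-\|M_k\|_2^2$ by a DCT argument before extracting the subsequence.
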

\begin{proof}
Recall that for $n>k$ we have by Hilbert space methods in $L_2(\nu)$ that $\mathbb{E}_{\nu} M_k M_n = \mathbb{E}_{\nu} M_k^2$.\footnote{\cite[488]{Gut2013-ou}.} This implies that for $n>k$ we have:
\begin{equation*}
\|M_n-M_k\|_2^2 = \mathbb{E}_{\nu} (M_n-M_k)^2 =   \mathbb{E}_{\nu} M_n^2 -  \mathbb{E}_{\nu} M_k^2  =\|M_n\|_2^2 - \|M_k\|_2^2 
\end{equation*}
Let $f=\lim_n M_n$, which classically is in $L_2(\nu)$. Since $\sup_n M_n$ is in $L_2(\nu)$, we can dominate $(M_n-M_k)^2$ by $2\cdot (\sup_n M_n)^2$ and argue by DCT and the previous equation that:
\begin{equation*}
\|f-M_k\|_2^2 =\lim_n \|M_n-M_k\|_2^2 =\lim_n \|M_n\|_2^2 - \|M_k\|_2^2 =(\sup_n \|M_n\|_2)^2 - \|M_k\|_2^2
\end{equation*}
Since the latter is a computable real which goes to zero, we can compute a subsequence $M_{k(n)}$ which converges to $f$ fast in $L_2(\nu)$. By Proposition~\ref{prop:schnorrtestforlsc} and Definition~\ref{defn:canonicalversion}, we have that $M_{k(n)}\rightarrow f_{\infty}$ on $\mathsf{SR}^{\nu}$.

For each $m\geq 0$, let $g_m=\mathbb{E}_{\nu}[\sup_n M_n \mid \mathscr{F}_m]$, so that $g_m$ is an $L_2(\nu)$ Schnorr test by Proposition~\ref{prop:preservetests}(\ref{prop:preservetests:2}), and so $g_m$ is finite on $\mathsf{SR}^{\nu}$. Then by Conditional DCT (Proposition~\ref{prop:bigprop}(\ref{prop:cdct})), for each $m\geq 0$ we have  that $\mathbb{E}_{\nu}[M_{k(n)} \mid \mathscr{F}_m]\rightarrow \mathbb{E}_{\nu}[f_\infty\mid \mathscr{F}_m]$ on $\mathsf{SR}^{\nu}$. But by Proposition~\ref{prop:goingup} for $k(n)>m$ we have that the former is equal to $M_m$ on $\mathsf{SR}^{\nu}$. Hence we have that $M_m=\mathbb{E}_{\nu}[f_\infty\mid \mathscr{F}_m]$ on $\mathsf{SR}^{\nu}$ for each $m\geq 0$.
\end{proof}


\section{Conclusion}

The main results of this paper (Theorems~\ref{thm:newnewlevy}, \ref{thm:newnewlevyrates}, \ref{thm:drlevy}, \ref{thm:drlevy:drates}, \ref{thm:doobmax}) characterise the points under which L\'evy's Upward Theorem holds in terms of notions from algorithmic randomness and the rates of convergence in terms of concepts from the classical theory of computation. As discussed in \S\ref{sec:relation:work:previous} this builds on work by previous authors. That which is new are the results on rates of convergence in Theorems~\ref{thm:newnewlevyrates}, \ref{thm:drlevy:drates}, the articulation of the general framework of effective disintegrations (see Definition~\ref{defn:eff:disin}, \S\ref{sec:ed-prop} for fundamental properties, and Appendix~\ref{sec:app:effective:disintegrations} for examples), a conceptually new proof of the characterisation of density randomness in the more general framework of effective disintegrations for $p>1$ (Theorem~\ref{thm:drlevy:drates}), and the articulation of the new concept of Maximal Doob Randomness (cf. Definition~\ref{defn:maximaldoob}, Theorem~\ref{thm:doobmax}, and Question~\ref{q:doob}). As far as Schnorr randomness goes, we noted in \S\ref{sec:relation:work:previous} that Theorem \ref{thm:newnewlevy} can be derived from Rute's work, modulo the verification of certain properties of effective disintegrations and the Miyabe translation method in \S\ref{sec:app1:lpcomputable}. We have extended Rute on Schnorr randomness in the generalisation of the $L_2(\nu)$ martingale result in \S\ref{sec:ed-martin}. We have also sought to present very accessible proofs, based almost entirely on the concept of $L_p(\nu)$ Schnorr test.

Our results also contribute to understanding the significance of convergence to the truth results for Bayesian inference. As was pointed out by philosophers of science, the probability one qualification in theorems like L\'evy's Upward Theorem raises the spectre of arbitrariness: a Bayesian with credences represented by a probability measure $\nu$ believes in convergence to the truth with certainty, but might do so only by arbitrarily packaging into a set of probability zero those points at which convergence fails.\footnote{\cite{Belot2013aa}, \cite[pp. 144 ff]{Earman1992aa}, \cite[pp. 28-29]{Howson2006aa}.} We have shown that for certain classes of effective random variables the packaging is anything but arbitrary. The probability one set on which convergence to the truth is successful coincides with standard classes of points which are algorithmically random by the lights of the computable probability measure. Thus, the effective typicality expressed by convergence to the truth is extensionally equivalent with a principled effective typicality of the underlying probability measure.

 \appendix

\section{Examples of classical disintegrations}\label{sec:app:classical:disintegrations}

In this appendix, we review two classical examples of disintegrations. This also affords us the opportunity to illustrate natural circumstances in which L\'evy's Upward Theorem need not hold for all points. Another reason to dwell on these two examples is that one of the main theorems of Rohlin is that, up to Borel isomorphism, ``blendings'' of these two examples are the only examples of disintegrations of countably generated $\sigma$-algebras.\footnote{\cite[\S{4} pp. 40-41]{Rohlin1962-rn}, \cite[Theorem 1.12 p. 12]{Climenhaga2012-ft}.} Our diagrams in this appendix are inspired by the few diagrams in Einsiedler-Ward,\footnote{\cite[pp. 122-123]{Einsiedler2010-yb}.} although they only work with a single $\sigma$-algebra rather than a filtration.

Most concrete examples of disintegrations involve products. We write $\lambda\otimes \mu$ for the product measure on $Y\times Z$ formed from finite measure $\lambda$ on $Y$ and finite measure $\mu$ on $Z$.

\begin{ex}\label{ex:refinedpartition}
(Refined partitions of the unit square). Let $X=[0,1]\times [0,1]$ with measure $\nu=m\otimes m$  being the product of Lebesgue measure $m$ on $[0,1]$ with itself. Let $\mathscr{D}_n$ be the dyadic partition of $X$ into $4^{n-1}$ many squares, and let $\mathscr{F}_n$ be the $\sigma$-algebra it generates. We can visualise the elements of $\mathscr{F}_n$ as any shape one can form from the squares in the below diagrams, so that like in pixelations more detailed shapes become available as $n$ gets larger:
\begin{center}
\begin{tikzpicture}
\draw[semithick] (0,0) rectangle (2,2);
\node[below] at (current bounding box.south) {$\mathscr{F}_1$};
\end{tikzpicture}
\hspace{10mm}
\begin{tikzpicture}
\draw[semithick] (0,0) -- (.975,0);
\draw[semithick] (1.025, 0) -- (2,0);
\draw[semithick] (0, 1.025) -- (.975,1.025);
\draw[semithick] (0, 1.025) -- (0,2);
\draw[semithick] (0, 2) -- (.975,2);
\draw[semithick, dotted] (.975, 2) -- (.975,1.025);
\draw[semithick, dotted] (.975, .975) -- (.975, 0);
\draw[semithick] (0, 0) -- (0, .975);
\draw[semithick, dotted] (0, .975) -- (.975, .975);
\draw[semithick, dotted] (1.025, .975) -- (2, .975);
\draw[semithick, dotted] (0, 0) -- (0, .975);
\draw[semithick, dotted] (0, .975) -- (.975, .975);
\draw[semithick] (1.025, 0) -- (1.025, .975);
\draw[semithick] (2, 0) -- (2, .975);
\draw[semithick] (1.025,1.025) rectangle (2,2);
\node[below] at (current bounding box.south) {$\mathscr{F}_2$};
\end{tikzpicture}
\hspace{10mm}
\begin{tikzpicture}
\draw[semithick] (0, 0) -- (.475, 0);
\draw[semithick] (.525, 0) -- (.975, 0);
\draw[semithick] (0, .525) -- (.475, .525);
\draw[semithick] (0, .525) -- (0, .975);
\draw[semithick, dotted] (0, .975) -- (.475, .975);
\draw[semithick, dotted] (.475, .975) -- (.475, .525);
\draw[semithick, dotted] (.475, .475) -- (.475, 0);
\draw[semithick] (0, 0) -- (0, .475);
\draw[semithick, dotted] (0, .475) -- (.475, .475);
\draw[semithick, dotted] (.525, .475) -- (.975, .475);
\draw[semithick, dotted] (0, 0) -- (0, .475);
\draw[semithick, dotted] (0, .475) -- (.475, .475);
\draw[semithick] (.525, 0) -- (.525, .475);
\draw[semithick, dotted] (.975, 0) -- (.975, .475);
\draw[semithick] (.525, .525) -- (.525, .975);
\draw[semithick] (.525, .525) -- (.975, .525);
\draw[semithick, dotted] (.975, .525) -- (.975, .975);
\draw[semithick, dotted] (.975, .975) -- (.525, .975);
\draw[semithick] (1.025+0, 0) -- (1.025+.475, 0);
\draw[semithick] (1.025+.525, 0) -- (1.025+.975, 0);
\draw[semithick] (1.025+0, .525) -- (1.025+.475, .525);
\draw[semithick] (1.025+0, .525) -- (1.025+0, .975);
\draw[semithick, dotted] (1.025+0, .975) -- (1.025+.475, .975);
\draw[semithick, dotted] (1.025+.475, .975) -- (1.025+.475, .525);
\draw[semithick, dotted] (1.025+.475, .475) -- (1.025+.475, 0);
\draw[semithick] (1.025+0, 0) -- (1.025+0, .475);
\draw[semithick, dotted] (1.025+0, .475) -- (1.025+.475, .475);
\draw[semithick, dotted] (1.025+.525, .475) -- (1.025+.975, .475);
\draw[semithick, dotted] (1.025+0, 0) -- (1.025+0, .475);
\draw[semithick, dotted] (1.025+0, .475) -- (1.025+.475, .475);
\draw[semithick] (1.025+.525, 0) -- (1.025+.525, .475);
\draw[semithick] (1.025+.975, 0) -- (1.025+.975, .475);
\draw[semithick] (1.025+.525, .525) -- (1.025+.525, .975);
\draw[semithick] (1.025+.525, .525) -- (1.025+.975, .525);
\draw[semithick] (1.025+.975, .525) -- (1.025+.975, .975);
\draw[semithick, dotted] (1.025+.975, .975) -- (1.025+.525, .975);
\draw[semithick] (1.025+0, 0+1.025) -- (1.025+.475, 0+1.025);
\draw[semithick] (1.025+.525, 0+1.025) -- (1.025+.975, 0+1.025);
\draw[semithick] (1.025+0, .525+1.025) -- (1.025+.475, .525+1.025);
\draw[semithick] (1.025+0, .525+1.025) -- (1.025+0, .975+1.025);
\draw[semithick] (1.025+0, .975+1.025) -- (1.025+.475, .975+1.025);
\draw[semithick, dotted] (1.025+.475, .975+1.025) -- (1.025+.475, .525+1.025);
\draw[semithick, dotted] (1.025+.475, .475+1.025) -- (1.025+.475, 0+1.025);
\draw[semithick] (1.025+0, 0+1.025) -- (1.025+0, .475+1.025);
\draw[semithick, dotted] (1.025+0, .475+1.025) -- (1.025+.475, .475+1.025);
\draw[semithick, dotted] (1.025+.525, .475+1.025) -- (1.025+.975, .475+1.025);
\draw[semithick, dotted] (1.025+0, 0+1.025) -- (1.025+0, .475+1.025);
\draw[semithick, dotted] (1.025+0, .475+1.025) -- (1.025+.475, .475+1.025);
\draw[semithick] (1.025+.525, 0+1.025) -- (1.025+.525, .475+1.025);
\draw[semithick] (1.025+.975, 0+1.025) -- (1.025+.975, .475+1.025);
\draw[semithick] (1.025+.525, .525+1.025) -- (1.025+.525, .975+1.025);
\draw[semithick] (1.025+.525, .525+1.025) -- (1.025+.975, .525+1.025);
\draw[semithick] (1.025+.975, .525+1.025) -- (1.025+.975, .975+1.025);
\draw[semithick] (1.025+.975, .975+1.025) -- (1.025+.525, .975+1.025);
\draw[semithick] (0, 0+1.025) -- (.475, 0+1.025);
\draw[semithick] (.525, 0+1.025) -- (.975, 0+1.025);
\draw[semithick] (0, .525+1.025) -- (.475, .525+1.025);
\draw[semithick] (0, .525+1.025) -- (0, .975+1.025);
\draw[semithick] (0, .975+1.025) -- (.475, .975+1.025);
\draw[semithick, dotted] (.475, .975+1.025) -- (.475, .525+1.025);
\draw[semithick, dotted] (.475, .475+1.025) -- (.475, 0+1.025);
\draw[semithick] (0, 0+1.025) -- (0, .475+1.025);
\draw[semithick, dotted] (0, .475+1.025) -- (.475, .475+1.025);
\draw[semithick, dotted] (.525, .475+1.025) -- (.975, .475+1.025);
\draw[semithick, dotted] (0, 0+1.025) -- (0, .475+1.025);
\draw[semithick, dotted] (0, .475+1.025) -- (.475, .475+1.025);
\draw[semithick] (.525, 0+1.025) -- (.525, .475+1.025);
\draw[semithick, dotted] (.975, 0+1.025) -- (.975, .475+1.025);
\draw[semithick] (.525, .525+1.025) -- (.525, .975+1.025);
\draw[semithick] (.525, .525+1.025) -- (.975, .525+1.025);
\draw[semithick, dotted] (.975, .525+1.025) -- (.975, .975+1.025);
\draw[semithick] (.975, .975+1.025) -- (.525, .975+1.025);
\node[below] at (current bounding box.south) {$\mathscr{F}_3$};
\end{tikzpicture}
\end{center}
\noindent In this diagram, we use the familiar diagrammatic conventions from point-set topology to indicate which components of the partition contain the edges: for instance, for $n\geq 2$, the southwest square and the northwest square have two edges, the southeast square has three edges, and the northeast square has four edges. Then the following map is a disintegration of $\mathscr{F}_n$, where $\mathcal{M}^+(X)$ is again the set of finite Borel measures on $X$:
\begin{equation}\label{eqn:refinedpartition:dis}
\rho^{(n)}:X\rightarrow \mathcal{M}^+(X) \hspace{3mm} \mbox{ by } \hspace{3mm} \rho^{(n)}_{w}=\sum_{Q\in \mathscr{D}_n} \nu(\cdot \mid Q)\cdot I_{Q}(w)
\end{equation}
Further, using equation~(\ref{eqn:disintegrate}) from \S\ref{sec:disintegration}, one has the following associated formula for the version of conditional expectation of $f$ with respect to $\mathscr{F}_n$:
\begin{equation}\label{eqn:refinedpartition:cexp}
\mathbb{E}_{\nu}[f\mid \mathscr{F}_n](w) = \sum_{Q\in \mathscr{D}_n} \bigg(\frac{1}{\nu(Q)} \int_{Q} f(x) \; d\nu(x) \bigg) \cdot I_{Q}(w)
\end{equation}
Suppose that at stage $n$, the agent's world $w$ is located in the square $Q_n(w)$ from~$\mathscr{D}_n$. Intuitively this means that the agent's evidence at this stage of inquiry is~$Q_n(w)$. Then (\ref{eqn:refinedpartition:cexp}) says that the agent's best estimate as to the value of a random variable $f$ at this stage is obtained by averaging $f$ over the event $Q_n(w)$ according to the prior probability measure $\nu$, and then making it higher to the extent that the prior probability $\nu(Q_w(n))$ is lower. In the case where the random variable $f$ is the indicator function $I_C$ of a Borel event $C$, this best estimate is just the usual conditional probability $\nu(C\mid Q_n(w))$. For instance, if $C$ is the closed polygon displayed below, then the conditional probability of the agent at stage $n$ is higher than if she were at another world $w^{\prime}$ iff there is more overlap between $Q_n(w),C$ than between $Q_n(w^{\prime}),C$:
\vspace{2mm}
\begin{center}
\begin{tikzpicture}
\draw[fill=gray!20, draw=black!80] (1.75, .25)--(1.51, 1.51)--(.25,1.75)--(.25,.25)--cycle;
\draw[semithick] (0,0) rectangle (2,2);
\node[below] at (current bounding box.south) {$\mathscr{F}_1$};
\end{tikzpicture}
\hspace{10mm}
\begin{tikzpicture}
\draw[fill=gray!20, draw=black!80] (1.75, .25)--(1.51, 1.51)--(.25,1.75)--(.25,.25)--cycle;
\draw[semithick] (0,0) -- (.975,0);
\draw[semithick] (1.025, 0) -- (2,0);
\draw[semithick] (0, 1.025) -- (.975,1.025);
\draw[semithick] (0, 1.025) -- (0,2);
\draw[semithick] (0, 2) -- (.975,2);
\draw[semithick, dotted] (.975, 2) -- (.975,1.025);
\draw[semithick, dotted] (.975, .975) -- (.975, 0);
\draw[semithick] (0, 0) -- (0, .975);
\draw[semithick, dotted] (0, .975) -- (.975, .975);
\draw[semithick, dotted] (1.025, .975) -- (2, .975);
\draw[semithick, dotted] (0, 0) -- (0, .975);
\draw[semithick, dotted] (0, .975) -- (.975, .975);
\draw[semithick] (1.025, 0) -- (1.025, .975);
\draw[semithick] (2, 0) -- (2, .975);
\draw[semithick] (1.025,1.025) rectangle (2,2);
\node[below] at (current bounding box.south) {$\mathscr{F}_2$};
\end{tikzpicture}
\hspace{10mm}
\begin{tikzpicture}
\draw[fill=gray!20, draw=black!80] (1.75, .25)--(1.51, 1.51)--(.25,1.75)--(.25,.25)--cycle;
\draw[semithick] (0, 0) -- (.475, 0);
\draw[semithick] (.525, 0) -- (.975, 0);
\draw[semithick] (0, .525) -- (.475, .525);
\draw[semithick] (0, .525) -- (0, .975);
\draw[semithick, dotted] (0, .975) -- (.475, .975);
\draw[semithick, dotted] (.475, .975) -- (.475, .525);
\draw[semithick, dotted] (.475, .475) -- (.475, 0);
\draw[semithick] (0, 0) -- (0, .475);
\draw[semithick, dotted] (0, .475) -- (.475, .475);
\draw[semithick, dotted] (.525, .475) -- (.975, .475);
\draw[semithick, dotted] (0, 0) -- (0, .475);
\draw[semithick, dotted] (0, .475) -- (.475, .475);
\draw[semithick] (.525, 0) -- (.525, .475);
\draw[semithick, dotted] (.975, 0) -- (.975, .475);
\draw[semithick] (.525, .525) -- (.525, .975);
\draw[semithick] (.525, .525) -- (.975, .525);
\draw[semithick, dotted] (.975, .525) -- (.975, .975);
\draw[semithick, dotted] (.975, .975) -- (.525, .975);
\draw[semithick] (1.025+0, 0) -- (1.025+.475, 0);
\draw[semithick] (1.025+.525, 0) -- (1.025+.975, 0);
\draw[semithick] (1.025+0, .525) -- (1.025+.475, .525);
\draw[semithick] (1.025+0, .525) -- (1.025+0, .975);
\draw[semithick, dotted] (1.025+0, .975) -- (1.025+.475, .975);
\draw[semithick, dotted] (1.025+.475, .975) -- (1.025+.475, .525);
\draw[semithick, dotted] (1.025+.475, .475) -- (1.025+.475, 0);
\draw[semithick] (1.025+0, 0) -- (1.025+0, .475);
\draw[semithick, dotted] (1.025+0, .475) -- (1.025+.475, .475);
\draw[semithick, dotted] (1.025+.525, .475) -- (1.025+.975, .475);
\draw[semithick, dotted] (1.025+0, 0) -- (1.025+0, .475);
\draw[semithick, dotted] (1.025+0, .475) -- (1.025+.475, .475);
\draw[semithick] (1.025+.525, 0) -- (1.025+.525, .475);
\draw[semithick] (1.025+.975, 0) -- (1.025+.975, .475);
\draw[semithick] (1.025+.525, .525) -- (1.025+.525, .975);
\draw[semithick] (1.025+.525, .525) -- (1.025+.975, .525);
\draw[semithick] (1.025+.975, .525) -- (1.025+.975, .975);
\draw[semithick, dotted] (1.025+.975, .975) -- (1.025+.525, .975);
\draw[semithick] (1.025+0, 0+1.025) -- (1.025+.475, 0+1.025);
\draw[semithick] (1.025+.525, 0+1.025) -- (1.025+.975, 0+1.025);
\draw[semithick] (1.025+0, .525+1.025) -- (1.025+.475, .525+1.025);
\draw[semithick] (1.025+0, .525+1.025) -- (1.025+0, .975+1.025);
\draw[semithick] (1.025+0, .975+1.025) -- (1.025+.475, .975+1.025);
\draw[semithick, dotted] (1.025+.475, .975+1.025) -- (1.025+.475, .525+1.025);
\draw[semithick, dotted] (1.025+.475, .475+1.025) -- (1.025+.475, 0+1.025);
\draw[semithick] (1.025+0, 0+1.025) -- (1.025+0, .475+1.025);
\draw[semithick, dotted] (1.025+0, .475+1.025) -- (1.025+.475, .475+1.025);
\draw[semithick, dotted] (1.025+.525, .475+1.025) -- (1.025+.975, .475+1.025);
\draw[semithick, dotted] (1.025+0, 0+1.025) -- (1.025+0, .475+1.025);
\draw[semithick, dotted] (1.025+0, .475+1.025) -- (1.025+.475, .475+1.025);
\draw[semithick] (1.025+.525, 0+1.025) -- (1.025+.525, .475+1.025);
\draw[semithick] (1.025+.975, 0+1.025) -- (1.025+.975, .475+1.025);
\draw[semithick] (1.025+.525, .525+1.025) -- (1.025+.525, .975+1.025);
\draw[semithick] (1.025+.525, .525+1.025) -- (1.025+.975, .525+1.025);
\draw[semithick] (1.025+.975, .525+1.025) -- (1.025+.975, .975+1.025);
\draw[semithick] (1.025+.975, .975+1.025) -- (1.025+.525, .975+1.025);
\draw[semithick] (0, 0+1.025) -- (.475, 0+1.025);
\draw[semithick] (.525, 0+1.025) -- (.975, 0+1.025);
\draw[semithick] (0, .525+1.025) -- (.475, .525+1.025);
\draw[semithick] (0, .525+1.025) -- (0, .975+1.025);
\draw[semithick] (0, .975+1.025) -- (.475, .975+1.025);
\draw[semithick, dotted] (.475, .975+1.025) -- (.475, .525+1.025);
\draw[semithick, dotted] (.475, .475+1.025) -- (.475, 0+1.025);
\draw[semithick] (0, 0+1.025) -- (0, .475+1.025);
\draw[semithick, dotted] (0, .475+1.025) -- (.475, .475+1.025);
\draw[semithick, dotted] (.525, .475+1.025) -- (.975, .475+1.025);
\draw[semithick, dotted] (0, 0+1.025) -- (0, .475+1.025);
\draw[semithick, dotted] (0, .475+1.025) -- (.475, .475+1.025);
\draw[semithick] (.525, 0+1.025) -- (.525, .475+1.025);
\draw[semithick, dotted] (.975, 0+1.025) -- (.975, .475+1.025);
\draw[semithick] (.525, .525+1.025) -- (.525, .975+1.025);
\draw[semithick] (.525, .525+1.025) -- (.975, .525+1.025);
\draw[semithick, dotted] (.975, .525+1.025) -- (.975, .975+1.025);
\draw[semithick] (.975, .975+1.025) -- (.525, .975+1.025);
\node[below] at (current bounding box.south) {$\mathscr{F}_3$};
\end{tikzpicture}
\end{center}
This example also vividly illustrates how $\lim_n \mathbb{E}_{\nu}[f\mid \mathscr{F}_n](w)=f(w)$ can fail. For instance, take the vertex $w=(.75, .75)$, which is in the polygon since it is closed. For all $n\geq 3$, this point is the southwest vertex of a dyadic square in $\mathscr{F}_n$, and such a square overlaps the closed polygon only at this vertex. Hence for the usc function $f=I_C$, one has both $f(w)=1$ and $\mathbb{E}_{\nu}[f\mid \mathscr{F}_n](w)=0$ for all $n\geq 3$. 

\end{ex}

In simple examples like this one, geometric intuition can guide us as to what points $\lim_n \mathbb{E}_{\nu}[f\mid \mathscr{F}_n]=f$ holds. The further assurance the classical version of Levy's Upward Theorem provides is that $\lim_n \mathbb{E}_{\nu}[f\mid \mathscr{F}_n]=f$ holds on a set of $\nu$-probability one, even when geometric intuition is unavailable. The additional  assurance that Theorems~\ref{thm:newnewlevy},\ref{thm:drlevy}, \ref{thm:doobmax} provides is that $\lim_n \mathbb{E}_{\nu}[f\mid \mathscr{F}_n]=f$ holds on the random points relative to $\nu$, for a large class of effective random variables~$f$. From this perspective, the problem with our vertex  $w=(.75, .75)$ is that it is not sufficiently random, which enabled us to construct a random variable which failed to converge to the truth at this point.

\begin{ex}\label{ex:refinedlines}
 (Refined lines in the unit square) Let $X=[0,1]\times [0,1]$ with Lebesgue measure $\nu=m\otimes m$ being the product of Lebesgue measure $m$ on $[0,1]$ with itself. Let $\mathscr{G}_n$ be the $\sigma$-algebra on $[0,1]\times [0,1]$ generated by events of the form $B\times Q$, where $B\subseteq [0,1]$ is Borel and where $Q$ is from a dyadic partition $\mathscr{D}_n$ of $[0,1]$ into $2^{n-1}$ (half)-closed intervals of equal length. Intuitively, $\mathscr{G}_n$ is the $\sigma$-algebra of evidence where the agent knows everything there is to know about the $x$-component at the outset, but is progressively learning more about the $y$-component. Since events of the form $\{x\}\times [0,1]$ are in $\mathscr{G}_1$, we can depict the $\sigma$-algebra $\mathscr{G}_1$ as the decomposition of $X$ into vertical lines. Likewise, we can depict $\mathscr{G}_2$ as the decomposition of $X$ into half vertical lines, etc. While we draw only ten such vertical lines in the below diagram, the idea is that $X$ is being decomposed into continuum-many such vertical lines at each stage:
\vspace{2mm}
\begin{center}
\begin{tikzpicture}
\draw[very thin] (.10,0) -- (.10,2);
\draw[very thin] (.20,0) -- (.20,2);
\draw[very thin] (.30,0) -- (.30,2);
\draw[very thin] (.40,0) -- (.40,2);
\draw[very thin] (.50,0) -- (.50,2);
\draw[very thin] (.60,0) -- (.60,2);
\draw[very thin] (.70,0) -- (.70,2);
\draw[very thin] (.80,0) -- (.80,2);
\draw[very thin] (.90,0) -- (.90,2);
\draw[very thin] (1.00,0) -- (1.00,2);
\draw[very thin] (1.10,0) -- (1.10,2);
\draw[very thin] (1.20,0) -- (1.20,2);
\draw[very thin] (1.30,0) -- (1.30,2);
\draw[very thin] (1.40,0) -- (1.40,2);
\draw[very thin] (1.50,0) -- (1.50,2);
\draw[very thin] (1.60,0) -- (1.60,2);
\draw[very thin] (1.70,0) -- (1.70,2);
\draw[very thin] (1.80,0) -- (1.80,2);
\draw[very thin] (1.90,0) -- (1.90,2);
\draw[semithick] (0,0) rectangle (2,2);
\node[below] at (current bounding box.south) {$\mathscr{G}_1$};
\end{tikzpicture}
\hspace{10mm}
\begin{tikzpicture}
\draw[very thin] (.10,0) -- (.10,2);
\draw[very thin] (.20,0) -- (.20,2);
\draw[very thin] (.30,0) -- (.30,2);
\draw[very thin] (.40,0) -- (.40,2);
\draw[very thin] (.50,0) -- (.50,2);
\draw[very thin] (.60,0) -- (.60,2);
\draw[very thin] (.70,0) -- (.70,2);
\draw[very thin] (.80,0) -- (.80,2);
\draw[very thin] (.90,0) -- (.90,2);
\draw[very thin] (1.00,0) -- (1.00,2);
\draw[very thin] (1.10,0) -- (1.10,2);
\draw[very thin] (1.20,0) -- (1.20,2);
\draw[very thin] (1.30,0) -- (1.30,2);
\draw[very thin] (1.40,0) -- (1.40,2);
\draw[very thin] (1.50,0) -- (1.50,2);
\draw[very thin] (1.60,0) -- (1.60,2);
\draw[very thin] (1.70,0) -- (1.70,2);
\draw[very thin] (1.80,0) -- (1.80,2);
\draw[very thin] (1.90,0) -- (1.90,2);
\draw[semithick] (0,0) rectangle (2,2);
\draw[semithick, dotted] (0,.975) -- (2,.975);
\draw[semithick] (0,1.025) -- (2,1.025);
\node[below] at (current bounding box.south) {$\mathscr{G}_2$};
\end{tikzpicture}
\hspace{10mm}
\begin{tikzpicture}
\draw[very thin] (.10,0) -- (.10,2);
\draw[very thin] (.20,0) -- (.20,2);
\draw[very thin] (.30,0) -- (.30,2);
\draw[very thin] (.40,0) -- (.40,2);
\draw[very thin] (.50,0) -- (.50,2);
\draw[very thin] (.60,0) -- (.60,2);
\draw[very thin] (.70,0) -- (.70,2);
\draw[very thin] (.80,0) -- (.80,2);
\draw[very thin] (.90,0) -- (.90,2);
\draw[very thin] (1.00,0) -- (1.00,2);
\draw[very thin] (1.10,0) -- (1.10,2);
\draw[very thin] (1.20,0) -- (1.20,2);
\draw[very thin] (1.30,0) -- (1.30,2);
\draw[very thin] (1.40,0) -- (1.40,2);
\draw[very thin] (1.50,0) -- (1.50,2);
\draw[very thin] (1.60,0) -- (1.60,2);
\draw[very thin] (1.70,0) -- (1.70,2);
\draw[very thin] (1.80,0) -- (1.80,2);
\draw[very thin] (1.90,0) -- (1.90,2);
\draw[semithick] (0,0) rectangle (2,2);
\draw[semithick] (0,.525) -- (2,.525);
\draw[semithick] (0,1.025) -- (2,1.025);
\draw[semithick] (0,1.525) -- (2,1.525);
\draw[semithick, dotted] (0,.475) -- (2,.475);
\draw[semithick, dotted] (0,.975) -- (2,.975);
\draw[semithick, dotted] (0,1.475) -- (2,1.475);

\node[below] at (current bounding box.south) {$\mathscr{G}_3$};
\end{tikzpicture}
\end{center}
\noindent Then the following map is a disintegration of $\mathscr{G}_n$, where $\delta_u$ is the Dirac measure centred on $u$: 
\begin{equation}\label{eqn:refinedlines:dis}
\rho^{(n)}:X\rightarrow \mathcal{M}^+(X) \hspace{3mm} \mbox{ by } \hspace{3mm} \rho^{(n)}_{(u,v)}=\delta_u\otimes \sum_{Q\in \mathscr{D}_n} m(\cdot \mid Q)\cdot I_{Q}(v) 
\end{equation}
Further, using equation~(\ref{eqn:disintegrate}) from \S\ref{sec:disintegration}, one has the following associated formula for the version of conditional expectation of $g$ with respect to $\mathscr{G}_n$:
\begin{equation}\label{eqn:refinedlines:cexp}
\mathbb{E}_{\nu}[g\mid \mathscr{G}_n](u,v) = \sum_{Q\in \mathscr{D}_n} \bigg(\frac{1}{m(Q)} \int_{Q} g(u,t) \; dm(t) \bigg) \cdot I_{Q}(v)
\end{equation}
Suppose that at stage $n$, the agent's world $(u,v)$ is such that its second coordinate $v$ located in the interval $Q_n(v)$ from $\mathscr{D}_n$. Intuitively this means that the agent's evidence at this stage of inquiry is the line $\{u\}\times Q_n(v)$. Then (\ref{eqn:refinedpartition:cexp}) says that the agent's best estimate as to the value of a random variable $g$ at this world and stage is obtained by defining the one-place random variable $f(v)=g(u,v)$ and then doing a one-dimensional analogue of the update in Example~\ref{ex:refinedpartition}. For instance, if $C$ is the displayed closed triangle and we consider the usc function $g=I_C$, then $\mathbb{E}_{\nu}[g\mid \mathscr{G}_n](u,v)$ is obtained by calculating the length of the line $C\cap (\{u\}\times Q_n(v))$, and then by multiplying by a factor of $2^{n-1}$ which is responsive to smaller partitions of the $y$-axis involving less likely events. We illustrate this with respect to the marked point $(u,v)=(\frac{1}{3}, \frac{2}{3})$ in the below diagram, where the line $C\cap (\{u\}\times Q_n(v))$ is indicated with a heavier dark line:
\vspace{2mm}
\begin{center}
\begin{tikzpicture}
\draw[fill=gray!20, draw=black!80] (.25, .25)--(.25, 1.85)--(1.5,1.5)--cycle;
\filldraw[black] (.70,1.30) circle (2pt);
\draw[very thick] (.70,.71) -- (.70,1.74);
\draw[very thin] (.10,0) -- (.10,2);
\draw[very thin] (.20,0) -- (.20,2);
\draw[very thin] (.30,0) -- (.30,2);
\draw[very thin] (.40,0) -- (.40,2);
\draw[very thin] (.50,0) -- (.50,2);
\draw[very thin] (.60,0) -- (.60,2);
\draw[very thin] (.70,0) -- (.70,2);
\draw[very thin] (.80,0) -- (.80,2);
\draw[very thin] (.90,0) -- (.90,2);
\draw[very thin] (1.00,0) -- (1.00,2);
\draw[very thin] (1.10,0) -- (1.10,2);
\draw[very thin] (1.20,0) -- (1.20,2);
\draw[very thin] (1.30,0) -- (1.30,2);
\draw[very thin] (1.40,0) -- (1.40,2);
\draw[very thin] (1.50,0) -- (1.50,2);
\draw[very thin] (1.60,0) -- (1.60,2);
\draw[very thin] (1.70,0) -- (1.70,2);
\draw[very thin] (1.80,0) -- (1.80,2);
\draw[very thin] (1.90,0) -- (1.90,2);
\draw[semithick] (0,0) rectangle (2,2);
\node[below] at (current bounding box.south) {$\mathscr{G}_1$};
\end{tikzpicture}
\hspace{10mm}
\begin{tikzpicture}
\draw[fill=gray!20, draw=black!80] (.25, .25)--(.25, 1.85)--(1.5,1.5)--cycle;
\filldraw[black] (.70,1.30) circle (2pt);
\draw[very thick] (.70,1) -- (.70,1.73);
\draw[very thin] (.10,0) -- (.10,2);
\draw[very thin] (.20,0) -- (.20,2);
\draw[very thin] (.30,0) -- (.30,2);
\draw[very thin] (.40,0) -- (.40,2);
\draw[very thin] (.50,0) -- (.50,2);
\draw[very thin] (.60,0) -- (.60,2);
\draw[very thin] (.70,0) -- (.70,2);
\draw[very thin] (.80,0) -- (.80,2);
\draw[very thin] (.90,0) -- (.90,2);
\draw[very thin] (1.00,0) -- (1.00,2);
\draw[very thin] (1.10,0) -- (1.10,2);
\draw[very thin] (1.20,0) -- (1.20,2);
\draw[very thin] (1.30,0) -- (1.30,2);
\draw[very thin] (1.40,0) -- (1.40,2);
\draw[very thin] (1.50,0) -- (1.50,2);
\draw[very thin] (1.60,0) -- (1.60,2);
\draw[very thin] (1.70,0) -- (1.70,2);
\draw[very thin] (1.80,0) -- (1.80,2);
\draw[very thin] (1.90,0) -- (1.90,2);
\draw[semithick] (0,0) rectangle (2,2);
\draw[semithick, dotted] (0,.975) -- (2,.975);
\draw[semithick] (0,1.025) -- (2,1.025);
\node[below] at (current bounding box.south) {$\mathscr{G}_2$};
\end{tikzpicture}
\hspace{10mm}
\begin{tikzpicture}
\draw[fill=gray!20, draw=black!80] (.25, .25)--(.25, 1.85)--(1.5,1.5)--cycle;
\filldraw[black] (.70,1.30) circle (2pt);
\draw[very thick] (.70,1) -- (.70,1.52);
\draw[very thin] (.10,0) -- (.10,2);
\draw[very thin] (.20,0) -- (.20,2);
\draw[very thin] (.30,0) -- (.30,2);
\draw[very thin] (.40,0) -- (.40,2);
\draw[very thin] (.50,0) -- (.50,2);
\draw[very thin] (.60,0) -- (.60,2);
\draw[very thin] (.70,0) -- (.70,2);
\draw[very thin] (.80,0) -- (.80,2);
\draw[very thin] (.90,0) -- (.90,2);
\draw[very thin] (1.00,0) -- (1.00,2);
\draw[very thin] (1.10,0) -- (1.10,2);
\draw[very thin] (1.20,0) -- (1.20,2);
\draw[very thin] (1.30,0) -- (1.30,2);
\draw[very thin] (1.40,0) -- (1.40,2);
\draw[very thin] (1.50,0) -- (1.50,2);
\draw[very thin] (1.60,0) -- (1.60,2);
\draw[very thin] (1.70,0) -- (1.70,2);
\draw[very thin] (1.80,0) -- (1.80,2);
\draw[very thin] (1.90,0) -- (1.90,2);
\draw[semithick] (0,0) rectangle (2,2);
\draw[semithick] (0,.525) -- (2,.525);
\draw[semithick] (0,1.025) -- (2,1.025);
\draw[semithick] (0,1.525) -- (2,1.525);
\draw[semithick, dotted] (0,.475) -- (2,.475);
\draw[semithick, dotted] (0,.975) -- (2,.975);
\draw[semithick, dotted] (0,1.475) -- (2,1.475);

\node[below] at (current bounding box.south) {$\mathscr{G}_3$};
\end{tikzpicture}
\end{center}

In contrast to the previous Example~\ref{ex:refinedpartition}, in this example many of the events in $\mathscr{G}_n$ have measure zero according to the prior probability measure $\nu$. Like in the previous Example~\ref{ex:refinedpartition}, we have natural pointwise failures of $\lim_n \mathbb{E}_{\nu}[f\mid \mathscr{F}_n]=f$ here as well: the rightmost vertex of the triangle displays the same kind of failure as in the previous example. And like in that case, the interpretation suggested by Theorems~\ref{thm:newnewlevy},\ref{thm:drlevy}, \ref{thm:doobmax}  is that the vertex is insufficiently random.

\end{ex}

\section{Examples of effective disintegrations}\label{sec:app:effective:disintegrations}

In this section, we describe several examples of effective disintegrations (cf. Definition~\ref{defn:eff:disin}). We focus for the most part on effectivizing the two paradigmatic Examples~\ref{ex:refinedpartition}-\ref{ex:refinedlines} from the previous appendix, but we also include a countable product (Example~\ref{ex:linescountableproduct2}). In a sequel to this paper, we look also at Bayesian parameter spaces and sample spaces.

One example like Example~\ref{ex:refinedpartition} is already widely-used 
in algorithmic randomness, although it is not usually thematized as such:
\begin{ex}\label{ex:refinedpartitionconcrete}\label{ex:refinedpartition:alt} (The canonical concrete refined partition disintegrations).

Suppose that $T\subseteq \mathbb{N}^{<\mathbb{N}}$ is a computable tree with no dead ends. Let $X=[T]$, the paths through $T$, which is a computable Polish space, and suppose $\nu$ in $\mathcal{P}(X)$ is computable with full support.

Suppose that $\mathscr{F}_n$ is the effective refined partition generated by the length $n$ strings in $T$. That is, $\mathscr{F}_n$ is generated by the sets $[\sigma]$ of paths in $T$ through the length~$n$ strings~$\sigma$.

Let $\rho^{(n)}:X\rightarrow \mathcal{P}(X)$ by $\rho_{\omega}= \nu(\cdot \mid [\omega\upharpoonright n])$.

Then $\rho^{(n)}$ is a Martin-L\"of disintegration of $\mathscr{F}_n$ with respect to $\nu$ and one has the following expression for the version of conditional expectation, which is defined for all $f$ in $\mathbb{L}_1(\nu)$ and all $\omega$ in $X$: 
\begin{equation}\label{eqn:firstconexpformula}
\mathbb{E}_{\nu}[f\mid \mathscr{F}_n](\omega) = \frac{1}{\nu([\omega\upharpoonright n])} \int_{[\omega\upharpoonright n]} f \; d\nu
\end{equation}
\end{ex}
Since we want to generalise this in what follows, we defer the verification that it is a Martin-L\"of disintegration.

The following is an effective disintegration like Example~\ref{ex:refinedlines}:
\begin{ex}\label{ex:refinedlines:alt} 
(The canonical concrete refined lines disintegrations).

Suppose that $S,T\subseteq \mathbb{N}^{<\mathbb{N}}$ are computable trees with no dead ends. Let $Y=[S]$ and $Z=[T]$, the paths through $S,T$ respectively, which are computable Polish spaces, and suppose $\lambda$ in $\mathcal{P}(Y)$ and $\mu$ in $\mathcal{P}(Z)$ are computable with full support. 

Let $\mathscr{F}_n$ be the $\sigma$-algebra on $Y\times Z$ generated by sets of the form $U\times [\tau]$, where $U$ ranges over c.e. opens from a $\lambda$-computable basis on $Y$, and where $\tau$ ranges over length $n$ strings in $T$.

Then the map $\rho: Y\times Z\rightarrow \mathcal{M}^+(Y\times Z)$ given by $\rho^{(n)}_{(\omega,\omega^{\prime})} = \delta_{\omega}\otimes \mu(\cdot \mid [\omega^{\prime}\upharpoonright n])$ is a Martin-L\"of disintegration of $\mathscr{F}_n$ with respect to $\lambda \otimes \mu$, and one has the following expression for the version of conditional expectation, which for each $f$ in $\mathbb{L}_1(\lambda\otimes\mu)$ is defined for $(\lambda \otimes \mu)$-a.s. many $(\omega,\omega^{\prime})$ in $Y\times Z$:
\begin{equation}\label{eqn:secondconexpformula}
\mathbb{E}_{\lambda\otimes\mu} [f \mid \mathscr{F}_n](\omega,\omega^{\prime})= \frac{1}{\mu([\omega^{\prime}\upharpoonright n])} \int_{[\omega^{\prime}\upharpoonright n]} f(\omega,\theta) \; d\mu(\theta)
\end{equation}
\end{ex}
Note that $\omega$ is free under the integral sign. Since there are no continuity assumptions on $f$ (it is merely an element of $\mathbb{L}_1(\nu)$) the value of the conditional expectation in (\ref{eqn:secondconexpformula}) can apriori change drastically with small changes of $\omega$. By contrast, in (\ref{eqn:firstconexpformula}), $\omega$'s contribution is restricted to its first $n$ bits.

In what follows, we want to generalise these two examples to a broader class of computable Polish spaces and verify that they are indeed Martin-L\"of disintegrations. We begin by generalising the way in which the previous examples involve partitions. We define a special case of Definition~\ref{defn:core}(\ref{defn:core:4}):
\begin{defn}
A sub-$\sigma$-algebra $\mathscr{F}$ of the Borel sets on $X$ is a \emph{$\nu$-effective partition} if it is generated by a computable sequence of events  $\{A_i: i\in I\}$ from the algebra $\mathscr{A}$ generated by $\nu$-computable basis $\mathscr{B}$ such that the events $\{A_i: i\in I\}$ are a partition of $X$.

Given such a partition with its computable index set $I$, we define the c.e. set $I^+=\{i\in I: \nu(A_i)>0\}$.

Further, a \emph{$\nu$-effective softening} of $\mathscr{F}$ is a pairwise disjoint computable sequence of c.e. opens $\{U_i: i\in I\}$ such that $U_i=A_i$ on $\mathsf{KR}^{\nu}$.
\end{defn}

\begin{prop}\label{prop:technical}
Every effective partition has an effective softening. 
\end{prop}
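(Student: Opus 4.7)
The plan is to invoke Proposition~\ref{prop:krplusbasis} to obtain, for each $i$ in $I$, a c.e. open $V_i$ equal on $\mathsf{KR}^{\nu}$ to $A_i$, together with an effectively closed superset $C_i\supseteq V_i$ of the same $\nu$-measure, all uniformly in $i$. The obstacle is that the $V_i$ themselves need not be genuinely disjoint: each pairwise intersection $V_i\cap V_j$ is a $\nu$-null c.e. open (since it equals $A_i\cap A_j=\emptyset$ on the $\nu$-measure-one set $\mathsf{KR}^{\nu}$), but it need not be empty as a set.

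To force genuine disjointness while preserving agreement with $A_i$ on $\mathsf{KR}^{\nu}$, one would enumerate $I$ as $i_0,i_1,i_2,\ldots$ and define
\begin{equation*}
U_{i_n} \;=\; V_{i_n}\setminus \bigcup_{k<n} C_{i_k} \;=\; V_{i_n}\cap \bigcap_{k<n} (X\setminus C_{i_k}).
\end{equation*}
Since a finite union of effectively closed sets is effectively closed, its complement is c.e. open, and hence $U_{i_n}$ is c.e. open, uniformly in $n$.

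Three verifications remain, all short. First, because any effectively closed $\nu$-null set is disjoint from $\mathsf{KR}^{\nu}$ by the definition of Kurtz randomness (applied to the complements of c.e. opens of $\nu$-measure one), one has $C_{i_k}=V_{i_k}=A_{i_k}$ on $\mathsf{KR}^{\nu}$, so $U_{i_n}=A_{i_n}\setminus \bigcup_{k<n} A_{i_k}=A_{i_n}$ on $\mathsf{KR}^{\nu}$ by the pairwise disjointness of the $A_i$. Second, for $m<n$ and $x$ in $U_{i_n}$ the definition forces $x\notin C_{i_m}\supseteq V_{i_m}\supseteq U_{i_m}$, so the $U_{i_n}$ are genuinely pairwise disjoint as subsets of $X$. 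Third, uniformity in $n$ follows from the uniformity in Proposition~\ref{prop:krplusbasis} together with the fact that only finitely many $C_{i_k}$ are subtracted at stage $n$.

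The heart of the argument is the standard trick of subtracting the effectively closed supersets from earlier stages, which is the natural way to bridge the gap between ``disjoint modulo a $\nu$-null set'' and ``genuinely disjoint,'' once Proposition~\ref{prop:krplusbasis} has supplied effectively closed tightenings of each c.e. open; no further effective content needs to be manufactured.
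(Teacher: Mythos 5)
Your proposal is correct and follows essentially the same route as the paper: apply Proposition~\ref{prop:krplusbasis} to get the c.e.\ opens $V_i$ with effectively closed supersets $C_i$ of the same measure, and then set $U_{i_n}=V_{i_n}\setminus\bigcup_{k<n}C_{i_k}$, which is exactly the paper's recursive definition. Your three verifications (agreement with $A_{i_n}$ on $\mathsf{KR}^{\nu}$, genuine pairwise disjointness, and uniformity) are all sound and simply spell out details the paper leaves implicit.
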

\begin{proof}
Since the computable sequence $A_i$ comes from the algebra generated by a $\nu$-computable basis, by Proposition~\ref{prop:krplusbasis}, there is a computable sequence of c.e. opens $V_i$ and effectively closed $C_i\supseteq V_i$ with $\nu(C_i)=\nu(A_i)$ and $V_i=A_i$ on  $\mathsf{KR}^{\nu}$. Then define recursively the sequence of c.e. opens by $U_0=V_0$ and $U_{n+1}=V_{n+1}\setminus \bigcup_{m\leq n} C_m$.
\end{proof}

Softenings of full partitions are a canonical way to obtain almost-full effective filtrations (cf. Definition~\ref{defn:core}(\ref{defn:core:5})):
\begin{prop}
Suppose that $\mathscr{F}_n$ is a full $\nu$-effective partition equipped with effective softenings which generate $\sigma$-algebras $\mathscr{G}_n$. Then $\mathscr{G}_n$ is an almost-full $\nu$-effective partition.
\end{prop}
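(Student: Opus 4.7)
The plan is to verify each ingredient of Definition~\ref{defn:core}(\ref{defn:core:5}) for $\mathscr{G}_n$, inheriting the relevant data from $\mathscr{F}_n$ via the softening construction of Proposition~\ref{prop:technical}.

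First I would unpack the data available from the softening construction. Let $\{A_{n,i}: i \in I_n\}$ be the uniformly computable partition generating $\mathscr{F}_n$ and $\{U_{n,i}: i \in I_n\}$ an effective softening, built as in the proof of Proposition~\ref{prop:technical} from c.e.\ opens $V_{n,i}$ with effectively closed supersets $C_{n,i}$ of the same $\nu$-measure and with $V_{n,i}=A_{n,i}$ on $\mathsf{KR}^{\nu}$. Each $U_{n,i}=V_{n,i}\setminus\bigcup_{j<i}C_{n,j}$ is then contained in $C_{n,i}$, and because the $A_{n,i}$ are pairwise disjoint on $\mathsf{KR}^{\nu}$ and $\nu(C_{n,j}\setminus V_{n,j})=0$, the material subtracted off has $\nu$-measure zero. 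Hence $\nu(U_{n,i})=\nu(C_{n,i})=\nu(A_{n,i})$ uniformly in $n,i$. By Proposition~\ref{prop:mucomputablebasis}(\ref{prop:union:mucompbasis:2.3}), adding each $U_{n,i}$ (paired with its superset $C_{n,i}$) to the underlying $\nu$-computable basis yields a new $\nu$-computable basis $\mathscr{B}'$, with respect to which the $U_{n,i}$ are themselves uniformly computable generators of $\mathscr{G}_n$. This gives the uniform $\nu$-effectivity of $\mathscr{G}_n$ in the sense of Definition~\ref{defn:core}(\ref{defn:core:4}).

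For the filtration being increasing, I would use that each $A_{n,i}\in\mathscr{F}_n\subseteq\mathscr{F}_{n+1}$ together with the fact that the $\sigma$-algebra generated by a countable partition consists exactly of arbitrary unions of partition elements, writing $A_{n,i}=\bigcup_{j\in J_{n,i}}A_{n+1,j}$ for some $J_{n,i}\subseteq I_{n+1}$. Transferring this identity along the equality $A=U$ on $\mathsf{KR}^{\nu}$ gives $U_{n,i}=\bigcup_{j\in J_{n,i}}U_{n+1,j}$ on $\mathsf{KR}^{\nu}$, which under the footnote convention of Definition~\ref{defn:core}(\ref{defn:core:4}) (where events are treated up to $\nu$-a.s.\ equality via their codes) places each $U_{n,i}$ in $\mathscr{G}_{n+1}$, yielding $\mathscr{G}_n\subseteq\mathscr{G}_{n+1}$.

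The almost-full clause is the final piece. Given a c.e.\ open $U$, I would apply the strict fullness of the filtration $\mathscr{F}_n$ to extract uniformly a computable sequence $(n_k,i_k)$ with $U=\bigcup_k A_{n_k,i_k}$ everywhere, then replace each $A_{n_k,i_k}$ by the corresponding softening $U_{n_k,i_k}$ to obtain $U=\bigcup_k U_{n_k,i_k}$ on $\mathsf{KR}^{\nu}$, which is exactly the almost-full condition with respect to the generators of $\mathscr{G}_n$.

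The main obstacle, such as it is, is bookkeeping rather than mathematics: the softenings $U_{n,i}$ behave like the partition sets $A_{n,i}$ only on the $\nu$-conull set $\mathsf{KR}^{\nu}$, not pointwise, and this is precisely why ``full'' must be relaxed to ``almost-full'' upon softening and why the inclusion $\mathscr{G}_n\subseteq\mathscr{G}_{n+1}$ should be read modulo $\nu$-null sets. No deep computability argument beyond what Propositions~\ref{prop:technical},~\ref{prop:krplusbasis}, and~\ref{prop:mucomputablebasis} already supply is required.
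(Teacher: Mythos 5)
Your proposal is correct and its essential step is exactly the paper's proof: use fullness of $\mathscr{F}_n$ to write a c.e.\ open $U$ as a computable union of generators $A_{m_i}$, then replace each $A_{m_i}$ by its softening $U_{m_i}$ and use their agreement on $\mathsf{KR}^{\nu}$ to get the almost-full condition. The additional verifications you carry out (measure-computability of the $U_{n,i}$ via enlarging the basis, and the increasing-filtration clause read modulo null sets) are left implicit in the paper, so your write-up is simply a more explicit version of the same argument.
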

\begin{proof}
Uniformly from an index for a c.e. open $U$, we can compute an index for a sequence $A_{m_i}$ from the sequences which generate the filtration $\mathscr{F}_0, \mathscr{F}_1, \ldots$ such that $U=\bigcup_i A_{m_i}$. Each $A_{m_i}$ is equal on $\mathsf{KR}^{\nu}$ to $U_{m_i}$, where the latter comes from the softening. Then $U$ is equal on $\mathsf{KR}^{\nu}$ to $\bigcup_i U_{m_i}$.
\end{proof}

Here is how to organise a suitably generalised version of a single stage of the filtration of Example~\ref{ex:refinedpartition}:
\begin{prop}\label{prop:partitionsaredisintegrations}
Let $\nu$ be a computable point of $\mathcal{P}(X)$. Let $\mathscr{F}$ be an effective partition $\{A_i: i\in I\}$ of $X$ with effective softening $\{U_i: i\in I\}$.

Let $\rho:X\rightarrow \mathcal{M}^+(X)$ by $\rho_x=\sum_{i\in I^+} \nu(\cdot \mid U_i)\cdot I_{U_i}(x)$. 

Then $\rho$ is a Martin-L\"of disintegration of $\mathscr{F}$ with respect to $\nu$ and one has the following expression for the version of conditional expectation, which is defined for all $f$ in $\mathbb{L}_1(\nu)$ and all $x$ in $X$: 
\begin{equation}\label{eqn:partitionex}
\mathbb{E}_{\nu}[f\mid \mathscr{F}](x) = \sum_{i\in I^+} \bigg(\frac{1}{\nu(U_i)} \int_{U_i} f \; d\nu\bigg) \cdot I_{U_i}(x)
\end{equation}
\end{prop}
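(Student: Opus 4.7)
The plan is to unfold the three clauses of Definition~\ref{defn:eff:disin} in turn. First, formula~(\ref{eqn:partitionex}) follows directly from the definition of $\mathbb{E}_{\nu}[f\mid\mathscr{F}](x)$ in~(\ref{eqn:disintegrate}), since $\int f\, d(\nu(\cdot\mid U_i)) = \nu(U_i)^{-1}\int_{U_i} f\, d\nu$ and the $U_i$ are pairwise disjoint, so at most one term in the sum is nonzero at each $x$; the restriction to $i\in I^+$ ensures that we never divide by zero. To see that this is a version of the conditional expectation of $f$, note that $U_i = A_i$ on the $\nu$-measure one set $\mathsf{KR}^{\nu}$, so~(\ref{eqn:partitionex}) coincides $\nu$-a.s.\ with the $\mathscr{F}$-measurable function $\sum_{i\in I^+}\nu(A_i)^{-1}(\int_{A_i} f\, d\nu)\cdot I_{A_i}$. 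Every $A\in\mathscr{F}$ is a countable disjoint union of $A_i$'s, so the defining integration identity $\int_A \mathbb{E}_{\nu}[f\mid\mathscr{F}]\, d\nu = \int_A f\, d\nu$ reduces by $\sigma$-additivity to the case $A = A_j$, where both sides equal $\int_{A_j} f\, d\nu$.

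Second, I would verify clause~(\ref{defn:eff:disin:2}) uniformly for $\mathsf{XR}^{\nu}\in\{\mathsf{KR}^{\nu},\mathsf{SR}^{\nu},\mathsf{MLR}^{\nu}\}$; it suffices to handle $\mathsf{KR}^{\nu}$ since the other two are subsets. Because $\bigcup_i U_i$ is a c.e.\ open whose $\nu$-measure equals $\sum_i \nu(A_i) = 1$, its complement is an effectively closed $\nu$-null set, so $\mathsf{KR}^{\nu}\subseteq \bigcup_i U_i$, and every $x\in\mathsf{KR}^{\nu}$ lies in a unique $U_{i(x)}$ by disjointness. Moreover $i(x)\in I^+$: otherwise $X\setminus U_{i(x)}$ would be a c.e.\ open of $\nu$-measure one not containing $x$, contradicting $x\in\mathsf{KR}^{\nu}$. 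Hence $\rho_x = \nu(\cdot\mid U_{i(x)})$ is a probability measure, so $\rho_x(X) = 1$. Since $[x]_{\mathscr{F}} = A_{i(x)}$ and $A_{i(x)} = U_{i(x)}$ $\nu$-a.s., one has $\rho_x([x]_{\mathscr{F}}) = 1$; and since $\mathsf{XR}^{\nu}$ itself has $\nu$-measure one, $\rho_x(\mathsf{XR}^{\nu}) = 1$, giving $\rho_x([x]_{\mathscr{F}}\cap \mathsf{XR}^{\nu}) = 1$. This delivers the Martin-L\"of (hence also Schnorr and Kurtz) disintegration property.

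Third, for clause~(\ref{defn:eff:disin:3}), the identity $\rho_x(U) = \sum_{i\in I^+}\nu(U\cap U_i)\cdot \nu(U_i)^{-1}\cdot I_{U_i}(x)$ is to be checked to be uniformly lsc with values in $[0,1]$. By Proposition~\ref{prop:mucomputablebasis}(\ref{prop:mucomputablebasis:1}) the values $\nu(U_i) = \nu(A_i)$ are uniformly computable, so $I^+$ is uniformly c.e.\ and $\nu(U_i)^{-1}$ is uniformly computable for $i\in I^+$. The measures $\nu(U\cap U_i)$ are uniformly left-c.e.\ as $\nu$-measures of c.e.\ opens, so each summand is a non-negative lsc function, and the sum over the c.e.\ index set $I^+$ is uniformly lsc by Proposition~\ref{prop:lscclosure}.

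The main obstacle is ensuring that the interplay between the measure-theoretic partition $\{A_i\}$ and its topological softening $\{U_i\}$ is tight enough to deliver all three clauses simultaneously; the key step carrying this weight is the observation that for $x\in\mathsf{KR}^{\nu}$ the index $i(x)$ lies in $I^+$, which guarantees that $\rho_x$ is a genuine probability measure rather than the zero measure. Everything else is routine countable-partition bookkeeping combined with the elementary closure properties of lsc functions.
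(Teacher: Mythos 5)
Your proposal follows the same route as the paper's own proof: derive formula~(\ref{eqn:partitionex}) from~(\ref{eqn:disintegrate}), check the defining integral identity on the generating cells $A_j$ (splitting into $j\in I^+$ and the trivial null case), verify clause~(\ref{defn:eff:disin:2}) by locating each Kurtz random in a unique positive-measure cell of the softening, and verify clause~(\ref{defn:eff:disin:3}) by exhibiting $x\mapsto\rho_x(U)$ as an effective combination of lsc data. The one step whose justification fails as written is your argument that $i(x)\in I^{+}$: you claim that if $\nu(U_{i(x)})=0$ then $X\setminus U_{i(x)}$ is a c.e.\ open of measure one omitting $x$. But $U_{i(x)}$ is c.e.\ open, so its complement is effectively closed, not c.e.\ open, and Kurtz randomness says nothing directly about effectively closed measure-one sets. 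The claim itself is true and the repair is immediate: either note that $\bigcup_{j\in I,\, j\neq i(x)}U_j$ is a c.e.\ open of $\nu$-measure one (by pairwise disjointness and $\sum_j\nu(U_j)=\sum_j\nu(A_j)=1$) which is disjoint from $U_{i(x)}$, so a Kurtz random cannot lie in $U_{i(x)}$ when $\nu(U_{i(x)})=0$; or invoke the effectively closed superset $C_{i(x)}\supseteq U_{i(x)}$ of the same (zero) measure built into the softening via Propositions~\ref{prop:krplusbasis} and~\ref{prop:technical}, whose complement is a measure-one c.e.\ open avoiding $U_{i(x)}$.

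Two smaller remarks. First, ``it suffices to handle $\mathsf{KR}^{\nu}$ since the other two are subsets'' is not literally sufficient, because clause~(\ref{defn:eff:disin:2}) requires $\rho_x([x]_{\mathscr{F}}\cap\mathsf{XR}^{\nu})=1$ for each class separately; but the absolute-continuity observation you supply immediately afterward ($\rho_x=\nu(\cdot\mid U_{i(x)})\ll\nu$, so every $\nu$-measure-one set has $\rho_x$-measure one) does cover all three classes uniformly, which is essentially how the paper handles it as well. Second, for clause~(\ref{defn:eff:disin:3}) the paper exploits disjointness to write $\rho_x(U)>q$ as a single existential condition over $I^{+}$, whereas you sum non-negative lsc functions over the c.e.\ set $I^{+}$ and appeal to Proposition~\ref{prop:lscclosure}; both arguments are correct. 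With the $I^{+}$ membership step repaired, the proof is complete and matches the paper's in substance.
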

\begin{proof}
By the definition of effective softening, the sets $U_i, U_j$ for distinct $i,j$ in $I^+$ have empty intersection. Hence, the map $\rho$ has codomain $\mathcal{M}^+(X)$. In particular, if $x$ is in $U_i$ for $i$ in $I^+$, then $\rho_x=\nu(\cdot \mid U_i)$, which is in $\mathcal{P}(X)$ and hence in $\mathcal{M}^+(X)$. But if $x$ not in any $U_i$ for $i$ in $I^+$, then $\rho_x=0$, which is a point of $\mathcal{M}^+(X)$.\footnote{If one does not introduce softenings, then this part of the argument breaks down and $\rho_x$ need not be a finite measure.}

By the definition in~(\ref{eqn:disintegrate}) and since $\frac{d\nu(\cdot \mid U_i)}{d\nu}=\frac{1}{\nu(U_i)}\cdot I_{U_i}$ for $i$ in $I^+$, one has the following for all $x$ in $X$:
\begin{equation*}
\mathbb{E}_{\nu}[f\mid \mathscr{F}](x) = \sum_{i\in I^+} \big(\int_X f(v) \; d\nu(\cdot \mid U_i)(v)\big) \cdot I_{U_i}(x)  =  \sum_{i\in I^+} \bigg(\frac{1}{\nu(U_i)} \int_{U_i} f(v) \; d\nu(v)\bigg) \cdot I_{U_i}(x)
\end{equation*}
If $j$ in $I^+$, then when we integrate over $x$ in $A_j$ with respect to $\nu$ we then get
\begin{equation*}
\int_{A_j} \mathbb{E}_{\nu}[f\mid \mathscr{F}](x) \; d\nu(x) = \int_{A_j} \frac{1}{\nu(U_j)} \int_{U_j} f(v) \; d\nu(v) \; d\nu(x) =\int_{A_j} f(v) \; d\nu(v)
\end{equation*}
If $j$ not in $I^+$ then the event $A_j$ is $\nu$-null and so trivially we get:
\begin{equation*}
\int_{A_j} \mathbb{E}_{\nu}[f\mid \mathscr{F}](x) \; d\nu(x) =  \int_{A_j} f(v) \; d\nu(v)
\end{equation*}
Since elements $A_j$ generate $\mathscr{F}$, this shows that (\ref{eqn:partitionex}) is a version of the conditional expectation of $f$ with respect to $\mathscr{F}$. Further, it is totally defined for all $f$ in $\mathbb{L}_1(\nu)$ and all $x$ in $X$.

Since $\mathscr{F}$ is an effective partition, one has that $[x]_{\mathscr{F}} = A_i$ for $x$ in $A_i$. Further, for $x$ in $\mathsf{KR}^{\nu}\cap A_i$, we have that $i$ in $I^+$. Hence for $x$ in $\mathsf{KR}^{\nu}\cap A_i$ we have $\rho_x([x]_{\mathscr{F}}\cap \mathsf{KR}^{\nu}) =\rho_x(A_i\cap \mathsf{KR}^{\nu})=
\nu(A_i\cap \mathsf{KR}^{\nu}\mid U_i) = \nu(A_i\mid U_i)=1$. The same argument works for $\mathsf{SR}^{\nu}$ and $\mathsf{MLR}^{\nu}$.

Suppose that $U$ is c.e. open and $q\geq 0$ is rational. Since the $U_i$ are pairwise disjoint, one has that $\rho_x(U)>q$ iff there is $i$ in $I^+$ such that $x$ is in $U_i$ and $\nu(U\mid U_i)>q$, which is a c.e. open condition in variable $x$.
\end{proof}

Here is how to organise a suitably generalised version of the initial step of the filtration of Example~\ref{ex:refinedlines}, that is, where the partition on the second component consists just of a single set (like $\mathscr{G}_1$ in Example~\ref{ex:refinedlines}).
\begin{prop}\label{prop:linesdisintegration} 
Suppose that $Y,Z$ are computable Polish spaces. Suppose that $\lambda$ is a computable point of $\mathcal{P}(Y)$ and $\mu$ is a computable point of $\mathcal{P}(Z)$. Let $\mathscr{F}$ be the $\lambda\otimes\mu$-effective $\sigma$-algebra on $Y\times Z$ generated by sets of the form $U\times Z$, where $U$ ranges over c.e. opens from a $\lambda$-computable basis on $Y$. Then $\rho: Y\times Z\rightarrow \mathcal{P}(Y\times Z)$ given by $\rho_{(u,v)} = \delta_u \otimes \mu$ is a Martin-L\"of disintegration of $\mathscr{F}$ with respect to $\lambda \otimes \mu$, and one has the following expression for the version of conditional expectation, which for each $f$ in $\mathbb{L}_1(\lambda\otimes\mu)$ is defined for $(\lambda \otimes \mu)$-a.s. many $(u,v)$ in $Y\times Z$:
\begin{equation}\label{eqn:linesdisintegration}
\mathbb{E}_{\lambda\otimes\mu} [f \mid \mathscr{F}](u,v)=\int_Z f(u,t) \; d\mu(t)
\end{equation}
\end{prop}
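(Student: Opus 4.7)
The plan is to verify the three conditions of Definition~\ref{defn:eff:disin} for the map $\rho_{(u,v)} = \delta_u \otimes \mu$, with $\mathsf{XR}^{\lambda \otimes \mu}$ taken in turn to be $\mathsf{KR}^{\lambda \otimes \mu}$, $\mathsf{SR}^{\lambda \otimes \mu}$, and $\mathsf{MLR}^{\lambda \otimes \mu}$. The formula~(\ref{eqn:linesdisintegration}) is immediate from $\int f \, d(\delta_u \otimes \mu) = \int_Z f(u,t)\, d\mu(t)$, and this is a version of the conditional expectation: the resulting function depends only on $u$, and for any generator $U \times Z$ with $U$ in the $\lambda$-computable basis, Fubini yields $\int_{U \times Z} \int_Z f(u,t)\, d\mu(t)\, d(\lambda \otimes \mu) = \int_{U \times Z} f \, d(\lambda \otimes \mu)$. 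Since the $\lambda$-computable basis generates the Borel $\sigma$-algebra on $Y$, one has $[(u,v)]_{\mathscr{F}} = \{u\} \times Z$, and clause~(\ref{defn:eff:disin:2}) of the definition therefore reduces to showing that whenever $(u,v) \in \mathsf{XR}^{\lambda \otimes \mu}$ one has $\mu(\{v' : (u,v') \in \mathsf{XR}^{\lambda \otimes \mu}\}) = 1$. Clause~(\ref{defn:eff:disin:3}) is routine: $\rho_{(u,v)}(W) = \mu(W_u)$ depends only on $u$, and lower semi-computability of $u \mapsto \mu(W_u)$ follows by expanding the c.e. open $W \subseteq Y \times Z$ as a union of products of basis opens and invoking Proposition~\ref{prop:mucomputablebasis}.

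The heart of the proof is clause~(\ref{defn:eff:disin:2}), which I would split into two steps for each randomness notion. \emph{Step (a)} shows that projection onto the first coordinate preserves randomness: $(u,v) \in \mathsf{XR}^{\lambda \otimes \mu}$ implies $u \in \mathsf{XR}^\lambda$. For $\mathsf{MLR}$ and $\mathsf{SR}$, given any $L_1(\lambda)$ test $g$, the pullback $(u',v') \mapsto g(u')$ is an $L_1(\lambda \otimes \mu)$ test of the same type and norm by Fubini; for $\mathsf{KR}$, the pullback of a c.e. open $U \subseteq Y$ of full $\lambda$-measure is a c.e. open $U \times Z$ of full product measure. \emph{Step (b)} shows that, for $u \in \mathsf{XR}^\lambda$, the fiber $\{v' : (u,v') \in \mathsf{XR}^{\lambda \otimes \mu}\}$ has $\mu$-measure~$1$. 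For $\mathsf{MLR}$ and $\mathsf{SR}$, given an $L_1(\lambda \otimes \mu)$ test $f$ one forms $h(u') := \int_Z f(u',t)\, d\mu(t)$; approximating $f$ from below by rational simple functions via Proposition~\ref{prop:exptoflsc} and applying MCT shows $h$ is lsc, while Fubini yields $\|h\|_1 = \|f\|_1$, so $h$ is a test of the same type; then $u \in \mathsf{XR}^\lambda$ forces $h(u) < \infty$, whence $f(u,\cdot)$ is finite $\mu$-a.s. Using the universal test from Proposition~\ref{prop:universal} for $\mathsf{MLR}$, or countable union over test indices for $\mathsf{SR}$, yields the desired fiber statement.

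The main obstacle is the $\mathsf{KR}$ case of Step (b), since Kurtz randomness admits no universal integral test, forcing one to handle each c.e. open of full measure individually. Here I would fix a c.e. open $W \subseteq Y \times Z$ with $(\lambda \otimes \mu)(W) = 1$ and let $g(u') := \mu(W_{u'})$; as above, $g$ is lsc, with $g \leq 1$ and $\int g\, d\lambda = 1$ by Fubini. Hence $\int (1 - g)\, d\lambda = 0$, and Markov's inequality gives $\lambda(\{u' : g(u') \leq 1 - 1/k\}) = 0$ for every $k \geq 1$; therefore each $V_k := \{u' : g(u') > 1 - 1/k\}$ is a c.e. open of full $\lambda$-measure, and $u \in \mathsf{KR}^\lambda$ forces $u \in V_k$ for all $k$, yielding $\mu(W_u) = 1$. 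Intersecting over the countably many such $W$ (whose common intersection is $\mathsf{KR}^{\lambda \otimes \mu}$) and using countable subadditivity of $\mu$-null sets gives $\mu(\{v' : (u,v') \in \mathsf{KR}^{\lambda \otimes \mu}\}) = 1$, completing clause~(\ref{defn:eff:disin:2}) for $\mathsf{KR}$ and finishing the proof.
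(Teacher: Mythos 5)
Your proof is correct, but on the key clause (Definition~\ref{defn:eff:disin}(\ref{defn:eff:disin:2})) it takes a genuinely different route from the paper. The paper handles the Kurtz case by asserting the product identity $\mathsf{KR}^{\lambda\otimes\mu}(Y\times Z)=\mathsf{KR}^{\lambda}(Y)\times\mathsf{KR}^{\mu}(Z)$ and then computing $\delta_u(\{u\})\cdot\mu(\mathsf{KR}^{\mu}(Z))=1$, and it handles the Schnorr and Martin-L\"of cases by invoking the relativized ``hard'' direction of van Lambalgen's Theorem as a black box: the fiber $A_u$ contains $\mathsf{SR}^{\mu,\mathbf{a}}(Z)$ for an oracle $\mathbf{a}$ computing $u$, hence has $\mu$-measure one. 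You instead argue directly and without relativization: you push each product-space integral test down to a test on $Y$ by integrating out the $Z$-coordinate (Fubini preserves the norm and its computability), conclude that the section $f(u,\cdot)$ is $\mu$-integrable hence finite $\mu$-a.s.\ whenever $u$ is random in $Y$, and then take a countable union over tests (or use the universal test of Proposition~\ref{prop:universal} for $\mathsf{MLR}$); for Kurtz you replace this with a Markov-inequality argument showing $\mu(W_u)=1$ for every full-measure c.e.\ open $W$ and every $u\in\mathsf{KR}^{\lambda}$. Your route buys two things: it is self-contained (no appeal to van Lambalgen), and it sidesteps the unrelativized product identity for Kurtz randomness, which is delicate --- the inclusion $\mathsf{KR}^{\lambda}\times\mathsf{KR}^{\mu}\subseteq\mathsf{KR}^{\lambda\otimes\mu}$ is problematic (consider a Kurtz random $u$ paired with itself on Cantor space squared: the complement of the diagonal is a full-measure c.e.\ open missing $(u,u)$), whereas your argument only establishes and only needs the fiber-measure statement, which is exactly what clause~(\ref{defn:eff:disin:2}) requires. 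One small technical point to tighten: lower semi-computability of $u'\mapsto\int_Z f(u',t)\,d\mu(t)$ does not quite follow from approximating $f$ by simple functions over the \emph{algebra} generated by a basis (algebra elements involve complements, whose section-measures are usc rather than lsc); you should instead use the increasing approximation by finite maxima of $q\cdot I_{B}$ with $B$ c.e.\ open, as in the construction inside Proposition~\ref{prop:exptoflsc}, so that each approximant's section-integral is a finite positive combination of $\mu$-measures of sections of c.e.\ opens, which are lsc by your clause-(\ref{defn:eff:disin:3}) argument.
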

\begin{proof}
By the definition in (\ref{eqn:disintegrate}) and by Fubini-Tonelli, one has the following for $f$ in $\mathbb{L}_1(\lambda\otimes \mu)$, and by the same theorem it is defined for $\lambda$-a.s. many $u$ in $Y$, and hence for $(\lambda \otimes \mu)$-a.s. many $(u,v)$ in $Y\times Z$:
\begin{align*}
 \mathbb{E}_{\lambda\otimes\mu} [f \mid \mathscr{F}](u,v)=\int_{Y\times Z} f(s,t) \; d\rho_{(u,v)}(s,t) =\int_Z \int_Y f(s,t) \; d\delta_u(s) \; d\mu(t) = \int_Z f(u,t) \; d\mu(t)
\end{align*}
Since $v$ from $Z$ does not appear free in this last term, when we integrate with respect to $v$ in $Z$ we get:
\begin{equation*}
\int_Z \mathbb{E}_{\lambda\otimes\mu} [f \mid \mathscr{F}](u,v)\; d\mu(v) = \int_Z f(u,t) \; d\mu(t)
\end{equation*}
Hence for Borel subsets $B$ of $Y$ we have by Fubini-Tonelli that:
\begin{align*}
 \int_{B\times Z} \mathbb{E}_{\lambda\otimes\mu} [f \mid \mathscr{F}](u,v) \; d(\lambda\otimes \mu)(u,v) 
& =  \int_B\int_Z f(u,t) \; d\mu(t)\; d\lambda(u) \\
& = \int_{B\times Z} f(u,t) \; d(\lambda\otimes \mu)(u,t)
\end{align*}
This shows that (\ref{eqn:linesdisintegration}) is a version of the condition expectation of $f$ with respect to~$\mathscr{F}$.

Note that $[(u,v)]_{\mathscr{F}}= \{u\}\times Z$, for any $(u,v)\in Y\times Z$. Further, recall that $\mathsf{KR}^{\lambda\otimes \mu}(Y\times Z)=\mathsf{KR}^{\lambda}(Y)\times \mathsf{KR}^{\mu}(Z)$.\footnote{One can easily check this by hand. It also follows from the fact that Kurtz randomness is preserved both ways under computable continuous open maps.} Hence for $(u,v)$ in $\mathsf{KR}^{\lambda\otimes \mu}(Y\times Z)$ one has the identity:
\begin{equation*}
[(u,v)]_{\mathscr{F}} \cap \mathsf{KR}^{\lambda\otimes \mu}(Y\times Z) = (\{u\}\cap \mathsf{KR}^{\lambda}(Y))\times (Z\cap  \mathsf{KR}^{\mu}(Z)) = \{u\}\times \mathsf{KR}^{\mu}(Z)
\end{equation*}
From this we get $\rho_{(u,v)}([(u,v)]_{\mathscr{F}}\cap \mathsf{KR}^{\lambda\otimes \mu}(Y\times Z)) = \delta_u(\{u\}) \cdot \mu(\mathsf{KR}^{\mu}(Z))=1$.

In this next paragraph, we use some notation familiar from Fubini-Tonelli, namely if $A\subseteq Y\times Z$ and $s$ in $Y$, then $A_s$ is defined to be $\{t\in Z: (s,t)\in A\}$.

For Schnorr disintegrations, suppose that $(u,v)$ is in $\mathsf{SR}^{\lambda \otimes \mu}(Y\times Z)$, so that $u$ is in $\mathsf{SR}^{\lambda}(Y)$. Since $[(u,v)]_{\mathscr{F}}= \{u\}\times Z$, we want to show that $(\delta_u\otimes \mu)(A)=1$, where $A$ is the event $(\{u\}\times Z)\cap \mathsf{SR}^{\lambda \otimes \mu}(Y\times Z)$. We have $A_u=\{t\in Z: (u,t)\in \mathsf{SR}^{\lambda \otimes \mu}(Y\times Z)\}$. By choosing a Turing degree ${\bf a}$ which computes a fast Cauchy sequence for $u$, we have by van Lambalgen's Theorem that $A_u\supseteq \mathsf{SR}^{\mu, {\bf a}}(Z)$ and so $\mu(A_u)=1$.\footnote{This ``hard'' direction of van Lambalgen's Theorem works in arbitrary computable Polish spaces with computable measures, basically because it is Fubini-Tonelli type argument. It similarly works for $\mathsf{MLR}^{\nu}$. For the setting of Cantor space with uniform measure, see discussion in \cite[pp. 257-258, 357]{Downey2010aa}.} Then by Fubini-Tonelli $\rho_{(u,v)}(A)=(\delta_u\otimes \mu)(A)=\int_Y \mu(A_s)\; d\delta_u(s) = \mu(A_u) =1$, which is what we wanted to show. The argument for Martin-L\"of disintegrations is similar.

Suppose that $W\subseteq Y\times Z$ is c.e. open. Then we can write $W=\bigcup_i U_i\times V_i$ where $U_i\subseteq Y, V_i\subseteq Z$ are computable sequences of c.e. opens with $U_i\subseteq U_{i+1}$ and $V_i\subseteq V_{i+1}$. Then for rational $q\geq 0$, we have $\rho_{u,v}(W)>q$ iff there is $i\geq 0$ with $\rho_{(u,v)}(U_i\times V_i)>q$, which happens iff there is $i\geq 0$ with $\delta_u(U_i)\cdot \mu(V_i)>q$, which happens iff there is $i\geq 0$ with $u$ in $U_i$ and $\mu(V_i)>q$. This is a c.e. open condition in variables $(u,v)$.
\end{proof}

Finally, we can combine partitions and lines as follows, which gives a suitably generalised version of an individual step in the filtration from Example~\ref{ex:refinedlines} (like $\mathscr{G}_2$ or $\mathscr{G}_3$ in that example).
\begin{prop}\label{prop:combinepartitionslines1}
Suppose that $Y,Z$ are computable Polish spaces. Suppose that $\lambda$ is a computable point of $\mathcal{P}(Y)$ and $\mu$ is a computable point of $\mathcal{P}(Z)$. 

Suppose $\{C_i: i\in I\}$ is an effective partition of $Z$ with effective softening $\{V_i: i\in I\}$.

Let $\mathscr{F}$ be the $\lambda\otimes\mu$-effective $\sigma$-algebra on $Y\times Z$ generated by sets of the form $U\times C_i$, where $U$ ranges over c.e. opens from a $\lambda$-computable basis on $Y$. Then the map $\rho: Y\times Z\rightarrow \mathcal{M}^+(Y\times Z)$ given by $\rho_{(u,v)} = \sum_{i\in I^+} \big(\delta_u\otimes \mu(\cdot \mid V_i)\big)\cdot I_{V_i}(v)$ is a Martin-L\"of disintegration of $\mathscr{F}$ with respect to $\lambda \otimes \mu$, and one has the following expression for the version of conditional expectation:
\begin{equation}\label{eqn:combinepartitionslines1}
\mathbb{E}_{\lambda\otimes\mu} [f \mid \mathscr{F}](u,v)= \sum_{i\in I^+} \bigg(\frac{1}{\mu(V_i)} \int_{V_i} f(u,t) \; d\mu(t)\bigg) \cdot I_{V_i}(v)
\end{equation}
\end{prop}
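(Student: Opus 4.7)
The plan is to combine the arguments of Proposition~\ref{prop:partitionsaredisintegrations} and Proposition~\ref{prop:linesdisintegration}, since the map $\rho$ here is essentially a ``fibred product'' of the two constructions: on the $Z$-coordinate it behaves like the partition disintegration through $\mu(\cdot\mid V_i)$, while on the $Y$-coordinate it behaves like the lines disintegration through $\delta_u$. First I would verify that $\rho$ lands in $\mathcal{M}^+(Y\times Z)$: since the softening $\{V_i:i\in I\}$ is pairwise disjoint (by the definition of softening and the argument of Proposition~\ref{prop:technical}), at most one $I_{V_i}(v)$ is nonzero, and when it is nonzero with $i\in I^+$ the corresponding summand $\delta_u\otimes \mu(\cdot\mid V_i)$ is a probability measure, while otherwise $\rho_{(u,v)}=0$.

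Next I would derive the formula~(\ref{eqn:combinepartitionslines1}) from the definition (\ref{eqn:disintegrate}) by computing
\begin{equation*}
\mathbb{E}_{\lambda\otimes\mu}[f\mid\mathscr{F}](u,v)=\sum_{i\in I^+}I_{V_i}(v)\int_{Y\times Z} f(s,t)\,d(\delta_u\otimes\mu(\cdot\mid V_i))(s,t),
\end{equation*}
and then applying Fubini--Tonelli to each summand exactly as in Proposition~\ref{prop:linesdisintegration}, yielding $\frac{1}{\mu(V_i)}\int_{V_i} f(u,t)\,d\mu(t)$. To see this is a version of the conditional expectation, I would check the defining integral identity $\int_{A} \mathbb{E}_{\lambda\otimes\mu}[f\mid\mathscr{F}]\,d(\lambda\otimes\mu)=\int_{A} f\,d(\lambda\otimes\mu)$ on generating sets $A=U\times C_j$: for $j\in I^+$, use that $V_j=C_j$ up to a $\mu$-null set (by the softening property and Proposition~\ref{prop:krplusbasis}) to reduce it by Fubini--Tonelli, as in the $j$-th term of Proposition~\ref{prop:partitionsaredisintegrations}; for $j\notin I^+$ both sides vanish since $\mu(C_j)=0$.

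For Definition~\ref{defn:eff:disin}(\ref{defn:eff:disin:2}) I would note first that $[(u,v)]_{\mathscr{F}}=\{u\}\times C_i$ whenever $v\in C_i$, since $\mathscr{F}$ is generated by $\{U\times C_i\}$ with $U$ ranging over a basis that separates points of $Y$. For $(u,v)$ in $\mathsf{KR}^{\lambda\otimes\mu}$ with $v\in C_i$, the $\mathsf{KR}$-product decomposition $\mathsf{KR}^{\lambda\otimes\mu}(Y\times Z)=\mathsf{KR}^{\lambda}(Y)\times\mathsf{KR}^{\mu}(Z)$ used in Proposition~\ref{prop:linesdisintegration} together with $V_i=C_i$ on $\mathsf{KR}^{\mu}$ gives $i\in I^+$ and
\begin{equation*}
\rho_{(u,v)}\big([(u,v)]_{\mathscr{F}}\cap\mathsf{KR}^{\lambda\otimes\mu}\big)=\delta_u(\{u\})\cdot\mu(\mathsf{KR}^{\mu}(Z)\cap C_i\mid V_i)=1.
\end{equation*}
For the Schnorr and Martin-L\"of cases, one invokes van Lambalgen exactly as in Proposition~\ref{prop:linesdisintegration}, applied to the fibre $A_u=\{t\in C_i: (u,t)\in\mathsf{XR}^{\lambda\otimes\mu}\}$; here one uses the softening so that conditioning on $V_i$ is well-defined and preserves the measure-one property.

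Finally, for Definition~\ref{defn:eff:disin}(\ref{defn:eff:disin:3}), given a c.e.\ open $W\subseteq Y\times Z$ written as $W=\bigcup_k U_k\times W_k$ with $U_k,W_k$ uniformly c.e.\ open, for rational $q\geq 0$ one has $\rho_{(u,v)}(W)>q$ iff there is $i\in I^+$ with $v\in V_i$ and a finite subset $F$ of indices $k$ with $u\in\bigcap_{k\in F}U_k$ and $\mu(\bigcup_{k\in F}(W_k\cap V_i))/\mu(V_i)>q$; this is a c.e.\ open condition in $(u,v)$ using that $V_i$ has uniformly computable measure (being from the algebra generated by the $\mu$-computable basis). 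The main obstacle I anticipate is purely bookkeeping: making sure the softening's role in turning a.s.\ identities into identities on $\mathsf{KR}$ is invoked consistently, especially in the van Lambalgen step where one must work with $V_i$ (c.e.\ open, hence suitable for algorithmic randomness) rather than $C_i$.
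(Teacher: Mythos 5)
Your proposal is correct and follows exactly the route the paper takes: the paper's own proof is literally the one-line remark that one combines the proofs of Proposition~\ref{prop:partitionsaredisintegrations} and Proposition~\ref{prop:linesdisintegration}, and you have carried out that combination faithfully (disjointness of the softening for the codomain, Fubini--Tonelli for the formula and the integral identity on generators, the $\mathsf{KR}$ product decomposition plus van Lambalgen for Definition~\ref{defn:eff:disin}(\ref{defn:eff:disin:2}), and the disjointness of the $V_i$ together with computability of $\mu(V_i)$ for lower semi-computability). No gaps.
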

\begin{proof}
This proof is just a combination of the proofs of Proposition~\ref{prop:partitionsaredisintegrations} and Proposition~\ref{prop:linesdisintegration}.
\end{proof}

Another variant on Example~\ref{ex:refinedlines} is 
\begin{prop}\label{ex:linescountableproduct} 
Let $X=\prod X_i$, where $X_i$ is a computable sequence of computable Polish spaces. Let a computable point $\nu$ of $\mathcal{P}(X)$ be given by $\nu=\bigotimes_i \nu_i$, where $\nu_i$ is a computable sequence in $\mathcal{P}(X_i)$. Let $n\geq 1$. Let $\mathscr{F}_n$ be the $\sigma$-algebra on $X$ generated by sets of the form $\prod_{i\leq n} V_i\times \prod_{i>n} X_i$, where $V_i$ for $i\leq n$ ranges over c.e. opens from a $\nu_i$-computable basis on $X_i$. For $x$ in $X$, write its coordinates as $x=(x_1, x_2, \ldots)$. Then $\rho^{(n)}: X\rightarrow \mathcal{P}(X)$ given by $\rho_x^{(n)} = (\otimes_{i\leq n} \delta_{x_i}) \otimes (\otimes_{i>n} \nu_i)$ is a Martin-L\"of disintegration of $\mathscr{F}_n$ with respect to $\nu$, and one has the following expression for the version of conditional expectation, which for each $f$ in $\mathbb{L}_1(\nu)$ is defined for $\nu$-a.s. many $x$ from $X$, and where $x=(x_1, x_2, \ldots)$ and $t=(t_{n+1}, t_{n+2}, \ldots)$
\begin{equation*}
\mathbb{E}_{\nu} [f \mid \mathscr{F}_n](x)=\int_{\prod_{i>n} X_i} f(x_1, \ldots, x_n, \overline{t}) \; d(\otimes_{i>n} \nu_i)(\overline{t})
\end{equation*}
\end{prop}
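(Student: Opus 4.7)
The approach is to mimic the proof of Proposition~\ref{prop:linesdisintegration}, but iterated so that it handles the countable product with the split occurring after coordinate $n$. Write $Y = \prod_{i \leq n} X_i$ and $Z = \prod_{i > n} X_i$ with product measures $\lambda = \otimes_{i \leq n} \nu_i$ and $\mu = \otimes_{i > n} \nu_i$, so that $X = Y \times Z$ and $\nu = \lambda \otimes \mu$, and $\rho^{(n)}_{x} = \delta_{(x_1,\ldots,x_n)} \otimes \mu$. In this notation the desired map is exactly the ``$\delta_u \otimes \mu$'' construction of Proposition~\ref{prop:linesdisintegration}, with $Y$ now a finite product of spaces rather than a single factor. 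I would verify the three clauses of Definition~\ref{defn:eff:disin} in turn.

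First, for clause~(\ref{defn:eff:disin:1}) (conditional expectation), the generators of $\mathscr{F}_n$ have the form $B \times Z$ for Borel $B \subseteq Y$, and a direct Fubini-Tonelli computation gives, for $f \in \mathbb{L}_1(\nu)$ and $\lambda$-a.s.\ many $(x_1,\ldots,x_n)$, the displayed formula
\begin{equation*}
\mathbb{E}_{\nu}[f \mid \mathscr{F}_n](x) = \int_Z f(x_1,\ldots,x_n,\bar{t}) \; d\mu(\bar{t}),
\end{equation*}
and then $\int_{B \times Z} \mathbb{E}_{\nu}[f \mid \mathscr{F}_n] \, d\nu = \int_{B \times Z} f \, d\nu$ holds by Fubini-Tonelli applied to $\lambda \otimes \mu$. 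This is essentially verbatim the argument in the proof of Proposition~\ref{prop:linesdisintegration}.

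Second, for clause~(\ref{defn:eff:disin:3}) (lsc property), I would first handle c.e. opens of the basic product form $W = \prod_{i \leq m} U_i \times \prod_{i>m} X_i$ with $U_i$ c.e. open in a $\nu_i$-computable basis. When $m \geq n$ one has $\rho^{(n)}_x(W) = \prod_{i \leq n} I_{U_i}(x_i) \cdot \prod_{n < i \leq m} \nu_i(U_i)$, which is a product of indicator functions of c.e. opens (lsc) with a computable real; when $m < n$ the second factor drops out. Either way the expression is lsc in $x$ by Proposition~\ref{prop:lscclosure}. A general c.e. open in $X$ is effectively a union of such box neighbourhoods, so the lsc property follows by sup-closure.

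Third, and this is the main obstacle, I must check clause~(\ref{defn:eff:disin:2}), and in particular that $\rho^{(n)}_x([x]_{\mathscr{F}_n} \cap \mathsf{XR}^{\nu}) = 1$ for $x$ in $\mathsf{XR}^{\nu}$ when $\mathsf{XR}^{\nu}$ is $\mathsf{KR}^{\nu}$, $\mathsf{SR}^{\nu}$ or $\mathsf{MLR}^{\nu}$. Since $[x]_{\mathscr{F}_n} = \{x_1\} \times \cdots \times \{x_n\} \times Z$, this reduces to showing
\begin{equation*}
\mu\bigl(\{\bar{t} \in Z : (x_1,\ldots,x_n,\bar{t}) \in \mathsf{XR}^{\nu}\}\bigr) = 1.
\end{equation*}
For $x \in \mathsf{MLR}^{\nu}$ (resp.\ $\mathsf{SR}^{\nu}$), the tuple $(x_1, \ldots, x_n)$ computes a Turing degree $\mathbf{a}$, and by the ``hard'' (Fubini-Tonelli) direction of van Lambalgen's theorem, as used in the proof of Proposition~\ref{prop:linesdisintegration}, the section contains $\mathsf{MLR}^{\mu, \mathbf{a}}(Z)$ (resp.\ $\mathsf{SR}^{\mu, \mathbf{a}}(Z)$), which has $\mu$-measure one. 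For $\mathsf{KR}^{\nu}$ the corresponding statement follows more directly from Fubini-Tonelli applied to the complement of a c.e.\ open of full $\nu$-measure. The main delicacy is that $Z$ is itself an infinite product; but this causes no problem because $\mu = \otimes_{i > n} \nu_i$ is itself a computable point of $\mathcal{P}(Z)$ relative to $\mathbf{a}$, so that van Lambalgen's theorem applies in the standard way to the pair $(Y,\lambda), (Z,\mu)$ regarded as computable Polish spaces with computable probability measures.
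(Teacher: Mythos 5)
Your proposal is correct and takes essentially the same approach as the paper: the paper's proof is simply to apply Proposition~\ref{prop:linesdisintegration} to $Y=\prod_{i\leq n} X_i$ and $Z=\prod_{i>n} X_i$ with $\lambda=\otimes_{i\leq n}\nu_i$ and $\mu=\otimes_{i>n}\nu_i$, which is exactly your decomposition. You re-derive the three clauses rather than citing the earlier proposition as a black box, but the substance (Fubini--Tonelli for the conditional expectation, box neighbourhoods for the lsc clause, and van Lambalgen for clause~(\ref{defn:eff:disin:2})) is the same.
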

\begin{proof}
Simply apply Proposition~\ref{prop:linesdisintegration} to $Y\times Z$, where $Y=\prod_{i\leq n} X_i$ and $Z=\prod_{i>n} X_i$ and $\lambda = \otimes_{i\leq n} \nu_i$ and $\mu=\otimes_{i>n} \nu_i$.
\end{proof}

The simplest kind of an effective filtration, which occurs in both Examples~\ref{ex:refinedpartition:alt}-\ref{ex:refinedlines:alt} is the following:
\begin{defn}
Suppose that $X$ is a computable Polish space. Suppose that $T\subseteq \mathbb{N}^{<\mathbb{N}}$ is a computable tree with no dead ends. Let $I_n=\{\sigma\in T: \left|\sigma\right|=n\}$. Suppose that $\mathscr{F}_n$ is an effective partition $\{A_{\sigma}: \sigma\in I_n\}$, uniformly in $n\geq 0$. If the partitions refine one another, in that $A_{\sigma}=\bigcup_{\sigma^{\frown}(j)\in T} A_{\sigma^{\frown}(j)}$ for all $\sigma$ in $T$, then the $\mathscr{F}_n$ is an effective filtration, which we call an \emph{effective refined partition}.
\end{defn}

The following isolates the natural sufficient condition for an effective refined partition to be full, and this condition is obviously met in Example~\ref{ex:refinedpartition:alt}:
\begin{prop}\label{prop:effectshrinking}
Suppose that the effective refined partition $\mathscr{F}_n$ satisfies the following properties:
\begin{itemize}[leftmargin=*]
\item \emph{Effectively Shrinking}: There is a computable function $\ell:\mathbb{Q}^{>0}\rightarrow \mathbb{N}$ such that for all rational $\epsilon>0$ one has that $\mathrm{diam}(A_{\sigma})<\epsilon$ for all $\sigma$ in $T$ with $\left|\sigma\right|\geq \ell(\epsilon)$.
\item \emph{Effectively non-empty}: There is a uniformly computable sequence of points $x_{\sigma}$ in $A_{\sigma}$.
\end{itemize}
Then the effective refined partition $\mathscr{F}_n$ is full.
\end{prop}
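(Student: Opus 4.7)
The plan is to verify directly that the defining uniformity of a full effective filtration holds, namely that from a c.e.~open $U$ one can compute a sequence of generators $A_{\sigma_i}$ with $\sigma_i \in T$ such that $U = \bigcup_i A_{\sigma_i}$. Write $U = \bigcup_k B(p_k, r_k)$ where the centres $p_k$ come from the countable dense set of $X$ and the radii $r_k$ are positive rationals, all uniformly computable in $k$. The strategy is to enumerate triples $(\sigma, k, \eta)$ with $\sigma \in T$, $k \in \mathbb{N}$ and rational $\eta > 0$ that satisfy (i)~$|\sigma| \geq \ell(\eta)$ and (ii)~$d(x_\sigma, p_k) < r_k - \eta$, and to output $A_\sigma$ for each such triple.

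For this enumeration to be effective I will use that (i) is decidable since $\ell$ is computable, and that (ii) is c.e.\ uniformly in $(\sigma, k, \eta)$: since $x_\sigma$ is a uniformly computable sequence of points of $X$ and the $p_k$ come from the countable dense set, Proposition~\ref{prop:closure:reals:general:0.4}(\ref{prop:closure:reals:general:0.4:shadow}) (together with computability of the distances to the dense set) gives that the predicate $d(x_\sigma, p_k) < q$ is c.e.\ uniformly in $\sigma$, $k$ and rational $q$.

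Correctness of the output is an easy triangle inequality: whenever $(\sigma, k, \eta)$ is enumerated, the shrinking hypothesis gives $\mathrm{diam}(A_\sigma) < \eta$, so for each $y \in A_\sigma$ one has $d(y, x_\sigma) \leq \eta$ and hence $d(y, p_k) \leq \eta + (r_k - \eta) = r_k$, so $A_\sigma \subseteq B(p_k, r_k) \subseteq U$. For completeness, given $x \in U$, pick $k$ with $x \in B(p_k, r_k)$, set $\delta = d(x, p_k)$, choose rational $\eta > 0$ with $\delta + 2\eta < r_k$, and take the unique $\sigma \in I_{\ell(\eta)}$ with $x \in A_\sigma$ (this exists because the level-$\ell(\eta)$ partition covers $X$). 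Then $\mathrm{diam}(A_\sigma) < \eta$ forces $d(x_\sigma, x) \leq \eta$ and therefore $d(x_\sigma, p_k) \leq \eta + \delta < r_k - \eta$, so $(\sigma, k, \eta)$ is eventually enumerated, putting $A_\sigma$ in the output with $x \in A_\sigma$.

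There is no real obstacle here beyond bookkeeping; the only mildly subtle point is verifying that condition~(ii) is a c.e.\ condition in its parameters, which rests on the uniformity of Proposition~\ref{prop:closure:reals:general:0.4}(\ref{prop:closure:reals:general:0.4:shadow}) applied to the computable sequence $x_\sigma$. Everything else is the triangle inequality plus the two hypotheses.
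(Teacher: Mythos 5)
Your proof is correct, but it takes a genuinely different route from the paper's. The paper's (sketched) argument constructs the computable continuous surjection $\pi:[T]\rightarrow X$ determined by $\{\pi(\omega)\}=\bigcap_n A_{\omega\upharpoonright n}$ (well-defined and computable precisely because of the two hypotheses), pulls the c.e.\ open $U$ back to a c.e.\ open $\pi^{-1}(U)=\bigcup_{\sigma\in S}[T]\cap[\sigma]$ in $[T]$, and then pushes forward again to get $U=\bigcup_{\sigma\in S}A_{\sigma}$. You instead verify the fullness condition directly by a c.e.\ enumeration of triples $(\sigma,k,\eta)$, with soundness and completeness each a one-line triangle inequality. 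Your version is more elementary and self-contained: it avoids setting up $\pi$, its surjectivity, and the correspondence between c.e.\ opens of $[T]$ and unions of cylinders, all of which the paper leaves to the reader (and whose final step ``one can check that $U=\bigcup_{\sigma\in S}A_{\sigma}$'' secretly contains essentially the same diameter-plus-triangle-inequality reasoning you make explicit). What the paper's route buys is the reusable object $\pi$ itself, which is the same device used in Proposition~\ref{prop:existencecompdom} and is of independent interest; your route buys transparency about exactly where each hypothesis is used. Your effectivity claim for condition~(ii) is also fine: $d(x_{\sigma},p_k)$ is a uniformly computable real, so comparing it with the rational $r_k-\eta$ is uniformly c.e., exactly as Proposition~\ref{prop:closure:reals:general:0.4}(\ref{prop:closure:reals:general:0.4:shadow}) gives. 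One trivial slip: in the soundness step you write $d(y,x_{\sigma})\leq\eta$ and conclude $d(y,p_k)\leq r_k$, which would not place $y$ in the \emph{open} ball; but since $\mathrm{diam}(A_{\sigma})<\eta$ and $d(x_{\sigma},p_k)<r_k-\eta$ are both strict, you in fact get $d(y,p_k)<r_k$, so the conclusion $A_{\sigma}\subseteq B(p_k,r_k)$ stands --- just state the inequalities strictly.
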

\begin{proof}
(Sketch) The two conditions imply that there is a well-defined computable continuous surjection $\pi:[T]\rightarrow X$ given by $\pi(\omega)=x$ iff $\{x\}=\bigcap_n A_{\omega\upharpoonright n}$. For fullness, suppose that $U\subseteq X$ is c.e. open. Then $\pi^{-1}(U)$ is c.e. open. Then there is c.e. set $S\subseteq T$ such that $\pi^{-1}(U)=\bigcup_{\sigma\in S} [T]\cap [\sigma]$. Then one can check that $U=\bigcup_{\sigma\in S} A_{\sigma}$.
\end{proof}

We can also obtain full effective filtrations by combining lines with full effective partitions, as in \ref{ex:refinedlines:alt}:
\begin{ex}\label{ex:combinepartitionslines2}

Suppose that $Y,Z$ are computable Polish spaces. Suppose that $\lambda$ is a computable point of $\mathcal{P}(Y)$ and suppose that $\mu$ is a computable point of $\mathcal{P}(Z)$. 

Suppose that $\mathscr{F}_n$ is a full effective partition of $Z$ (resp. almost-full effective partition of $Z$).

Let $\mathscr{G}_n$ be the effective $\sigma$-algebra $\{U\times A: U\subseteq Y \mbox{ c.e. open } \; \& \; A\in \mathscr{F}_n\}$.

Then $\mathscr{G}_n$ is a full effective filtration of $Y\times Z$ (resp. almost-full effective filtration of $Y\times Z$).
\end{ex}

Another example of a full effective filtration is related to countable products:
\begin{ex}\label{ex:linescountableproduct2}
The effective $\sigma$-algebras $\mathscr{F}_n$ from the countable products Example~\ref{ex:linescountableproduct} is a full effective filtration. This is because every c.e. open $W\subseteq X=\prod_i X_i$ can be uniformly written as $\bigcup_{\sigma\in J} W_{\sigma}$, for some c.e. index set $J$, where $W_{\sigma}=\prod_{i<\left|\sigma\right|} V_{\sigma(i)}\times \prod_{i\geq \left|\sigma\right|} X_i$, where $V_{\sigma(i)}$ is uniformly c.e. open in $X_i$ for $i\leq n$.
\end{ex}
Of course, this example is the same as that of full effective partitions when $X_i$ is uniformly countable. However, this example goes beyond that of full effective partitions when the $X_i$ are uncountable.

\newpage

\bibliographystyle{amsplain}
\bibliography{00-HWZB-project-01.bib}

\end{document}